\newcommand{\xkh}[1]{\left(#1\right)}
\newcommand{\dkh}[1]{\left\{#1\right\}}
\newcommand{\nj}[1]{\langle {#1} \rangle}
\newcommand{\norm}[1]{\|{#1}\|}
\newcommand{\norms}[1]{\|{#1}\|}
\newcommand{\abs}[1]{\lvert#1\rvert}
\newcommand{\R}{{\mathbb R}}
\newcommand{\Rn}{{\mathbb R}^n}
\newcommand{\T}{\top}
\newcommand{\vx}{{\bm x}}
\newcommand{\vz}{{\bm z}}
\newcommand{\va}{{\bm a}}
\newcommand{\RNum}[1]{\uppercase\expandafter{\romannumeral #1\relax}}
\numberwithin{equation}{section}
\newtheorem{definition}{Definition}[section]
\newtheorem{prop}[definition]{Proposition}
\newtheorem{theorem}[definition]{Theorem}
\newtheorem{remark}[definition]{Remark}
\newtheorem{example}[definition]{Example}
\newtheorem{thm}{Theorem}[section]
\newtheorem{lem}{Lemma}[section]
\theoremstyle{remark}
\newtheorem{rem}{Remark}[section]
\newtheorem*{rem*}{Remark}
\date{}
\begin{document}

\author{Jian-Feng Cai}
\thanks{J. F. Cai was supported in part by Hong Kong Research Grant Council grants 16309518, 16309219, 16310620, 16306821.}
\address{Department of Mathematics, The Hong Kong University of Science and Technology, Clear Water Bay, Kowloon, Hong Kong, China}
\email{jfcai@ust.hk}

\author{Meng Huang}
\address{Department of Mathematics, The Hong Kong University of Science and Technology, Clear Water Bay, Kowloon, Hong Kong, China}
\email{menghuang@ust.hk}

\author{Dong Li}
\address{SUSTech International Center for Mathematics and Department of Mathematics, Southern University of Science and Technology, Shenzhen, China}
\email{lid@sustech.edu.cn}

\author{Yang Wang}
\thanks{Y. Wang was supported in part by the Hong Kong Research Grant Council grants 16306415 and 16308518.}
\address{Department of Mathematics, The Hong Kong University of Science and Technology, Clear Water Bay, Kowloon, Hong Kong, China}
\email{yangwang@ust.hk}

% Email: {mpdongli@gmail.com}.
%The author's work was supported in part by Hong Kong RGC grant GRF 16307317
 %and 16309518. }}
\baselineskip 18pt
\bibliographystyle{plain}
%\title[Solving phase retrieval with random initial guess]{Solving phase retrieval with random initial %guess  is nearly as good as by spectral initialization }
\title[Smoothed amplitude model]{The global landscape of phase retrieval \RNum{1}:  perturbed  amplitude models}
\maketitle

\begin{abstract}
A fundamental task in phase retrieval is to  recover an unknown signal $\vx\in \Rn$ from a set of 
magnitude-only measurements $y_i=\abs{\nj{\va_i,\vx}}, \; i=1,\ldots,m$. 
In this paper, we propose two novel perturbed amplitude models (PAMs) which have non-convex
and quadratic-type loss function. When the measurements $ \va_i \in \Rn$ are Gaussian random vectors and the number of measurements $m\ge Cn$, we rigorously prove that the PAMs admit no spurious local minimizers with high probability, i.e., the target solution $ \vx$ is the unique global minimizer (up to a global phase) and the loss function has a negative directional curvature around each saddle point.  Thanks to the well-tamed benign geometric landscape, one can employ the vanilla gradient descent method to locate the global minimizer $\vx$ (up to a global phase) without spectral initialization. We carry out extensive numerical experiments to show that the gradient descent algorithm with random initialization outperforms  state-of-the-art algorithms with spectral initialization  in empirical success rate and convergence speed.
\end{abstract}

\section{Introduction}
\subsection{Background}
The basic amplitude model for phase retrieval can be written as 
\[
y_j=\abs{\nj{\va_j, \vx}}, \qquad j=1,\ldots,m,
\]
where $ \va_j\in \Rn, j=1,\ldots,m$ are given vectors and $m$ is the number of measurements. The goal is to recover the unknown signal $\vx\in \Rn$
based on the measurements $\dkh{(\va_j,y_j)}_{j=1}^m$. 
This problem arises in many fields of science and engineering such as X-ray crystallography \cite{harrison1993phase,millane1990phase}, microscopy
\cite{miao2008extending}, astronomy \cite{fienup1987phase}, coherent diffractive
imaging \cite{shechtman2015phase,gerchberg1972practical} and optics
\cite{walther1963question} etc.  In practical applications
due to the physical limitations  optical detectors  can only record the magnitude of signals while losing the phase information. Despite its simple mathematical formulation, it has been
shown that reconstructing a finite-dimensional discrete signal from the magnitude of its Fourier transform is generally an {\em NP-complete} problem \cite{Sahinoglou}.

Many algorithms have been designed to solve the phase retrieval problem, which can be categorized into  convex algorithms and non-convex ones.  The convex algorithms usually rely on a ``matrix-lifting'' technique, which lifts the phase retrieval problem into a low rank matrix recovery problem. By using convex relaxation one can recast the matrix recovery problem as a convex optimization problem. The corresponding algorithms include PhaseLift \cite{phaselift,Phaseliftn}, PhaseCut \cite{Waldspurger2015} etc. 
It has been shown \cite{Phaseliftn} that PhaseLift can achieve the exact recovery under the optimal sampling complexity with Gaussian random measurements.

Although convex methods have good theoretical guarantees of convergence, they tend to be computationally inefficient for large scale problems. In contrast, many non-convex algorithms bypass the lifting step and operate directly on the lower-dimensional ambient space, making them much more computationally efficient. Early non-convex algorithms were mostly based on the technique of alternating projections, e.g. Gerchberg-Saxton \cite{Gerchberg1972} and Fineup \cite{ER3}. The main drawback, however, is the lack of theoretical guarantee. Later Netrapalli et al \cite{AltMin} proposed the AltMinPhase  algorithm based on a technique known as {\em spectral initialization}. They proved that the algorithm linearly converges to the true solution with $O(n \log^3 n)$ resampling Gaussian random measurements. This  work led 
further to several other non-convex algorithms based on spectral initialization. 
A common thread  is 
 first choosing a good initial guess through spectral initialization, and then solving an optimization model through gradient descent. Two widely used optimization estimators are the intensity-based loss
\begin{equation}\label{eq:mod1}
\min_{\vz\in \R^n}\,\,F(\vz)=\sum_{j=1}^m \xkh{\abs{\nj{\va_j,\vz}}^2-y_j^2}^2;
\end{equation}
and the amplitude-based loss
\begin{equation}\label{eq:mod2}
\min_{\vz\in \R^n}\,\,F(\vz)=\sum_{j=1}^m \xkh{\abs{\nj{\va_j,\vz}}-y_j}^2.
\end{equation}
Specifically,  Cand\`es et al developed the Wirtinger Flow (WF)   method  \cite{WF}  based on  (\ref{eq:mod1}) and proved that the WF algorithm can achieve linear convergence with $O(n \log n)$ Gaussian random measurements.   Chen and Cand\`es  
in \cite{TWF} improved the results to $O(n)$ Gaussian random measurements by incorporating a truncation which leads to a novel Truncated Wirtinger Flow (TWF) algorithm. Other methods based on (\ref{eq:mod1}) include the Gauss-Newton method \cite{Gaoxu}, the trust-region 
method \cite{Sun18} and the like \cite{huang2019dc}.  For the amplitude flow estimator
\eqref{eq:mod2}, several algorithms have also been developed recently, such as the Truncated Amplitude Flow (TAF) algorithm \cite{TAF}, the Reshaped Wirtinger Flow (RWF) \cite{RWF} algorithm, randomized Kaczmarz methods \cite{tan2019phase,huang2021linear,jeong2017,Wei2015} and the Perturbed Amplitude Flow (PAF) \cite{PAF} algorithm. Those algorithms  have been shown to  converge linearly to the true solution up to a global phase with $O(n)$ Gaussian random measurements. Furthermore,  there is ample evidence from
 numerical simulations showing  that algorithms based on the amplitude flow loss (\ref{eq:mod2}) tend to outperform algorithms based on loss (\ref{eq:mod1}) when measured in empirical
success rate and convergence speed.

\subsection{Prior arts and connections} \label{sec:prob}
As was already mentioned earlier, producing a good initial guess using spectral initialization seems to
be a prerequisite for prototypical  non-convex algorithms to succeed with good theoretical guarantees. A natural and fundamental  question is:

{\em Is it possible for non-convex algorithms to achieve successful recovery with a random initialization (i.e. without spectral initialization or any additional truncation)}?  

%xxx

For the intensity-based estimator (\ref{eq:mod1}), the answer is affirmative.  In the recent
work \cite{Sun18}, Ju Sun et al.  carried out a deep study of the global geometric structure of the loss function of (\ref{eq:mod1}). They proved that the loss function $F(\vz)$  does not have any spurious local minima under $O(n \log^3 n)$ Gaussian random measurements. More specifically, it was shown in \cite{Sun18} that all %global
minimizers coincide with the target signal $\vx$ up to a global phase, and the loss function has a negative directional curvature around each saddle point. Thanks to this benign geometric landscape any algorithm which can avoid saddle points  converges to the true solution with high probability. 
A trust-region
method was employed in \cite{Sun18} to find the global minimizers with random initialization. To reduce the sampling complexity, it has been shown in \cite{cai2019} that a  combination of the loss function (\ref{eq:mod1}) with a judiciously chosen activation function also possesses the benign geometry structure  under $O(n)$ Gaussian random measurements. Recently, a smoothed amplitude flow estimator has been proposed in \cite{2020a} and the authors show that the loss function has benign geometry structure under the optimal sampling complexity. Numerical tests show that the estimator in \cite{2020a}  yields very stable and fast convergence with random initialization and performs as good as or even better than the existing gradient descent methods with spectral initialization.

The emerging concept of a benign geometric landscape has also recently been   explored in many other applications of signal processing and machine learning, e.g. matrix sensing \cite{bhojanapalli2016global,park2016non}, tensor decomposition \cite{ge2016matrix}, dictionary learning\cite{sun2016complete} and matrix completion \cite{ge2015escaping}. 
For general optimization problems there exist a plethora of loss functions with 
well-behaved geometric landscapes such that all local optima are also global optima and 
each saddle point has a negative direction curvature in its vincinity. 
Correspondingly several techniques have been developed to guarantee that the standard gradient based optimization algorithms can escape such saddle points efficiently, see e.g. \cite{jin2017escape,du2017gradient,jin2017accelerated}.

\subsection{Our contributions}
This paper aims to give a positive answer to the problem proposed in Subsection \ref{sec:prob}, especially for  the amplitude-based model. We first introduce two novel estimators based on a deep modification
of \eqref{eq:mod2} and then we prove rigorously that their loss functions have a benign geometric landscape under the optimal sampling complexity $O(n)$, namely, the loss functions have no spurious local minimizers and  have a negative directional curvature around each saddle point. Such properties allow first order method like gradient descent  to find a global minimum with random initial guess.
% The first estimator employs a loss function which is Lipschitz at
%the origin, whereas another estimator involves a globally smooth loss function.
We carry out extensive numerical experiments and show that the gradient descent algorithm with random initialization outperforms  the  state-of-the-art algorithms with spectral initialization  in empirical success rate and convergence speed.

We now give a slightly more detailed summary of the main theoretical results proved in our papers. 
Consider the loss function which is akin to the estimator \eqref{eq:mod2}:

\begin{align} \label{Au19:mod2}
\min_{\vz \in \R^n} F(\vz ) &= \frac 1 m \sum_{i=1}^m \biggl( \sqrt{\beta |\vz|^2+(\va_i^T \vz)^2}
-\sqrt{ \beta |\vz|^2 + y_i^2} \biggr)^2.
\end{align}
The following theorem shows that the loss function above has benign geometry structure under optimal sampling complexity.

\begin{theorem} [Informal] \label{th:first}
 Consider the perturbed amplitude model (PAM1) \eqref{Au19:mod2}.  Assume 
$\{\va_i\}_{i=1}^m$ are i.i.d.  Gaussian random vectors and $\vx\ne 0$.
Let $0<\beta<\infty$. 
If $m\ge C n $, then with probability at least $1-O(m^{-2})$, 
the loss function $F=F(\vz)$ has no spurious local minimizers. The only global minimizer is
$\pm \vx$, and all saddle points are strict saddles. 
\end{theorem}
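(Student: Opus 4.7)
My plan is the standard population-to-empirical landscape analysis. By rotational invariance of the Gaussian measurement vectors $\va_i$, I may assume without loss of generality that $\|\vx\| = 1$ and $\vx$ is the first standard basis vector, so that $\E F(\vz)$ depends only on the two scalars $z_1 = \vz^{\T}\vx$ and $t = \|\vz - z_1\vx\|$; call this reduced function $g(z_1, t)$. The high-level plan has three steps: (i) enumerate and classify every critical point of the population loss $\E F$, verifying that $\pm\vx$ are strict local minima and that every other critical point has a direction of strictly negative Hessian; (ii) establish uniform concentration of $\nabla F$ and $\nabla^2 F$ to their expectations on a compact annulus $\{\rho_1 \le \|\vz\| \le \rho_2\}$ when $m \ge Cn$; (iii) handle the residual regions $\vz \approx 0$ and $\|\vz\| \gg 1$ by direct arguments.

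\textbf{Population analysis.} Setting $r_i(\vz) = \sqrt{\beta \|\vz\|^2 + (\va_i^{\T}\vz)^2}$ and $s_i(\vz) = \sqrt{\beta \|\vz\|^2 + y_i^2}$, for $\vz \ne 0$ both are smooth and
\[
\nabla F(\vz) = \frac{2}{m}\sum_{i=1}^m (r_i - s_i)\left(\frac{\beta\vz + (\va_i^{\T}\vz)\va_i}{r_i} - \frac{\beta\vz}{s_i}\right).
\]
Since $r_i(\pm\vx) = s_i(\pm\vx)$ for every $i$, $\pm\vx$ are zeros of $\nabla F$; since $F \ge 0$ with $F(\pm\vx) = 0$, these are the only global minima, and the expected Hessian there is strictly positive definite by a direct Gaussian moment computation. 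By the symmetries $z_1 \to -z_1$ and orthogonal rotations in the hyperplane perpendicular to $\vx$, any other critical point of $g$ must satisfy either $t = 0$ or $z_1 = 0$; a scalar analysis of $\partial_t g(0, t)$ isolates a possible saddle sphere $\{z_1 = 0,\, t = t_*\}$, at which the population Hessian admits a strictly negative eigenvalue in the $\vx$-direction, making the sphere a strict saddle. The non-smooth point $\vz = 0$ is not a local minimum, since $F$ strictly decreases along the ray from $0$ to $\vx$.

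\textbf{Concentration and conclusion.} To transfer the population landscape to $F$ itself, I would prove, with probability $1 - O(m^{-2})$ for $m \ge Cn$, the uniform bounds
\[
\sup_{\rho_1 \le \|\vz\| \le \rho_2}\norm{\nabla F(\vz) - \E\nabla F(\vz)} \le \epsilon, \qquad \sup_{\rho_1 \le \|\vz\| \le \rho_2}\norm{\nabla^2 F(\vz) - \E\nabla^2 F(\vz)} \le \epsilon.
\]
The summands entering $\nabla F$ and $\nabla^2 F$ are polynomials of degree at most $2$ in $\va_i$ multiplied by smooth bounded weights such as $(r_i + s_i)^{-1}$ and $\va_i^{\T}\vz/r_i$, all $O(1)$ and Lipschitz in $\vz$ on the annulus. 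A standard $\epsilon$-net argument on the sphere combined with sub-exponential concentration (for the gradient) and matrix Bernstein (for the Hessian) should then deliver the bounds at the optimal rate $m \gtrsim n$. Combining these with the population analysis via a partition of the parameter space --- neighborhoods of $0$, of $\pm\vx$, of the saddle sphere, and the complementary bulk where $\|\nabla g\|$ is bounded below --- yields the theorem; the far region $\|\vz\| \gg 1$ is ruled out separately by verifying $\langle \nabla F(\vz),\vz\rangle > 0$.

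\textbf{Main difficulty.} The principal obstacle, as in related amplitude-type landscape proofs, is obtaining a quantitative lower bound on the magnitude of the negative eigenvalue of the population Hessian along the saddle sphere that is large enough to survive the $\epsilon$-concentration error at the sample rate $m \gtrsim n$. A secondary but delicate issue is the non-smoothness of the individual summands at $\vz = 0$: Hessian concentration must be restricted to $\|\vz\| \ge \rho_1 > 0$, and the small-$\vz$ region must be covered by a separate argument that exhibits an explicit descent direction toward $\pm\vx$.
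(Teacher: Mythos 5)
Your overall strategy --- reduce to the population loss via rotational invariance, classify its landscape, prove uniform concentration on an annulus, and handle $\vz\approx 0$ and $\|\vz\|\gg 1$ separately --- is essentially the same as the paper's. Two of your steps, however, have real gaps.

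\textbf{Classification of population critical points.} You assert that ``by the symmetries $z_1\to -z_1$ and rotations perpendicular to $\vx$, any other critical point of $g$ must satisfy $t=0$ or $z_1=0$.'' This does not follow from symmetry alone: the reflection $z_1\mapsto -z_1$ only tells you that critical points come in pairs, not that they lie on the axes. The paper avoids this claim entirely. In the regime $\rho\sim 1$, $||\hat u\cdot e_1|-1|>\eta_0$ (the paper's coordinates) it proves something weaker but sufficient: after passing to the $(\rho,t,e^\perp)$ chart, $|\partial_t g|$ is bounded below wherever $|\hat u\cdot e_1|$ is bounded away from $0$ and $1$, while $\partial_{tt}g<0$ wherever $|\hat u\cdot e_1|\ll 1$ (Theorem \ref{thmA31_1}, relying on Lemma \ref{lemAu29_2}). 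You would need to replace ``enumerate the critical points'' with a similar region-by-region sign analysis of $\partial_t g$ and $\partial_{tt}g$; the conclusion you want is true, but not for the reason you give.

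\textbf{Concentration of the Hessian.} This is the more serious gap, and it is exactly where the probability $1-O(m^{-2})$ in the theorem comes from. You write that the summands of $\nabla^2 F$ ``are polynomials of degree at most $2$ in $\va_i$ multiplied by smooth bounded weights, all $O(1)$,'' and that matrix Bernstein then gives the rate $m\gtrsim n$. Neither claim is correct. The weight $\sqrt{\beta|\vz|^2+y_i^2}=\sqrt{\beta|\vz|^2+(\va_i^\T\vx)^2}$ is not bounded; it grows like $|\va_i^\T\vx|$. Tracking the Hessian of $f_0(u)=-\frac1m\sum_k\sqrt{\beta|u|^2+(\va_k\cdot u)^2}\sqrt{\beta|u|^2+(\va_k\cdot e_1)^2}$ produces, after the $-3/2$-power damping by the first radical, summands of the form $(\va_k\cdot\xi)^2\sqrt{\beta|u|^2+(\va_k\cdot e_1)^2}$ (see Lemma \ref{Sp1_1_0.1}), which behave like a cubic in Gaussians. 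Such random variables are not sub-exponential --- their tails are only of Weibull type $\exp(-ct^{2/3})$ --- so Bernstein plus an $\epsilon$-net over $\mathbb S^{n-1}$ does not close at $m\gtrsim n$ (one would need roughly $m\gtrsim n^{3/2}$). The paper copes with this by (a) exploiting nonnegativity to obtain \emph{one-sided} high-probability bounds where possible (Lemma \ref{A30_3}) and (b) falling back on a Chebyshev/fourth-moment bound (Lemma \ref{bp7}) for the remaining two-sided estimate, which is what delivers only $1-O(m^{-2})$ for the strong convexity near $\pm\vx$ (Lemma \ref{Sp1_1}, Theorem \ref{Sep2e1}). Your plan, if carried out as written, would either fail at the net argument or quietly require a suboptimal sample size. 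Indeed, the paper introduces PAM2 --- inserting $(\va_j\cdot\vx)^2$ inside the first square root to globally smooth and damp exactly this term --- precisely to restore exponential concentration $1-e^{-cm}$; that the theorem you are proving only claims $1-O(m^{-2})$ is itself a signal that a standard sub-exponential argument does not suffice here.
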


The avid reader should notice that the probability concentration in Theorem \ref{th:first} is only 
$1-O(m^{-2})$. Besides, the function is only Lipschitz continuous near the 
origin.\footnote{This is due to the function $\sqrt{\beta |\vz|^2+ (\va_i^\T \vz)^2}$.}
To remedy this and improve the probability of success,  we introduce
the following genuinely globally smooth estimator:
\begin{align} \label{Au19:mod3}
\min_{\vz \in \R^n} F(\vz ) &= \frac 1 m \sum_{i=1}^m \biggl( \sqrt{\beta |\vz|^2+(\va_i^T \vz)^2 +y_i^2}  
-\sqrt{ \beta |\vz|^2 +2 y_i^2 } \biggr)^2.
\end{align}
The geometric landscape is stated below.

\begin{theorem} [Informal] 
Consider the perturbed amplitude model (PAM2) \eqref{Au19:mod3}.  Assume 
$\{\va_i\}_{i=1}^m$ are i.i.d.  Gaussian random vectors and $\vx\ne 0$.
Let $0<\beta<\infty$. 
If $m\ge C n $, then with probability at least $1-e^{-cm}$, 
the loss function $F=F(\vz)$ has no spurious local minimizers. The only global minimizer is
$\pm \vx$, and all other critical points are strict saddles. 
\end{theorem}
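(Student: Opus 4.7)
The plan is to mirror the strategy used for PAM1 in Theorem~\ref{th:first}; the key improvement comes from the regulariser $+y_i^2$ inside the first square root, which makes every summand $C^\infty$ on all of $\R^n$ with Lipschitz derivatives of polynomial growth. This global smoothness is what will upgrade the probability of success from $1-O(m^{-2})$ to $1-e^{-cm}$.

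First I would analyse the population objective $f(\vz):=\E F(\vz)$ with $\va\sim\mathcal N(0,I_n)$. By rotational invariance $f$ depends only on $\|\vz\|$ and $\langle\vz,\vx\rangle$, so computing $\nabla f$ reduces to a one-dimensional Gaussian integral. The zero set of $\nabla f$ should consist of $\pm\vx$, the origin, and a bounded collection of spurious critical points on $\vx^\perp$. Direct inspection then gives $\nabla^2 f(\pm\vx)\succeq \mu_1 I$ and, at every other critical point, $\langle \vx/\|\vx\|,\nabla^2 f\,\vx/\|\vx\|\rangle\leq -\mu_2$ for explicit constants $\mu_1,\mu_2>0$. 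I would then partition $\R^n$ into $\mathcal R_1$, an $O(1)$-neighbourhood of $\pm\vx$ with $\nabla^2 f\succeq(\mu_1/2)I$; $\mathcal R_2$, neighbourhoods of the other critical points on which $\vx/\|\vx\|$ still witnesses curvature $\leq -\mu_2/2$; $\mathcal R_3$, the complement within a ball $B(0,R_0)$, $R_0=C\|\vx\|$, on which $\|\nabla f\|\geq\mu_3$; and $\mathcal R_4=\{\|\vz\|>R_0\}$, on which a direct per-sample calculation gives $\langle\vz,\nabla F(\vz)\rangle\gtrsim \|\vz\|^2(\|\vz\|-R_0)$ deterministically, ruling out critical points of $F$ outside the ball.

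On $B(0,R_0)$ I would then run a uniform concentration argument. Because $\sqrt{\beta\|\vz\|^2+(\va^\T\vz)^2+y^2}\geq \max(\sqrt\beta\|\vz\|,|\va^\T\vx|)$, every entry of $\nabla F$ and $\nabla^2 F$ is a sub-exponential function of $\va$ with parameters controlled by polynomials in $\|\vz\|,\|\vx\|,\beta^{-1/2}$. A pointwise Bernstein inequality, combined with an $\epsilon$-net on $B(0,R_0)$ and polynomial Lipschitz estimates for $\nabla F$ and $\nabla^2 F$, yields
\begin{equation*}
\Pr\Bigl(\sup_{\vz\in B(0,R_0)}\bigl\|\nabla F(\vz)-\nabla f(\vz)\bigr\|+\bigl\|\nabla^2 F(\vz)-\nabla^2 f(\vz)\bigr\|_{\rm op}\geq\delta\Bigr)\leq e^{-cm}
\end{equation*}
as soon as $m\geq C_\delta n$. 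Choosing $\delta<\tfrac12\min(\mu_1,\mu_2,\mu_3)$ transfers the regional bounds from $f$ to $F$: on the resulting event, $\pm\vx$ remain the unique strict local (hence global) minimisers of $F$, and every other critical point is a strict saddle with a direction of negative curvature along $\vx/\|\vx\|$.

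The principal obstacle is the uniform Hessian concentration on $B(0,R_0)$ with exponential probability at the optimal sample count $m=O(n)$. The global smoothness of PAM2 removes the singular behaviour at the origin that forced the PAM1 proof into polynomial tails, so Bernstein applies cleanly at a single $\vz$; the delicate piece is bounding the Lipschitz constants of $\nabla F,\nabla^2 F$ by tame polynomials in $(\|\vz\|,\|\vx\|)$ so that the $(R_0/\epsilon)^n$ covering cost is absorbed by the $e^{-c\delta^2 m}$ tail. Tracking the $\beta$-dependence through these polynomials (and through the margins $\mu_i$) is what ultimately fixes the hidden constant $C$ in the conclusion $m\geq Cn$.
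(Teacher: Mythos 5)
Your plan is sound in spirit and would likely succeed, but it is a genuinely different organization from what the paper does in Section~\ref{S:model3}, so it is worth comparing. The paper never explicitly characterizes the critical set of $\E F$; instead it works throughout in the reduced coordinates $(\rho, t, e^{\perp})$ with $\rho = \|u\|_2$ and $t = \hat u \cdot e_1$, proves regional statements about $\partial_\rho f$, $\partial_t g$, $\partial_{tt} g$ (Lemmas~\ref{Sep3_e1}--\ref{Sep3_e2}, Theorems~\ref{Sep3_e3}, \ref{thmSep4_1a}), and only near $\pm e_1$ does it set up full Hessian concentration (Lemma~\ref{Sep6Sp1_1}, restricted to $\tfrac13 \le \|u\|_2 \le 3$). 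You instead propose a ``global'' argument in the style of Sun--Qu--Wright: enumerate the critical points of the population objective, establish margins $\mu_1,\mu_2,\mu_3$ region by region, then transfer them to $F$ by a single uniform concentration of $\nabla F$ and $\nabla^2 F$ on a ball. Your organization buys conceptual clarity and a cleaner statement of what the landscape looks like; the paper's coordinate-wise approach avoids ever having to concentrate the full Hessian away from the minimizers, which keeps the moment and Lipschitz bookkeeping lighter (one-dimensional quantities like $\partial_t g$, $\partial_{tt} g$ are easier to cover with nets than $n\times n$ Hessians, even if both ultimately need $m \gtrsim n$). On the substance, your choice of $\vx/\|\vx\|$ as the negative-curvature witness at the spurious critical points is correct and consistent with the paper: since $\partial_t\psi(\rho,0,e^\perp) = \rho e_1$ and $\nabla f$ vanishes at a critical point, Proposition~\ref{Au29e1} gives $\partial_{tt}g = \rho^2\, e_1^\T \nabla^2 f\, e_1$, so the paper's $\partial_{tt}$-test and your $e_1$-directional test agree exactly where it matters.

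A few points in your sketch are imprecise and should be tightened. First, the claim that $\langle\vz,\nabla F(\vz)\rangle\gtrsim \|\vz\|^2(\|\vz\|-R_0)$ holds \emph{deterministically} on $\mathcal R_4$ is not right: the coefficient is $\tfrac 1m\sum_j(\sqrt{\beta+(\va_j\cdot\hat z)^2}-\sqrt\beta)^2$, which can be made arbitrarily small by degenerate designs, so this is a high-probability statement (cf.~Lemma~\ref{Sep3_e1}); you can only make it deterministic after conditioning on a uniform lower bound for this empirical average over $\hat z\in\mathbb S^{n-1}$, which itself costs a Bernstein-plus-net argument. Second, the spurious critical points of $\E F$ do not form a ``bounded collection'' of isolated points: by rotational invariance about $\vx$, they form an $(n-2)$-sphere $\{\vz : \vz\perp\vx, \|\vz\|=r_0\}$. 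Your proposed region $\mathcal R_2$ therefore must be a tubular neighborhood of this manifold, and the margin $\mu_2$ a uniform bound over it; this is what the paper's bound on $\partial_{tt} h_\infty$ over $|t|\ll 1$, $\rho\sim 1$ (Lemma~\ref{lemSep4_2a}(3)) supplies. Finally, since $f=\E F$ depends on the two scalars $\|\vz\|$ and $\langle\vz,\vx\rangle$, computing $\nabla f$ reduces to a two-dimensional (not one-dimensional) Gaussian integral in $(X_1,Y_1)=(a\cdot e_1, a\cdot e^\perp)$, exactly the $h_\infty(\rho,t)$ of Lemma~\ref{lemSep4_2a}. None of these are fatal, but they are the places where the sketch would need care before it becomes a proof.
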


\begin{remark}
In a con-current work \cite{2020a}, 
we considered another new smoothed amplitude-based estimator which is based
on a piece-wise smooth modification of the amplitude estimator 
\eqref{eq:mod2}. The estimator takes the form
\begin{align} \notag
\min_{\vz\in \mathbb R^n} 
F(\vz)=\frac{1}{2m}\sum_{i=1}^m y_i^2 \xkh{\gamma\xkh{\frac{\va_i^\T \vz}{y_i}}-1}^2, 
\end{align}
where the function $\gamma(t)$ is taken to be 
\begin{align*}
\gamma(t):=
\begin{cases}
     \abs{t}, &  \abs{t} > \beta;  \\
    \frac{1}{2\beta} t^2+\frac{\beta}{2}, &  \abs{t} \leq \beta .
\end{cases} 
\end{align*}
For $0<\beta\le  1/2$, we prove that the loss function has a benign landscape under the
optimal sampling threshold $m=O(n)$. There are subtle technical difficulties in connection with
the piecewise-smoothness of the loss function which make the overall proof therein quite special.
On the other hand, there are exciting evidences that the machinery developed in this work can
be generalized significantly in various directions (including complex-valued cases etc). We plan to address some of these important
issues in forthcoming works.
\end{remark}

\subsection{Organization}
The paper is organized as follows. 
In Section 2,  we analyze the global geometric structure for the first estimator, and the global analysis for the second estimator is given in Section 3. For both estimators, we  show that their loss functions have no spurious local minimizers  under optimal sampling complexity $O(n)$.  In Section 4, we give some numerical experiments to demonstrate the efficiency of our proposed estimators.
In Appendix, we collect the technique lemmas which are used in the proof.

\subsection{Notations}
Throughout this proof we fix $\beta>0$ as a constant and do not study
the precise dependence of other parameters on $\beta$.  We write $u \in \mathbb S^{n-1}$ if $u\in \mathbb R^n$ and 
$\|u \|_2=\sqrt{ \sum_{j} (u_j)^2} =1$. 
 We use $\chi$ to denote the usual characteristic function. For example  $\chi_A (x)=1$ if $x \in A$ and $\chi_A(x)=0$ if $x\notin A$. 
 We denote by $\delta_1$, $\epsilon$, $\eta$, $\eta_1$ various 
constants whose value will be taken sufficiently small. The needed smallness will be
clear from the context. 
 For any quantity $X$, we shall write $X=O(Y)$ if $|X| \le C Y$ for some constant
$C>0$.  We write $X\lesssim Y$ if $X\le CY$ for some constant $C>0$.  
We shall write $X \ll Y$ if $ X \le c Y$ where the constant $c>0$ will be sufficiently
small. 
 In our proof it is important for us to specify the precise dependence of the sampling size $m$ in terms of the dimension $n$. For this purpose
we shall write $m\gtrsim n $ if $m\ge C n$ where the constant $C$ is allowed to depend on 
$\beta$ and 
the small constants $\epsilon$, $\epsilon_i$ etc used in the argument.  
One can extract more explicit dependence 
of $C$ on the small constants and $\beta$ but for simplicity we suppress this dependence here. 
 We shall say an event $A$ happens with \textbf{high probability} if  $\mathbb P (A) \ge   1-C e^{-cm}$, 
where $c>0$, $C>0$ are constants.  The constants $c$ and $C$ are allowed to depend
on $\beta$ and the small constants $\epsilon$, $\delta$ mentioned before.

\section{perturbed amplitude model \RNum{1}}  \label{S:model2}
Recall the loss function of perturbed amplitude model (PAM1) \eqref{Au19:mod2}:
\begin{align} \label{model2e1}
f(u) &= \frac 1 m \sum_{j=1}^m \Bigl( \;\sqrt{\beta |u|^2+(a_j\cdot u)^2}
-\sqrt{ \beta |u|^2 + (a_j\cdot x)^2}\;\;  \Bigr)^2,
\end{align}
where $\beta>0$ is a parameter. Here, we denote $|u|:=\|u\|_2=\sqrt{\sum_j u_j^2}$ for the convenience and
write $\va_i$ as $a_i$, $\vx$ as $x$ to alleviate the notation. The global geometric structure of above empirical loss is stated below.

\begin{thm} \label{thmC}
Let $0<\beta <\infty$. 
Assume 
$\{a_i\}_{i=1}^m$ are i.i.d.  standard Gaussian random vectors and $x\ne 0$. 
There exist positive constants $C$, $C_1$ depending only on $\beta$, such that if $m\ge C n  $, then
with probability at least $1- \frac {C_1} {m^2}$ the loss function $f(u)$ 
defined by \eqref{model2e1}
has no spurious local minimizers. The only global minimizer is
$\pm  x$, and the loss function is strongly convex in a neighborhood of $\pm x$.
At the point $u=0$ the loss function has non-vanishing directional gradient along any direction
$\xi \in \mathbb S^{n-1}$. 
All other critical points are strict saddles, i.e., each saddle point has a neighborhood
where the function has negative directional curvature.
\end{thm}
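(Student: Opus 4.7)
The plan is to partition $\R^n\setminus\{0\}$ into three overlapping regions and establish on each a deterministic outcome: (i) positive-definiteness of $\nabla^2 f$ in a small tube around $\pm x$, (ii) existence of a direction of strictly negative curvature of $\nabla^2 f$ in an annular region around the "saddle orbit", and (iii) a non-vanishing gradient, suitably normalized, on the complement. The singular point $u=0$ is handled separately by a one-sided directional-derivative computation: with $A_j(u):=\sqrt{\beta|u|^2+(a_j\cdot u)^2}$ and $B_j(u):=\sqrt{\beta|u|^2+(a_j\cdot x)^2}$, the positive one-homogeneity of $A_j$ and the vanishing first-order term of $B_j$ at $u=0$ give, for any unit $\xi$,
\[
\partial_\xi^+ f(0)=-\frac{2}{m}\sum_{j=1}^m |a_j\cdot x|\,\sqrt{\beta+(a_j\cdot \xi)^2},
\]
which is strictly negative with overwhelming probability; thus $0$ is not a local minimizer and has a non-vanishing one-sided directional gradient in every direction.

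I would next work with the population loss $F(u):=\E f(u)$ and its derivatives. By Gaussian rotational invariance one may assume $x=\|x\|e_1$, and by the scaling $f(cu;cx)=c^2 f(u;x)$ further reduce to $\|x\|=1$. Writing $u=\alpha x+\gamma w$ with $w\perp x$ and $|w|=1$, the quantities $F$, $\nabla F$, $\nabla^2 F$ become integrals against two independent standard Gaussians, hence functions of $(\alpha,\gamma)$. I expect this two-dimensional reduction to show that the only critical points of $F$ are $\pm x$ (strict local minima with positive-definite Hessian) and a "saddle orbit" $\{\alpha=0,\ \gamma=\gamma_\star\}$ for some $\gamma_\star>0$ with strictly negative Hessian curvature along $x$. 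Carving out an $\epsilon$-tube around $\pm x$, an $\epsilon$-annulus around the saddle orbit, and the complement (minus a small ball around the origin), I would verify the desired sign conditions for $F$ on each region by explicit Gaussian-integral computation in $(\alpha,\gamma)$.

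The second stage is to pass from $F$ to $f$ via uniform concentration. The relevant quantities are $\sup_{u\in R}\|\nabla f(u)-\nabla F(u)\|$ and $\sup_{u\in R}\|\nabla^2 f(u)-\nabla^2 F(u)\|_{\mathrm{op}}$ on each compact region $R$, handled by an $\varepsilon$-net combined with a Lipschitz bound on the random residuals. Since the summands are only sub-exponential, with heavier tails coming from the $\sqrt{\cdot}$ factors when $|u|$ is small, pointwise concentration will follow from Bernstein's inequality after truncation, while the Lipschitz bound reduces to controlling operator norms of certain $n\times n$ random matrices of the form $m^{-1}\sum_j a_j a_j^\T$ weighted by bounded factors. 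The relatively weak probability $1-C_1/m^{2}$ in the statement (compared with exponentially small) signals that at some point a polynomial-moment Chebyshev bound is needed, most likely to absorb $|u|^{-1}$-type blow-ups produced when differentiating $A_j$ near $u=0$; this is precisely why the companion model \eqref{Au19:mod3} replaces the $\sqrt{\beta|u|^2+\cdots}$ denominators with $\sqrt{\beta|u|^2+\cdots+y_i^2}$ and recovers full exponential concentration.

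The principal obstacle is the quantitative control of the population Hessian: it contains Gaussian integrals such as $\E\bigl[\tfrac{a_j a_j^\T}{A_j(u)}\,B_j(u)\bigr]$ and related cross-expectations whose $(\alpha,\gamma)$-dependence is not transparent, and verifying that the corresponding reduced Hessian has the correct signature uniformly over each of the three regions, together with the quantitative spectral-gap inequalities needed on each region's boundary, will require either exact evaluation via one-dimensional Gaussian integrals or a careful comparison argument. A secondary technical nuisance is to stitch the directional-derivative analysis at $u=0$ to the smooth analysis in $|u|\ge\delta$: in the intermediate shell $0<|u|\ll 1$ the summands are unbounded and one will need a tailored gradient lower bound obtained by expanding both square roots to the next order in $|u|$, so that the "non-vanishing gradient" statement in region (iii) extends continuously down to an arbitrarily small punctured ball around the origin.
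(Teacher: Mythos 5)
Your overall architecture matches the paper's almost exactly: the one-sided directional-derivative computation at the origin is the same; the remainder is the identical three-region decomposition (a small tube around $\pm x$ where strong convexity is proven, an annular region with $\hat u\cdot e_1\approx 0$ where strict-saddle negative directional curvature is established, and a complement where the gradient is shown non-vanishing via $\partial_\rho f\ne 0$); the reduction to a two-parameter family $(\rho, t)$ via Gaussian rotational invariance is the same as your $(\alpha,\gamma)$ reduction; and the passage from population to sample loss is by $\varepsilon$-net plus Bernstein after a smooth truncation, exactly as you sketch.

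The one point you misdiagnose is the origin of the polynomial probability bound $1-O(m^{-2})$. It does \emph{not} arise from $|u|^{-1}$-type blow-ups near $u=0$: the regime $0<\|u\|_2 \le \frac{\sqrt\beta}{4(1+\beta)}$ is handled with the usual exponential probability by showing $\partial_\rho f<0$ there using only second-moment concentration (Lemmas \ref{Au28_e2}, \ref{Au28_e3}, Theorem \ref{thmAu28_1}). As the paper notes in Remark 2.2, the weaker probability enters only in the \emph{strong-convexity analysis near $\pm x$} (Lemmas \ref{Sp1_1_0.1}, \ref{Sp1_1}), where $\|u\|_2\sim 1$. The culprit is Hessian summands of the type $A_k^{-3/2}B_k^{1/2}(a_k\cdot\xi)^2$ with $A_k=\beta|u|^2+(a_k\cdot u)^2$ and $B_k=\beta|u|^2+(a_k\cdot e_1)^2$: on the set where $a_k\cdot u\approx 0$, the factor $A_k^{-3/2}$ only damps $(a_k\cdot u)$-growth and does nothing against $(a_k\cdot e_1)$- and $(a_k\cdot\xi)$-growth, leaving a term of effectively cubic size in the Gaussians whose concentration is obtained from a fourth-moment Chebyshev bound (Lemma \ref{bp7}); this is what caps the probability at $1-O(m^{-2})$. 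Relatedly, the reason PAM2 recovers full exponential concentration is not its global smoothness at $u=0$; it is that the extra $(a_k\cdot x)^2$ inside the \emph{first} square root turns $A_k$ into $\beta|u|^2+(a_k\cdot u)^2+(a_k\cdot e_1)^2$, so $A_k^{-3/2}$ now damps the $(a_k\cdot e_1)$-growth as well, restoring sub-exponential tails for the Hessian summands (cf.\ Lemma \ref{lemSep5b_1} and Lemma \ref{Sep6Sp1_1}). This is a small correction to your roadmap, not a flaw in your decomposition.
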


\begin{remark}
We shall show that most of the statements can be proved with high probability
$1-e^{-cm}$. The only part where the weaker probability $1-O(m^{-2})$ is used
comes in the analysis of the strong convexity near the global minimizer $u=\pm x$ (see e.g. Lemma \ref{Sp1_1}). This can be refined but we shall not dwell on it here.
\end{remark}

%If we denote 
%\begin{align*}
%f_j(u,x)= \Bigl( \;\sqrt{\beta |u|^2+(a_j\cdot u)^2}
%-\sqrt{ \beta |u|^2 + (a_j\cdot x)^2} \; \Bigr)^2,
%\end{align*}
%then clearly for $\lambda>0$, 
%\begin{align*}
%f_j(\lambda u, \lambda x) = \lambda^2 f_j (u, x).
%\end{align*}
In view of this homogeneity and the rotation invariance of the Gaussian distribution, we may assume 
without loss of generality that $x= e_1$ when studying the landscape of $f(u)$.
Thus throughout the rest of the proof we shall assume $x=e_1$.

\subsection{The regimes $\|u\|_2\le  \frac {\sqrt{\beta}} {4(1+\beta)} $ and $\|u\|_2\ge 3\sqrt{1+\beta}$ are fine}$\;$

Write  $u=\rho \hat u$ where $\hat u \in S^{n-1}$.   Then 
\begin{eqnarray*}
f_j(u,e_1)& = & \Bigl(\; \sqrt{\beta |u|^2 + (a_j\cdot u)^2}- \sqrt{\beta |u|^2 +(a_j\cdot e_1)^2}  \;\; \Bigr)^2 \notag \\
& = &  \rho^2 ( (a_j\cdot \hat u)^2 + 2 \beta) +(a_j\cdot e_1)^2 -2 \rho \sqrt{\beta + (a_j\cdot \hat u)^2} \sqrt{ \beta \rho^2 +(a_j\cdot e_1)^2}.
\end{eqnarray*}
The derivative with respect to $\rho$ is
\begin{align} \label{partialrho}
\partial_{\rho} f &=\frac 1m \sum_{j=1}^m \Bigl(
2 \rho ( (a_j \cdot \hat u)^2 +2\beta) -2 \sqrt{\beta+ (a_j\cdot \hat u)^2} 
\sqrt{\beta \rho^2 + (a_j\cdot e_1)^2} 
-\frac { 2\beta \rho^2 \sqrt{\beta+ (a_j \cdot \hat u)^2}}
{\sqrt{\beta \rho^2 + (a_j\cdot e_1)^2}} \Bigr).
\end{align}

\begin{lem}[The regime $\rho\ge 3\sqrt{1+\beta}$ is OK] \label{Au28_e1}
For $m\gtrsim n$,  with high probability it holds that
\begin{align*}
\partial_{\rho} f >0,  \quad\forall\, \rho \ge 3\sqrt{1+\beta}, \;\text{}\forall\, \hat u \in \mathbb S^{n-1}.
\end{align*}
\end{lem}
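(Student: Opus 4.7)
The plan is to show $\partial_\rho f > 0$ by lower-bounding each sample's contribution as a positive leading piece minus a small error, then invoking Gaussian concentration uniformly over the sphere. Setting $s_j = (a_j\cdot \hat u)^2$ and $t_j=a_j\cdot e_1$, the $j$-th summand of \eqref{partialrho} reads $2\rho(s_j+2\beta) - 2\sqrt{\beta+s_j}\cdot\frac{2\beta\rho^2+t_j^2}{\sqrt{\beta\rho^2+t_j^2}}$, and two elementary identities drive the analysis:
\begin{align*}
s + 2\beta - 2\sqrt{\beta(\beta+s)} &= \frac{s^2}{s + 2\beta + 2\sqrt{\beta(\beta+s)}},\\
\frac{2\beta\rho^2 + t^2}{\sqrt{\beta\rho^2+t^2}} - 2\sqrt{\beta}\rho &= \frac{t^4}{\sqrt{\beta\rho^2+t^2}\bigl(\sqrt{\beta\rho^2+t^2}+\sqrt{\beta}\rho\bigr)^2}.
\end{align*}
Using them, each summand splits as the manifestly non-negative leading piece $2\rho\cdot s_j^2/(s_j+2\beta+2\sqrt{\beta(\beta+s_j)})$ minus an error controlled by $2\sqrt{\beta+s_j}\cdot\min\bigl(|t_j|,\;t_j^4/(4\beta^{3/2}\rho^3)\bigr)$.

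Next I would pass to expectations. By rotation-invariance of Gaussians, $\E\bigl[s_j^2/(s_j+2\beta+2\sqrt{\beta(\beta+s_j)})\bigr] =: c(\beta) > 0$ is a positive constant independent of $\hat u$, so the expected leading contribution is $2\rho\,c(\beta)$. Splitting the error using the minimum above and applying finite Gaussian moment bounds (Cauchy--Schwarz together with the exponentially small Gaussian tail on $\{|t_j|>\sqrt{\beta}\rho\}$) yields an expected error of at most $C(\beta)$, independent of $\rho$, and in fact decaying like $1/\rho^3$ in the ``small $t$'' regime. The threshold $\rho\ge 3\sqrt{1+\beta}$ is then engineered precisely so that $2\rho\,c(\beta)$ dominates this error with a margin of order $\sqrt{1+\beta}$, which one verifies by inserting the explicit expressions for $c(\beta)$ and $C(\beta)$.

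To upgrade this expectation-level bound to a high-probability statement uniform over $\hat u \in \mathbb S^{n-1}$ and $\rho \ge 3\sqrt{1+\beta}$, I would apply Bernstein-type concentration to the two relevant empirical averages (their summands are sub-exponential polynomials in Gaussians), combined with an $\epsilon$-net on $\mathbb S^{n-1}$ whose log-cardinality is $\lesssim n$ and therefore absorbed by $m\gtrsim n$. Lipschitz continuity of each summand in $\hat u$ extends the pointwise estimate to all of $\mathbb S^{n-1}$. Uniformity in $\rho$ follows from the monotone structure of the lower bound $2\rho\,c(\beta) - O(\rho^{-3})$, so it suffices to verify the inequality at $\rho=3\sqrt{1+\beta}$ (with a short argument at $\rho\to\infty$, or a coarse $\rho$-grid, to handle fluctuations).

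The main obstacle is the uniform concentration of the leading sum $\frac{1}{m}\sum_j s_j^2/(s_j+2\beta+2\sqrt{\beta(\beta+s_j)})$ on the sphere: its summand is comparable to $s_j\sim \chi^2_1$, hence only sub-exponential rather than subgaussian, so a Bernstein (not Hoeffding) estimate with matching net/chaining is required to reach the optimal sample complexity $m\gtrsim n$. The technical lemmas collected in the appendix should supply precisely these concentration inequalities; the rest of the argument is then bookkeeping.
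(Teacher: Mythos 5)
Your decomposition is genuinely different from the paper's and is algebraically quite elegant. The paper proves the lemma by three crude bounds: a Bernstein lower bound on $\frac1m\sum (a_j\cdot\hat u)^2$, the observation that the third term of \eqref{partialrho} is pointwise dominated by the second (since $\beta\rho^2/\sqrt{\beta\rho^2+t_j^2}\le\sqrt{\beta\rho^2+t_j^2}$), and an empirical Cauchy--Schwarz on that second term; after Bernstein this yields
\[
\partial_\rho f \ge 2\rho\bigl((1-\delta_1)+2\beta\bigr) - 4(\beta+1+\delta_1)^{1/2}(\beta\rho^2+1+\delta_1)^{1/2},
\]
and the threshold $\rho\ge 3\sqrt{1+\beta}$ is checked by a one-line algebraic identity (at $\delta_1=0$ the comparison reduces to $36(1+2\beta)^2>16(9\beta(1+\beta)+1)$, i.e., $20>0$, with monotonicity of the lower bound in $\rho$ finishing the job). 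Your route instead merges the two negative terms into $2\sqrt{\beta+s_j}\,(2\beta\rho^2+t_j^2)/\sqrt{\beta\rho^2+t_j^2}$, then completes the square twice to isolate a manifestly non-negative leading piece $2\rho\,s_j^2/(s_j+2\beta+2\sqrt{\beta(\beta+s_j)})$ and a $\rho$-decaying remainder. Both identities are correct, the remainder bound $\min(|t_j|,\,t_j^4/(4\beta^{3/2}\rho^3))$ is correct, and the monotonicity observation that the empirical remainder is non-increasing in $\rho$ (so it suffices to check $\rho=3\sqrt{1+\beta}$) is a nice clean way to get uniformity in $\rho$. The concentration step (Bernstein plus net on $\mathbb S^{n-1}$) is the same machinery as the paper.

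Where you are too brisk is the claim that inserting the explicit expressions for $c(\beta)$ and the error constant ``verifies'' the threshold $3\sqrt{1+\beta}$ with a margin. That verification is genuinely delicate, unlike the paper's one-liner. Two concrete problems. First, the bound $t_j^4/(4\beta^{3/2}\rho^3)$ carries a $\beta^{-3/2}$ factor, so as $\beta\to0$ the ``small-$|t|$'' branch degenerates; you must fall back to the $|t_j|$ branch and exploit $c(\beta)\to1$ as $\beta\to0$, which is a different balance from the one you describe. Second, the ``exponentially small Gaussian tail on $\{|t_j|>\sqrt{\beta}\rho\}$'' is not small uniformly in $\beta$: at the threshold $\rho=3\sqrt{1+\beta}$ one has $\sqrt{\beta}\rho=3\sqrt{\beta(1+\beta)}$, which is $O(1)$ (or smaller) for moderate $\beta$, so the tail event has constant probability there and cannot be discarded. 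A further subtlety you should not paper over: when $\hat u$ is close to $\pm e_1$, $s_j$ and $t_j$ are strongly correlated, so the product in the remainder does not factor; Cauchy--Schwarz is the right tool, but the resulting constants (e.g. $\sqrt{\E t_j^8}=\sqrt{105}$) are large enough that a crude application does not land inside $3\sqrt{1+\beta}$, and a sharper pointwise bound on the ratio is needed. None of this is fatal — the inequality $6\sqrt{1+\beta}\,c(\beta) > \E[\text{remainder at }\rho=3\sqrt{1+\beta}]$ does appear to hold after a careful (case-split in $\beta$) computation — but it is work, not bookkeeping, and it is exactly the work the paper sidesteps with its blunter decomposition.
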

\begin{proof}
To prove this lemma, we need to lower bound the first term and upper bound the last two terms of $\partial_{\rho} f $.
For the first term, by using Bernstein's inequality, we have with high probability,
\begin{align} \label{eq:unibound}
\Bigl| \frac 1 m \sum_{j=1}^m  (a_j\cdot \hat u)^2
 - 1 \Bigr| \le \delta_1 \ll 1, \qquad\forall\, \hat u \in \mathbb S^{n-1}.
 \end{align}
It immediately  gives 
\begin{align*}
&\frac 1 m \sum_{j=1}^m \Bigl( 
2 \rho ( (a_j \cdot \hat u)^2 +2\beta) \Bigr)
\ge 2\rho (  (1-\delta_1) +2\beta).
\end{align*}
For the second term,  simple calculation leads to 
\begin{align*}
& \frac 2m \sum_{j=1}^m \Bigl( \sqrt{\beta+ (a_j\cdot \hat u)^2} 
\sqrt{\beta \rho^2 + (a_j\cdot e_1)^2} 
\Bigr)  \notag \\
\le &\; \frac 2m 
\Bigl( \sum_{j=1}^m (\beta+ (a_j\cdot \hat u)^2 ) \Bigr)^{\frac 12}
\cdot \Bigl( \sum_{j=1}^m (\beta \rho^2 +(a_j\cdot e_1)^2 ) \Bigr)^{\frac 12}
\notag \\
\le&\; 2 (\beta+1+\delta_1)^{\frac 12}
( \beta \rho^2+ 1+\delta_1)^{\frac 12}.
\end{align*}
Finally, it is easy to derive from \eqref{eq:unibound} that
\begin{align*}
&\frac 2m  \sum_{j=1}^m \Bigl(  
\sqrt{\beta+ (a_j \cdot \hat u)^2} \frac { \beta \rho^2}
{\sqrt{\beta \rho^2 + (a_j\cdot e_1)^2}} \Bigr) \notag \\
\le & \; \frac 2m \sum_{j=1}^m \Bigl( \sqrt{\beta+ (a_j\cdot \hat u)^2} 
\sqrt{\beta \rho^2 + (a_j\cdot e_1)^2} 
\Bigr) \notag \\
\le &\; 2 (\beta+1+\delta_1)^{\frac 12}
( \beta \rho^2+ 1+\delta_1)^{\frac 12}.
\end{align*}
Putting all above estimators into \eqref{partialrho} gives
\begin{eqnarray*}
\partial_{\rho} f 
&\ge & 2\rho ( (1-\delta_1) +2\beta)- 
4 (\beta+1+\delta_1)^{\frac 12}
( \beta \rho^2+ 1+\delta_1)^{\frac 12}  \\ 
&=& 2\cdot \frac { \rho^2 (1-(2+8\beta) \delta_1) -4(1+\beta) + (-8-4\beta) \delta_1+ (\rho^2-4) \delta_1^2 } { \rho ( (1-\delta_1) +2\beta)+
2(\beta+1+\delta_1)^{\frac 12}
( \beta \rho^2+ 1+\delta_1)^{\frac 12} }.
\end{eqnarray*}
Clearly if $\delta_1>0$ is sufficiently small and $\rho\ge 3\sqrt{1+\beta}$,
then $\partial_{\rho} f>0$. 
\end{proof}

\begin{lem}[The regime $0<\rho\le  \frac {\sqrt{\beta}}{4(1+\beta)} $ is OK] \label{Au28_e2}
For $m\gtrsim n$,  with high probability it holds that
\begin{align*}
\partial_{\rho} f <0,  \quad\forall\,  0<\rho \le \frac{\sqrt{\beta}}{4(1+\beta)}, \;\text{}\forall\, \hat u \in \mathbb S^{n-1}.
\end{align*}
\end{lem}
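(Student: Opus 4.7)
The plan is to show that for $\rho$ in the stated range, the negative contributions in \eqref{partialrho} dominate the positive one. Among the three terms inside the sum, the first is $O(\rho)$ and positive, while the second and third are negative. Since we only need $\partial_\rho f<0$, it suffices to bound the first term from above and the second term from below (in absolute value); the third (negative) term can be safely discarded from the lower bound.

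For the positive term, I would invoke \eqref{eq:unibound} directly to obtain, uniformly in $\hat u\in\mathbb S^{n-1}$,
\begin{align*}
\frac 1 m \sum_{j=1}^m 2\rho\bigl((a_j\cdot\hat u)^2+2\beta\bigr) \;\le\; 2\rho(1+2\beta+\delta_1).
\end{align*}
For the dominant negative term, the key observation is that the ugly product under the sum admits a clean pointwise lower bound: $\sqrt{\beta+(a_j\cdot\hat u)^2}\ge\sqrt\beta$ and $\sqrt{\beta\rho^2+(a_j\cdot e_1)^2}\ge |a_j\cdot e_1|$. This decouples $\hat u$ from $e_1$, so that
\begin{align*}
\frac 2 m \sum_{j=1}^m \sqrt{\beta+(a_j\cdot\hat u)^2}\,\sqrt{\beta\rho^2+(a_j\cdot e_1)^2}
\;\ge\; \frac{2\sqrt\beta}{m}\sum_{j=1}^m |a_j\cdot e_1|.
\end{align*}
The resulting empirical mean is a sum of i.i.d.\ subexponential variables with mean $\mathbb E|a_1\cdot e_1|=\sqrt{2/\pi}$, for which a one-point Bernstein estimate gives $\frac 1 m\sum_j|a_j\cdot e_1|\ge \sqrt{2/\pi}-\delta_1$ with high probability (no net argument is needed because $e_1$ is fixed).

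Combining these and dropping the nonpositive third term, with high probability and uniformly in $\hat u$,
\begin{align*}
\partial_\rho f \;\le\; 2\rho(1+2\beta+\delta_1)\;-\;2\sqrt\beta\bigl(\sqrt{2/\pi}-\delta_1\bigr).
\end{align*}
The threshold $\rho\le \frac{\sqrt\beta}{4(1+\beta)}$ is engineered precisely so that the right-hand side is negative: substituting it in, one checks
\begin{align*}
\frac{1+2\beta+\delta_1}{2(1+\beta)} \;<\; 2\bigl(\sqrt{2/\pi}-\delta_1\bigr),
\end{align*}
which holds for all $\beta>0$ once $\delta_1$ is fixed small enough, since $\sqrt{2/\pi}\approx 0.798$ yields $4(1+\beta)\sqrt{2/\pi} > 1+2\beta$ with strict slack.

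The only delicate step is the lower bound on the cross term, and the trick there is simply to avoid a Cauchy--Schwarz upper bound (which is the wrong direction) by pulling out the elementary inequalities above; after that the argument reduces to two scalar concentration inequalities. The uniformity in $\hat u$ is inherited from \eqref{eq:unibound}, which has already been established. No new probabilistic input beyond Bernstein's inequality is required.
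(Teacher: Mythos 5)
Your proposal is correct and follows essentially the same route as the paper's proof of Lemma \ref{Au28_e2}: discard the third (nonpositive) term, lower bound the cross term pointwise via $\sqrt{\beta+(a_j\cdot\hat u)^2}\,\sqrt{\beta\rho^2+(a_j\cdot e_1)^2}\ge \sqrt\beta\,|a_j\cdot e_1|$ to decouple $\hat u$ from $e_1$, and apply the two Bernstein-type concentration estimates for $\frac1m\sum_j(a_j\cdot\hat u)^2$ (uniformly) and $\frac1m\sum_j|a_j\cdot e_1|$ (fixed direction). The final threshold check is the same arithmetic the paper uses, noting $\sqrt{2/\pi}\approx 0.798$.
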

\begin{proof}
By Bernstein's inequality, we have with high probability, 
\begin{align*}
&\Bigl |\frac 1m \sum_{j=1}^m 
|a_j\cdot e_1| - \sqrt{\frac 2{\pi}} 
\Bigr| \le \delta_1\ll 1,   \\
& \Bigl| \frac 1 m \sum_{j=1}^m  (a_j\cdot \hat u)^2
- 1 \Bigr| \le \delta_1 \ll 1, \qquad\forall\, \hat u \in \mathbb S^{n-1}.
\end{align*}
Putting this into \eqref{partialrho} gives
\begin{align*}
\partial_{\rho} f
& \le \frac 1m \sum_{j=1}^m \Bigl( 2\rho ((a_j\cdot \hat u)^2 +2\beta )\Bigr) 
 - \frac 1m \sum_{j=1}^m  2\sqrt{\beta}  |a_j \cdot e_1| \notag \\
 & \le 2\rho (1+\delta_1+2\beta) - 2\sqrt{\beta}(\sqrt{\frac 2 {\pi} } -\delta_1).
 \end{align*}
 Since $\sqrt{\frac 2{\pi}} \approx 0.797885 $, the desired result clearly
 follows by taking $\delta_1$ sufficiently small. 
\end{proof}

The point $u=0$ needs to be treated with care since our loss function $f(u)$ is only
Lipschitz at this point. To this end, we define the one-sided directional
derivative of $f$ along a direction $\xi \in \mathbb S^{n-1}$ as
\begin{align} \label{Dxi0}
D_{\xi} f (0) = \lim_{t\to 0^{+} } \frac {f(t\xi)} t.
\end{align}
It is easy to check that 
\begin{align*}
D_{\xi} f(0) = - \frac 2m \sum_{j=1}^m \sqrt{\beta+(a_j\cdot \xi)^2} |a_j \cdot e_1|.
\end{align*}

\begin{lem}[The point $u=0 $ is OK] \label{Au28_e3}
For $m\gtrsim n$,  with high probability it holds that
\begin{align*}
D_{\xi} f(0) <-\sqrt{\beta},  \quad\forall\, \xi \in \mathbb S^{n-1}.
\end{align*}
\end{lem}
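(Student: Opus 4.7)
The task is to uniformly lower bound
$$h(\xi) := \frac{1}{m}\sum_{j=1}^m \sqrt{\beta+(a_j\cdot\xi)^2}\,|a_j\cdot e_1|,\qquad \xi\in\mathbb S^{n-1},$$
by a quantity strictly greater than $\sqrt{\beta}/2$, since $D_\xi f(0)=-2h(\xi)$. Pointwise the expectation is easy: using $\sqrt{\beta+t^2}\ge\sqrt{\beta}$, one has
$$\mathbb E\, h(\xi)\ge \sqrt{\beta}\,\mathbb E|a_j\cdot e_1|=\sqrt{2\beta/\pi}\approx 0.798\sqrt{\beta},$$
which leaves a comfortable margin above $\sqrt{\beta}/2$. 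Since $\sqrt{\beta+(a_j\cdot\xi)^2}\le\sqrt{\beta}+|a_j\cdot\xi|$, the summand is sub-exponential with norm bounded independently of $\xi$, and Bernstein's inequality yields
$$\mathbb P\bigl(|h(\xi)-\mathbb E\, h(\xi)|>\delta\bigr)\le 2e^{-c\delta^2 m}$$
for any small constant $\delta>0$.

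To upgrade to a uniform estimate over the sphere, I would use a standard $\epsilon$-net argument. The key input is a favourable Lipschitz bound: the inequality $|\sqrt{\beta+s^2}-\sqrt{\beta+t^2}|\le |s-t|$ gives
$$|h(\xi_1)-h(\xi_2)|\le \|\xi_1-\xi_2\|\cdot\sup_{w\in\mathbb S^{n-1}}\frac{1}{m}\sum_{j=1}^m |a_j\cdot w|\,|a_j\cdot e_1|,$$
and Cauchy--Schwarz combined with \eqref{eq:unibound} bounds the supremum by $1+\delta_1$ with high probability. Thus $h$ is Lipschitz on $\mathbb S^{n-1}$ with a universal constant. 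Fix a small $\epsilon>0$ and choose a net $\mathcal N\subset\mathbb S^{n-1}$ with $|\mathcal N|\le (3/\epsilon)^n$. A union bound over $\mathcal N$ together with $m\ge Cn$ ensures $h(\xi_0)\ge\sqrt{2\beta/\pi}-\delta$ for every $\xi_0\in\mathcal N$ with probability at least $1-e^{-cm}$; the Lipschitz estimate then extends this bound to all $\xi\in\mathbb S^{n-1}$ at a cost of at most $(1+\delta_1)\epsilon$. Choosing $\delta$ and $\epsilon$ small enough that $\sqrt{2\beta/\pi}-\delta-(1+\delta_1)\epsilon>\sqrt{\beta}/2$ then yields $D_\xi f(0)<-\sqrt{\beta}$ uniformly in $\xi$.

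The main obstacle is to ensure that the net argument survives with only $m\gtrsim n$ samples, rather than $m\gtrsim n\log n$. This is handled by the observation that the Lipschitz constant of $h$ on the sphere is $O(1)$ rather than $O(\sqrt{n})$ with high probability, thanks to Cauchy--Schwarz together with the quadratic uniform estimate \eqref{eq:unibound} already established in the previous lemmas. With an $O(1)$ Lipschitz constant a constant-radius $\epsilon$-net has cardinality $e^{O(n)}$, and the Bernstein tail $e^{-c\delta^2 m}$ absorbs this after a union bound whenever $m\gtrsim n$.
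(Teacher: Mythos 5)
Your proof is correct but noticeably more elaborate than the one in the paper, and the gap comes from where you apply the key elementary bound. The inequality $\sqrt{\beta+(a_j\cdot\xi)^2}\ge\sqrt{\beta}$, which you invoke only to lower bound the expectation $\mathbb{E}\,h(\xi)$, can be applied deterministically inside the empirical average itself, giving
\begin{align*}
h(\xi)\;\ge\;\sqrt{\beta}\cdot\frac1m\sum_{j=1}^m|a_j\cdot e_1|
\end{align*}
for every $\xi\in\mathbb S^{n-1}$ and every realization of the $a_j$. The right-hand side carries no $\xi$-dependence whatsoever, so uniformity over the sphere is automatic: a single Bernstein-type deviation bound on the scalar $\frac1m\sum_j|a_j\cdot e_1|$, which concentrates near $\sqrt{2/\pi}\approx 0.798>1/2$, finishes the argument without any net, Lipschitz estimate, or union bound. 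That is exactly the paper's two-line proof. Your $\epsilon$-net route does work --- in particular the $O(1)$ Lipschitz constant via Cauchy--Schwarz together with the uniform quadratic estimate \eqref{eq:unibound} is the right way to keep the sample requirement at $m\gtrsim n$ rather than $m\gtrsim n\log n$ --- but it solves a uniformization problem that vanishes entirely once the pointwise lower bound is applied before, rather than after, taking the sample average.
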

\begin{proof}
Clearly with high probability and uniformly in $\xi \in \mathbb S^{n-1}$,
\begin{align*}
D_{\xi} f(0) \le -\frac 2m \sum_{j=1}^m \sqrt{\beta} |a_j\cdot e_1|
<- 2\sqrt{\beta} (\sqrt{\frac 2 {\pi} } -0.01) <-\sqrt{\beta}.
\end{align*}
\end{proof}

In summary, we have the following theorem.

\begin{thm}[Non-vanishing gradient when $\|u\|_2\le \frac {\sqrt{\beta}}{4(1+\beta)}$
or $\|u\|_2\ge 3\sqrt{1+\beta}$] \label{thmAu28_1}
For $m\gtrsim n$, with high probability the following hold:
\begin{enumerate}
\item We have 
\begin{align*}
&\partial_{\rho} f <0, \qquad\forall\, 0<\rho\le \frac {\sqrt{\beta}}{4(1+\beta)}, 
\quad \forall\, \hat u \in \mathbb S^{n-1};\\
&\partial_{\rho} f>0, \qquad\forall\, \rho\ge 3\sqrt{1+\beta},\quad\forall\, \hat u
\in \mathbb S^{n-1}.
\end{align*}
\item For $u=0$,  we have
\begin{align*}
D_{\xi} f(0) <-\sqrt{\beta}, \qquad \forall\, \xi \in \mathbb S^{n-1},
\end{align*}
where $D_{\xi} f(0)$ was defined in \eqref{Dxi0}.
\end{enumerate}
\end{thm}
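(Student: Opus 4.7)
The plan is to derive Theorem \ref{thmAu28_1} by assembling the three lemmas that precede it (Lemmas \ref{Au28_e1}, \ref{Au28_e2}, and \ref{Au28_e3}) and then taking a union bound over their respective exceptional events. The first bullet of the theorem splits into two $\rho$-regimes: the small-$\rho$ regime $0<\rho\le \frac{\sqrt{\beta}}{4(1+\beta)}$ is exactly Lemma \ref{Au28_e2}, while the large-$\rho$ regime $\rho\ge 3\sqrt{1+\beta}$ is exactly Lemma \ref{Au28_e1}. The second bullet is exactly Lemma \ref{Au28_e3}. Since each of these events holds with probability at least $1-Ce^{-cm}$ whenever $m\gtrsim n$, their intersection still has the same qualitative high-probability guarantee, and the theorem follows.

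To make this scheme run cleanly, I would first isolate the two uniform concentration statements that each of the three lemmas relies on, namely
\begin{align*}
\sup_{\hat u\in\mathbb S^{n-1}} \Bigl|\frac{1}{m}\sum_{j=1}^m (a_j\cdot \hat u)^2 - 1\Bigr|\le \delta_1, \qquad \Bigl|\frac{1}{m}\sum_{j=1}^m |a_j\cdot e_1| - \sqrt{\tfrac{2}{\pi}}\Bigr|\le \delta_1,
\end{align*}
both valid with high probability when $m\gtrsim n$. With these in hand, Lemma \ref{Au28_e2} reduces to a direct sign computation in \eqref{partialrho}, where the positive term is controlled by $2\rho(1+\delta_1+2\beta)$ and the negative term dominates because $\sqrt{2/\pi}>0.79$; Lemma \ref{Au28_e1} follows by pairing \eqref{partialrho} with a Cauchy--Schwarz bound on the cross term $\frac{1}{m}\sum_j \sqrt{\beta+(a_j\cdot \hat u)^2}\sqrt{\beta\rho^2+(a_j\cdot e_1)^2}$, which exposes a numerator quadratic in $\rho$ that is positive once $\rho\ge 3\sqrt{1+\beta}$; and Lemma \ref{Au28_e3} simply applies the one-dimensional bound to the closed form $D_\xi f(0) = -\frac{2}{m}\sum_j \sqrt{\beta+(a_j\cdot \xi)^2}\,|a_j\cdot e_1|$, combined with the trivial pointwise lower bound $\sqrt{\beta+(a_j\cdot \xi)^2}\ge \sqrt{\beta}$.

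The main obstacle is really the first uniform Bernstein bound above, since it asserts control of a quadratic form uniformly over the $(n-1)$-sphere; this is precisely where the hypothesis $m\gtrsim n$ enters, and it is handled by a standard $\epsilon$-net argument on $\mathbb S^{n-1}$ combined with sub-exponential tails for the $\chi^2$-type variables $(a_j\cdot \hat u)^2$. A secondary subtlety is that $f$ is not differentiable at $u=0$, so the usual gradient must be replaced by the one-sided directional derivative $D_\xi f(0)$ from \eqref{Dxi0}; fortunately this derivative admits a clean closed form, and Lemma \ref{Au28_e3} handles it directly. Once these probabilistic inputs are in place, the theorem is immediate by the union bound described above.
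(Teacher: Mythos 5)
Your proposal is correct and follows exactly the paper's route: the theorem is proved by invoking Lemmas \ref{Au28_e1}, \ref{Au28_e2}, and \ref{Au28_e3} and intersecting their high-probability events via a union bound. Your supplementary sketch of how each lemma is established (Cauchy--Schwarz for the large-$\rho$ regime, the pointwise bound $\sqrt{\beta+(a_j\cdot\xi)^2}\ge\sqrt{\beta}$ at the origin, and the uniform $\chi^2$-type concentration over $\mathbb S^{n-1}$ as the source of the $m\gtrsim n$ requirement) accurately reflects the arguments given in the paper.
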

\begin{proof}
This follows from Lemmas \ref{Au28_e1}, \ref{Au28_e2} and \ref{Au28_e3}.
\end{proof}

\subsection{Analysis of the regime $\rho \sim 1$, $||\hat u \cdot e_1|-1|\ge
\epsilon_0>0$} 

$\;$

\vspace{0.2cm}
In this section we consider the regime $0< c_1 < \rho < c_2<\infty$, $ 
|\hat u \cdot e_1| < 1-\epsilon_0$, where $0<\epsilon_0\ll 1$. The choice of the 
constants $c_1$ and $c_2$ can be quite flexible. For example, we can
take $c_1= \frac{\sqrt{\beta}} {4(1+\beta)}$, $c_2= 3(1+\beta)$. For this reason
we write $\rho\sim 1$. 
To simplify the discussion, we need to employ a new coordinate system.  Write
\begin{align*}
\hat u &= (\hat u \cdot e_1) e_1 +\tilde u, \notag \\
 &= t e_1+ \sqrt{1-t^2} e^{\perp}, 
 \end{align*}
 where $e^{\perp} \in \mathbb S^{n-1}$ satisfies $e^{\perp} \cdot e_1=0$. 
Clearly in the regime $\rho\sim 1$, $|t|<1$, we have a smooth representation
\begin{align*}
u = \rho \hat u = \rho (t e_1+ \sqrt{1-t^2} e^{\perp} ) =: \psi( \rho, t, e^{\perp}).
\end{align*}
The following pedestrian proposition shows that the landscape of a smooth function
 undergoes mild changes
under smooth change of variables. 
\begin{prop}[Criteria for no local minimum]
\label{Au29e1}
In the regime $\rho \sim 1$, $|t|<1$, consider
\begin{align*}
f(u) = f ( \psi(\rho,t,e^{\perp} ) ) = : g(\rho, t, e^{\perp}).
\end{align*}
Then the following hold:
\begin{enumerate}
\item If at some point $|\partial_t g | >0$, then $\| \nabla f \|_2 >0$ at the corresponding point.
\item If at some point $\partial_{tt} g<0$, then either $\nabla f \ne 0$ at the corresponding point,
or $\nabla f=0$ and $f$ has a negative curvature at the corresponding 
point (i.e. a strict saddle). 
\end{enumerate}
\end{prop}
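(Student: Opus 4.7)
The plan is to treat this as a routine application of the chain rule. The key preliminary observation is that throughout the regime $\rho\sim 1$ and $|t|<1$ both $\psi$ and $f\circ\psi$ are smooth: the map $\psi(\rho,t,e^{\perp})=\rho(te_1+\sqrt{1-t^2}\,e^{\perp})$ is $C^{\infty}$ there because $\sqrt{1-t^2}$ is bounded away from $0$, and $f$ itself is smooth away from $u=0$ (its only non-differentiability point), while here $u=\psi\ne 0$ since $\rho\ne 0$. I will also record the explicit formula $\partial_t\psi=\rho(e_1-\tfrac{t}{\sqrt{1-t^2}}\,e^{\perp})$, whose $e_1$-component equals $\rho>0$, so in particular $\partial_t\psi\neq 0$ everywhere in the regime.

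For part (1), I will differentiate once using the chain rule to obtain $\partial_t g=\nabla f(u)\cdot \partial_t\psi$. The hypothesis $|\partial_t g|>0$ then forces $\nabla f(u)\neq 0$, hence $\|\nabla f\|_2>0$.

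For part (2), I will apply the chain rule a second time to obtain
\[
\partial_{tt} g \;=\; (\partial_t\psi)^{\top}\nabla^2 f(u)\,(\partial_t\psi) \;+\; \nabla f(u)\cdot \partial_{tt}\psi.
\]
If $\nabla f(u)\neq 0$ the first alternative in the proposition holds and we are done. Otherwise $\nabla f(u)=0$, the second summand vanishes, and the assumption $\partial_{tt}g<0$ collapses to $(\partial_t\psi)^{\top}\nabla^2 f(u)(\partial_t\psi)<0$. Since $\partial_t\psi\neq 0$, this exhibits a direction of strict negative curvature for $f$ at $u$, so $u$ is a strict saddle with a neighborhood in which $f$ has negative directional curvature along $\partial_t\psi$.

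I do not anticipate any serious obstacle: both claims reduce to the chain rule together with the explicit non-vanishing of $\partial_t\psi$, and the only point that deserves an explicit mention is the smoothness of $f$ at $u=\psi\neq 0$ which rules out any pathology from the square-root singularity present at the origin. The entire argument should occupy just a few lines.
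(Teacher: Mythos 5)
Your proposal is correct and follows essentially the same route as the paper: both prove the claim by writing down the chain-rule identities $\partial_t g = \nabla f\cdot\partial_t\psi$ and $\partial_{tt}g = \nabla f\cdot\partial_{tt}\psi + (\partial_t\psi)^{\top}\nabla^2 f\,\partial_t\psi$ and reading off the conclusions. Your version merely spells out the supporting details the paper leaves implicit, namely the smoothness of $f$ away from the origin and the non-vanishing of $\partial_t\psi$ in the stated regime.
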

\begin{proof}
These easily follow from the formulae:
\begin{align*}
&\partial_t g = \nabla f \cdot \partial_t \psi, \\
&\partial_{tt} g= \nabla f \cdot \partial_{tt} \psi+ (\partial_t \psi)^T \nabla^2 f \partial_t \psi,
\end{align*}
where $\nabla^2 f = (\partial_{ij} f)$ denotes the Hessian matrix of $f$. 
\end{proof}
Proposition \ref{Au29e1} allows us to simplify the computation greatly by looking only
at the derivatives $\partial_t $ and $\partial_{tt}$.  We shall use these in the regime
$|t| < 1-\epsilon_0$ where $0<\epsilon_0\ll 1$. Now observe that
 \begin{align*}
 \frac 12\mathbb E f = \frac 12 (1+2\beta) \rho^2 +\frac 12- \rho
 \mathbb E  \sqrt{ \beta + (a\cdot \hat u)^2} \sqrt{\beta \rho^2 + (a\cdot e_1)^2},
 \end{align*}
 where $a\sim \mathcal N (0, \operatorname{I_n})$. 
Denote $X_1= a\cdot e_1$ and $Y_1= a\cdot e^{\perp}$ so that
$a\cdot \hat u = t X_1 +\sqrt{1-t^2} Y_1=: X_t$. 
%Note that
%\begin{align*}
%\frac d {dt} X_t = X_1 - \frac {t}{\sqrt{1-t^2}} Y_1, \quad
%\frac {d^2}{dt^2} X_t= - (1-t^2)^{-\frac 32} Y_1.
%\end{align*}
We focus on the term 
\begin{align*}
 &\mathbb E  \sqrt{ \beta + (a\cdot \hat u)^2} \sqrt{\beta \rho^2 + (a\cdot e_1)^2} \\
=&\; \mathbb E \sqrt{\beta+X_t^2} \sqrt{\beta \rho^2+X_1^2} =:\, h_{\infty}(\rho, t). 
\end{align*}
\begin{lem}[The limiting profile] \label{lemAu29_2}
For any $0<\eta_0\ll 1$, the following hold:

\begin{enumerate}
\item $ \sup_{|t| \le 1-\eta_0, \rho \sim 1} ( |\partial_t h_{\infty}(\rho,t)| + |\partial_{tt} h_{\infty} (\rho, t) | + |\partial_{ttt} h_{\infty}(\rho, t) |) \lesssim 1$. \vspace{1em}
\item $|\partial_t h_{\infty}(\rho, t)| \gtrsim |t|$ for $0<|t|<1$, $\rho\sim 1$. \vspace{1em}
\item $\partial_{tt} h_{\infty}(\rho, t)\gtrsim 1$ for $|t| \ll 1$, $\rho\sim 1$. 
\end{enumerate}
\end{lem}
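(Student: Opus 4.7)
The plan is to encode all three claims in a single Gaussian correlation identity. With $X_1 = a\cdot e_1$ and $Y_1 = a\cdot e^{\perp}$ independent standard normals, the pair $(X_1, X_t)$ with $X_t = tX_1 + \sqrt{1-t^2}\,Y_1$ is a bivariate standard Gaussian of correlation $t$, and introducing $G(x) = \sqrt{\beta\rho^2+x^2}$, $H(x) = \sqrt{\beta+x^2}$ we have $h_\infty(\rho, t) = \mathbb{E}[G(X_1) H(X_t)]$. The basic identity I will use is
\begin{equation*}
\frac{d}{dt}\mathbb{E}[\phi(X_1)\psi(X_t)] = \mathbb{E}[\phi'(X_1)\psi'(X_t)],
\end{equation*}
valid for sufficiently tame $\phi, \psi$. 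This is derived by differentiating under the expectation using the representation $X_t = tX_1 + \sqrt{1-t^2}\,Y_1$ and applying Stein's lemma to both $X_1$ and $Y_1$; the formally singular prefactor $t/\sqrt{1-t^2}$ cancels after integration by parts.

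Iterating the identity gives
\begin{equation*}
\partial_t h_\infty = \mathbb{E}[G'(X_1) H'(X_t)], \qquad \partial_{tt} h_\infty = \mathbb{E}[G''(X_1) H''(X_t)], \qquad \partial_{ttt} h_\infty = \mathbb{E}[G'''(X_1) H'''(X_t)].
\end{equation*}
Since $|G'|\le 1$, and $G''(x) = \beta\rho^2(\beta\rho^2+x^2)^{-3/2}$, $G'''(x) = -3\beta\rho^2 x(\beta\rho^2+x^2)^{-5/2}$ (with analogous formulas for $H$) are all uniformly bounded in $x$ once $\rho$ is bounded away from $0$, claim $(1)$ follows immediately. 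In fact this argument produces the bound for all $|t|\le 1$, so $\eta_0$ plays no essential role here.

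For claim $(3)$, observe that $G''$ and $H''$ are strictly positive everywhere and are bounded below by positive constants on any fixed interval $[-R, R]$. Choosing $R$ so that $\mathbb{P}(|X_1|>R)\le 1/4$ gives, via a union bound, $\mathbb{P}(|X_1|\le R,\, |X_t|\le R)\ge 1/2$ uniformly in $t$, hence $\partial_{tt} h_\infty \gtrsim 1$ uniformly in $|t|\le 1$ and $\rho\sim 1$. For claim $(2)$, the oddness of $G', H'$ and the independence of $X_1, Y_1$ give $\partial_t h_\infty(\rho, 0) = \mathbb{E}[G'(X_1)]\,\mathbb{E}[H'(Y_1)] = 0$, so
\begin{equation*}
\partial_t h_\infty(\rho, t) = \int_0^t \partial_{tt} h_\infty(\rho, s)\,ds,
\end{equation*}
and the uniform lower bound from claim $(3)$ immediately delivers $|\partial_t h_\infty(\rho, t)|\gtrsim |t|$ throughout $0<|t|<1$.

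The main technical point is the rigorous justification of the Gaussian correlation identity and its iterates. This reduces to Stein's lemma applied to $G', H', G'', H'', G''', H'''$, each of which is bounded and smooth (given $\rho$ bounded away from $0$) and has at most linear growth at infinity, so dominated convergence and Fubini cause no trouble. Everything else is elementary pointwise bookkeeping on explicit functions.
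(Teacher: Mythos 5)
Your proposal is correct, and it takes a genuinely different and cleaner route than the paper. The paper works in the rotated coordinates and uses the vector field $x\partial_y-y\partial_x$ (which annihilates the Gaussian weight $\rho_1$) to move derivatives from $g(tx+\sqrt{1-t^2}y)$ onto $k(x)$, then argues term by term; the upper bound on $\partial_t h_\infty$ is done by $L^\infty$ estimates on $g'$, the lower bound $|\partial_t h_\infty|\gtrsim |t|$ by restricting the integral to a box where $x,y\sim 10$ and the integrand has a sign, and the positivity of $\partial_{tt} h_\infty$ by an explicit expansion that they only carry out for $|t|\ll 1$. Your approach instead applies the Gaussian interpolation identity (Price's theorem) $\frac{d}{dt}\mathbb E[\phi(X_1)\psi(X_t)]=\mathbb E[\phi'(X_1)\psi'(X_t)]$ and iterates it, which collapses all three claims into the single statement $\partial_t^k h_\infty=\mathbb E[G^{(k)}(X_1)H^{(k)}(X_t)]$ for $k=1,2,3$. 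Once this is in hand, claim (1) is immediate from the uniform boundedness of $G',G'',G''',H',H'',H'''$ (using $\rho$ bounded away from $0$); claim (3) is immediate because $G''(x)=\beta\rho^2(\beta\rho^2+x^2)^{-3/2}>0$ and $H''(x)=\beta(\beta+x^2)^{-3/2}>0$ and your truncation to $\{|X_1|\le R,\,|X_t|\le R\}$ with a union bound is the right way to make the expectation $\gtrsim 1$ uniformly in $t$; and claim (2) follows from $\partial_t h_\infty(\rho,0)=\mathbb E[G'(X_1)]\,\mathbb E[H'(Y_1)]=0$ (both odd) plus integrating (3). What your version buys: the iteration is structural rather than computational, the upper bounds on $\partial_{tt}$ and $\partial_{ttt}$ (which the paper asserts but does not really spell out) come for free, and your lower bound on $\partial_{tt}h_\infty$ holds on all of $[-1,1]$ rather than just $|t|\ll 1$, so $\eta_0$ drops out entirely. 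Your one-line justification of differentiation under the expectation via Stein's lemma and dominated convergence is fine for these specific functions since each $G^{(k)},H^{(k)}$ for $k\le 3$ is bounded and Lipschitz on $\mathbb R$ once $\rho$ is bounded below.
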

\begin{proof}
See appendix. 
\end{proof}

\begin{thm}[The regime $\|u\|_2\sim 1$, $||\hat u \cdot e_1|-1|>\eta_0$ is fine]
\label{thmA31_1}
For any given $0<\eta_0 \ll 1$ and $0<c_1<c_2<\infty$, if $m\gtrsim n$,
then the following hold with high probability: In the regime $c_1< \rho =\|u\|_2< c_2$, $| |\hat u \cdot e_1|-1 |
> \eta_0$, there are only two possibilities:
\begin{enumerate}
\item  $\| \nabla f (u)\|_2 >0$;
\item $\nabla f(u)=0$, and $f$ has a negative directional curvature at this point.
\end{enumerate}
\end{thm}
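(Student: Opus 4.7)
The strategy is to apply Proposition \ref{Au29e1} to the reparametrized empirical loss $g(\rho,t,e^\perp)=f(\psi(\rho,t,e^\perp))$ by transferring the sign/size information about $\partial_t h_\infty$ and $\partial_{tt} h_\infty$ provided by Lemma \ref{lemAu29_2} to the empirical version via uniform concentration. Using the identity
\begin{equation*}
\tfrac12 \mathbb E f \;=\; \tfrac12(1+2\beta)\rho^2 + \tfrac12 -\rho\, h_\infty(\rho,t),
\end{equation*}
the expected derivatives in $t$ satisfy $\partial_t \mathbb E(\tfrac12 f)=-\rho\,\partial_t h_\infty$ and $\partial_{tt}\mathbb E(\tfrac12 f)=-\rho\,\partial_{tt}h_\infty$. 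So Lemma \ref{lemAu29_2} already tells me exactly what I want in expectation: a strictly negative second derivative in $t$ for $|t|$ small and a strictly nonzero first derivative in $t$ for $|t|$ bounded away from $0$.

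The plan is to split the admissible region $\{c_1\le\rho\le c_2,\ |t|\le 1-\eta_0\}$ at some fixed threshold $\eta_1\ll 1$. First, in the inner regime $|t|\le\eta_1$, Lemma \ref{lemAu29_2}(3) gives $\partial_{tt}\mathbb E(\tfrac12 f)\le -c\rho<0$; once I show empirical $\partial_{tt} g$ concentrates, I get $\partial_{tt} g<0$ uniformly, and Proposition \ref{Au29e1}(2) yields either $\nabla f\ne 0$ or a strict saddle. Second, in the annulus $\eta_1<|t|\le 1-\eta_0$, Lemma \ref{lemAu29_2}(2) gives $|\partial_t \mathbb E(\tfrac12 f)|\ge c\rho|t|\ge c\,c_1\eta_1>0$; after empirical concentration, $|\partial_t g|>0$, and Proposition \ref{Au29e1}(1) forces $\|\nabla f\|_2>0$.

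The main technical step is therefore the uniform concentration of $\partial_t g(\rho,t,e^\perp)$ and $\partial_{tt} g(\rho,t,e^\perp)$ around their expectations, simultaneously for all $(\rho,t,e^\perp)\in[c_1,c_2]\times[-(1-\eta_0),1-\eta_0]\times(\mathbb S^{n-1}\cap e_1^\perp)$. To carry this out I would: (i) differentiate the explicit summand in \eqref{model2e1}, noting that in this regime $\beta+X_t^2\ge\beta$ and $\beta\rho^2+X_1^2\ge \beta c_1^2$ are bounded below, so the summands and their $(\rho,t)$-derivatives are smooth polynomials-times-reciprocals of bounded square roots of quadratics in $(X_1,Y_1)=(a_j\cdot e_1,a_j\cdot e^\perp)$, hence sub-exponential random variables with controlled Orlicz norm; (ii) apply Bernstein's inequality pointwise; (iii) combine with a standard $\varepsilon$-net argument on the $n$-dimensional parameter manifold (the only nontrivial direction is $e^\perp\in\mathbb S^{n-2}$, which needs only an $n$-dependent covering number $(C/\varepsilon)^{n-1}$), using the bound on $\partial_{ttt} h_\infty$ (and analogous uniform bounds on mixed derivatives in $\rho$ and $e^\perp$, which follow by the same reasoning as Lemma \ref{lemAu29_2}(1)) to establish the Lipschitz constants required to transfer pointwise to uniform estimates. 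For $m\gtrsim n$ the union bound absorbs the covering factor with room to spare, giving the high-probability event on which the expected sign information passes to the empirical loss.

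The only real obstacle I anticipate is making the net argument robust uniformly in $e^\perp$, because the underlying random quantity depends on $e^\perp$ only through the scalar $Y_1=a_j\cdot e^\perp$, so the Lipschitz constant in $e^\perp$ of the summand grows like $|a_j|$, producing sub-Gaussian tails for the Lipschitz bound. One then truncates on the event $\{\max_j\|a_j\|_\infty\lesssim \sqrt{\log m}\}$ (which holds with high probability) to get a deterministic Lipschitz constant of polylogarithmic size, and pays at most a polylog cost in the covering argument, still comfortably affordable at sampling rate $m\gtrsim n$. Everything else is routine calculus on the explicit formulas, and the conclusion of Theorem \ref{thmA31_1} then follows directly from Proposition \ref{Au29e1}.
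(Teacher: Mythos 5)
Your high-level plan is the same as the paper's: pass to the $(\rho,t,e^\perp)$ coordinates, use Proposition \ref{Au29e1}, split at a small threshold $\eta_1$ in $t$, use $\partial_{tt}h_\infty\gtrsim 1$ near $t=0$ and $|\partial_t h_\infty|\gtrsim|t|$ in the annulus (Lemma \ref{lemAu29_2}), and transfer these signs to the empirical $\partial_t g$, $\partial_{tt}g$ via concentration. The gap is entirely in how you propose to get the concentration.

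Two concrete problems. First, the summands of $\partial_{tt}g$ are \emph{not} sub-exponential. After differentiating twice in $t$, the empirical sum acquires a term of the shape $g''(Z_j)\bigl(\tfrac{d}{dt}Z_j\bigr)^2\sqrt{\beta\rho^2+X_j^2}$, where $g''$ is bounded (no damping when $Z_j$ is small) while $\bigl(\tfrac{d}{dt}Z_j\bigr)^2\sqrt{\beta\rho^2+X_j^2}$ is a degree-three polynomial in the Gaussians $(X_j,Y_j)$. Its tail is of Weibull type $\exp(-ct^{2/3})$, which is strictly heavier than sub-exponential, so a pointwise Bernstein bound does not deliver $1-e^{-cm}$. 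The paper resolves this in Lemma \ref{A30_3} by a decomposition: the dangerous piece ($H_3$, the part supported where $Z_j$ is small) is nonnegative, so it can simply be dropped to obtain the \emph{one-sided} bound $\partial_{tt}g\le\mathbb E\,\partial_{tt}g+2\epsilon$ with high probability; the two-sided bound is only achievable with probability $1-O(m^{-2})$ and is not used in Theorem \ref{thmA31_1}. Since you only need the upper bound on $\partial_{tt}g$ to conclude, your argument can be salvaged, but the claim of ``uniform (two-sided) concentration of $\partial_{tt}g$'' via plain Bernstein does not hold at the stated probability level, and you need to isolate and discard the heavy-tailed nonnegative piece explicitly.

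Second, the Lipschitz/net step in $e^\perp$ does not work via truncation as you describe. The perturbation in $e^\perp$ enters only through $a_j\cdot e^\perp$, and $|a_j\cdot(e^\perp-\tilde e^\perp)|$ is bounded by $\|a_j\|_2\|e^\perp-\tilde e^\perp\|_2$; after truncating $\max_j\|a_j\|_\infty\lesssim\sqrt{\log m}$ the Lipschitz constant is still $\|a_j\|_2\lesssim\sqrt{n\log m}$, not polylogarithmic. A net spacing of $\delta\sim\epsilon/\sqrt{n\log m}$ then gives a covering of size $\exp(Cn\log n)$, which destroys the $m\gtrsim n$ threshold. The paper avoids this entirely (see the proof of Lemma \ref{A30_1}, step 2): one never bounds $|a_j\cdot(e^\perp-\tilde e^\perp)|$ deterministically; one instead applies an AM--GM split $L_j|a_j\cdot v|\le\tfrac{1}{2\delta}|a_j\cdot v|^2+\tfrac{\delta}{2}L_j^2$ and uses the fact that $\tfrac1m\sum_j|a_j\cdot v|^2\lesssim\|v\|_2^2$ holds uniformly in $v$ with high probability at $m\gtrsim n$. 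This yields an $O(\delta)$ Lipschitz increment with a fixed net of size $\exp(Cn)$ and preserves the optimal sampling threshold. Your proposal should replace the truncation-plus-deterministic-Lipschitz argument by this empirical-$L^2$ AM--GM device, and replace the two-sided claim for $\partial_{tt}g$ by the one-sided bound obtained from the decomposition that discards the small-$Z_j$ nonnegative part.
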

\begin{proof}
Denote
\begin{align*}
g(\rho, t, e^{\perp})
&= 2\beta \rho^2 +\rho^2
\frac 1m \sum_{j=1}^m (a_j\cdot \hat u)^2 - 2\rho \frac 1m \sum_{j=1}^m
\sqrt{\beta +(a_j\cdot \hat u)^2}
\cdot \sqrt{ \beta \rho^2 +X_j^2} \notag \\
&=:2\beta \rho^2 +\rho^2 h_0 (\rho, t,e^{\perp})- 2\rho h(\rho, t, e^{\perp}),
\end{align*}
where $X_j=a_j\cdot e_1$, and
\begin{align*}
a_j \cdot \hat u = t X_j +\sqrt{1-t^2} Y_j, \qquad Y_j =a_j\cdot e^{\perp}.
\end{align*}
Clearly
\begin{align*}
&\partial_t g = \rho^2 \partial_t h_0 -2\rho \partial_t h; \\
&\partial_{tt}g
= \rho^2 \partial_{tt} h_0 -2 \rho \partial_{tt} h.
\end{align*}
Observe that
\begin{align*}
h_0 & = \frac 1m \sum_{j=1}^m (tX_j +\sqrt{1-t^2} Y_j)^2 \notag \\
& = t^2 \frac 1m \sum_{j=1}^m X_j^2
+ 2t\sqrt{1-t^2} \frac 1m
\sum_{j=1}^m X_j Y_j + (1-t^2) \frac 1m \sum_{j=1}^m Y_j^2.
\end{align*}
Clearly then for any small $\epsilon>0$ and $m\gtrsim n$, it holds with high
probability that
\begin{align*}
|\partial_t h_0 - \mathbb E \partial_t h_0|
+|\partial_{tt} h_0 - \mathbb E \partial_{tt} h_0|
\le \epsilon, \qquad\forall\, |t| \le 1-\epsilon_0, e^{\perp}\cdot e_1=0,
e^{\perp} \in \mathbb S^{n-1}.
\end{align*}
Note that we actually have $\mathbb E \partial_t h_0=0$ and $\mathbb E \partial_{tt} h_0=0$.
By Lemma \ref{A30_2} and \ref{A30_3},  
for any small $\epsilon>0$ and $m\gtrsim n$, it also holds with high
probability that
\begin{align*}
& |\partial_t h - \mathbb E \partial_t h| 
\le \epsilon, \qquad\forall\, |t| \le 1-\epsilon_0, e^{\perp}\cdot e_1=0,
e^{\perp} \in \mathbb S^{n-1}, \forall\, c_1\le \rho \le c_2;\\
&\partial_{tt} h \ge \mathbb E \partial_{tt} h
-\epsilon, \qquad\forall\, |t| \le 1-\epsilon_0, e^{\perp}\cdot e_1=0,
e^{\perp} \in \mathbb S^{n-1}, \forall\, c_1\le \rho \le c_2.
\end{align*}
We then obtain for small $\epsilon>0$, if $m\gtrsim n$, it holds 
with high probability  that
\begin{align*}
&|\partial_t g -\mathbb E \partial_t g | \le 2 \epsilon; \\
& \partial_{tt} g \le 
\mathbb E \partial_{tt} g +2 \epsilon.
\end{align*}
Clearly
\begin{align*}
&\mathbb E \partial_t g = -2 \rho  \partial_t h_{\infty}(\rho, t) ;\\
&\mathbb E \partial_{tt} g = -2 \rho  \partial_{tt} h_{\infty}(\rho, t).
\end{align*}
By Lemma \ref{lemAu29_2}, we can take $t_0\ll 1$ such that
\begin{align*}
&\partial_{tt} h_{\infty} (\rho, t) \ge \epsilon_1>0, \qquad\forall\, |t|\le t_0, 
c_1\le \rho \le c_2; \\
&|\partial_t h_{\infty} (\rho, t)| \ge \epsilon_2>0, \qquad\forall\, t_0\le |t|\le 1-\epsilon_0,
c_1\le \rho\le c_2.
\end{align*}
By taking $\epsilon>0$ sufficiently small, we can then guarantee that
\begin{align*}
& |\partial_t g| >\epsilon_3>0, 
\qquad\forall\, |t|\le t_0, 
c_1\le \rho \le c_2; \\
&\partial_{tt} g  \le -\epsilon_4<0, \qquad\forall\, t_0\le |t|\le 1-\epsilon_0,
c_1\le \rho\le c_2.
\end{align*}
The desired result then follows from Proposition \ref{Au29e1}. 
\end{proof}

\subsection{Localization of $\rho$, the regime $||\hat u \cdot e_1|-1|\ll 1$}
$\,$

\vspace{0.2cm}

In this section we shall localize $\rho$ under the assumption
that $||\hat u \cdot e_1|-1| \ll 1$, i.e., we shall show that if 
$| |\hat u \cdot e_1|-1|\le \epsilon_0 \ll 1$, then with high probability that
$|\rho -1| \le  \eta(\epsilon_0) \ll 1$. 
In the lemma below we assume $\rho\ge c_1$ since by Theorem \ref{thmAu28_1} the 
regime $\rho \ll 1$ is already treated. 
\begin{lem}\label{lem_6.5}
Let $0<\beta\le \frac 14$ and consider the regime $0<c_1\le \rho <1$. If $0< \eta_0\ll 1$ is sufficiently small, then
for $m\gtrsim n$, it holds
with high probability that
\begin{align*}
\partial_{\rho} f <0, \qquad \forall\,  \rho \le 1-c(\eta_0), 
\forall\, \hat u \in \mathbb S^{n-1}
\text{ with $| |\hat u \cdot e_1|-1| \le \eta_0$}, 
\end{align*}
where $c(\eta_0)\to 0$ as $\eta_0\to 0$.
\end{lem}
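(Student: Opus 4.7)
The plan is to reduce to the axial case $\hat u = e_1$, where an algebraic identity exposes a clean sign, and then propagate to nearby $\hat u$ by a perturbation argument. Specialize \eqref{partialrho} to $\hat u=e_1$, and write $X_j = a_j\cdot e_1$, $A_j = \sqrt{\beta+X_j^2}$, $B_{j,\rho}= \sqrt{\beta\rho^2+X_j^2}$. A careful reorganization (using $\rho^2 A_j^2 - B_{j,\rho}^2 = X_j^2(\rho^2-1)$ to combine the three terms over a common denominator) should reduce the $j$th summand of $\partial_\rho f$ to the factored form
\begin{equation*}
G_j(\rho) \;=\; \frac{2X_j^2(\rho^2-1)\bigl(A_jB_{j,\rho}-\beta\rho\bigr)}{B_{j,\rho}(\rho A_j+B_{j,\rho})}.
\end{equation*}
The algebraic heart is the Cauchy--Schwarz-type identity $(A_jB_{j,\rho})^2-(\beta\rho+X_j^2)^2 = \beta X_j^2(1-\rho)^2\ge 0$, which yields $A_jB_{j,\rho}-\beta\rho \ge X_j^2$ and hence $G_j(\rho)\le 0$ for $\rho\in[c_1,1]$, strictly whenever $X_j\ne 0$.

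Turning this pointwise sign into a quantitative bound, the crude estimates $B_{j,\rho}\le A_j$ and $\rho A_j + B_{j,\rho}\le 2A_j$ (both valid for $\rho\le 1$) yield $-G_j(\rho) \ge X_j^4(1-\rho^2)/(\beta+X_j^2)$. The random variable $Z^4/(\beta+Z^2)$ with $Z\sim\mathcal N(0,1)$ is sub-exponential with strictly positive mean $\kappa_0=\kappa_0(\beta)>0$, so Bernstein's inequality gives $\frac 1m\sum_j X_j^4/(\beta+X_j^2)\ge \kappa_0/2$ with probability $1-e^{-cm}$. Thus $-\partial_\rho f(\rho e_1) \ge \tfrac{\kappa_0}{2}(1-\rho^2)$ uniformly in $\rho\in[c_1,1]$ with high probability.

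To conclude I propagate to $\hat u$ close to $\pm e_1$. Parametrize $\hat u=te_1+\sqrt{1-t^2}\,e^\perp$ with $e^\perp\in \mathbb S^{n-1}\cap e_1^\perp$; set $W_j=a_j\cdot\hat u$ and $Y_j=a_j\cdot e^\perp$, so that $|W_j-X_j|\lesssim \eta_0 |X_j|+\sqrt{\eta_0}\,|Y_j|$ when $|t-1|\le\eta_0$. The integrand of $\partial_\rho f$, viewed as a function of $W$, is Lipschitz with constant $C_\beta(|X_j|+|Y_j|+1)$ on the regime $\rho\in[c_1,1]$ — crucially, the only denominator $\sqrt{\beta\rho^2+X_j^2}\ge \sqrt\beta\,c_1$ is uniformly bounded below. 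A Bernstein plus $\epsilon$-net argument over $e^\perp\in\mathbb S^{n-1}$ and $t\in[1-\eta_0,1]$, which is precisely where the sample complexity $m\gtrsim n$ is consumed, then yields $\sup|\partial_\rho f(u)-\partial_\rho f(\rho e_1)|\le C\sqrt{\eta_0}$ with high probability; the case $|t+1|\le\eta_0$ is symmetric. Combining this with the previous estimate gives $-\partial_\rho f(u)\ge \frac{\kappa_0}{2}(1-\rho^2)-C\sqrt{\eta_0}>0$ whenever $\rho\le 1-c(\eta_0)$ with $c(\eta_0)=\Theta(\sqrt{\eta_0})\to 0$, which is the desired conclusion.

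The main obstacle I expect is the perturbation step: producing uniform Lipschitz bounds for the integrand, identifying the correct sub-exponential moment structure of the summand in $(X_j,Y_j)$, and running the net argument uniformly over $e^\perp\in\mathbb S^{n-1}$. Everything else reduces either to the one-line algebraic factorization or to standard Bernstein concentration. The hypothesis $\rho\ge c_1$ is what controls the denominator and keeps the integrand globally smooth; the restriction $\beta\le 1/4$ does not seem essential for the qualitative axial sign, but is presumably convenient for keeping the constants $\kappa_0(\beta)$ and $C_\beta$ in a clean range so that the asserted threshold $c(\eta_0)\to 0$ holds with uniform dependence on $\beta$.
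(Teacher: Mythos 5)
Your proposal is correct, and it takes a genuinely different route from the paper's. You verify $\partial_\rho f(\rho e_1)<0$ by an exact algebraic factorization of the $j$th summand,
\begin{align*}
G_j(\rho)=\frac{2(\rho A_j-B_{j,\rho})(A_jB_{j,\rho}-\beta\rho)}{B_{j,\rho}}
=\frac{2X_j^2(\rho^2-1)(A_jB_{j,\rho}-\beta\rho)}{B_{j,\rho}(\rho A_j+B_{j,\rho})},
\end{align*}
combined with the identity $(A_jB_{j,\rho})^2-(\beta\rho+X_j^2)^2=\beta X_j^2(1-\rho)^2\ge 0$, and only then perturb from $\hat u=e_1$. (I re-derived the factorization and the Cauchy--Schwarz-type identity; both check out, as does the subsequent elimination $B_{j,\rho}(\rho A_j+B_{j,\rho})\le 2(\beta+X_j^2)$ for $\rho\le 1$, which yields $-G_j\ge X_j^4(1-\rho^2)/(\beta+X_j^2)$.) The paper instead perturbs first --- it replaces $\sqrt{\beta+(a_j\cdot\hat u)^2}$ by $\sqrt{\beta+(a_j\cdot e_1)^2}$ up to an error term $H\lesssim r$ --- and then applies two crude pointwise inequalities ($-\sqrt{\beta+X^2}\sqrt{\beta\rho^2+X^2}\le-(\beta\rho^2+X^2)$ and $-\sqrt{\beta+X^2}\cdot\beta\rho^2/\sqrt{\beta\rho^2+X^2}\le-\beta\rho^2$) together with Bernstein concentration, landing on the quadratic $g(\rho)=\rho(1+2\beta)-2\beta\rho^2-1+O(r+\delta_1)$, which has roots at $\rho=1$ and $\rho=1/(2\beta)$; this is exactly where the restriction $\beta<1/2$ (stated as $\beta\le 1/4$) enters, and the paper removes it only afterward in Theorem~2.9 via a convexity argument ($\partial_{\rho\rho}f>0$). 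Your factorization is sharper: the negative sign of $G_j$ and the quantitative lower bound hold for every $\beta>0$, so the restriction on $\beta$ simply never appears. The cost is that the burden of the argument shifts to the perturbation step, but there your estimate --- Lipschitz constant $O(|X_j|+|Y_j|+1)$ in $W_j$, deviation $|W_j-X_j|\lesssim\eta_0|X_j|+\sqrt{\eta_0}|Y_j|$, uniform control of $\frac1m\sum_j(|X_j|+|Y_j|+1)^2$ over $e^\perp\in\mathbb S^{n-1}\cap e_1^\perp$ via Bernstein plus an $\epsilon$-net --- is essentially the same machinery the paper uses to bound $H$, so nothing new is risked. In short: both proofs are valid; yours is arguably cleaner because it isolates the deterministic sign structure via a one-line factorization and incidentally proves the lemma without the hypothesis $\beta\le 1/4$.
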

\begin{remark}
In Theorem \ref{thA31_0a} we shall remove the constraint  $0<\beta\le \frac 14$  and prove the result for all $0<\beta<\infty$.
\end{remark}

\begin{proof}[Proof of Lemma \ref{lem_6.5}]
Recall
\begin{align*}
\frac 12 \partial_{\rho} f &=\frac 1m \sum_{j=1}^m \Bigl(
 \rho ( (a_j \cdot \hat u)^2  +2\beta)- \sqrt{\beta+ (a_j\cdot \hat u)^2} 
\sqrt{\beta \rho^2 + (a_j\cdot e_1)^2} - \frac { \beta \rho^2 \sqrt{\beta+ (a_j \cdot \hat u)^2}}
{\sqrt{\beta \rho^2 + (a_j\cdot e_1)^2}} \Bigr).
\end{align*}
Without loss of generality we assume
\begin{align*}
\| \hat u -e_1\|_2\le r \ll 1.
\end{align*}
The other case $\| \hat u+e_1\|_2 \ll 1$ is similar and therefore omitted.  Note that
\[
|\sqrt{\beta+ (a_j\cdot \hat u)^2}-\sqrt{\beta+ (a_j\cdot e_1)^2}| = \Bigl| \frac   {(a_j \cdot (\hat u-e_1))  (a_j\cdot (\hat u +e_1) )} { \sqrt{\beta+ (a_j\cdot \hat u)^2}+\sqrt{\beta+ (a_j\cdot e_1)^2}} \Bigr| \le  |a_j \cdot (\hat u -e_1)|.
\]
It immediately gives
\begin{align*}
\frac 12 \partial_{\rho}f 
& \le \frac 1m \sum_{j=1}^m \Bigl( \rho ( (a_j\cdot \hat u)^2 +2\beta) 
-\sqrt{\beta+(a_j\cdot e_1)^2} \sqrt{ \beta \rho^2 +(a_j\cdot e_1)^2}
-\frac {\beta \rho^2\sqrt{\beta+(a_j\cdot e_1)^2}}{\sqrt{\beta \rho^2+(a_j\cdot e_1)^2}} \Bigr)
+H,
\end{align*}
where
\begin{align*}
H &\lesssim \frac 1m \sum_{j=1}^m |a_j\cdot (\hat u-e_1)|
(1+|a_j\cdot e_1|)  \lesssim \frac 1m \sum_{j=1}^m \Bigl(a_j\cdot (\hat u-e_1))^2\cdot \frac 1 {2r}
+r (1 + (a_j\cdot e_1)^2) \Bigr). 
\end{align*}
Clearly it holds with high probability that
\begin{align*}
H \le B_1 r,
\end{align*}
where $B_1>0$ is a constant.  For $\rho \le 1$, we have
\begin{align*}
&-\sqrt{\beta+(a_j\cdot e_1)^2} \sqrt{ \beta \rho^2 +(a_j\cdot e_1)^2}
\le  - (\beta \rho^2 + (a_j\cdot e_1)^2); \\
& -\sqrt{\beta+(a_j\cdot e_1)^2}
\cdot \frac {\beta \rho^2}{\sqrt{\beta \rho^2+(a_j\cdot e_1)^2}}
\le -\beta \rho^2.
\end{align*}
Then assuming $\rho<1$,  it holds with high probability that
\begin{align*}
\frac 12 \partial_{\rho} f &\le \rho ( 1+2\beta) - (\beta \rho^2+1)
- \beta \rho^2 +B_1 r +\delta_1,
\end{align*}
where $\delta_1>0$ is a small constant which accounts for the deviation from the 
mean value used in the Bernstein's inequality.  For $0<\beta\le \frac 14$ (actually
$0<\beta<\frac 12$ suffices) the desired conclusion
then clearly follows by taking $\delta_1= O(\eta_0)$ and $r=O(\eta_0)$. 
\end{proof}

\begin{lem}[$\partial_{\rho\rho}f$ is good] \label{A31_e10}
We have almost surely it holds that
\begin{align*}
\partial_{\rho\rho} f >0, \qquad\forall\, 0<\rho<\infty, \, \forall\, \hat u \in \mathbb S^{n-1}.
\end{align*}
Furthermore, for any fixed two constants $0<c_1<c_2<\infty$, if
$m\gtrsim n$, then it holds with high probability that 
\begin{align*}
\partial_{\rho\rho} f \ge \alpha >0, \qquad\forall\,
c_1\le \rho \le c_2, \,\forall\, \hat u \in \mathbb S^{n-1},
\end{align*}
where $\alpha>0$ is a constant depending only on $(c_1,c_2,\beta)$. 
\end{lem}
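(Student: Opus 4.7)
The plan is to derive a clean closed-form for $f_j''(\rho)$ along the ray $u=\rho\hat u$ and extract a $\rho$-independent pointwise lower bound that settles both statements at once. Writing $\alpha_j := \sqrt{\beta + (a_j\cdot\hat u)^2}$, $c_j := a_j\cdot e_1$, and $\psi_j(\rho) := \sqrt{\beta\rho^2 + c_j^2}$, we have
\begin{align*}
m f(\rho\hat u) = \sum_{j=1}^m \bigl(\alpha_j\rho - \psi_j(\rho)\bigr)^2.
\end{align*}
First I would differentiate $\psi_j^2 = \beta\rho^2 + c_j^2$ twice to obtain the identity $(\psi_j')^2 + \psi_j\psi_j'' = \beta$. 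Substituting this into
\begin{align*}
f_j''(\rho) = 2(\alpha_j - \psi_j')^2 - 2(\alpha_j\rho - \psi_j)\psi_j''
\end{align*}
produces a cancellation that leaves $f_j''(\rho) = 2(\alpha_j^2 + \beta) - 2\alpha_j(2\psi_j' + \rho\psi_j'')$. Introducing the auxiliary variable $s_j := \beta\rho^2/(\beta\rho^2 + c_j^2) \in [0,1]$, a short computation gives $2\psi_j' + \rho\psi_j'' = \sqrt{\beta s_j}\,(3-s_j)$, so that
\begin{align*}
f_j''(\rho) \;=\; 2(\alpha_j^2 + \beta) - 2\alpha_j\sqrt{\beta}\,(3-s_j)\sqrt{s_j}.
\end{align*}

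The pivotal one-variable estimate I would use is $(3-s)\sqrt{s}\le 2$ on $[0,1]$, with equality only at $s=1$. This is elementary: the derivative $3(1-s)/(2\sqrt{s})\ge 0$ on $(0,1]$ shows monotone increase to the boundary maximum $2$. Inserting this yields the $\rho$-independent lower bound
\begin{align*}
f_j''(\rho) \;\ge\; 2(\alpha_j^2 - 2\sqrt{\beta}\,\alpha_j + \beta) \;=\; 2(\alpha_j - \sqrt{\beta})^2 \;\ge\; 0,
\end{align*}
and more precisely the decomposition
\begin{align*}
f_j''(\rho) \;=\; 2(\alpha_j - \sqrt{\beta})^2 + 2\alpha_j\sqrt{\beta}\bigl(2 - (3-s_j)\sqrt{s_j}\bigr),
\end{align*}
in which both summands are nonnegative.

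From this, the first conclusion follows by noting that almost surely $c_j = a_j\cdot e_1 \ne 0$ for every $j$, so $s_j \in [0,1)$ strictly for every $\rho>0$, which makes the second summand strictly positive uniformly in $\hat u$. For the uniform lower bound on $c_1 \le \rho \le c_2$ I would keep only the $\rho$-free piece: setting $h(t) := (\sqrt{\beta + t^2} - \sqrt{\beta})^2$,
\begin{align*}
\partial_{\rho\rho} f(\rho\hat u) \;\ge\; \frac{2}{m}\sum_{j=1}^m h(a_j\cdot\hat u).
\end{align*}
The mean $\mu(\beta) := \mathbb E\,h(X)$ for $X\sim\mathcal N(0,1)$ is a strictly positive constant (since $h>0$ off $\{X=0\}$), and $h(t)\le t^2$ ensures that $h(a_j\cdot\hat u)$ is sub-exponential. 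A Bernstein bound combined with a standard $\epsilon$-net argument over $\mathbb S^{n-1}$, entirely analogous to the reasoning behind \eqref{eq:unibound}, will then give $\frac{1}{m}\sum_j h(a_j\cdot\hat u) \ge \mu(\beta)/2$ uniformly in $\hat u$ with high probability provided $m\gtrsim n$. Taking $\alpha = \mu(\beta)$ completes the proof; in fact the bound so obtained is uniform over all $\rho>0$, so the compactness constraint $c_1 \le \rho \le c_2$ is not essential.

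The hard part is the algebraic reduction yielding the structural identity $f_j''(\rho) = 2(\alpha_j^2+\beta) - 2\alpha_j\sqrt{\beta}(3-s_j)\sqrt{s_j}$ together with the optimal companion inequality $(3-s)\sqrt{s}\le 2$; once these are in hand, both claims drop out almost immediately, and the concentration step is routine.
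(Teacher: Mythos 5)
Your proposal is correct, and the core algebra is identical to the paper's: after the change of variable $s_j = 1/(1+x_j)$ with $x_j = (a_j\cdot e_1)^2/(\beta\rho^2)$, your identity
$f_j''(\rho) = 2(\alpha_j - \sqrt{\beta})^2 + 2\alpha_j\sqrt{\beta}\bigl(2 - (3-s_j)\sqrt{s_j}\bigr)$
is exactly the paper's decomposition
$\tfrac12\partial_{\rho\rho}f = \tfrac1m\sum(\sqrt{\beta+Z_j^2}-\sqrt\beta)^2 + \tfrac1m\sum\sqrt{\beta+Z_j^2}\sqrt\beta\,(2-h_1(X_j/(\sqrt\beta\rho)))$,
and your bound $(3-s)\sqrt s\le 2$ on $[0,1]$ is the paper's $h_0(x)\le 2$ on $[0,\infty)$ in disguise. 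The one genuine divergence is in the final concentration step: the paper lower-bounds $\partial_{\rho\rho}f$ by the second (i.e. $\rho$-dependent) summand, using $\alpha_j\ge\sqrt\beta$, and then needs $c_1\le\rho\le c_2$ to get a uniform lower bound because $2-h_1(X_j/(\sqrt\beta\rho))\to 0$ as $\rho\to\infty$. You instead lower-bound by the first ($\rho$-free) summand $\frac2m\sum(\sqrt{\beta+(a_j\cdot\hat u)^2}-\sqrt\beta)^2$, whose mean is a fixed positive constant depending only on $\beta$ and which is sub-exponential in $(a_j\cdot\hat u)$; a Bernstein-plus-net argument uniform over $\hat u\in\mathbb S^{n-1}$ then gives the claimed $\alpha>0$, and — as you note — the bound holds uniformly over all $\rho>0$, so the compactness assumption $c_1\le\rho\le c_2$ is not actually needed. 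That is a small but real simplification over the paper's route; otherwise the two proofs are the same.
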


\begin{proof}
Recall
\begin{align*}
\frac 12 \partial_{\rho} f &=\frac 1m \sum_{j=1}^m \Bigl(
\rho ( (a_j \cdot \hat u)^2  +2\beta) - \sqrt{\beta+ (a_j\cdot \hat u)^2} \sqrt{\beta \rho^2 + (a_j\cdot e_1)^2} 
-  \frac { \beta \rho^2 \sqrt{\beta+ (a_j \cdot \hat u)^2}}{\sqrt{\beta \rho^2 + (a_j\cdot e_1)^2}} \Bigr).
\end{align*}
A simple calculation leads to 
\begin{eqnarray*} 
&& \frac 12 \partial_{\rho\rho} f \\
&=& \frac 1m \sum_{j=1}^m \Bigl(
  ( (a_j \cdot \hat u)^2  +2\beta) - 
\frac{3\beta \rho \sqrt{\beta+ (a_j\cdot \hat u)^2} }{\sqrt{\beta \rho^2 + (a_j\cdot e_1)^2} } +
 \frac { \beta^2 \rho^3\sqrt{\beta+ (a_j \cdot \hat u)^2}}
{({\beta \rho^2 + (a_j\cdot e_1)^2})^{\frac 32} } \Bigr)\notag \\
&=&  \frac 1m \sum_{j=1}^m \biggl(
  ( (a_j \cdot \hat u)^2  +2\beta) - \sqrt{\beta+ (a_j\cdot \hat u)^2} 
\cdot \sqrt{\beta}\cdot
\Bigl( 3 \left(\frac { a_j\cdot e_1}{\sqrt{\beta} \rho} \right)^2+2 \Bigr)
\cdot \Bigl( \left(\frac { a_j\cdot e_1}{\sqrt{\beta} \rho} \right)^2+1 \Bigr)^{-\frac 32}
\biggr).
\end{eqnarray*}
For $0\le x <\infty$, denote 
\begin{align*}
h_0(x)= \frac {3x+2} {  (1+x)^{\frac 32} }.
\end{align*}
It is not difficult to check that
\begin{align} \label{eq:lem2.61}
h_0(x) \le 2, \qquad \forall\, 0\le x <\infty
\end{align}
and the equality holds if and only if $x=0$. 
Now define $h_1(x) = h_0(x^2)$. Then we can rewrite $\partial_{\rho\rho} f$ as
\begin{align*}
\frac 12 \partial_{\rho\rho} f
= \frac 1m \sum_{j=1}^m ( \sqrt{\beta +(a_j\cdot \hat u)^2} -\sqrt{\beta} )^2 
+\frac 1m \sum_{j=1}^m \sqrt{\beta+(a_j\cdot \hat u)^2}
\cdot \sqrt{\beta}
\cdot \Bigl( 2 -h_1( \frac {a_j \cdot e_1} {\sqrt{\beta }\rho }  )\Bigr).
\end{align*}
It then follows from \eqref{eq:lem2.61} that 
\begin{align*}
\frac 12 \partial_{\rho\rho}f
\ge  \frac \beta m \sum_{j=1}^m 
\Bigl( 2 -h_1( \frac {a_j \cdot e_1} {\sqrt{\beta} \rho }  )\Bigr) >0
\end{align*}
holds almost surely  since the event $\bigcap_{j=1}^m \{ a_j\cdot e_1=0 \}$ has 
zero probability.
By using the Bernstein's inequality, we have with high probability that
\begin{align*}
\frac 1m \sum_{j=1}^m 
\Bigl( 2 -h_1( \frac {a_j \cdot e_1} {\sqrt{\beta }\rho }  )\Bigr)\gtrsim 1, \qquad \forall\, c_1\le \rho \le c_2.
\end{align*}
Thus 
\begin{align*}
\partial_{\rho\rho} f \gtrsim 1, \qquad \forall\,
c_1\le \rho \le c_2, \quad \forall\, \hat u \in \mathbb S^{n-1}.
\end{align*}
\end{proof}

\begin{thm}[Localization of $\rho$ when $||\hat u \cdot e_1|-1|\ll 1$] \label{thA31_0a}
Consider the regime $0<c_1\le \rho \le c_2$. If $0<\eta_0\ll 1$ is sufficiently small, then
for $m\gtrsim n$, it holds
with high probability that
\begin{align*}
&\partial_{\rho} f <0, \qquad \forall\,  \rho \le 1-c(\eta_0), \forall\, \hat u\in \mathbb S^{n-1}
\text{ with $||\hat u \cdot e_1|-1| \le \eta_0$};\\
&\partial_{\rho} f >0, \qquad \forall\,  \rho \ge 1+c(\eta_0), \forall\, \hat u\in \mathbb S^{n-1}
\text{ with $||\hat u \cdot e_1|-1| \le \eta_0$};
\end{align*}
where $c(\eta_0)\to 0$ as $\eta_0 \to 0$.
\end{thm}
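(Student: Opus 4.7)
The plan is to localize $\rho$ by combining a deterministic identity at $(\rho,\hat u)=(1,e_1)$ with the uniform strong convexity in $\rho$ from Lemma \ref{A31_e10}, and then propagate to all $\hat u$ close to $\pm e_1$ via a uniform Lipschitz estimate. WLOG we assume $c_1<1<c_2$ (otherwise the relevant conclusion is vacuous) and by symmetry $f(-u)=f(u)$ we restrict to $\hat u\cdot e_1\ge 0$, so $\|\hat u -e_1\|_2\le \sqrt{2\eta_0}$.

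First I would exploit a pointwise identity. Plugging $\hat u=e_1$ and $\rho=1$ into the formula
\[
\frac12\partial_\rho f=\frac1m\sum_{j=1}^m\Bigl(\rho((a_j\cdot\hat u)^2+2\beta)-\sqrt{\beta+(a_j\cdot\hat u)^2}\sqrt{\beta\rho^2+(a_j\cdot e_1)^2}-\frac{\beta\rho^2\sqrt{\beta+(a_j\cdot\hat u)^2}}{\sqrt{\beta\rho^2+(a_j\cdot e_1)^2}}\Bigr),
\]
each summand collapses to $((a_j\cdot e_1)^2+2\beta)-(\beta+(a_j\cdot e_1)^2)-\beta=0$, hence $\partial_\rho f(e_1)=0$ deterministically. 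Lemma \ref{A31_e10} gives $\partial_{\rho\rho}f(\rho e_1)\ge\alpha>0$ on $[c_1,c_2]$ with high probability, so by the fundamental theorem of calculus $\partial_\rho f(\rho e_1)\ge\alpha(\rho-1)$ for $\rho\ge 1$ and $\partial_\rho f(\rho e_1)\le\alpha(\rho-1)$ for $\rho\le 1$. Thus on the ``axis" $\hat u=e_1$ the derivative $\partial_\rho f$ is already strictly signed with quantitative control.

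Next I would establish a uniform Lipschitz bound
\[
\sup_{\rho\in[c_1,c_2]}\,\bigl|\partial_\rho f(\rho\hat u)-\partial_\rho f(\rho e_1)\bigr|\le L\,\|\hat u-e_1\|_2\qquad\forall\,\hat u\in\mathbb S^{n-1},
\]
with $L=O(1)$ on a high-probability event when $m\gtrsim n$. The summands depend on $\hat u$ only through $(a_j\cdot\hat u)^2$ and $\sqrt{\beta+(a_j\cdot\hat u)^2}$; the maps $t\mapsto t^2$ and $t\mapsto\sqrt{\beta+t^2}$ are respectively $2|t|$- and $1$-Lipschitz, and the denominator $\sqrt{\beta\rho^2+(a_j\cdot e_1)^2}\ge\sqrt{\beta}\,c_1$ is uniformly bounded below. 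Averaging the gradients yields a Lipschitz constant controlled by quantities like $\frac1m\sum_j|a_j||a_j\cdot e_1|$ and $\frac1m\sum_j(a_j\cdot w)^2$, which are $O(1)$ uniformly in $w\in\mathbb S^{n-1}$ via Bernstein together with an $\varepsilon$-net over $\mathbb S^{n-1}$ (equivalently, via $\|A\|_{\mathrm{op}}\lesssim\sqrt m$ for the Gaussian matrix $A=(a_1,\ldots,a_m)^\T$), provided $m\gtrsim n$.

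Combining the two pieces: on the good event we get, for all $\hat u$ with $\|\hat u-e_1\|_2\le\sqrt{2\eta_0}$ and all $\rho\in[c_1,c_2]$,
\[
\partial_\rho f(\rho\hat u)\le\alpha(\rho-1)+L\sqrt{2\eta_0},\qquad \partial_\rho f(\rho\hat u)\ge\alpha(\rho-1)-L\sqrt{2\eta_0}.
\]
Choosing $c(\eta_0):=2L\sqrt{2\eta_0}/\alpha$ (which tends to $0$ as $\eta_0\to 0$) then gives $\partial_\rho f<0$ for $\rho\le 1-c(\eta_0)$ and $\partial_\rho f>0$ for $\rho\ge 1+c(\eta_0)$, as desired. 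The case $\hat u\cdot e_1\le 0$ is obtained by applying the same argument with $e_1$ replaced by $-e_1$, using the evenness of $f$ in $u$.

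The main obstacle is obtaining the Lipschitz estimate uniformly in $(\rho,\hat u)$ on a single high-probability event. The pointwise-in-$\hat u$ bounds are straightforward from Bernstein, but upgrading to a uniform bound over the whole sphere requires an $\varepsilon$-net argument together with the Gaussian operator norm bound, which is precisely the mechanism that forces the sampling threshold $m\gtrsim n$. Everything else reduces to the deterministic cancellation at $(1,e_1)$ and to Lemma \ref{A31_e10}.
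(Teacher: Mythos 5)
Your proof is correct and follows essentially the same strategy as the paper's. Both exploit the deterministic cancellation $\partial_\rho f(\rho=1, \hat u=e_1)=0$, the uniform strong convexity $\partial_{\rho\rho}f\ge\alpha>0$ from Lemma \ref{A31_e10}, and a Lipschitz-type estimate in $\hat u$ (which the paper proves by the same mechanism it used for the $H$-term in Lemma \ref{lem_6.5}); the only cosmetic difference is that the paper integrates $\partial_{\rho\rho}f$ in $\rho$ for each fixed $\hat u$ starting from $\rho=1$ and applies the Lipschitz bound only on the slice $\rho=1$, whereas you integrate along the axis $\hat u=e_1$ and apply the Lipschitz bound uniformly in $\rho\in[c_1,c_2]$ — two equivalent orderings of the same triangle-inequality argument.
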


\begin{proof}
We shall sketch the proof. 
%The first statement $\partial_{\rho} f <0$
%when $\rho \le 1-c(\eta_0)$ follows from Lemma \ref{lem_6.5}. 
We first consider  the regime $\rho \ge 1$. 
Without loss of generality we assume $\| \hat u -e_1\|_2 \le \eta_0$. The other
case $\| \hat u +e_1\|_2 \le \eta_0$ can be similarly treated.

First observe that
\begin{align*}
(\partial_{\rho} f)( \rho=1, \hat u=e_1)=0.
\end{align*}
Then by a calculation similar to the estimate of $H$ term in
Lemma \ref{lem_6.5}, we have with high probability that
\begin{align*}
|\partial_{\rho} h( \rho=1, \hat u)| =c_1(\eta_0)\ll 1, \qquad \forall\, |\hat u -e_1|\le \eta_0,
\end{align*}
where $c_1(\eta_0)\to 0$ as $\eta_0\to 0$.
Now by Lemma \ref{A31_e10}, it holds with high probability that
\begin{align*}
\partial_{\rho\rho} f \ge \alpha>0, \quad\forall\, c_1\le \rho \le c_2,
\forall\, \hat u \in \mathbb S^{n-1}.
\end{align*}
It then implies that for $\rho \ge  1+ \frac {2c_1(\eta_0)} {\alpha_0}$, we have
\begin{align*}
(\partial_{\rho} f) ( \rho, \hat u) \ge  \alpha_0
\cdot \frac {2c_1(\eta_0)} {\alpha_0}   - c_1 (\eta_0)=c_1(\eta_0)>0. 
\end{align*}
Redefining $c(\eta_0)$ suitably then yields the result.  The argument
for $\rho \le 1-c(\eta_0)$ is similar. We omit the details.
\end{proof}

\subsection{Strong convexity near the global minimizers $u=\pm e_1$: analysis
of the limiting profile}
$\;$

\vspace{0.2cm}

In this section we shall show that in the small neighborhood of $u=\pm e_1$
where
\begin{align*}
| |\hat u \cdot e_1| -1| \ll 1, \quad  |\rho-1| \ll 1,
\end{align*}
  the Hessian of the expectation of the loss function 
must be strictly positive definite. In yet other words $\mathbb E f$ must be strictly convex
in this neighborhood so that $u=\pm e_1$ are the unique minimizers. 
To this end consider
\begin{align} \label{Au31e71}
h(u) =\frac 12 ( \mathbb E f -1)= \frac 12 (1+2\beta ) |u|^2   -  \mathbb E
\sqrt{\beta |u|^2 + (a\cdot u)^2} \sqrt{ \beta|u|^2 +(a\cdot e_1)^2},
\end{align}
where $a\sim \mathcal N (0, \operatorname{I}_n)$. 
\begin{thm}[Strong convexity of $\mathbb E f$ when $\| u\pm e_1\| \ll 1$]
\label{thA31e50}
Consider $h$ defined by \eqref{Au31e71}.
There exist $0<\epsilon_0\ll 1$ and a positive constant $\gamma_1$ such that the following hold:
\begin{enumerate}
\item If $\| u-e_1\|_2 \le \epsilon_0$, then for any $\xi \in \mathbb S^{n-1}$ it holds
\begin{align*}
\sum_{i,j=1}^n \xi_i \xi_j \mathbb E 
(\partial_i \partial_j h)(u) \ge \gamma_1 >0.
\end{align*}
\item If $\| u+ e_1\|_2 \le \epsilon_0$, then for any $\xi \in \mathbb S^{n-1}$ it holds
\begin{align*}
\sum_{i,j=1}^n \xi_i \xi_j \mathbb E 
(\partial_i \partial_j h)(u) \ge \gamma_1 >0.
\end{align*}

\end{enumerate}

\end{thm}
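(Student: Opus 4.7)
My plan is to compute the Hessian of $h$ at the single point $u = e_1$ exactly, exploiting the rotational invariance of the standard Gaussian about the $e_1$-axis, and then extend the strict positive definiteness to a small ball by continuity. Part (2) of the theorem (the case $\|u+e_1\|_2 \le \epsilon_0$) will follow immediately from the parity $h(-u) = h(u)$, hence $\nabla^2 h(-u) = \nabla^2 h(u)$. The key ingredient at $u=e_1$ is a careful second-order Taylor expansion of the random product $A(u)B(u)$, where $A(u) := \sqrt{\beta|u|^2+(a\cdot u)^2}$ and $B(u) := \sqrt{\beta|u|^2+(a\cdot e_1)^2}$.

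Setting $u = e_1 + s\xi$ with $\xi \in \mathbb S^{n-1}$ and $a_1 := a\cdot e_1$, and applying $\sqrt{1+x} = 1 + x/2 - x^2/8 + O(x^3)$ to each factor followed by multiplying the two expansions, I expect to obtain
\begin{align*}
A(u)B(u) = (\beta + a_1^2) + s\bigl[2\beta\xi_1 + a_1(a\cdot\xi)\bigr] + \frac{s^2}{2}\Bigl[2\beta + \frac{\beta(a\cdot\xi)^2}{\beta+a_1^2}\Bigr] + O(s^3).
\end{align*}
The algebraic miracle that cleans up the quadratic coefficient is the identity $(Q-S)^2 = a_1^2(a\cdot\xi)^2$, where $Q = \beta\xi_1 + a_1(a\cdot\xi)$ and $S = \beta\xi_1$ are the linear coefficients inside $A^2$ and $B^2$. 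Taking expectations and subtracting the corresponding expansion of $\tfrac{1}{2}(1+2\beta)|u|^2 = \tfrac{1}{2}(1+2\beta)(1+2s\xi_1+s^2)$, the $s^1$ terms cancel (confirming $\nabla h(e_1)=0$) and I am left with
\begin{align*}
\xi^{\T}\nabla^2 h(e_1)\,\xi = 1 - \beta\,\E\!\left[\frac{(a\cdot\xi)^2}{\beta+a_1^2}\right].
\end{align*}

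To evaluate this expectation I decompose $\xi = \xi_1 e_1 + \xi^{\perp}$ and note that $\xi^{\perp}\cdot a$ is independent of $a_1$. A short computation then yields
\begin{align*}
\E\!\left[\frac{(a\cdot\xi)^2}{\beta+a_1^2}\right] = \xi_1^2\alpha_1 + (1-\xi_1^2)\alpha_2,\qquad \alpha_1 := \E\!\left[\frac{a_1^2}{\beta+a_1^2}\right],\quad \alpha_2 := \E\!\left[\frac{1}{\beta+a_1^2}\right].
\end{align*}
Using the algebraic identity $\alpha_1 + \beta\alpha_2 = 1$, the quadratic form simplifies to the convex combination $\xi^{\T}\nabla^2 h(e_1)\,\xi = \alpha_1(1-\xi_1^2) + (1-\beta\alpha_1)\xi_1^2$, so $\nabla^2 h(e_1)$ has eigenvalue $\alpha_1$ on $e_1^{\perp}$ (multiplicity $n-1$) and $1-\beta\alpha_1$ along $e_1$. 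Both are strictly positive: $\alpha_1 > 0$ is immediate, and Jensen applied to the convex function $x\mapsto 1/(\beta+x)$ gives $\alpha_2 \ge 1/(\beta+1)$, so $1-\beta\alpha_1 = 1-\beta+\beta^2\alpha_2 \ge 1/(\beta+1) > 0$. Setting $\gamma_1 := \tfrac{1}{2}\min\bigl(\alpha_1,\,1/(\beta+1)\bigr) > 0$ gives the pointwise bound at $u = e_1$.

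Finally, to extend the bound to a ball $\|u-e_1\|_2 \le \epsilon_0$, I will invoke continuity of the map $u \mapsto \nabla^2 h(u)$. In any ball $\|u-e_1\|_2 \le 1/2$ we have $|u| \ge 1/2$, so the denominators $\sqrt{\beta|u|^2+(a\cdot u)^2}$ and $\sqrt{\beta|u|^2+(a\cdot e_1)^2}$ that appear after two differentiations of $AB$ are bounded below by $\sqrt{\beta}/2$; the integrands then admit a polynomial-in-$|a|$ Gaussian-integrable envelope, so dominated convergence delivers continuity. Shrinking $\epsilon_0$ preserves the strict inequality $\xi^{\T}\nabla^2 h(u)\,\xi \ge \gamma_1$ uniformly in $\xi \in \mathbb S^{n-1}$ on the ball, and the $u = -e_1$ case follows verbatim from parity. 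I expect the main obstacle to be the bookkeeping for the second-order Taylor expansion, in particular pinning down the $(Q-S)^2$ simplification that kills the awkward $\xi_1$-dependence outside the clean ratio $\beta/(\beta+a_1^2)$; the lone substantive analytic step is the short Jensen argument guaranteeing $1-\beta\alpha_1 > 0$ uniformly in $\beta > 0$, without which the radial eigenvalue could in principle be non-positive for large $\beta$.
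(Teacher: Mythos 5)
Your proof is correct, and it takes a genuinely different route from the paper's. The paper switches to $(\rho,s)$ coordinates with $\rho=\|u\|_2$, $s=|u'|/\rho$, computes the Cartesian Hessian of $h$ by chain rule from $\partial_{\rho\rho}h$, $\partial_{\rho s}h$, $\partial_{ss}h$ (carefully regularizing the formal $1/s$ singularity via $\partial_s h(\rho,s)/s=\int_0^1\partial_{ss}h(\rho,\theta s)\,d\theta$), evaluates the block Hessian at $(\rho,s)=(1,0)$ in its Lemma \ref{leAu31_61}, proves positivity of the radial eigenvalue via a separate derivative-in-$1/\beta$ argument (Lemma \ref{lemE.7}), and extends to a ball using uniform third-derivative bounds (Lemma \ref{leAu31_60}). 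You instead Taylor-expand $h(e_1+s\xi)$ directly in $s$ along a Cartesian ray, use the algebraic identity $(Q-S)^2=a_1^2(a\cdot\xi)^2$ to collapse the second-order coefficient, and obtain the closed formula $\xi^{\T}\nabla^2 h(e_1)\xi = 1-\beta\,\E\bigl[(a\cdot\xi)^2/(\beta+a_1^2)\bigr]$, which you then diagonalize into the eigenvalues $\alpha_1$ (on $e_1^{\perp}$) and $1-\beta\alpha_1$ (along $e_1$). Your numbers agree with the paper's: $\alpha_1=\E[X^2Y^2/(\beta+X^2)]$ is exactly the paper's $(\partial_{ss}h)(1,0)=\gamma_4$, and $1-\beta\alpha_1$ equals the paper's $(\partial_{\rho\rho}h)(1,0)=\gamma_3$. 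Your approach buys several things: it bypasses the singular polar representation entirely (the straight-line expansion is smooth), it yields an explicit constant $\gamma_1=\tfrac12\min(\alpha_1,1/(\beta+1))$, and the Jensen argument for $1-\beta\alpha_1\ge 1/(\beta+1)$ is cleaner and more quantitative than Lemma \ref{lemE.7}. The paper's $(\rho,s)$ formalism is more machinery than this particular theorem requires, though it is reused elsewhere in the argument (localization of $\rho$, the saddle analysis in the regime $|t|\ll 1$). Your continuity step via a dominated-convergence envelope on $\|u-e_1\|_2\le 1/2$ is valid and replaces the paper's explicit third-derivative bound; make sure in the write-up to note that the envelope must also justify differentiating $\E[AB]$ under the integral twice, not merely continuity of the result, but the same polynomial-times-$(\sqrt{\beta}/2)^{-k}$ bound covers both.
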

\begin{proof}
We shall only consider the case $\| u -e_1\|_2\ll 1$. The other case $\| u +e_1\|_2\ll 1$
is similar and therefore omitted. Note that
\begin{align*}
\| u -e_1\|_2^2 = \| \rho \hat u -e_1\|_2^2
= (\rho-1)^2 +2\rho (1- t) \le \epsilon_0^2,
\end{align*}
where $t=\hat u \cdot e_1$.  Thus for $0<\epsilon_0\ll 1$, we have
\begin{align*}
|\rho -1| \le  \epsilon_0, \qquad    1- {\epsilon_0^2} \le t \le 1.
\end{align*}

We now need to make a  change of variable. The representation $u=t e_1+\sqrt{1-t^2} e^{\perp}$ is not so suitable since the derivatives
blow up as $t \to 1-$.  This is an artificial singularity due to the non-smoothness of
the representation $\sqrt{1-t^2}$ as $t\to 1-$.  To resolve this, we use a
different representation (recall $1-\epsilon_0^2 \le \hat u \cdot e_1 \to 1$),
\begin{align*}
\hat u = \sqrt{1-s^2} e_1 + s e^{\perp}, \qquad e^{\perp} \cdot e_1 =0, \, e^{\perp}
\in \mathbb S^{n-1}, 
\end{align*}
where we assume $0\le s \ll 1$. Note that $s= \frac {|u^{\prime}|}{\rho} $, and
$u^{\prime}=u-(u\cdot e_1) e_1= (0, u_2,\cdots,u_n)$. 

%\begin{rem*}
%This ``switch of representation" reminds us that on a sphere we need to use
%at least two charts! (One near the north pole, and another near the south
%pole.)
%\end{rem*}

To calculate $\partial^2 h$ we need to compute the Hessian expressed in the
$(\rho, s)$ coordinate. It is not difficult to check that by \eqref{Au31e71},
the value of $h(u)$  depends only on $(\rho, s)$. 
Thus by a slight abuse of notation
we write  $h= h(|u|,\, \frac{|u^{\prime}|} {|u|} ) =h(\rho,s)$ (we denote $|u|=\|u\|_2$,
$|u^{\prime}|=\|u^{\prime}\|_2$) and compute
(below we assume $s>0$ so that $|u^{\prime}|>0$)
\[
\partial_i h = \partial_{\rho} h \frac{u_i}{\rho}
+ \partial_s h\cdot  (-\frac{u_i}{\rho^3} |u^{\prime}| +
1_{i\ne 1} \frac 1 {\rho} \cdot \frac {u_i}{|u^{\prime}|})
\]
and 
\begin{eqnarray*}
\partial_{ij} h & =  & \partial_{\rho\rho} h \frac{u_i u_j}{\rho^2} + \frac{u_i}{\rho} \partial_{\rho s} h \cdot (-\frac{u_j}{\rho^3} |u^{\prime}|  + \partial_{\rho} h \cdot ( \frac {\delta_{ij}}{\rho} - \frac{u_i u_j}{\rho^3} ) + \frac{u_j} {\rho} \partial_{\rho s}  h\cdot  (-\frac{u_i}{\rho^3} |u^{\prime}| +
1_{i\ne 1} \frac 1 {\rho} \cdot \frac {u_i}{|u^{\prime}|}) \notag \\
&&   + 1_{j\ne 1} \frac 1 {\rho} \cdot \frac {u_j}{|u^{\prime}|})   + \partial_{ss} h\cdot  (-\frac{u_i}{\rho^3} |u^{\prime}| +
1_{i\ne 1} \frac 1 {\rho} \cdot \frac {u_i}{|u^{\prime}|})
\cdot  (-\frac{u_j}{\rho^3} |u^{\prime}| +
1_{j\ne 1} \frac 1 {\rho} \cdot \frac {u_j}{|u^{\prime}|}) \notag \\
&&  + \partial_s h \cdot (
-\frac{\delta_{ij}} {\rho^3} |u^{\prime}| + 
\frac{3u_i u_j}{\rho^5} |u^{\prime}| - \frac{u_i}{\rho^3} \frac{u_j}{|u^{\prime}|}
1_{j\ne 1} - \frac{u_j}{\rho^3} \frac{u_i}{|u^{\prime}|}
1_{i\ne 1} + 1_{i\ne 1} \frac 1 {\rho} \frac{\delta_{ij}}{|u^{\prime}|}
-1_{i \ne 1} 1_{j\ne 1} \frac 1 {\rho} \cdot \frac{u_i u_j} {|u^{\prime}|^3}).
\end{eqnarray*}
Then denoting $a=\xi \cdot \hat u$,  $b= \sum_{j\ne 1} \xi_j \cdot \frac{u_j}{|u^{\prime}|}$,
we have
\begin{eqnarray*}
\sum_{i,j=1}^n \xi_i \xi_j \partial_{ij } h
&=&  \partial_{\rho\rho} h \cdot a^2 
+ 2 a \partial_{\rho s} h \cdot( - \frac{a s}{\rho} +\frac b {\rho})  + \partial_{\rho} h \cdot ( \frac{|\xi|^2- |\xi \cdot \hat u|^2} {\rho} ) + \partial_{ss} h \cdot ( - \frac{ as}{\rho} +\frac{b}{\rho})^2 \notag \\
&&  + \partial_s h \cdot( -\frac{|\xi|^2}{\rho^2} s+ 3 \frac{a^2 s}{\rho^2} - 2\frac{ab}{\rho^2}+\frac{|\xi^{\prime}|^2} {\rho |u^{\prime}|} - \frac{b^2}{\rho |u^{\prime}|} ) \notag \\
&=&   \partial_{\rho\rho} h \cdot a^2 
+ 2 a \partial_{\rho s} h \cdot( - \frac{a s}{\rho} +\frac b {\rho})  \quad
+ \partial_{\rho} h \cdot ( \frac{|\xi|^2- |\xi \cdot \hat u|^2} {\rho} ) \notag \\
&& + \partial_{ss} h \cdot ( \frac {a^2 s^2-2abs}{\rho^2} )
+ (\partial_{ss} h-\frac 1s \partial_s h) \frac{b^2} {\rho^2}  + \partial_s h \cdot( 
-\frac{|\xi|^2}{\rho^2} s
+ 3 \frac{a^2 s}{\rho^2} - 2\frac{ab}{\rho^2}+\frac{|\xi^{\prime}|^2}
{\rho^2 s}  ).
\end{eqnarray*}
We should point it out that, in the above computation, one does not need to worry about the formal singularity
caused by $\frac 1 s$. Since 
$\partial_s h(\rho, s=0)=0$, we write
\begin{align*}
(\partial_s h)(\rho, s) \cdot
\frac {1}s 
= \frac { (\partial_s h)(\rho, s) -(\partial_s h)(\rho, 0) } s
= \int_0^1 (\partial_{ss} h) (\rho, \theta s) d\theta, \quad s>0.
\end{align*}
In particular we have
\begin{align*}
&\lim_{s\to 0^{+}}
(\partial_s h)(\rho, s)  \cdot \frac 1s
 =( \partial_{ss} h) (\rho, 0) ;\\
 &\Bigl| (\partial_{ss} h)(\rho,s)- \frac 1 s \partial_s h_1 (\rho,s) \Bigr|
 = O(s) \to 0, \quad \text{as $s\to 0$}.
\end{align*}
By using this observation and Lemma \ref{leAu31_61}, we obtain
\begin{align*}
\sum_{i,j=1}^n \xi_i \xi_j (\partial_{ij} h)(e_1)
&=  (\partial_{\rho\rho} h)(1,0) \cdot a^2\Bigr|_{a=\xi_1}
+ (\partial_{ss} h)(1,0) \cdot |\xi^{\prime}|^2 \notag \\
& \ge \gamma_0 \cdot |\xi|^2, \qquad \forall\, \xi \in \mathbb S^{n-1},
\end{align*}
where $\gamma_0>0$ is a constant. 
Now for $\| u -e_1\|_2 \ll 1$, by using Lemma \ref{leAu31_60} and
Lemma \ref{leAu31_61}, we have
\begin{align*}
 & \Bigl| \sum_{i,j=1}^n \xi_i \xi_j \Bigl( (\partial_{ij}h)(e_1)-
 (\partial_{ij} h)(u) \Bigr) \Bigr| \notag \\
 \lesssim & \; 
 |(\partial_{\rho\rho} h)(\rho, s) - (\partial_{\rho\rho} h)(1,0) |
 + | (\partial_{\rho\rho}h)(\rho,s)| \cdot |
 (\xi\cdot \hat u)^2-(\xi \cdot e_1)^2| 
 \notag \\
 &\quad+ | (\partial_{\rho s} h) (\rho, s)| \cdot (1+s) 
 + |(\partial_{\rho} h)(\rho, s)| +|\partial_{ss} h(\rho,s)| \cdot (s+s^2) \notag \\
 %& \qquad + |(\partial_{ss} h)(\rho, s) -(\partial_{ss} h)(1,0) |
 %+|(\partial_{ss} h)(1,0)| \cdot ( \frac 1 {\rho^2} -1 ) | \notag \\
 & \qquad + | \partial_s h(\rho, s)|
 + \Bigl| \frac  1 {\rho^2} \int_0^1 (\partial_{ss} h) (\rho, \theta s) d\theta
 - (\partial_{ss} h)(1,0) \Bigr| \notag \\
 & \qquad + \Bigl| (\partial_{ss} h)(\rho,s) - (\partial_s h) (\rho,s) \cdot \frac 1 s\Bigr|
 \cdot \frac {b^2} {\rho^2} \notag \\
 \lesssim & \; O(|\rho-1| + |s| +\| \hat u-e_1\|_2).
 \end{align*}
 It follows that if $\|u-e_1\|_2$ is sufficiently small, we then have
 \begin{align*}
 \sum_{i,j=1}^n \xi_i \xi_j (\partial_{ij} h)(u)
 \ge \frac {\gamma_0} 2 |\xi|^2, 
 \qquad\forall\, \xi \in \mathbb S^{n-1}.
 \end{align*}
\end{proof}

\subsection{Near the global minimizer:  strong convexity} 
$\,$

\vspace{0.2cm}

In this section we show strong convexity of the loss function $f(u)$ near the global
minimizer $u=\pm e_1$.

\begin{thm}[Strong convexity near the global minimizer] \label{Sep2e1}
There exist $0<\epsilon_0\ll 1$ and  positive constants $\beta_1, \gamma $ such that if
$m\gtrsim n$, then the following hold with probability at least $1- \frac {\beta_1}{m^2}$:

\begin{enumerate}
\item If $\|u -e_1\|_2\le \epsilon_0$, then 
\begin{align*}
\sum_{i,j=1}^n \xi_i \xi_j (\partial_{ij} f)(u) \ge \gamma>0, \qquad \forall\, \xi \in \mathbb
S^{n-1}.
\end{align*}
\item If $\|u +e_1\|_2\le \epsilon_0$, then 
\begin{align*}
\sum_{i,j=1}^n \xi_i \xi_j (\partial_{ij} f)(u) \ge \gamma>0, \qquad \forall\, \xi \in \mathbb
S^{n-1}.
\end{align*}
\end{enumerate}
In other words, $f(u)$ is strongly convex in a sufficiently small neighborhood of $\pm e_1$.
\end{thm}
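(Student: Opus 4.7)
The plan is to deduce the theorem from Theorem~\ref{thA31e50} via a uniform concentration argument. Theorem~\ref{thA31e50} already yields strict positive definiteness
$$
\sum_{i,j=1}^n \xi_i\xi_j \,\partial_{ij}(\mathbb{E} f)(u) \;=\; 2\sum_{i,j=1}^n \xi_i\xi_j \,\partial_{ij} h(u) \;\ge\; 2\gamma_1 \;>\; 0
$$
for every $\xi \in \mathbb{S}^{n-1}$ and every $u$ with $\|u\mp e_1\|_2\le \epsilon_0$. It therefore suffices to show that, with probability at least $1-\beta_1/m^2$, the deviation $\Delta(u):=\nabla^2 f(u)-\nabla^2(\mathbb{E} f)(u)$ has operator norm at most $\gamma_1$ uniformly over those balls; the claimed lower bound then follows with $\gamma:=\gamma_1$.

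Concretely I would carry this out in three steps. First, differentiate \eqref{model2e1} twice and write every entry of $\nabla^2 f(u)$ as an empirical average $\frac{1}{m}\sum_{j=1}^m Z_j(u)_{ik}$ of i.i.d.\ random matrices built from $a_j$. Because $\|u\|_2\ge 1-\epsilon_0$ on the region of interest, every denominator of the form $\sqrt{\beta|u|^2+(a_j\cdot u)^2}$ appearing after differentiation is bounded below deterministically by $\sqrt{\beta}(1-\epsilon_0)$; consequently each $Z_j(u)$ is dominated by a fixed polynomial in $\|a_j\|_2$ and has finite $p$-th moments uniform in $u$. Second, for each fixed $u$, apply a matrix moment inequality of Rosenthal / matrix Khintchine type to get
$$
\mathbb{E}\,\|\Delta(u)\|^{2p} \;\lesssim\; (n/m)^p,
$$
so that Markov's inequality yields $\|\Delta(u)\|\le \gamma_1$ with probability $\ge 1-C_p(n/(m\gamma_1^2))^p$; since $m\gtrsim n$, choosing $p$ a suitably large fixed constant gives probability $\ge 1-1/m^c$ for any prescribed $c$. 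Third, pass from pointwise to uniform control via an $\epsilon$-net of the ball $\{\|u-e_1\|_2\le\epsilon_0\}$ together with the sphere of $\xi$'s, of total cardinality $(1/\epsilon)^{O(n)}$, combined with a Lipschitz estimate $\|\nabla^2 f(u)-\nabla^2 f(u')\|\le L\,\|u-u'\|_2$ where $L$ is polynomial in $\max_j \|a_j\|_2\lesssim \sqrt{n}$ (an event of high probability). Taking $\epsilon$ polynomially small in $m$ absorbs the $(1/\epsilon)^{O(n)}$ factor against the $m^{-c}$ per-point bound.

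The main obstacle is the Lipschitz estimate and the balancing of net size against per-point probability. The relevant third derivatives of $f$ involve expressions such as $(\beta|u|^2+(a_j\cdot u)^2)^{-3/2}$ multiplied by cubic monomials in $a_j\cdot u$; although they are uniformly bounded once $|u|$ is bounded below, they carry factors that grow polynomially in $\|a_j\|_2$, which is exactly what forces us into a moment-based (rather than sharp exponential) concentration and is the source of the weaker rate $1-O(m^{-2})$ here, in contrast to the $1-e^{-cm}$ bounds used elsewhere in the paper. As the remark following Theorem~\ref{thmC} indicates, this trade-off is inessential and could in principle be upgraded to an exponential probability via a matrix Bernstein or chaining argument, but the present polynomial bound is already sufficient for Theorem~\ref{thmC}.
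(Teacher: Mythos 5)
Your overall strategy matches the paper's: reduce to showing $\nabla^2 f$ concentrates around $\nabla^2(\mathbb E f)$ uniformly in the two small balls, and then invoke Theorem~\ref{thA31e50} (indeed the paper splits $f = 2f_0 + \frac1m\sum_k((a_k\cdot u)^2+2\beta|u|^2)$ and applies Lemma~\ref{Sp1_1} to $f_0$). However, the moment-plus-covering argument you sketch does not close, and the gap is the central technical difficulty that the paper has to confront.

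The problem is the balance between the covering net and the per-point failure probability. Your $\epsilon$-net has cardinality $(1/\epsilon)^{O(n)}$, and you propose taking $\epsilon$ polynomially small in $m$, so the net has cardinality $m^{O(n)}$. With $p$ a fixed constant, the Rosenthal/Markov bound yields a per-point failure probability of order $C_p (n/m)^{p}$ (or at best $C_p m^{-p}$ after using independence more carefully), which is a fixed negative power of $m$. The union bound then costs $m^{O(n)} \cdot m^{-p}$, which diverges as $n\to\infty$ for any fixed $p$: having $m\gtrsim n$ only makes $n/m$ bounded, not vanishing. Pushing $p$ up to order $n$ to compensate makes the Rosenthal constant $C_p$ blow up factorially and destroys the estimate. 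In short, polynomial per-point tails are incompatible with an $e^{O(n)}$-sized net; you need exponential (sub-Gaussian/sub-exponential) concentration on the pieces that enter the union bound. This is exactly why the basic Bernstein + net argument, which works beautifully elsewhere in the paper, cannot be applied verbatim to all the Hessian terms of $f_0$: after two derivatives, terms like $A_k^{-3/2} B_k^{1/2} (a_k\cdot\xi)^2$ with $A_k = \beta|u|^2+(a_k\cdot u)^2$ have heavy (merely polynomial-moment) tails in the worst directions.

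What the paper actually does (Lemma~\ref{Sp1_1_0.1} and Lemma~\ref{Sp1_1}) is a truncation decomposition: each problematic summand is split using a smooth cutoff $\phi(a_k\cdot u /(\delta\langle a_k\cdot\xi\rangle))$ into a region near $a_k\cdot u = 0$ and its complement. On the complement the summands become uniformly sub-exponential and Lipschitz, so Bernstein plus an $\epsilon$-net gives exponential concentration (the union bound works because the failure probability is $e^{-cm}$, not polynomial). The near-origin piece is not sub-exponential, but its expectation can be made arbitrarily small by choosing $\delta$ small, and the residual fluctuation is controlled once (not uniformly via a net) by the fourth-moment Markov bound of Lemma~\ref{bp7}; this last step is the sole source of the $1-O(m^{-2})$ probability in the theorem. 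Without this cutoff structure you cannot separate ``bounded-and-Lipschitz, union-bound-friendly'' pieces from ``heavy-tailed, small-expectation'' pieces, and the covering argument you propose is exactly the step that breaks.

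A secondary concern, less essential, is the asserted matrix-moment estimate $\mathbb E\|\Delta(u)\|^{2p}\lesssim(n/m)^p$: this is not a standard consequence of matrix Rosenthal or Khintchine inequalities when the summands have only polynomial moments (no uniform operator-norm bound), and you would at minimum need a careful symmetrization/decoupling argument and would likely pick up extra $\log n$ or $n$ factors. But even granting this estimate, the covering obstruction above is fatal for the argument as written.
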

\begin{proof}
Recall 
\begin{align*}
f(u) = 2 f_0(u) + \frac 1m \sum_{k=1}^m( (a_k\cdot u)^2 +2\beta |u|^2) ,
\end{align*}
where 
\begin{align*}
f_0(u) =- \frac1m \sum_{k=1}^m \sqrt {\beta |u|^2+ (a_k\cdot u)^2}
\cdot \sqrt{\beta |u|^2+(a_k\cdot e_1)^2}.
\end{align*}
Clearly 
\begin{align*}
\sum_{i,j=1}^n \xi_i \xi_j \partial_{ij} f(u)
=2\Bigl( \frac 1m \sum_{k=1}^m |a_k \cdot \xi |^2 \Bigr) 
+ 4\beta|\xi |^2 +2 \sum_{i,j=1}^n \xi_i \xi_j \partial_{ij} f_0(u). 
\end{align*}
Obviously we have for $m\gtrsim n$, it holds with high probability that
\begin{align*}
\Bigl| \frac 1m \sum_{k=1}^m |a_k\cdot \xi|^2 
- 1 \Bigr| \le \frac{\epsilon}{100}, \qquad\forall\, \xi \in \mathbb S^{n-1}.
\end{align*}
By Lemma \ref{Sp1_1}, we have
\begin{align*}
\Bigl| \sum_{i,j=1}^n \xi_i \xi_j (\partial_{ij} f_0(u) -
\mathbb E \partial_{ij} f_0(u) )
\Bigr| \le \frac{\epsilon}{100}, \qquad\forall\, \xi \in \mathbb S^{n-1},
\;\forall\,  \frac 13 \le \|u \|_2 \le 3.
\end{align*}
Thus we have
\begin{align*}
\Bigl| \sum_{i,j=1}^n \xi_i \xi_j (\partial_{ij} f(u) -
\mathbb E \partial_{ij} f(u) )
\Bigr| \le \frac{\epsilon}{100}, \qquad\forall\, \xi \in \mathbb S^{n-1},
\;\forall\,  \frac 13 \le \|u \|_2 \le 3.
\end{align*}
The desired result then follows from Theorem \ref{thA31e50} by taking 
$\epsilon>0$ sufficiently small. 
\end{proof}
We now complete the proof of the main theorem.

\begin{proof}[Proof of Theorem \ref{thmC}]
We proceed in several steps. 
\begin{enumerate}
\item By Theorem \ref{thmAu28_1}, we see that
with high probability the function
$f(u)$ has non-vanishing gradient in the regimes
\begin{align*}
0<\|u\|_2 \le \frac {\sqrt{\beta}} {4(1+\beta)}=c_1
\end{align*}
and 
\begin{align*}
\|u\|_2 \ge 3 (1+\beta)=c_2.
\end{align*}
Moreover at the point $u=0$, we have the directional gradient
is strictly less than $-\sqrt{\beta}$ along any direction
$\xi \in \mathbb S^{n-1}$.

\item By Theorem \ref{Sep2e1}, there exists $\epsilon_0>0$ sufficiently
small, such that with probability at least
$1-O(m^{-2})$, $f(u)$ is strongly convex in the neighborhood
$\|u\pm e_1\|_2\le \epsilon_0$.  

\item By Theorem \ref{thA31_0a}, we have that  with high
probability
\begin{align*}
\| \nabla f\|_2 >0, 
\end{align*}
if $|\rho -1|\ge c(\eta_0)$  and $| |\hat u \cdot e_1| -1| \le \eta_0$. 
Here we recall $\rho =\|u\|_2$ and $u=\rho \hat u$.  Observe that
\begin{align*}
\| u\pm e_1\|_2^2 = (\rho-1)^2 + 2\rho (1\pm \hat u\cdot e_1).
\end{align*}
By taking $\eta_0=\epsilon_0^2/100$, we see that 
$\|u\pm e_1\|_2 >\epsilon_0$, $||\hat u\cdot e_1| -1|\le \eta_0$ must
imply
\begin{align*}
|\rho-1| >\frac {\epsilon_0}{10}.
\end{align*}
Thus it remains for us to treat the regime $\| |\hat u\cdot e_1|-1
|>\eta_0$, $c_1\le \|u\|_2\le c_2$. 

\item In the regime $||\hat u\cdot e_1|-1|>\eta_0$, $\|u\|_2\sim 1$,
we have by Theorem \ref{thmA31_1}, with high probability it holds that
either the function has a non-vanishing gradient at the point $u$, or the
gradient vanishes at $u$, but $f$ has a negative directional curvature
at this point.
\end{enumerate}
\end{proof}

\section{perturbed amplitude model \RNum{2}}  \label{S:model3}
In this section, we introduce the second perturbed amplitude model for solving phase retrieval problem and consider the global 
landscape of it. Specifically, we consider the following empirical loss for some parameter $\beta>0$, 
\begin{align} \label{model3e1}
f(u) &= \frac 1 m \sum_{j=1}^m \Bigl( \;\sqrt{\beta |u|^2+(a_j\cdot u)^2 +(a_j\cdot x)^2}
-\sqrt{ \beta |u|^2 +2 (a_j\cdot x)^2}\;\;  \Bigr)^2.
\end{align}

\begin{thm} \label{thmD}
Let $0<\beta <\infty$. 
Assume 
$\{a_i\}_{i=1}^m$ are i.i.d.  standard Gaussian random vectors and $x\ne 0$. 
There exist positive constants $c$, $C$ depending only on $\beta$, such that if $m\ge C n  $, then
with probability at least $1- e^{-cm} $ the loss function $f=f(u)$ 
defined by \eqref{model3e1}
has no spurious local minimizers. The only global minimizer is
$\pm  x$, and the loss function is strongly convex in a neighborhood of $\pm x$.
The point $u=0$ is a local maximum point with strictly negative-definite Hessian.
All other critical points are strict saddles, i.e., each saddle point has a neighborhood
where the function has negative directional curvature.
\end{thm}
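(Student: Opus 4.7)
The plan is to mirror the four-regime strategy developed in Section \ref{S:model2} for Theorem \ref{thmC}, exploiting the key new feature of \eqref{model3e1}: since each radicand is bounded below by the strictly positive quantity $(a_j\cdot x)^2$ (respectively $2(a_j\cdot x)^2$), the loss $f$ is now globally $C^\infty$ almost surely in $\{a_j\}$. Global smoothness permits uniform sub-exponential concentration on compact sets, which will upgrade the $1 - O(m^{-2})$ probability of Theorem \ref{thmC} to $1 - e^{-cm}$ and, in addition, allows us to treat the origin by a direct Hessian computation rather than by one-sided directional derivatives. Without loss of generality I take $x = e_1$ throughout.

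The first step, genuinely new compared to PAM1, is to show that $u = 0$ is a strict local maximum. Writing $g_j(u) = \sqrt{\beta|u|^2 + (a_j\cdot u)^2 + (a_j\cdot e_1)^2}$ and $h_j(u) = \sqrt{\beta|u|^2 + 2(a_j\cdot e_1)^2}$, both $\nabla g_j$ and $\nabla h_j$ vanish at $u=0$, so $\nabla f(0) = 0$. A short computation then yields
\begin{align*}
\nabla^2 f(0) = \frac{2(1 - \sqrt 2)}{m}\sum_{j=1}^m \Bigl[\beta\Bigl(1 - \frac{1}{\sqrt 2}\Bigr) I_n + a_j a_j^{T} \Bigr].
\end{align*}
Since $1 - \sqrt 2 < 0$ and matrix Bernstein gives $\|\frac{1}{m}\sum a_j a_j^T - I_n\|\le \delta$ on an event of probability at least $1 - e^{-cm}$, the matrix $\nabla^2 f(0)$ is negative-definite with a quantitative spectral gap.

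For large and intermediate $\rho := \|u\|_2$, I plan to replay the scheme of Sections 2.1--2.3. The derivative $\partial_\rho f$ has a structural form analogous to \eqref{partialrho}, so a Bernstein bound forces the leading linear-in-$\rho$ term to dominate when $\rho$ is large. In the annulus $c_1\le \rho \le c_2$ with $||\hat u\cdot e_1|-1|>\eta_0$, I introduce the coordinates $\hat u = t e_1 + \sqrt{1-t^2}\, e^\perp$ and analyze the PAM2 limiting profile $h_\infty(\rho, t) = \mathbb E\sqrt{\beta\rho^2 + \rho^2 X_t^2 + X_1^2}\sqrt{\beta\rho^2 + 2X_1^2}$, aiming to prove analogues of Lemma \ref{lemAu29_2}: namely $|\partial_t h_\infty|\gtrsim |t|$ for $0<|t|<1$ and $\partial_{tt} h_\infty\gtrsim 1$ for $|t|\ll 1$. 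Proposition \ref{Au29e1} is generic and, after uniform concentration, delivers the strict-saddle conclusion. Localization of $\rho$ when $||\hat u\cdot e_1|-1|\ll 1$ follows the blueprint of Theorem \ref{thA31_0a}: verify $(\partial_\rho f)(1, e_1) = 0$, prove a uniform lower bound on $\partial_{\rho\rho} f$ over compact $\rho$-intervals (the analogue of Lemma \ref{A31_e10}, genuinely simpler here because there is no origin singularity), and invoke a mean-value argument.

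The hardest step will be the strong convexity near $\pm e_1$, analogous to Theorems \ref{thA31e50} and \ref{Sep2e1}. My plan is to switch to the $(\rho, s)$ coordinates with $s = |u'|/\rho$, compute $\partial_{\rho\rho}$, $\partial_{\rho s}$, $\partial_{ss}$ of the expected loss at $(\rho, s) = (1, 0)$, and establish strict positive-definiteness of the limiting Hessian by explicit evaluation of the resulting Gaussian integrals. The main obstacle is that the extra $X_1^2$ inside the first radical of PAM2 destroys the clean product structure that simplified the PAM1 computation in Lemma \ref{leAu31_61}, so the integrals no longer factor and must be bounded one by one; verification of positive-definiteness at the minimizer should ultimately reduce to checking a single explicit one-dimensional integral inequality. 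Once this is in hand, the empirical Hessian is controlled by a standard $\epsilon$-net combined with sub-exponential Bernstein; since the loss and its first two derivatives are uniformly Lipschitz on compact sets, the probability becomes $1 - e^{-cm}$ rather than the $1 - O(m^{-2})$ of Lemma \ref{Sp1_1}. Combining all four regimes then rules out every spurious critical point and completes the proof.
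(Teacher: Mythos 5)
Your blueprint is mostly sound, and in two places it is actually more direct than the paper: your closed-form Hessian computation at the origin,
\begin{align*}
\nabla^2 f(0) = \frac{2(1 - \sqrt 2)}{m}\sum_{j=1}^m \Bigl[\beta\Bigl(1 - \tfrac{1}{\sqrt 2}\Bigr) I_n + a_j a_j^{T} \Bigr],
\end{align*}
is correct (I re-derived it: $\nabla g_j(0)=\nabla h_j(0)=0$, $g_j(0)-h_j(0)=(1-\sqrt 2)|a_j\cdot e_1|$, and $\nabla^2 g_j(0)-\nabla^2 h_j(0)=|a_j\cdot e_1|^{-1}[\beta(1-1/\sqrt 2)I_n+a_ja_j^T]$), whereas the paper extracts the negative Hessian as a limit $t\to 0^+$ of the radial-derivative bound from Lemma \ref{Sep3_e2}. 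Likewise, the probability upgrade from $1-O(m^{-2})$ to $1-e^{-cm}$ is real, though the precise mechanism is less about Lipschitzness of the loss and more that the additive $(a_j\cdot e_1)^2$ inside the first radical damps the heavy-tailed pieces (e.g.\ the $|X_j|^3$-type terms that forced Lemma \ref{bp7} in the PAM1 argument) down to sub-exponential summands, which is what Lemmas \ref{lemSep5b_1} and \ref{Sep6Sp1_1} exploit.

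However, there is a genuine gap in your plan for localizing $\rho$ when $||\hat u\cdot e_1|-1|\ll 1$. You propose to ``prove a uniform lower bound on $\partial_{\rho\rho} f$ over compact $\rho$-intervals (the analogue of Lemma \ref{A31_e10})'' and then run the mean-value argument of Theorem \ref{thA31_0a}. This fails: unlike PAM1, the PAM2 loss is \emph{not} convex in $\rho$ on the intermediate annulus. Indeed, since you have just shown that $u=0$ is a strict local maximum, the radial slice $\rho\mapsto f(\rho e_1)$ starts at a local max, decreases, and then rises to the minimum at $\rho=1$; its derivative $\partial_\rho f$ is zero at $\rho=0$, negative on $(0,1)$, and zero at $\rho=1$, so it cannot be monotone, forcing $\partial_{\rho\rho}f<0$ somewhere in $(0,1)$. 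Concretely, taking $\beta\to 0$ and $\hat u=e_1$, each summand is $y(\sqrt{\rho^2+1}-\sqrt 2)^2$ with $y=(a_j\cdot e_1)^2$, whose second $\rho$-derivative is negative for all $\rho<\sqrt{2^{1/3}-1}\approx 0.51$, uniformly in $y$ — so no Bernstein averaging can rescue the sign. For small $\beta$ this region of radial concavity overlaps the interval $[R_2,R_1]$ you need to cover, so the proposed Lemma \ref{A31_e10}-analogue is false. The paper's workaround is precisely the substitution $R=\rho^2$ in Theorem \ref{Sep3_e3}: one writes $F(R,s,t)=R(s+2\beta)+3t-2\sqrt{R(\beta+s)+t}\sqrt{\beta R+2t}$ and shows $\partial_{RR}F>0$ for all $R>0$, $s\ge 0$, $t>0$ (convexity holds in $R$ even though it fails in $\rho$), then bootstraps the comparison $\partial_R F(R,s,t)\approx\partial_R F(R,t,t)$ for $\hat u$ near $\pm e_1$ together with the quantitative lower bound $\partial_{RR}F(R,t,t)\sim t^2/(1+t)$. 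Replacing your proposed convexity-in-$\rho$ step with this convexity-in-$R$ argument is the one substantive change required to make your proof go through.
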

\begin{remark}
One should note that the set  $\bigcup_{j=1}^m \{ a_j\cdot x=0 \}$ 
has measure zero. Therefore for a typical realization, $a_j\cdot x$ is always non-zero for all $j$ and
the function 
\begin{align*}
\tilde f_j (y) =\sqrt{y^2+(a_j\cdot x)^2}
\end{align*}
is smooth. In particular, we can compute (for each realization) the derivatives of
the summands in \eqref{model3e1} without any problem. 
\end{remark}

\begin{remark}
Thanks to the regularization term $(a_j\cdot x)^2$, our new model \eqref{model3e1} enjoys
a better probability concentration bound $1-e^{-cm}$ than the model \eqref{model2e1}
where the weaker probability concentration $1-O(m^{-2})$ is proved. 
\end{remark}

Wthout loss of generality we shall assume $x=e_1$ throughout the rest of the proof.

\subsection{The regimes $\|u\|_2\ll 1 $ and $\|u\|_2\gg 1$ are fine}$\;$

Write  $u=\rho \hat u$ where $\hat u \in S^{n-1}$.   Then
\begin{align*}
&\Bigl(\; \sqrt{\beta |u|^2 + (a_j\cdot u)^2+|a_j\cdot e_1|^2 }- \sqrt{\beta |u|^2 +2(a_j\cdot e_1)^2} 
\;\; \Bigr)^2
\notag \\
=\;&\;   \rho^2 ( (a_j\cdot \hat u)^2 + 2 \beta) +3(a_j\cdot e_1)^2
-2  \sqrt{\beta \rho^2 + \rho^2(a_j\cdot \hat u)^2+(a_j\cdot e_1)^2} \sqrt{ \beta \rho^2 +2(a_j\cdot e_1)^2}.
\end{align*}
Thus, the derivative of $f$ is
\begin{eqnarray}
\partial_{\rho} f &= & 2\rho \frac 1m \sum_{j=1}^m \Bigl(
( (a_j \cdot \hat u)^2 +2\beta) -\frac{\beta+(a_j\cdot \hat u)^2} {\sqrt{\beta \rho^2+\rho^2(a_j\cdot \hat u)^2
+(a_j\cdot e_1)^2} }\sqrt{\beta \rho^2 + 2(a_j\cdot e_1)^2}  \notag \\
&&- \sqrt{\beta \rho^2+ \rho^2(a_j \cdot \hat u)^2+(a_j\cdot e_1)^2} \frac { \beta}
{\sqrt{\beta \rho^2 + 2(a_j\cdot e_1)^2}} \Bigr). \label{mod2partf}
\end{eqnarray}

\begin{lem}[The regime $\rho\gg 1$ is OK] \label{Sep3_e1}
There exist constants $R_1=R_1(\beta)>0$, $d_1=d_1(\beta)>0$ such that the 
following hold:  For $m\gtrsim n$,  with high probability it holds that
\begin{align*}
\partial_{\rho} f \ge d_1 \rho ,  \quad\forall\, \rho \ge R_1 , \;\text{}\forall\, \hat u \in \mathbb S^{n-1}.
\end{align*}
\end{lem}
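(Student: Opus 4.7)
The plan is to study the behavior of $\partial_{\rho} f/(2\rho)$ as $\rho \to \infty$, identify a strictly positive pointwise limit, and then control the finite-$\rho$ deviation together with the uniformity in $\hat u$. Write $A_j = a_j \cdot \hat u$ and $X_j = a_j \cdot e_1$, so from \eqref{mod2partf} we have $\partial_{\rho} f = 2\rho \cdot \frac{1}{m}\sum_{j=1}^m S_j(\rho,\hat u)$, where
\begin{align*}
S_j(\rho,\hat u) = A_j^2 + 2\beta - \frac{(\beta+A_j^2)\sqrt{\beta\rho^2 + 2X_j^2}}{\sqrt{\beta\rho^2 + \rho^2 A_j^2 + X_j^2}} - \frac{\beta\sqrt{\beta\rho^2 + \rho^2 A_j^2 + X_j^2}}{\sqrt{\beta\rho^2 + 2X_j^2}}.
\end{align*}

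First I would compute the pointwise limit as $\rho \to \infty$: factoring out $\rho$ inside the square roots and Taylor-expanding $\sqrt{1+\epsilon}$ gives
\begin{align*}
S_j^{\infty}(\hat u) := \lim_{\rho\to\infty} S_j(\rho,\hat u) = A_j^2 + 2\beta - 2\sqrt{\beta(\beta + A_j^2)},
\end{align*}
together with the quantitative bound
\begin{align*}
|S_j(\rho,\hat u) - S_j^{\infty}(\hat u)| \le \frac{C_\beta \sqrt{\beta+A_j^2}\, X_j^2}{\rho^2} \le \frac{C'_\beta (1 + |A_j|)\, X_j^2}{\rho^2}
\end{align*}
for $\rho \ge 1$. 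The routine concentration $\frac{1}{m}\sum_j(1+|A_j|)X_j^2 \le C''_\beta$ uniformly over $\hat u \in \mathbb{S}^{n-1}$ (via Bernstein plus an $\epsilon$-net on the sphere, which needs $m \gtrsim n$) then gives
\begin{align*}
\sup_{\hat u \in \mathbb{S}^{n-1}} \Bigl| \frac 1m \sum_{j=1}^m (S_j(\rho,\hat u) - S_j^{\infty}(\hat u)) \Bigr| \le \frac{K_\beta}{\rho^2}, \quad \forall\, \rho \ge 1,
\end{align*}
with high probability.

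Next I would lower-bound the expectation of the limiting profile. Since $A_j \sim \mathcal N(0,1)$ for fixed $\hat u$, Jensen's inequality applied to $x \mapsto \sqrt{\beta+x}$ gives
\begin{align*}
\mathbb{E}[S_j^{\infty}] = 1 + 2\beta - 2\sqrt{\beta}\, \mathbb{E}\sqrt{\beta + A_j^2} \ge 1 + 2\beta - 2\sqrt{\beta(\beta+1)}.
\end{align*}
The identity $(1+2\beta)^2 - 4\beta(\beta+1) = 1$ then yields $\mathbb{E}[S_j^{\infty}] \ge (1+2\beta + 2\sqrt{\beta(\beta+1)})^{-1} =: 2d_\beta > 0$. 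Applying Bernstein's inequality to $S_j^{\infty}(\hat u)$ (a sub-exponential variable in $A_j$, as $|S_j^\infty| \lesssim 1 + A_j^2$) and the standard $\epsilon$-net argument on $\mathbb{S}^{n-1}$, using $m \gtrsim n$, gives with high probability
\begin{align*}
\inf_{\hat u \in \mathbb{S}^{n-1}} \frac 1m \sum_{j=1}^m S_j^{\infty}(\hat u) \ge d_\beta.
\end{align*}

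Combining the two bounds, with high probability for every $\hat u \in \mathbb{S}^{n-1}$ and $\rho \ge 1$,
\begin{align*}
\frac{\partial_\rho f}{2\rho} \ge d_\beta - \frac{K_\beta}{\rho^2}.
\end{align*}
Choosing $R_1 = R_1(\beta)$ so that $K_\beta/R_1^2 \le d_\beta/2$ gives $\partial_\rho f \ge d_\beta \cdot \rho$ for all $\rho \ge R_1$, which is the claim with $d_1 = d_\beta$.

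The main obstacle is ensuring both concentration inequalities (for $S_j^\infty$ and for the error term) hold uniformly in $\hat u \in \mathbb{S}^{n-1}$; this requires the Lipschitz/sub-exponential control of $S_j^\infty(\hat u)$ and $S_j(\rho,\hat u)$ as functions of $\hat u$, combined with an $\epsilon$-net of cardinality $(3/\epsilon)^n$, which is exactly where the condition $m \gtrsim n$ enters. The Jensen step itself is clean; the tracking of the $O(X_j^2/\rho^2)$ error uniformly in both $\rho$ and $\hat u$ is the one place that requires a little care.
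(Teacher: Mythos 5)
Your overall strategy is the same as the paper's: write $\tfrac{1}{2\rho}\partial_\rho f$ as an average of $S_j(\rho,\hat u)$, identify the $\rho\to\infty$ limiting profile $S_j^\infty = (\sqrt{\beta+A_j^2}-\sqrt\beta)^2$, lower-bound its empirical average uniformly, and show the deviation is small for large $\rho$. Your Jensen computation of $\mathbb E[S_j^\infty]$ and the clean identity $(1+2\beta)^2 - 4\beta(\beta+1)=1$ are correct, and the per-sample bound $|S_j - S_j^\infty| \lesssim_\beta \sqrt{\beta+A_j^2}\,X_j^2/\rho^2$ is also correct.

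The gap is in the concentration step. You need $\sup_{\hat u}\frac 1m\sum_j (1+|A_j|)X_j^2 \le C$ with probability $1-e^{-cm}$ and invoke Bernstein plus a net. But $(1+|A_j|)X_j^2$ is a product of a sub-Gaussian and a sub-exponential factor, hence only sub-Weibull of order $2/3$; it is not sub-exponential and Bernstein's inequality does not apply to it. The one-big-jump lower bound $\mathbb P\bigl(\frac 1m\sum_j |A_j|X_j^2 > C\bigr)\gtrsim \mathbb P(|A_1|X_1^2 > Cm)\sim e^{-cm^{2/3}}$ shows the best achievable failure probability is $e^{-cm^{2/3}}$, not $e^{-cm}$, and once you union over the $e^{Cn}$-size net you need $m^{2/3}\gtrsim n$, i.e. $m\gtrsim n^{3/2}$ rather than $m\gtrsim n$. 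So as written the proof cannot deliver either the stated probability $1-e^{-cm}$ or the stated sampling rate. The paper sidesteps this by not chasing the full $O(1/\rho^2)$ decay: it applies $\sqrt{a+b}\le\sqrt a+\sqrt b$ to $(\beta+Z_j^2)\sqrt{Q/P}$ to extract an $O(1/\rho)$ error that is controlled by $\sqrt 2\,|X_j|\sqrt{\beta+Z_j^2} \le X_j^2+Z_j^2+\beta$, a purely quadratic (hence sub-exponential) quantity, and it never estimates the error in the $\beta\sqrt{P/Q}$ term because its deviation from the limit is one-sided in the helpful direction. Trading one power of $\rho$ for one degree in the Gaussians is exactly what makes the high-probability bound go through; your version gains a power of $\rho$ you don't need and pays for it with a heavier-tailed error.
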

\begin{proof}
We only sketch the proof.  Denote $X_j =a_j\cdot e_1$ and $Z_j= a_j \cdot \hat u$. 
We next gives several estimation bounds for the terms of $\partial_\rho f$. We first establish an upper bound for the second term.
Before proceeding,  observe that 
\begin{eqnarray*}
    \frac{\beta \rho^2 +2X_j^2} { (\beta +Z_j^2) \rho^2 +X_j^2}
   - \frac {\beta}{\beta +Z_j^2} & =& \frac {\beta X_j^2 + 2 X_j^2 Z_j^2} { \Bigl( (\beta +Z_j^2) \rho^2 +X_j^2 \Bigr)
\cdot (\beta +Z_j^2) } \notag \\
& \le & \frac {2X_j^2} {(\beta +Z_j^2) \rho^2 +X_j^2} \\
& \le &  \frac 1 {\rho^2} \cdot \frac {2X_j^2} {\beta +Z_j^2},
\end{eqnarray*}
which means
\begin{align*}
(\beta +Z_j^2) \cdot \frac {\sqrt{\beta \rho^2+2X_j^2} }{\sqrt{\rho^2(\beta +Z_j^2) +X_j^2}}
  & \le (\beta +Z_j^2) \Bigl( \sqrt{\frac {\beta}{\beta +Z_j^2} }+
  \frac 1 {\rho} \frac {\sqrt 2 |X_j|} {\sqrt{\beta +Z_j^2} } \Bigr) \notag \\
  & \le \sqrt{\beta} \sqrt{\beta+Z_j^2} + \frac 1 {\rho}
  \sqrt{2} |X_j| \sqrt{\beta +Z_j^2},
\end{align*}
where we use the fact $\sqrt{a+b} \le \sqrt{a}+\sqrt{b}$ for any positive number $a,b$ in the first inequality.
For the third term, it is easy to see that
\begin{align*}
\sqrt{\rho^2(\beta+Z_j^2)+X_j^2}
\cdot \frac {\beta}{\sqrt{\beta \rho^2+2X_j^2}} \le \sqrt{\beta}
\cdot \sqrt{\beta+Z_j^2}.
\end{align*}
Putting the above two estimators into \eqref{mod2partf}, we get
\begin{align*}
\frac 1{2\rho} \partial_{\rho} f
&\ge \frac 1m \sum_{j=1}^m \Bigl(  Z_j^2+2\beta
-2\sqrt{\beta} \sqrt{\beta+Z_j^2} \Bigr)
-\frac 1 {\rho} \sqrt{2} \frac 1m \sum_{j=1}^m 
|X_j| \cdot \sqrt{\beta+Z_j^2} \notag \\
& \ge \frac 1m \sum_{j=1}^m (\sqrt{Z_j^2+\beta}-\sqrt{\beta})^2
-\frac 1 {\rho} \cdot \frac 1m \sum_{j=1}^m (X_j^2+Z_j^2+\beta).
\end{align*}
By Bernstein's inequality and simple union bound arguments, we clearly have  with high probability, 
\begin{align*}
&\Bigl|  \frac 1m \sum_{j=1}^m (\sqrt{Z_j^2+\beta}-\sqrt{\beta})^2
-\operatorname{mean} \Bigr| \le \epsilon,
\quad \forall\, \hat u \in \mathbb S^{n-1}; \\
&\frac 1m \sum_{j=1}^m (X_j^2+Z_j^2+\beta)
\le 3+\beta, \qquad \forall\, \hat u \in \mathbb S^{n-1}.
\end{align*}
The desired result then clearly follows.
\end{proof}

\begin{lem}[The regime $\|u\|_2 \ll 1 $ is OK] \label{Sep3_e2}
There exist constants $R_2=R_2(\beta)>0$, $d_2=d_2(\beta)>0$ such that the 
following hold: 
For $m\gtrsim n$,  with high probability it holds that
\begin{align*}
\partial_{\rho} f \le -d_2 \rho<0 ,  \quad\forall\, 0<\rho \le R_2 , \;\text{}\forall\, \hat u \in \mathbb S^{n-1}.
\end{align*}
Moreover, at $u=0$, we have $\nabla f (0)=0$, and
\begin{align*}
\sum_{k,l=1}^n  \xi_k \xi_l (\partial_{kl} f)(0)  \le - d_2 \| \xi\|_2^2,
\qquad\forall\, \xi \in \mathbb S^{n-1}.
\end{align*}
In yet other words, $u=0$ is a strict local maximum point with strictly negative definite Hessian. 

\end{lem}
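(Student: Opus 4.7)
The plan has two stages: first establish the ``at $u=0$'' part of the claim (vanishing gradient, strictly negative-definite Hessian) by direct computation, then propagate strict concavity in the radial direction into a small ball $\|u\|_2\le R_2$ using the formula \eqref{mod2partf}. The hard part will be the second stage, because higher derivatives of each summand of $f$ blow up like negative powers of $|X_j|=|a_j\cdot e_1|$, and since $\mathbb{E}[X_j^{-2}]=\infty$ a naive Taylor expansion of $f$ in $u$ does not concentrate; I will handle this by splitting the sum according to the size of $|X_j|$.

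For the Hessian at $u=0$, write $g_j(u)=\sqrt{\beta|u|^2+(a_j\cdot u)^2+X_j^2}-\sqrt{\beta|u|^2+2X_j^2}$ with $X_j=a_j\cdot e_1$. Since $X_j\ne 0$ almost surely, $g_j$ is smooth near $0$, with $g_j(0)=(1-\sqrt 2)|X_j|$ and $\nabla g_j(0)=0$; so $\nabla f(0)=0$. A short calculation yields $\nabla^2 g_j(0)=|X_j|^{-1}\bigl[\beta(1-1/\sqrt 2)\,I+a_ja_j^T\bigr]$ and consequently
\[
\nabla^2 f(0)= \frac{2}{m}\sum_{j=1}^m g_j(0)\,\nabla^2 g_j(0) = 2(1-\sqrt 2)\Bigl(\beta(1-1/\sqrt 2)\,I + \frac{1}{m}\sum_{j=1}^m a_ja_j^T\Bigr).
\]
The signs line up: $1-\sqrt 2<0$ and $1-1/\sqrt 2>0$. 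By matrix Bernstein, $\frac{1}{m}\sum_j a_ja_j^T\succeq(1-\epsilon)I$ with probability $\ge 1-e^{-cm}$ when $m\gtrsim n$, whence $\xi^T\nabla^2 f(0)\xi\le -d_2\|\xi\|_2^2$ uniformly in $\xi\in\mathbb S^{n-1}$ for some $d_2=d_2(\beta)>0$.

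For $0<\rho\le R_2$, use \eqref{mod2partf} in the form $\frac{1}{2\rho}\partial_\rho f=\frac{1}{m}\sum_j B_j(\hat u,\rho)$, where $B_j(\hat u,\rho)$ denotes the bracketed expression there. At $\rho=0$ one finds
\[
B_j(\hat u,0)=(1-\sqrt 2)\,Z_j^2+\beta\bigl(2-\sqrt 2-1/\sqrt 2\bigr),\qquad Z_j=a_j\cdot\hat u,
\]
and \emph{both} coefficients are strictly negative (the second equals $\beta(2-3\sqrt 2/2)<0$). Bernstein's inequality together with an $\epsilon$-net on $\mathbb S^{n-1}$ gives $\frac{1}{m}\sum_j Z_j^2\in[1-\epsilon,1+\epsilon]$ uniformly, so $\frac{1}{m}\sum_j B_j(\hat u,0)\le -c_0$ for some $c_0=c_0(\beta)>0$, with high probability.

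The remaining task is to bound the perturbation $\frac{1}{m}\sum_j[B_j(\hat u,\rho)-B_j(\hat u,0)]$ for small $\rho$. Setting $s_j=\sqrt{\beta\rho^2+2X_j^2}$, $r_j=\sqrt{\rho^2(\beta+Z_j^2)+X_j^2}$ and $\alpha_j=\rho^2/X_j^2$, the identities
\[
\frac{s_j^2}{r_j^2}-2=-\frac{\alpha_j(\beta+2Z_j^2)}{\alpha_j(\beta+Z_j^2)+1},\qquad \frac{r_j^2}{s_j^2}-\tfrac{1}{2}=\frac{\alpha_j(\beta+2Z_j^2)}{2(\alpha_j\beta+2)}
\]
yield the Taylor-type estimate $|B_j(\hat u,\rho)-B_j(\hat u,0)|\lesssim (1+Z_j^2)^2\alpha_j$ (with $\beta$-dependent implicit constant), together with a uniform-in-$\rho$ bound $|B_j(\hat u,\rho)|\lesssim 1+Z_j^2$ coming from the monotone ranges $s_j/r_j\in[\sqrt{\beta/(\beta+Z_j^2)},\sqrt 2]$ and $r_j/s_j\in[1/\sqrt 2,\sqrt{(\beta+Z_j^2)/\beta}]$. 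Split the sum at $|X_j|=\sqrt\rho$: on $\{|X_j|>\sqrt\rho\}$, $\alpha_j<\rho$ and Bernstein on $\frac{1}{m}\sum_j(1+Z_j^2)^2$ gives total contribution $\lesssim \rho$; on $\{|X_j|\le\sqrt\rho\}$, Gaussian anti-concentration of $X_j$ shows the cardinality is $\lesssim m\sqrt\rho$ with high probability, and the uniform bound contributes $\lesssim \sqrt\rho$. Thus $\sup_{\hat u}\bigl|\frac{1}{m}\sum_j [B_j(\hat u,\rho)-B_j(\hat u,0)]\bigr|\le C\sqrt\rho$ with high probability. Choosing $R_2=R_2(\beta)$ with $C\sqrt{R_2}\le c_0/2$ gives $\frac{1}{m}\sum_j B_j(\hat u,\rho)\le -c_0/2$ for $\rho\in(0,R_2]$, hence $\partial_\rho f\le -d_2\rho$ with $d_2=c_0$.
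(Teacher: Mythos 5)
Your two-stage plan is conceptually aligned with the paper's: both evaluate the bracketed term $B_j$ at $\rho=0$, where $s_j/r_j=\sqrt 2$ and $r_j/s_j=1/\sqrt 2$, obtain the strictly negative main term $(1-\sqrt 2)Z_j^2+\beta(2-\tfrac{3}{\sqrt2})$, and then bound the perturbation; your algebraic identities for $s_j^2/r_j^2-2$ and $r_j^2/s_j^2-\tfrac12$ are exactly the ones the paper uses. Moreover, your direct computation of $\nabla^2 f(0)=2(1-\sqrt 2)\bigl[\beta(1-1/\sqrt 2)I+\tfrac1m\sum a_ja_j^T\bigr]$ is correct and actually cleaner than the paper's one-line remark (``take $u=t\xi$ and rerun as $t\to 0^+$'').

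The gap is in your estimate of the perturbation. First, the bound $|B_j(\hat u,\rho)-B_j(\hat u,0)|\lesssim(1+Z_j^2)^2\alpha_j$ contains a $Z_j^4$ factor, and for Gaussian $Z_j$ the variable $Z_j^4$ is \emph{not} sub-exponential; invoking ``Bernstein on $\tfrac1m\sum(1+Z_j^2)^2$'' uniformly over $\hat u\in\mathbb S^{n-1}$ does not yield $1-e^{-cm}$ probability under $m\gtrsim n$ (it would either degrade the probability, as in Lemma~\ref{bp7}, or force $m\gtrsim n^2$). Second, the $\rho$-dependent split at $|X_j|=\sqrt\rho$ is delicate: the claim that $\#\{j:|X_j|\le\sqrt\rho\}\lesssim m\sqrt\rho$ simultaneously for all $\rho\in(0,R_2]$ with probability $1-e^{-cm}$ is not immediate (for tiny $\rho$ this would force the count to be zero on an event of only moderate probability), and the restricted sum $\tfrac1m\sum_{|X_j|\le\sqrt\rho}(1+Z_j^2)$ must be controlled uniformly in \emph{both} $\rho$ and $\hat u$, which is not a single Bernstein application. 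The paper's Lemma~\ref{ApMod3E1} sidesteps both points: it truncates first in $Z_j$ at a fixed scale $K$, so the residual large-$Z_j$ piece is bounded by the sub-exponential quantity $\sqrt 2(\beta+Z_j^2)(1-\phi(Z_j/K))$ and admits an $\epsilon$-net argument with $1-e^{-cm}$ probability, while the truncated piece depends on $Z_j$ only through a bounded factor and is dominated by $\rho\,C_{K,\beta}/\sqrt{\beta\rho^2+X_j^2}$. It then cuts off $X_j$ at a \emph{fixed} threshold $\eta$ (not $\rho$-dependent), so the small-$X_j$ contribution $E_{K,\beta}\,\tfrac1m\sum\phi(X_j/\eta)$ is a bounded sum not involving $\hat u$, handled by plain Bernstein; the remaining deterministic factor $\rho\,C_{K,\beta}/\eta$ is killed last by choosing $R_2$ small. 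Your scheme can likely be repaired along these lines (truncate $Z_j$ to kill the $Z_j^4$ factor and replace $\sqrt\rho$ by a fixed $\eta$ chosen before $R_2$), but as written the concentration steps do not go through at the claimed probability level.
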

\begin{proof}
We only sketch the proof.  Again denote $X_j =a_j\cdot e_1$ and $Z_j= a_j \cdot \hat u$. 
Observe that 
\begin{align*}
 \frac{\beta \rho^2 +2X_j^2} {\rho^2 (\beta +Z_j^2) +X_j^2}
=2 - \frac {\rho^2 (\beta+2Z_j^2)} {\rho^2(\beta+Z_j^2) +X_j^2}
\end{align*}
and 
\begin{align*}
&(\beta+Z_j^2) 
\sqrt{ \frac{\beta \rho^2 +2X_j^2} {\rho^2 (\beta +Z_j^2) +X_j^2}}  \ge  \sqrt{2} (\beta +Z_j^2)
- (\beta+Z_j^2) \sqrt{ 
\frac {\rho^2(\beta+2Z_j^2)} {\rho^2(\beta+Z_j^2) +X_j^2} }.
\end{align*}
On the other hand,
\begin{align*}
\frac { \rho^2(\beta+Z_j^2) +X_j^2}
{\beta \rho^2 +2X_j^2}
-\frac 12 = \frac 12 \cdot \frac {\rho^2(\beta+2Z_j^2)}
{\beta \rho^2 +2 X_j^2} \ge 0.
\end{align*}
Thus 
\begin{align*}
\frac 1 {2\rho}
\partial_{\rho} f
&\le \frac 1m \sum_{j=1}^m (Z_j^2 +2\beta -\sqrt{2}\cdot ({\beta+Z_j^2})
-\frac 1 {\sqrt 2} \beta)   + \frac 1m \sum_{j=1}^m 
(\beta+Z_j^2) \sqrt{ 
\frac {\rho^2(\beta+2Z_j^2)} {\rho^2(\beta+Z_j^2) +X_j^2} }.
\end{align*}
Since 
\begin{align*}
Z_j^2 +2\beta -\sqrt{2}\cdot ({\beta+Z_j^2})
-\frac 1 {\sqrt 2} \beta 
= -(\sqrt 2-1) Z_j^2 - (\sqrt 2+ \frac 1 {\sqrt 2} -2) \beta,
\end{align*}
the first summand clearly gives a nontrivial negative lower bound. 
The desired result then follows from Lemma \ref{ApMod3E1}.
We note that the result for $u=0$ follows by taking $u= t \xi$ and re-run the above
argument taking $t\to 0+$.  
\end{proof}

\begin{thm}[The regimes $\|u\|_2\ll 1$
and $\|u\|_2\gg 1$ are OK] \label{thmSep3_1}
For $m\gtrsim n$, with high probability the following hold:
\begin{enumerate}
\item We have 
\begin{align*}
&\partial_{\rho} f \ge d_1 \rho>0 , \qquad\forall\, \rho\ge R_1, 
\quad \forall\, \hat u \in \mathbb S^{n-1};\\
&\partial_{\rho} f\le -d_2 \rho<0, \qquad\forall\, 0<\rho\le R_2,\quad\forall\, \hat u
\in \mathbb S^{n-1},
\end{align*}
where $d_1$, $d_2$, $R_1$, $R_2$ are constants depending only on $\beta$. 
\item The point $u=0$ is a local maximum point with strictly negative-definite
Hessian, 
\begin{align*}
\sum_{k,l=1}^n  \xi_k \xi_l (\partial_{kl} f)(0) \le -d_2 <0, 
\qquad \forall\, \xi \in \mathbb S^{n-1}. 
\end{align*}
\end{enumerate}
\end{thm}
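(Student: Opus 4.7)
The plan is essentially to assemble the conclusions of Lemmas \ref{Sep3_e1} and \ref{Sep3_e2}, which have already done the heavy lifting. Part (1) is an immediate concatenation: the lower bound $\partial_{\rho} f \ge d_1 \rho$ on the regime $\rho \ge R_1$ is exactly the content of Lemma \ref{Sep3_e1}, while the upper bound $\partial_{\rho} f \le -d_2 \rho$ on $0<\rho \le R_2$ is exactly the first assertion of Lemma \ref{Sep3_e2}. The constants $d_1,d_2,R_1,R_2$ depend only on $\beta$ as indicated, inherited verbatim from those lemmas.

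For Part (2), the statement that $u=0$ is a strict local maximum with negative-definite Hessian is already contained in the second half of Lemma \ref{Sep3_e2}: one evaluates $\nabla^2 f$ at the origin (where the summands are smooth since a.s.\ $a_j\cdot e_1\ne 0$ for all $j$), and the same uniform empirical-process bounds used to control $\partial_\rho f$ near $\rho=0$ translate, via taking $u=t\xi$ and letting $t\to 0^+$, into the quadratic-form bound $\sum_{k,l}\xi_k\xi_l(\partial_{kl}f)(0)\le -d_2\|\xi\|_2^2$ uniformly over $\xi\in\mathbb S^{n-1}$.

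The only nontrivial step is a union bound: each of the two high-probability events furnished by Lemmas \ref{Sep3_e1} and \ref{Sep3_e2} holds with probability at least $1 - Ce^{-cm}$, so their intersection also holds with high probability (with possibly adjusted constants $C,c>0$ depending on $\beta$). One then chooses the final $d_1,d_2,R_1,R_2$ to be the smaller/larger of the two lemmas' values as appropriate so that both conclusions are valid simultaneously on the common event.

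The real obstacle was already resolved inside the two lemmas themselves, namely the uniform-in-$\hat u\in\mathbb S^{n-1}$ control of the empirical averages involving the quantities $\sqrt{\beta\rho^2+\rho^2(a_j\cdot\hat u)^2+(a_j\cdot e_1)^2}$ and $(\beta+Z_j^2)\sqrt{(\beta\rho^2+2X_j^2)/(\rho^2(\beta+Z_j^2)+X_j^2)}$, handled via Bernstein-type concentration together with covering arguments and the auxiliary Lemma \ref{ApMod3E1}. Here we do not revisit those computations; assembly of the two lemmas is all that remains.
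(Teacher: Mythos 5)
Your proposal is correct and matches the paper's own proof exactly: the paper simply cites Lemmas \ref{Sep3_e1} and \ref{Sep3_e2} and leaves the routine assembly (the union bound, the observation that the Hessian bound at $u=0$ is the second assertion of Lemma \ref{Sep3_e2}) implicit, which you have spelled out faithfully.
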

\begin{proof}
This follows from Lemmas \ref{Sep3_e1} and \ref{Sep3_e2}.
\end{proof}

\begin{thm}[The regime $\|u\|_2 \sim 1 $, $||\hat u\cdot e_1|-1|\le \epsilon_0$,
 $|\|u\|_2-1|\ge c(\epsilon_0)$ is OK] \label{Sep3_e3}
Let $R_1$, $R_2$ be the same as in Lemma \ref{Sep3_e1} and \ref{Sep3_e2}.
Let $0<\epsilon_0\ll 1$ be given and consider the regime $ \Bigl | |\hat u \cdot e_1|-1
\Bigr| \le \epsilon_0$ with $R_1 \le \| u\|_2 \le R_2$.  There exists 
a constant $c(\epsilon_0)>0$ ($c(\epsilon_0)$ also depends on $\beta$ but
we suppress this dependence) 
which tends to zero as $\epsilon_0\to 0$ such that the following
hold:
For $m\gtrsim n$,  with high probability it holds that
\begin{align*}
&\partial_{\rho} f <0,  \quad\forall\, R_2\le \rho \le 1-c(\epsilon_0) , \;\;\text{}\forall\, \hat u \in \mathbb S^{n-1}\quad  \text{with}\quad  | |\hat u \cdot e_1|-1| \le \epsilon_0;\\
&\partial_{\rho} f >0,  \quad\forall\, 1+ c(\epsilon_0) \le  \rho \le R_1,\; \;\text{}\forall\, \hat u \in \mathbb S^{n-1} \quad \text{with} \quad | |\hat u \cdot e_1| -1| \le \epsilon_0.
\end{align*}
\end{thm}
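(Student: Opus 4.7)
The plan is to mirror the strategy used in Theorem \ref{thA31_0a} for PAM1: we exploit the fact that $\partial_\rho f$ vanishes at the global minimizer $(\rho,\hat u)=(1,\pm e_1)$, combine this with a uniform Lipschitz estimate in $\hat u$ and a uniform lower bound on $\partial_{\rho\rho} f$, and then conclude via a one-variable Taylor/mean-value argument in $\rho$.

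First I would record the ``anchor'' identity $(\partial_\rho f)(\rho=1,\hat u=\pm e_1)=0$. This is immediate from \eqref{mod2partf}: when $\hat u=\pm e_1$, the two square roots appearing in each summand coincide at $\rho=1$, so the three terms telescope. Without loss of generality I treat the case $\|\hat u-e_1\|_2\le\epsilon_0$ (the case $\|\hat u+e_1\|_2\le\epsilon_0$ is identical by the $\vz\to-\vz$ symmetry of $f$).

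Second, I would prove the perturbative estimate
\begin{align*}
\bigl|(\partial_\rho f)(\rho=1,\hat u)\bigr|\le c_1(\epsilon_0),\qquad \forall\,\|\hat u-e_1\|_2\le\epsilon_0,
\end{align*}
with $c_1(\epsilon_0)\to 0$ as $\epsilon_0\to 0$, holding with high probability when $m\gtrsim n$. This is obtained exactly as in the $H$-term estimate of Lemma \ref{lem_6.5}: writing each summand of \eqref{mod2partf} at $\rho=1$ as its value at $\hat u=e_1$ (which is zero) plus a remainder, the remainder is controlled by $|a_j\cdot(\hat u-e_1)|\cdot\Phi(a_j)$ where $\Phi$ grows at most linearly in the Gaussian moments; Cauchy--Schwarz together with the standard Bernstein / net estimates on $\frac{1}{m}\sum(a_j\cdot\hat u-e_1)^2$ and $\frac{1}{m}\sum(1+(a_j\cdot e_1)^2)$ give the bound with $c_1(\epsilon_0)=O(\epsilon_0)$.

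Third -- and this is the main obstacle -- I would prove a uniform lower bound
\begin{align*}
\partial_{\rho\rho}f(\rho,\hat u)\ge \alpha>0,\qquad\forall\,R_2\le\rho\le R_1,\ \forall\,\hat u\in\mathbb S^{n-1},
\end{align*}
holding with high probability when $m\gtrsim n$. The philosophy is the same as Lemma \ref{A31_e10}, but the presence of the extra $(a_j\cdot e_1)^2$ term inside both square roots of \eqref{model3e1} complicates the algebraic identity. After differentiating \eqref{mod2partf} in $\rho$, I would expect to express $\partial_{\rho\rho}f$ as a manifestly non-negative quadratic-type expression (akin to a sum of squares of the form $(\sqrt{A_j}-\sqrt{B_j})^2$ plus a ``mass'' term) modulo a non-negative function of $(a_j\cdot\hat u, a_j\cdot e_1,\rho)$ that is pointwise bounded below by a strictly positive constant on the compact range $R_2\le\rho\le R_1$. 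A uniform concentration via Bernstein plus an $\epsilon$-net on $\hat u\in\mathbb S^{n-1}$ then upgrades the expectation lower bound to the empirical one, yielding $\alpha>0$ depending only on $(\beta,R_1,R_2)$.

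Fourth, I combine the pieces. Fix $\hat u$ with $\|\hat u-e_1\|_2\le\epsilon_0$. For $\rho\ge 1+\tfrac{2c_1(\epsilon_0)}{\alpha}$, integrating $\partial_{\rho\rho}f\ge\alpha$ from $1$ to $\rho$ gives
\begin{align*}
(\partial_\rho f)(\rho,\hat u)\ge (\partial_\rho f)(1,\hat u)+\alpha(\rho-1)\ge -c_1(\epsilon_0)+2c_1(\epsilon_0)=c_1(\epsilon_0)>0,
\end{align*}
and symmetrically $(\partial_\rho f)(\rho,\hat u)<0$ for $R_2\le\rho\le 1-\tfrac{2c_1(\epsilon_0)}{\alpha}$. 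Setting $c(\epsilon_0):=\tfrac{2c_1(\epsilon_0)}{\alpha}$ (still $\to 0$ as $\epsilon_0\to 0$) and observing that $\|\hat u-e_1\|_2\le\epsilon_0$ is implied by $||\hat u\cdot e_1|-1|\le\epsilon_0^2/2$ (up to choosing the sign of $e_1$) finishes the proof after possibly renaming $\epsilon_0$. The main technical work is therefore the sum-of-squares decomposition of $\partial_{\rho\rho}f$ for the PAM2 model; everything else is routine concentration plus a one-dimensional mean-value step.
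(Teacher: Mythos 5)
Your plan mirrors Theorem~\ref{thA31_0a}: anchor at $(\partial_\rho f)(1,\pm e_1)=0$, prove the Lipschitz-in-$\hat u$ estimate $|(\partial_\rho f)(1,\hat u)|\le c_1(\epsilon_0)$, then conclude by integrating a conjectured uniform lower bound $\partial_{\rho\rho}f\ge\alpha>0$ over $R_2\le\rho\le R_1$. Steps~1 and~2 are fine, but Step~3 is a genuine gap, and not just because you leave the decomposition as a placeholder: the claimed uniform bound is actually false near $\rho=R_2$. Writing $f=\tfrac1m\sum_j F(\rho^2,(a_j\cdot\hat u)^2,(a_j\cdot e_1)^2)$ with $F(R,s,t)=R(s+2\beta)+3t-2\sqrt{R(\beta+s)+t}\sqrt{\beta R+2t}$, one has
\begin{align*}
\partial_{\rho\rho}f=\tfrac{2}{m}\sum_j\partial_R F+\tfrac{4\rho^2}{m}\sum_j\partial_{RR}F.
\end{align*}
The second sum is nonnegative, but as $\rho\to 0^+$ at $\hat u=e_1$ each term of the first sum tends to $(2-\tfrac{3}{\sqrt2})\beta+(1-\sqrt2)(a_j\cdot e_1)^2<0$, so $\partial_{\rho\rho}f<0$ for small $\rho$ (consistent with $\partial_\rho f(0,\hat u)=0$ and $\partial_\rho f<0$ on $(0,R_2]$). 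Since $R_2$ is small, no uniform $\alpha>0$ exists on $[R_2,R_1]$, and your mean-value step from $\rho=1$ down to $\rho$ near $R_2$ collapses. This is precisely the spot where PAM1 and PAM2 differ: Lemma~\ref{A31_e10} gives such a bound for PAM1, and it has no PAM2 analogue.

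The paper's proof never touches $\partial_{\rho\rho}f$. It works in the variable $R=\rho^2$ and exploits \emph{per-sample} convexity in $R$: $\partial_{RR}F(R,s,t)>0$ for every $R>0$, $s\ge 0$, $t>0$ (via $\partial_R F=s+2\beta-F_0(\sqrt{z})$ with $F_0$ decreasing on the relevant range and $\partial_R z>0$), together with the per-sample anchor $\partial_R F(1,t,t)=0$. The quantitative estimate $\partial_{RR}F(R,t,t)\gtrsim \tfrac{t^2}{1+t}$ is then integrated in $R$ and averaged against the empirical distribution of $t_j=(a_j\cdot e_1)^2$, with the $|s_j-t_j|$ error controlled by $\min\{\|\hat u-e_1\|_2,\|\hat u+e_1\|_2\}$. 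The reparametrization matters because it quarantines the first-order term $\partial_R F$, which can be large and negative for small $\rho$ and is exactly what spoils $\partial_{\rho\rho}f$. If you want to salvage your approach, replacing $\partial_{\rho\rho}f$ by $\partial_{RR}f$ removes the counterexample, but you would still need a uniform positive lower bound on $\partial_{RR}f$ over the full range, which the paper's anchor-plus-quantitative-convexity argument deliberately avoids.
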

\begin{proof}
We shall work with the variable $R=\rho^2$. Write
\begin{align*}
f(u) = \frac 1m \sum_{j=1}^m F(\rho^2, (a_j\cdot \hat u)^2, (a_j\cdot e_1)^2),
\end{align*}
where
\begin{align*}
F(R, s, t) = R( s+ 2\beta) + 3 t -
2 \sqrt{ R(\beta+s) +t} \sqrt{ \beta R + 2 t}.
\end{align*}
Observe that 
\begin{align}
\partial_{R} F
&= s+2\beta - \Bigl( \beta
\sqrt{ \frac {R(\beta+s)+t} {\beta R+2t} }
+ (\beta+s) \sqrt{ \frac {\beta R+2t} {R(\beta+s) +t} } \Bigr) \label{otc2.08tpp01} \\
&=s+2\beta -F_0(\sqrt{z(R,s,t)} ), \notag 
\end{align}
where
\begin{align*}
F_0(y)= \beta y + (\beta+s) y^{-1}, \quad z(R,s,t)=
{  \frac {R(\beta+s)+t} {\beta R+2t} }.
\end{align*}
Note that $F_0^{\prime}(y) <0$ for any $0<y <\sqrt{ \frac {\beta +s} {\beta} }$. 
It is easy to check that for $t>0$, $s\ge 0$  (note that $t=(a_j\cdot e_1)^2>0$ for all $j$ 
almost surely)
\begin{align*}
z(R,s,t) < \frac {\beta +s} {\beta}.
\end{align*}
Furthermore, since
\begin{align*}
z(R,s,t)=\frac{ \beta +s}{\beta} - \frac {t \cdot \frac{\beta+2s} {\beta} } {\beta R+2t},
\end{align*}
we have $\partial_R z(R,s,t)>0$ for $R>0$, $t>0$, $s\ge 0$. Thus
\begin{align} \label{Sep3_e3.0}
\partial_{RR} F >0, \qquad\forall\, R>0, \, \forall\, s\ge 0, t>0.
\end{align}
On the other hand, by directly using \eqref{otc2.08tpp01},
it is not difficult to check that for $R\sim 1$, 
\begin{align} \label{Sep3_e3.1}
|(\partial_R F)(R,s,t) -(\partial_R F)(R, t, t) | \lesssim |s-t|.
\end{align}
Also observe that for $R\sim 1$, we have 
\begin{align}
&z(R, t, t) \sim 1, \quad \partial_R z (R, t, t) \sim \frac {t} {1+t}; \notag \\
&  -F_0^{\prime} (\sqrt{z(R,t,t)} ) 
=-\beta + \frac {\beta +t} {z(R,t,t)} = \frac 1 {z(R,t,t)} \cdot 
\frac {t (\beta+2t)} {\beta R+2t} \sim t; \notag \\
& \partial_{RR} F(R,t,t)=-F_0^{\prime}(\sqrt {z(R,t,t)})
\frac 12 z(R,t,t)^{-\frac 12} \cdot \partial_R 
z(R,t,t) \sim \frac {t^2} {1+t}. \label{Sep3_e3.2}
\end{align}
Note that $z(1, t,t)=1$ and $\partial_R F(1,t,t)=0$. 
By using \eqref{Sep3_e3.0}, \eqref{Sep3_e3.1} and \eqref{Sep3_e3.2}, we obtain
for $R\ge 1+\eta_0$ ($0<\eta_0\ll 1$ will be specified later)
\begin{align*}
(\partial_R F)(R, s,t ) & > (\partial_R F)(1+\eta_0, s,t) \notag \\
& \ge (\partial_R F)(1+\eta_0, t, t) - B_1 |s-t| \notag \\
& \ge B_2 \cdot \frac {t^2}{1+t} \cdot \eta_0 - B_1 |s-t|,
\end{align*}
where $B_1>0$, $B_2>0$ are constants depending only on $\beta$.
Consequently we have for $R\ge 1+\eta_0$,
\begin{align*}
\partial_R f \ge  B_2 \eta_0 \frac 1m \sum_{j=1}^m \frac {(a_j\cdot e_1)^4}
{1+(a_j\cdot e_1)^2} - B_1 \frac 1m \sum_{j=1}^m | (a_j\cdot \hat u)^2
-(a_j\cdot e_1)^2|.
\end{align*}
Clearly with high probability,
\begin{align*}
\frac 1m \sum_{j=1}^m \frac {(a_j\cdot e_1)^4}
{1+(a_j\cdot e_1)^2} & \ge B_3>0, \\
 \frac 1m \sum_{j=1}^m | (a_j\cdot \hat u)^2
-(a_j\cdot e_1)^2|
&\le \frac 1m \sum_{j=1}^m | a_j\cdot (\hat u- e_1)|
\cdot |a_j \cdot (\hat u +e_1) | \\
&\le 
B_4 \min \{ \|\hat u -e_1\|_2, \| \hat u+e_1\|_2 \}, \qquad\forall\, \hat u \in
\mathbb S^{n-1}, 
\end{align*}
where $B_3>0$, $B_4>0$ are absolute constants.  Clearly then
for $R\ge 1+\eta_0$, we have (below $B_5>0$, $B_6>0$ are
 constants depending only on $\beta$)
\begin{align*}
\partial_R f \ge B_5 \eta_0 - B_6 \sqrt{ 1-|\hat u \cdot e_1|}>0
\end{align*}
if $\eta_0$ is chosen suitably small.  The case for $R\le 1-\eta_0$ is similar.
We omit the details.
\end{proof}
\subsection{Analysis of the regime $\rho \sim 1$, $||\hat u \cdot e_1|-1|\ge
\epsilon_0>0$} 

$\;$

\vspace{0.2cm}
In this section we consider the regime $\rho \sim 1$, 
$ |\hat u \cdot e_1| < 1-\epsilon_0$, where $0<\epsilon_0\ll 1$. 
To simplify the discussion, we use the coordinate system
\begin{align*}
\hat u &= (\hat u \cdot e_1) e_1 +\tilde u, \notag \\
 &= t e_1+ \sqrt{1-t^2} e^{\perp}, 
 \end{align*}
 where $e^{\perp} \in \mathbb S^{n-1}$ satisfies $e^{\perp} \cdot e_1=0$. 
Clearly in the regime $\rho\sim 1$, $|t|<1$, we have a smooth representation
\begin{align*}
u = \rho \hat u = \rho\cdot (t e_1+ \sqrt{1-t^2} e^{\perp} ) = \psi( \rho, t, e^{\perp}).
\end{align*}
By Proposition \ref{Au29e1}, we can simplify the computation by examining only
at the derivatives $\partial_t $ and $\partial_{tt}$.  We shall use these in the regime
$|t| < 1-\epsilon_0$ where $0<\epsilon_0\ll 1$. 
Now observe
 \begin{align*}
 \frac 12\mathbb E f = \frac 12 (1+2\beta) \rho^2 +\frac 32- 
 \mathbb E  \sqrt{ \beta\rho^2 + \rho^2(a\cdot \hat u)^2
 +(a\cdot e_1)^2 } \sqrt{\beta \rho^2 + 2(a\cdot e_1)^2},
 \end{align*}
 where $a\sim \mathcal N (0, \operatorname{I_n})$. 
Denote $X_1= a\cdot e_1$ and $Y_1= a\cdot e^{\perp}$ so that
$a\cdot \hat u = t X_1 +\sqrt{1-t^2} Y_1=: X_t$. 
We focus on the term 
\begin{align*}
 &\mathbb E  \sqrt{ \beta \rho^2 + \rho^2(a\cdot \hat u)^2 +(a\cdot e_1)^2} \sqrt{\beta \rho^2 + 2(a\cdot e_1)^2} \\
=&\; \mathbb E \sqrt{\beta \rho^2+\rho^2X_t^2+X_1^2} \sqrt{\beta \rho^2+2X_1^2} =:\, h_{\infty}(\rho, t). 
\end{align*}
\begin{lem}[The limiting profile] \label{lemSep4_2a}
For any $0<\eta_0\ll 1$, the following hold:

\begin{enumerate}
\item $\sup_{|t| \le 1-\eta_0, \rho \sim 1} ( |\partial_t h_{\infty}(\rho,t)|
+ |\partial_{tt} h_{\infty} (\rho, t) | + |\partial_{ttt} h_{\infty}(\rho, t) |)
\lesssim 1$. \vspace{1em}
\item $|\partial_t h_{\infty}(\rho, t)| \gtrsim |t|$ for $0<|t|<1$, $\rho\sim 1$. \vspace{1em}
\item $\partial_{tt} h_{\infty}(\rho, t)\gtrsim 1$ for $|t| \ll 1$, $\rho\sim 1$. 
\end{enumerate}
\end{lem}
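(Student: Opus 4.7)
The plan is to mirror the argument for Lemma \ref{lemAu29_2}, whose setup is identical in structure. Write $h_\infty(\rho,t)=\mathbb{E}[g(X_t, X_1)]$ with
\[
g(z,w) = \sqrt{AB},\qquad A:= \beta\rho^2+\rho^2 z^2+w^2,\quad B:= \beta\rho^2+2w^2,
\]
where $(X_t, X_1)$ is jointly Gaussian with unit marginals and correlation $t$. Price's identity (the Gaussian correlation formula) gives the key reduction
\[
\partial_t^k h_\infty(\rho,t)=\mathbb{E}\bigl[\partial_z^k\partial_w^k g(X_t,X_1)\bigr],\qquad k=1,2,3.
\]
Since $g$ is even in each variable, $h_\infty$ is even in $t$; in particular $\partial_t h_\infty(\rho,0)=0$. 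Note that the regularization forces $A,B\gtrsim 1$ uniformly for $\rho\sim 1$, so derivatives of $g$ are always controlled by polynomial numerators divided by bounded positive denominators.

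For item (1), the uniform polynomial growth of $\partial_z^k\partial_w^k g$, together with the smooth and uniformly bounded dependence of the bivariate density $p_t(z,w)$ on $t\in[-1+\eta_0,1-\eta_0]$, gives the claimed $O(1)$ bounds by standard Gaussian moment estimates.

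For item (3), I would specialize to $t=0$ so that $(X_t, X_1)=(Z,W)$ are independent. A direct computation gives $\partial_z\partial_w g=\rho^4 zw(\beta+2z^2)/(A^{3/2}\sqrt{B})$, and two integrations by parts rewrite
\[
\partial_{tt}h_\infty(\rho,0)=\mathbb{E}\bigl[g(Z,W)(Z^2-1)(W^2-1)\bigr].
\]
Regarding $g$ as a function of $(p,q):=(z^2,w^2)$, one checks $\partial_p\partial_q\sqrt{AB}=\rho^4(\beta+2p)/(4A^{3/2}\sqrt{B})>0$, so $g$ is strictly supermodular in $(p,q)$. Applying the correlation (FKG) inequality to the independent pair $(Z^2, W^2)$, each of mean one, yields $\partial_{tt}h_\infty(\rho,0)>0$. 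Continuity, combined with the $O(1)$ bound on $\partial_{ttt}h_\infty$ from item (1), extends strict positivity to all $|t|\ll 1$.

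For item (2), the evenness of $h_\infty$ in $t$ allows one to write $\partial_t h_\infty(\rho,t)=t\,K(\rho,t^2)$ with $K(\rho,0)=\partial_{tt}h_\infty(\rho,0)>0$, so that the desired $|\partial_t h_\infty|\gtrsim|t|$ follows once $K(\rho,s)\ge c>0$ is established uniformly on $[0,(1-\eta_0)^2]$. This uniform positivity is the main obstacle: while $\partial_z^2 g\ge 0$ holds pointwise, $\partial_w^2 g$ lacks a definite sign, so $\mathbb{E}[\partial_z^2\partial_w^2 g]$ at a general correlation cannot be concluded positive by a one-shot FKG argument. My plan is to pass through Mehler's expansion $h_\infty(\rho,t)=\sum_{k\ge 0}\frac{t^{2k}}{(2k)!}c_{2k}$ with $c_{2k}=\mathbb{E}[g(Z,W)\mathrm{He}_{2k}(Z)\mathrm{He}_{2k}(W)]$, and verify inductively that each $c_{2k}\ge 0$ by exploiting that iterated differentiation in $p$ and $q$ preserves the supermodular structure of $\sqrt{AB}$ on the orthant $\{p,q\ge 0\}$; the lowest nontrivial term $c_2>0$ then already forces $K\ge c_2/2$. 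A fallback, should this fail, is to split $\partial_z^2\partial_w^2 g$ into its manifestly non-negative piece and a signed remainder which can be controlled via the pointwise decay $A^{-3/2}B^{-1/2}$ together with fourth-moment Gaussian bounds.
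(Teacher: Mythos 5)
Your route via Price's formula, Gaussian integration by parts, and the supermodularity of $g$ in $(p,q)=(z^2,w^2)$ is genuinely different from the paper's. The paper instead passes to polar coordinates (setting $t=\sin\theta_0$ and $r$ the radius), which reduces the whole lemma to a purely deterministic one‑variable inequality (Lemma~\ref{lemSep5_0}) about $\int_0^\pi \sqrt{\gamma_1+\gamma_2\cos^2(\theta-\theta_0)+\gamma_3\sin^2\theta}\,\sqrt{\gamma_1+2\gamma_3\sin^2\theta}\,d\theta$; the positivity there is obtained by folding $[0,\pi]$ onto $[0,\pi/2]$ and comparing integrands at $\theta\pm\theta_0$ via the elementary monotonicity of $z\mapsto(\gamma_1+2\gamma_3 z)/(a\gamma_1+\gamma_3 z)$. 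Items (1) and (3) of your argument do work: your computation $\partial_p\partial_q\tilde g = \rho^4(\beta+2p)/(4A^{3/2}\sqrt B)>0$ is correct, and the double integration by parts $\partial_{tt}h_\infty(\rho,0)=\mathbb E[g\,\mathrm{He}_2(Z)\mathrm{He}_2(W)]$ together with the supermodular covariance argument (condition on $q$, then $\phi'(q)=\mathrm{Cov}_p(\partial_q\tilde g,p)\ge 0$, then $\mathrm{Cov}(\phi(Q),Q)\ge0$) does give $c_2>0$; compactness in $\rho$ then upgrades to $\gtrsim 1$, and the $\partial_{ttt}$ bound from item (1) extends it to $|t|\ll 1$. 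The paper's method buys a single uniform argument covering (2) and (3) simultaneously and cleanly produces the $\gtrsim|t|$ rate all the way to $t=1$; your method buys a conceptually tidy reduction to Hermite coefficients, but that advantage evaporates at item~(2).

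The gap is in item (2), and it is a real one. Your plan is to show all Mehler coefficients $c_{2k}=\mathbb E[\partial_z^{2k}\partial_w^{2k}g(Z,W)]\ge 0$ by an induction that is supposed to use the claim that ``iterated differentiation in $p$ and $q$ preserves the supermodular structure of $\sqrt{AB}$.'' That claim is false. A direct computation from $\tilde g_{pq}=\rho^4(\beta+2p)/(4A^{3/2}\sqrt B)$ gives
\begin{align*}
\tilde g_{pqq} \;=\; \frac{\rho^4(\beta+2p)}{4}\,\partial_q\!\left(A^{-3/2}B^{-1/2}\right)
\;=\; -\frac{\rho^4(\beta+2p)}{4}\left(\frac{3}{2}A^{-5/2}B^{-1/2}+A^{-3/2}B^{-3/2}\right)\;<\;0,
\end{align*}
so already the third mixed derivative has the wrong sign. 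Since $\partial_z^2\partial_w^2 g=4\tilde g_{pq}+8q\,\tilde g_{pqq}+8p\,\tilde g_{ppq}+16pq\,\tilde g_{ppqq}$, the terms compete, and the inductive nonnegativity of all $c_{2k}$ cannot be obtained from supermodularity alone. The fallback you mention (split off a nonnegative piece and bound a signed remainder by $A^{-3/2}B^{-1/2}$ and fourth‑moment estimates) is not carried out and, as stated, would at best give an upper bound on the remainder, not a sign or a lower bound on $K(\rho,s)$; it does not show $K(\rho,s)\ge c>0$ uniformly. A workable alternative within your framework would be to use Price at $k=1$: $\partial_t h_\infty=\mathbb E[\partial_z\partial_w g(X_t,X_1)]$ with $\partial_z\partial_w g(z,w)=zw\,\Phi(z^2,w^2)$, $\Phi>0$, then condition on $(|X_t|,|X_1|)$ and use $\mathbb E[X_tX_1\mid |X_t|=a,|X_1|=b]=ab\tanh(tab/(1-t^2))\ge 0$ to read off the sign, and then lower‑bound $\tanh$ linearly in $t$ on a fixed compact block of $(a,b)$ to get the $\gtrsim|t|$ rate. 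As it stands, however, item~(2) is not proved.
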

\begin{proof}
See appendix. 
\end{proof}

\begin{thm}[The regime $\|u\|_2\sim 1$, $||\hat u \cdot e_1|-1|>\eta_0$ is fine]
\label{thmSep4_1a}
For any given $0<\eta_0 \ll 1$ and $0<c_1<c_2<\infty$, if $m\gtrsim n$,
then the following hold with high probability:
In the regime $c_1< \rho =\|u\|_2< c_2$, $| |\hat u \cdot e_1|-1 |
> \epsilon_0$, there are only two possibilities:
\begin{enumerate}
\item  $\| \nabla f (u)\|_2 >0$;
\item $\nabla f(u)=0$, and $f$ has a negative directional curvature at this point.
\end{enumerate}
\end{thm}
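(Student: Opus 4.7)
The plan is to mirror the strategy carried out for the PAM1 model in Theorem \ref{thmA31_1}, adapted to the new integrand in \eqref{model3e1}. First I would introduce the same change of variables $\hat u = t e_1 + \sqrt{1-t^2}\, e^{\perp}$, $e^{\perp}\in\mathbb S^{n-1}$, $e^{\perp}\cdot e_1=0$, so that $u = \rho\hat u =\psi(\rho,t,e^{\perp})$. Writing $X_j=a_j\cdot e_1$ and $Y_j=a_j\cdot e^{\perp}$, we have $a_j\cdot\hat u = tX_j+\sqrt{1-t^2}\,Y_j$, and $f(u)= g(\rho,t,e^{\perp})$ becomes a smooth function of $(\rho,t)$ in the regime $\rho\sim 1$, $|t|<1-\eta_0$. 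By Proposition \ref{Au29e1}, it suffices to produce either $|\partial_t g|>0$ or $\partial_{tt}g<0$ at every point in the target regime; the former implies a non-vanishing Euclidean gradient of $f$, and the latter implies that the point is either a non-critical point of $f$ or a strict saddle.

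Next I would compute $\partial_t g$ and $\partial_{tt} g$ explicitly. The only part of the summands in \eqref{model3e1} that depends on $t$ (once $\rho$ and $e^{\perp}$ are fixed) is the factor $\sqrt{\beta\rho^2+\rho^2(a_j\cdot\hat u)^2+X_j^2}$, since the second factor $\sqrt{\beta\rho^2+2X_j^2}$ and the constant terms do not involve $t$. Thus $\partial_t g$ and $\partial_{tt} g$ reduce to empirical averages of explicit smooth functions of $(X_j,Y_j)$, together with a deterministic quadratic piece analogous to the $h_0$ term in the PAM1 analysis. The expectations of these quantities are, by definition, $-2\rho\,\partial_t h_{\infty}(\rho,t)$ and $-2\rho\,\partial_{tt}h_{\infty}(\rho,t)$ respectively, where $h_{\infty}$ is the limiting profile defined just before Lemma \ref{lemSep4_2a}.

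Then I would prove uniform concentration of these empirical derivatives over the three-parameter family $(\rho,t,e^{\perp})$ with $c_1\le\rho\le c_2$, $|t|\le 1-\eta_0$ and $e^{\perp}\in\mathbb S^{n-1}\cap e_1^{\perp}$. For $\partial_t g$, a Bernstein inequality together with an $\varepsilon$-net argument over this compact parameter set yields
\begin{align*}
\sup_{\rho,t,e^{\perp}}\bigl|\partial_t g-\mathbb E\,\partial_t g\bigr|\le\epsilon
\end{align*}
with probability at least $1-e^{-cm}$ whenever $m\gtrsim n$; this is entirely analogous to Lemma \ref{A30_2}. For $\partial_{tt} g$ only a one-sided bound $\partial_{tt}g\le\mathbb E\,\partial_{tt}g+\epsilon$ is needed, which is the analog of Lemma \ref{A30_3}. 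Combined with Lemma \ref{lemSep4_2a}, which supplies $\partial_{tt}h_{\infty}(\rho,t)\gtrsim 1$ for $|t|\le t_0$ and $|\partial_t h_{\infty}(\rho,t)|\gtrsim|t|$ for $t_0\le|t|\le 1-\eta_0$, I can pick $t_0$ small and $\epsilon$ smaller to obtain
\begin{align*}
\partial_{tt} g\le -\epsilon_1<0\quad\text{for }|t|\le t_0,\qquad |\partial_t g|\ge\epsilon_2>0\quad\text{for }t_0\le|t|\le 1-\eta_0,
\end{align*}
uniformly in $\rho\in[c_1,c_2]$ and $e^{\perp}$. The desired dichotomy then follows from Proposition \ref{Au29e1}.

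The main obstacle I expect is the uniform concentration step: because the integrands contain the extra $X_j^2$ terms inside the square roots (compared with the PAM1 case), the random variables whose empirical averages must be controlled are only sub-exponential with polynomial factors in $|X_j|$ and $|Y_j|$, so an unbounded Bernstein-type inequality plus a careful truncation is needed before the $\varepsilon$-net argument closes. Once this technical step is done in the same spirit as Lemmas \ref{A30_2} and \ref{A30_3} (using the $O(1)$ bound on $|\partial_{ttt}h_{\infty}|$ from Lemma \ref{lemSep4_2a} to control Lipschitz constants in $t$), the conclusion is immediate.
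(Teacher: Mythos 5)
Your proof plan follows the paper's strategy for Theorem \ref{thmSep4_1a} faithfully: the change of variables $\hat u = t e_1+\sqrt{1-t^2}e^\perp$, the reduction to $\partial_t g$ and $\partial_{tt} g$ via Proposition \ref{Au29e1}, uniform concentration to the expected profile, and the dichotomy supplied by Lemma \ref{lemSep4_2a}. Your final display with the two regimes is stated with the correct pairing ($\partial_{tt} g<0$ for $|t|\le t_0$, $|\partial_t g|>0$ for $t_0\le|t|\le 1-\eta_0$); the paper's printed proof appears to swap these two conditions and you have silently corrected that. One small slip: for PAM2 the constant is $\mathbb E\,\partial_t g=-2\,\partial_t h_\infty$, not $-2\rho\,\partial_t h_\infty$ (the $\rho$ factors are absorbed inside $h_\infty$ in \eqref{Sep6e71}); this is harmless since $\rho\sim 1$ but it is the PAM1 formula, not the PAM2 one.

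The one substantive misconception is in your discussion of the ``main obstacle.'' You write that the extra $X_j^2$ inside the square root makes the concentration step harder, requiring polynomial-tail control. The opposite is true, and this is the entire point of the PAM2 model: since $A_j=\beta\rho^2+\rho^2(a_j\cdot\hat u)^2+X_j^2$ now contains $X_j^2$, the damping factors $A_j^{-1/2}$, $A_j^{-3/2}$ appearing in $\partial_{tt}g$ tame the $X_j$ powers that in PAM1 required fourth-moment (Chebyshev-type) bounds via Lemma \ref{bp7} and only gave $1-O(m^{-2})$. With the extra regularization, the summands become bounded/sub-exponential, and Lemma \ref{Sep5a_2} delivers a \emph{two-sided} bound on $\partial_{tt}h$ with exponential probability $1-e^{-cm}$, in contrast with the one-sided bound of Lemma \ref{A30_3}. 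So you do not need to fall back to a one-sided inequality, and the uniform concentration is \emph{easier} than in PAM1, not harder. This distinction is exactly what upgrades the probability in Theorem \ref{thmD} relative to Theorem \ref{thmC}, and is worth understanding rather than treating as a technical nuisance.
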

\begin{proof}
Denote  $X_j=a_j\cdot e_1$ and 
\begin{align*}
g(\rho, t, e^{\perp})
&=2\beta \rho^2 + \rho^2 \frac 1m \sum_{j=1}^m  (a_j\cdot \hat u)^2  
-2 \frac 1m \sum_{j=1}^m 
 \sqrt{\beta \rho^2 + \rho^2(a_j\cdot \hat u)^2+X_j^2} \sqrt{ \beta \rho^2 +2X_j^2} 
 \notag \\
 &=: 2\beta \rho^2 + \rho^2 h_0(\rho, t,e^{\perp})
 -2h(\rho, t, e^{\perp} ),
\end{align*}
where 
\begin{align*}
a_j\cdot \hat u= t X_j +\sqrt{1-t^2} Y_j,
\quad X_j=a_j\cdot e_1, \; Y_j=a_j\cdot e^{\perp}.
\end{align*}
By the same argument as in the proof of Theorem \ref{thmA31_1}, we have
 for any small $\epsilon>0$ and $m\gtrsim n$, it holds with high
probability that
\begin{align*}
|\partial_t h_0 - \mathbb E \partial_t h_0|
+|\partial_{tt} h_0 - \mathbb E \partial_{tt} h_0|
\le \epsilon, \qquad\forall\, |t| \le 1-\epsilon_0, e^{\perp}\cdot e_1=0,
e^{\perp} \in \mathbb S^{n-1}.
\end{align*}
Note also $\mathbb E \partial_t h_0=0$ and $\mathbb E \partial_{tt} h_0=0$.
By Lemma \ref{Sep5a_2},  
for any small $\epsilon>0$ and $m\gtrsim n$, it also holds with high
probability that
\begin{align*}
& |\partial_t h - \mathbb E \partial_t h| +|\partial_{tt} h
-\mathbb E \partial_{tt} h|
\le \epsilon, \qquad\forall\, |t| \le 1-\epsilon_0, e^{\perp}\cdot e_1=0,
e^{\perp} \in \mathbb S^{n-1}, \forall\, c_1\le \rho \le c_2.
\end{align*}
We then obtain for small $\epsilon>0$, if $m\gtrsim n$, it holds 
with high probability  that
\begin{align*}
&|\partial_t g -\mathbb E \partial_t g | \le 2 \epsilon; \\
& \partial_{tt} g \le 
\mathbb E \partial_{tt} g +2 \epsilon.
\end{align*}
Clearly
\begin{align*}
&\mathbb E \partial_t g = -  \partial_t h_{\infty}(\rho, t) ;\\
&\mathbb E \partial_{tt} g = - \partial_{tt} h_{\infty}(\rho, t).
\end{align*}
By Lemma \ref{lemSep4_2a}, we can take $t_0\ll 1$ such that
\begin{align*}
&\partial_{tt} h_{\infty} (\rho, t) \ge \epsilon_1>0, \qquad\forall\, |t|\le t_0, 
c_1\le \rho \le c_2; \\
&|\partial_t h_{\infty} (\rho, t)| \ge \epsilon_2>0, \qquad\forall\, t_0\le |t|\le 1-\epsilon_0,
c_1\le \rho\le c_2.
\end{align*}
By taking $\epsilon>0$ sufficiently small, we can then guarantee that
\begin{align*}
& |\partial_t g| >\epsilon_3>0, 
\qquad\forall\, |t|\le t_0, 
c_1\le \rho \le c_2; \\
&\partial_{tt} g  \le -\epsilon_4<0, \qquad\forall\, t_0\le |t|\le 1-\epsilon_0,
c_1\le \rho\le c_2.
\end{align*}
The desired result then follows from Proposition \ref{Au29e1}. 
\end{proof}

\subsection{Strong convexity near the global minimizers $u=\pm e_1$: analysis
of the limiting profile}
$\;$

\vspace{0.2cm}

In this section we shall show that in the small neighborhood of $u=\pm e_1$
where
\begin{align*}
| |\hat u \cdot e_1| -1| \ll 1, \quad  |\rho-1| \ll 1,
\end{align*}
 that the Hessian of the expectation of the loss function 
must be strictly positive definite. In yet other words $\mathbb E f$ must be strictly convex
in this neighborhood so that $u=\pm e_1$ are the unique minimizers. 
To this end consider
\begin{align} \label{Sep6e71}
h(u) =\frac 12 ( \mathbb E f -3)= \frac 12 (1+2\beta) \rho^2 - 
 \mathbb E  \sqrt{ \beta\rho^2 + \rho^2(a\cdot \hat u)^2
 +(a\cdot e_1)^2 } \sqrt{\beta \rho^2 + 2(a\cdot e_1)^2},
\end{align}
where $a\sim \mathcal N (0, \operatorname{I}_n)$. 
\begin{thm}[Strong convexity of $\mathbb E f$ when $\| u\pm e_1\| \ll 1$]
\label{thSep6e50}
Consider $h$ defined by \eqref{Sep6e71}.
There exist $0<\epsilon_0\ll 1$ and a positive constant $\gamma_1$ such that the following hold:
\begin{enumerate}
\item If $\| u-e_1\|_2 \le \epsilon_0$, then for any $\xi \in \mathbb S^{n-1}$, we have
\begin{align*}
\sum_{i,j=1}^n \xi_i \xi_j 
(\partial_i \partial_j h)(u) \ge \gamma_1 >0.
\end{align*}
\item If $\| u+ e_1\|_2 \le \epsilon_0$, then for any $\xi \in \mathbb S^{n-1}$, we have
\begin{align*}
\sum_{i,j=1}^n \xi_i \xi_j 
(\partial_i \partial_j h)(u) \ge \gamma_1 >0.
\end{align*}

\end{enumerate}

\end{thm}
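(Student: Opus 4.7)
The plan is to mimic the strategy of Theorem \ref{thA31e50}: exploit the rotational symmetry of the population loss to reduce the Hessian to a two-dimensional computation, verify strict positivity of the reduced Hessian at the base point $u = e_1$ (resp.\ $u=-e_1$), and then conclude by continuity in a small neighborhood.

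First I would exploit symmetry. Since $a \sim \mathcal{N}(0, I_n)$, the integrand in \eqref{Sep6e71} depends on $u$ only through $\rho = \|u\|_2$ and $|\hat u \cdot e_1|$. Following the change of variables used for PAM1, write $\hat u = \sqrt{1-s^2}\, e_1 + s\, e^\perp$ with $e^\perp \in \mathbb{S}^{n-1}$, $e^\perp \cdot e_1 = 0$, so that $h(u) = h(\rho, s)$. The constraint $\|u - e_1\|_2 \le \epsilon_0$ forces $|\rho-1| \lesssim \epsilon_0$ and $0 \le s \lesssim \epsilon_0$, and moreover $h$ is even in $s$ (replace $e^\perp$ by $-e^\perp$), which gives $\partial_s h(\rho, 0) = 0$ and $\partial_{\rho s} h(\rho, 0) = 0$. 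The Cartesian Hessian can then be expressed in $(\rho, s)$ coordinates by exactly the same identity as in the proof of Theorem \ref{thA31e50}, where the apparent $1/s$ singularity is removed using $\partial_s h(\rho, 0) = 0$ and $\partial_s h(\rho, s)/s \to \partial_{ss} h(\rho, 0)$ as $s \to 0^{+}$.

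Next I would compute $\partial_{\rho\rho} h(1,0)$ and $\partial_{ss} h(1,0)$ explicitly. Set $X = a\cdot e_1 \sim \mathcal{N}(0,1)$, $Y = a\cdot e^\perp \sim \mathcal{N}(0,1)$ independent of $X$. With $A(\rho,s) = \beta\rho^2 + \rho^2(a\cdot \hat u)^2 + X^2$ and $B(\rho) = \beta\rho^2 + 2X^2$, note that $A(1,0) = B(1) = \beta + 2X^2$, so at the base point the square roots simplify beautifully. Taylor expanding $a\cdot \hat u = \sqrt{1-s^2}X + sY$ to second order in $s$, a short computation gives
\begin{align*}
\partial_{ss}\sqrt{A(1,s)B(1)}\big|_{s=0} &= (Y^2 - X^2) - \frac{X^2 Y^2}{\beta + 2X^2},
\end{align*}
so that $\partial_{ss} h(1,0) = \mathbb{E}\bigl[X^2/(\beta + 2X^2)\bigr] > 0$. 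Similarly, expanding $\sqrt{U(\rho)V(\rho)}$ with $U = (\beta+X^2)\rho^2 + X^2$, $V = \beta\rho^2 + 2X^2$ around $\rho=1$ (where $U=V$), the leading divergent pieces cancel and leave
\begin{align*}
\partial_{\rho\rho} h(1,0) = \mathbb{E}\!\left[\frac{X^4}{\beta + 2X^2}\right] > 0.
\end{align*}
Together with $\partial_{\rho s} h(1,0) = 0$, this shows that the reduced $2 \times 2$ Hessian at $(1,0)$ is strictly positive definite with a $\beta$-dependent lower bound $\gamma_0 > 0$.

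Plugging these values into the Cartesian Hessian identity, the Hessian $\nabla^2 h(e_1)$ becomes diagonal in the orthonormal basis $\{e_1,\dots,e_n\}$, with eigenvalue $\partial_{\rho\rho} h(1,0)$ along $e_1$ and eigenvalue of order $\partial_{ss} h(1,0)$ in the orthogonal directions, both bounded below by a positive constant $\gamma_0$ depending only on $\beta$. Finally, I would extend this to a neighborhood by continuity: bounds analogous to Lemmas \ref{leAu31_60}--\ref{leAu31_61} (with the new integrand replacing the one for PAM1) show that $(\partial_{\rho\rho} h, \partial_{\rho s} h, \partial_{ss} h, \partial_\rho h, \partial_s h)$ are Lipschitz in $(\rho,s)$ near $(1,0)$, so the Cartesian Hessian at $u$ differs from that at $e_1$ by $O(\|u-e_1\|_2)$; choosing $\epsilon_0$ small then gives $\nabla^2 h(u) \succeq (\gamma_0/2) I$, and we set $\gamma_1 = \gamma_0/2$. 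The case $\|u+e_1\|_2 \le \epsilon_0$ is identical after the sign flip $e_1 \mapsto -e_1$. The main obstacle is the clean execution of the two second-derivative computations at $(1,0)$ — the cancellation of divergent pieces in $\partial_{\rho\rho}\sqrt{UV}$ is delicate and relies crucially on the choice $U(1) = V(1)$, which is precisely the design feature of the new regularization term $(a_j \cdot x)^2$ built into PAM2.
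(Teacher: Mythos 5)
Your proposal follows essentially the same route as the paper's proof: the same $(\rho,s)$ change of variables, the same parity argument for $\partial_s h(\rho,0)=\partial_{\rho s}h(\rho,0)=0$, the same Cartesian Hessian identity carried over from Theorem \ref{thA31e50}, the same explicit evaluation of $\partial_{\rho\rho}h(1,0)$ and $\partial_{ss}h(1,0)$, and the same Lipschitz-continuity argument (via the analogue of Lemmas \ref{leSep6_60}--\ref{leSep6_61}) to pass to the neighborhood. Your closed-form expressions $\partial_{ss}h(1,0)=\mathbb{E}[X^2/(\beta+2X^2)]$ and $\partial_{\rho\rho}h(1,0)=\mathbb{E}[X^4/(\beta+2X^2)]$ are correct (and in the latter case avoid a sign typo that appears in the final displayed line of the paper's Lemma \ref{leSep6_61}, which should read $+\frac{1}{\sqrt{2\pi}}\int x^4/(\beta+2x^2)\,e^{-x^2/2}\,dx>0$, consistent with the stated $\gamma_3>0$).
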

\begin{proof}
We shall only consider the case $\| u -e_1\|_2\ll 1$. The other case $\| u +e_1\|_2\ll 1$
is similar and therefore omitted. Note that
\begin{align*}
\| u -e_1\|_2^2 = \| \rho \hat u -e_1\|_2^2
= (\rho-1)^2 +2\rho (1- t) \le \epsilon_0^2,
\end{align*}
where $t=\hat u \cdot e_1$.  Thus for $0<\epsilon_0\ll 1$, we have
\begin{align*}
|\rho -1| \le  \epsilon_0, \qquad    1- {\epsilon_0^2} \le t \le 1.
\end{align*}
We  make a  change of variable and write
(recall $1-\epsilon_0^2 \le \hat u \cdot e_1 \to 1$),
\begin{align*}
\hat u = \sqrt{1-s^2} e_1 + s e^{\perp}, \qquad e^{\perp} \cdot e_1 =0, \, e^{\perp}
\in \mathbb S^{n-1}, 
\end{align*}
where we assume $0\le s \ll 1$. Note that $s= \frac {|u^{\prime}|}{\rho} $, and
$u^{\prime}=u-(u\cdot e_1) e_1= (0, u_2,\cdots,u_n)$. 
To calculate $\partial^2 h$ we need to compute the Hessian expressed in the
$(\rho, s)$ coordinate. It is not difficult to check that by \eqref{Sep6e71},
the value of $h(u)$  depends only on $(\rho, s)$. 
Thus by a slight abuse of notation
we write  $h(u)= h(|u|,\, \frac{|u^{\prime}|} {|u|} ) =h(\rho,s)$ (we denote $|u|=\|u\|_2$,
$|u^{\prime}|=\|u^{\prime}\|_2$) and compute
(below we assume $s>0$ so that $|u^{\prime}|>0$)
\begin{eqnarray*}
\sum_{i,j=1}^n \xi_i \xi_j \partial_{ij } h
&= &  \partial_{\rho\rho} h \cdot a^2 
+ 2 a \partial_{\rho s} h \cdot( - \frac{a s}{\rho} +\frac b {\rho}) + \partial_{\rho} h \cdot ( \frac{|\xi|^2- |\xi \cdot \hat u|^2} {\rho} ) + \partial_{ss} h \cdot ( \frac {a^2 s^2-2abs}{\rho^2} ) \\
&& + (\partial_{ss} h-\frac 1s \partial_s h) \frac{b^2} {\rho^2} + \partial_s h \cdot( -\frac{|\xi|^2}{\rho^2} s + 3 \frac{a^2 s}{\rho^2} - 2\frac{ab}{\rho^2}+\frac{|\xi^{\prime}|^2}
{\rho^2 s}  ).
\end{eqnarray*}
In the above computation, one does not need to worry about the formal singularity
caused by $\frac 1 s$. Since (by Lemma \ref{leSep6_61})
$\partial_s h(\rho, s=0)=0$ for any $\rho>0$, we write
\begin{align*}
(\partial_s h)(\rho, s) \cdot
\frac {1}s 
= \frac { (\partial_s h)(\rho, s) -(\partial_s h)(\rho, 0) } s
= \int_0^1 (\partial_{ss} h) (\rho, \theta s) d\theta, \quad s>0.
\end{align*}
In particular we have
\begin{align*}
&\lim_{s\to 0^{+}}
(\partial_s h)(\rho, s)  \cdot \frac 1s
 =( \partial_{ss} h) (\rho, 0) ;\\
 &\Bigl| (\partial_{ss} h)(\rho,s)- \frac 1 s \partial_s h_1 (\rho,s) \Bigr|
 = O(s) \to 0, \quad \text{as $s\to 0$}.
\end{align*}
By using this observation and Lemma \ref{leSep6_61}, we obtain
\begin{align*}
\sum_{i,j=1}^n \xi_i \xi_j (\partial_{ij} h)(e_1)
&=  (\partial_{\rho\rho} h)(1,0) \cdot a^2\Bigr|_{a=\xi_1}
+ (\partial_{ss} h)(1,0) \cdot |\xi^{\prime}|^2  \ge \gamma_0 \cdot |\xi|^2, \qquad \forall\, \xi \in \mathbb S^{n-1},
\end{align*}
where $\gamma_0>0$ is a constant. 
Now for $\| u -e_1\|_2 \ll 1$, by using Lemma \ref{leSep6_60} and
Lemma \ref{leSep6_61}, we have
\begin{align*}
 & \Bigl| \sum_{i,j=1}^n \xi_i \xi_j \Bigl( (\partial_{ij}h)(e_1)-
 (\partial_{ij} h)(u) \Bigr) \Bigr| \notag \\
 \lesssim & \; 
 |(\partial_{\rho\rho} h)(\rho, s) - (\partial_{\rho\rho} h)(1,0) |
+ |(\partial_{\rho\rho}h)(\rho,s)|\cdot | (\xi \cdot \hat u)^2- (\xi \cdot e_1)^2| \notag \\
&\quad + | (\partial_{\rho s} h) (\rho, s)| \cdot (1+s) 
 + |(\partial_{\rho} h)(\rho, s)| +|\partial_{ss} h(\rho,s)| \cdot (s+s^2) \notag \\
 %& \qquad + |(\partial_{ss} h)(\rho, s) -(\partial_{ss} h)(1,0) |
 %+|(\partial_{ss} h)(1,0)| \cdot ( \frac 1 {\rho^2} -1 ) | \notag \\
 & \qquad + | \partial_s h(\rho, s)|
 + \Bigl| \frac  1 {\rho^2} \int_0^1 (\partial_{ss} h) (\rho, \theta s) d\theta
 - (\partial_{ss} h)(1,0) \Bigr| \notag \\
 & \qquad + \Bigl| (\partial_{ss} h)(\rho,s) - (\partial_s h) (\rho,s) \cdot \frac 1 s\Bigr|
 \cdot \frac {b^2} {\rho^2} \notag \\
 \lesssim & \; O(|\rho-1| + |s| +\| \hat u-e_1\|_2).
 \end{align*}
 It follows that if $\|u-e_1\|_2$ is sufficiently small, we then have
 \begin{align*}
 \sum_{i,j=1}^n \xi_i \xi_j (\partial_{ij} h)(u)
 \ge \frac {\gamma_0} 2 |\xi|^2, 
 \qquad\forall\, \xi \in \mathbb S^{n-1}.
 \end{align*}
\end{proof}

\subsection{Near the global minimizer:  strong convexity} 
$\,$

\vspace{0.2cm}

In this section we show strong convexity of the loss function $f(u)$ near the global
minimizer $u=\pm e_1$.

\begin{thm}[Strong convexity near the global minimizer] \label{Sep6e1}
There exists $0<\epsilon_0\ll 1$ and a constant $\beta_1>0$ such that if
$m\gtrsim n$, then the following hold with high probability:

\begin{enumerate}
\item If $\|u -e_1\|_2\le \epsilon_0$, then 
\begin{align*}
\sum_{i,j=1}^n \xi_i \xi_j (\partial_{ij} f)(u) \ge \gamma>0, \qquad \forall\, \xi \in \mathbb
S^{n-1},
\end{align*}
where $\gamma$ is a constant.

\item If $\|u +e_1\|_2\le \epsilon_0$, then 
\begin{align*}
\sum_{i,j=1}^n \xi_i \xi_j (\partial_{ij} f)(u) \ge \gamma>0, \qquad \forall\, \xi \in \mathbb
S^{n-1},
\end{align*}
where $\gamma$ is a constant.
\end{enumerate}

In yet other words, $f(u)$ is strongly convex in a sufficiently small neighborhood of $\pm e_1$.
\end{thm}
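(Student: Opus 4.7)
The plan is to mirror the strategy used for Theorem \ref{Sep2e1} in the PAM1 case, but taking advantage of the fact that the summands of PAM2 are genuinely smooth (since almost surely $a_j\cdot e_1\ne 0$ and the argument of each square root is bounded below by $2(a_j\cdot e_1)^2>0$). Expand
\begin{align*}
f(u) = 2 f_0(u) + \frac{1}{m}\sum_{j=1}^m \Bigl( \beta|u|^2+(a_j\cdot u)^2 + 3(a_j\cdot e_1)^2 \Bigr),
\end{align*}
where $f_0(u) = -\frac{1}{m}\sum_{j=1}^m \sqrt{\beta|u|^2+(a_j\cdot u)^2+(a_j\cdot e_1)^2}\cdot\sqrt{\beta|u|^2+2(a_j\cdot e_1)^2}$. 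For any $\xi\in\mathbb S^{n-1}$, the explicit quadratic piece contributes
\begin{align*}
\sum_{i,j}\xi_i\xi_j\partial_{ij}\Bigl(\tfrac{1}{m}\sum_k(\beta|u|^2+(a_k\cdot u)^2)\Bigr) = 2\beta + \tfrac{2}{m}\sum_k(a_k\cdot\xi)^2,
\end{align*}
which by Bernstein concentrates within $\epsilon/100$ of $2\beta+2$ uniformly over $\xi\in\mathbb S^{n-1}$ with high probability for $m\gtrsim n$.

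The main step is to show that the Hessian of $f_0$ concentrates uniformly around its expectation on the shell $\tfrac13\le\|u\|_2\le 3$. Unlike the PAM1 case where a dedicated lemma (Lemma \ref{Sp1_1}) was needed and only yielded probability $1-O(m^{-2})$, here every summand of $f_0$ is $C^\infty$ in $u$ (the presence of the $(a_j\cdot e_1)^2$ regularizer keeps the arguments of the square roots strictly positive) and its Hessian entries have at most polynomial growth in $|a_j|$ of low degree. This lets me apply a standard covering-net plus Bernstein/sub-exponential concentration argument directly: for each fixed $\xi\in\mathbb S^{n-1}$ and fixed $u$ in a fine net of the shell, one gets exponential concentration for $\sum_{i,j}\xi_i\xi_j\partial_{ij}f_0(u)$ around its mean, and a Lipschitz bound on the relevant quadratic form in $(u,\xi)$ lets one take a union bound over a net of cardinality $e^{Cn}$. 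The resulting statement is
\begin{align*}
\Bigl| \sum_{i,j}\xi_i\xi_j\bigl(\partial_{ij}f_0(u)-\mathbb E\partial_{ij}f_0(u)\bigr)\Bigr|\le \frac{\epsilon}{100}, \qquad \forall\,\xi\in\mathbb S^{n-1},\ \forall\,\tfrac13\le\|u\|_2\le 3,
\end{align*}
with probability at least $1-e^{-cm}$.

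Combining the two concentration statements yields
\begin{align*}
\Bigl| \sum_{i,j}\xi_i\xi_j\bigl(\partial_{ij}f(u)-\mathbb E\partial_{ij}f(u)\bigr)\Bigr|\le \frac{\epsilon}{50}
\end{align*}
uniformly on this shell. Since $\|u\pm e_1\|_2\le\epsilon_0$ forces $\|u\|_2\in[\tfrac13,3]$ when $\epsilon_0\ll1$, I can invoke Theorem \ref{thSep6e50}: on this neighborhood the Hessian of $\frac12\mathbb E f$ is bounded below by $\gamma_1>0$ in the quadratic form sense, so choosing $\epsilon$ small enough guarantees $\sum_{i,j}\xi_i\xi_j(\partial_{ij}f)(u)\ge\gamma:=\gamma_1>0$. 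This handles both neighborhoods of $+e_1$ and $-e_1$ symmetrically (the function is even in $u$).

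The main obstacle is the uniform Hessian concentration for $f_0$. The subtlety is that the integrand $\sqrt{\beta|u|^2+(a_j\cdot u)^2+(a_j\cdot e_1)^2}\cdot\sqrt{\beta|u|^2+2(a_j\cdot e_1)^2}$, although smooth in $u$ for every realization, produces second derivatives with denominators that are small when $a_j$ is small in the relevant directions; I need to check that these denominators are bounded below by $\beta\|u\|_2^2\gtrsim 1$ on the shell so that the Hessian entries are controlled by polynomial functions of $|a_j\cdot u|$ and $|a_j\cdot e_1|$, which are sub-exponential. Once this sub-exponential tail bound is in hand, the net argument proceeds in a standard manner, and the improved probability $1-e^{-cm}$ (as opposed to the weaker $1-O(m^{-2})$ in the PAM1 case) follows automatically.
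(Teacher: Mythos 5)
Your approach is essentially the same as the paper's: decompose $f = 2f_0 + (\text{explicit quadratic piece})$, concentrate the explicit piece by Bernstein, prove a uniform-on-the-shell Hessian concentration estimate for $f_0$ via a covering net plus sub-exponential tail bounds (the paper packages this as Lemma~\ref{Sep6Sp1_1}, built on Lemma~\ref{lemSep5b_1}), and finally invoke Theorem~\ref{thSep6e50} to transfer strong convexity of the expected loss to $f$ itself. One small arithmetic slip: the quadratic piece in the decomposition should carry $2\beta|u|^2$ rather than $\beta|u|^2$, so its Hessian quadratic form contributes $4\beta|\xi|^2 + \frac{2}{m}\sum_k(a_k\cdot\xi)^2$, not $2\beta + \frac{2}{m}\sum_k(a_k\cdot\xi)^2$; this does not affect the strategy or the conclusion.
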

\begin{proof}
Recall 
\begin{align*}
f(u) = 2 f_0(u) + \frac 1m \sum_{k=1}^m 
\Bigl((a_k\cdot u)^2 +2\beta |u|^2 +3(a_k\cdot e_1)^2\Bigr),
\end{align*}
where 
\begin{align*}
f_0(u) =- \frac1m \sum_{k=1}^m \sqrt {\beta |u|^2+ (a_k\cdot u)^2 +(a_k\cdot e_1)^2}
\cdot \sqrt{\beta |u|^2+2(a_k\cdot e_1)^2}.
\end{align*}
Clearly 
\begin{align*}
\sum_{i,j=1}^n \xi_i \xi_j \partial_{ij} f(u)
=2\Bigl( \frac 1m \sum_{k=1}^m |a_k \cdot \xi |^2 \Bigr) 
+ 4\beta|\xi |^2 +2 \sum_{i,j=1}^n \xi_i \xi_j \partial_{ij} f_0(u). 
\end{align*}
Obviously we have for $m\gtrsim n$, it holds with high probability that
\begin{align*}
\Bigl| \frac 1m \sum_{k=1}^m |a_k\cdot \xi|^2 
- 1 \Bigr| \le \frac{\epsilon}{100}, \qquad\forall\, \xi \in \mathbb S^{n-1}.
\end{align*}
By Lemma \ref{Sep6Sp1_1}, we have
\begin{align*}
\Bigl| \sum_{i,j=1}^n \xi_i \xi_j (\partial_{ij} f_0(u) -
\mathbb E \partial_{ij} f_0(u) )
\Bigr| \le \frac{\epsilon}{100}, \qquad\forall\, \xi \in \mathbb S^{n-1},
\;\forall\,  \frac 13 \le \|u \|_2 \le 3.
\end{align*}
Thus we have
\begin{align*}
\Bigl| \sum_{i,j=1}^n \xi_i \xi_j (\partial_{ij} f(u) -
\mathbb E \partial_{ij} f(u) )
\Bigr| \le \frac{\epsilon}{100}, \qquad\forall\, \xi \in \mathbb S^{n-1},
\;\forall\,  \frac 13 \le \|u \|_2 \le 3.
\end{align*}
The desired result then follows from Theorem \ref{thSep6e50} by taking 
$\epsilon>0$ sufficiently small. 
\end{proof}
We now complete the proof of the main theorem.

\begin{proof}[Proof of Theorem \ref{thmD}]
We proceed in several steps. 
\begin{enumerate}
\item By Theorem \ref{thmSep3_1}, we see that
with high probability the function
$f(u)$ has non-vanishing gradient in the regimes
\begin{align*}
0<\|u\|_2 \le R_2=R_2(\beta)
\end{align*}
and 
\begin{align*}
\|u\|_2 \ge R_1=R_1(\beta),
\end{align*}
where $R_1>0$, $R_2>0$ depend only on $\beta$.
Moreover the point $u=0$ is a local maximum point with strictly negative-definite
Hessian.

\item By Theorem \ref{Sep6e1}, there exists $\epsilon_0>0$ sufficiently
small, such that with high probability, $f(u)$ is strongly convex in the neighborhood
$\|u\pm e_1\|_2\le \epsilon_0$.  

\item By Theorem \ref{Sep3_e3}, we have that  with high
probability
\begin{align*}
\| \nabla f\|_2 >0, 
\end{align*}
if $|\rho -1|\ge c(\eta_0)$  and $| |\hat u \cdot e_1| -1| \le \eta_0$. 
Here we recall $\rho =\|u\|_2$ and $u=\rho \hat u$.  Observe that
\begin{align*}
\| u\pm e_1\|_2^2 = (\rho-1)^2 + 2\rho (1\pm \hat u\cdot e_1).
\end{align*}
By taking $\eta_0=\epsilon_0^2/100$, we see that 
$\|u\pm e_1\|_2 >\epsilon_0$, $||\hat u\cdot e_1| -1|\le \eta_0$ must
imply
\begin{align*}
|\rho-1| >\frac {\epsilon_0}{10}.
\end{align*}
Thus it remains for us to treat the regime $\| |\hat u\cdot e_1|-1
|>\eta_0$, $R_1 \le \|u\|_2\le R_2$. 

\item In the regime $||\hat u\cdot e_1|-1|>\eta_0$, $\|u\|_2\sim 1$,
we have by Theorem \ref{thmSep4_1a}, with high probability it holds that
either the function has a non-vanishing gradient at the point $u$, or the
gradient vanishes at $u$, but $f$ has a negative directional curvature
at this point.
\end{enumerate}
\end{proof}

\section{Numerical Experiments} \label{S:numerics}

In this section, we demonstrate the numerical efficiency of our estimators by simple gradient descent and compare their performance with other competitive algorithms.

In a concurrent work \cite{2020a}, we considered the following piecewise 
Smoothed Amplitude loss (SAF):
\begin{equation*} 
\min_{u \in \Rn} \quad f(u)=\frac{1}{2m}\sum_{j=1}^m \xkh{\gamma\xkh{\frac{\abs{a_j\cdot u }}{\abs{a_j\cdot x}}}-1}^2\cdot \abs{a_j\cdot x}^2
\end{equation*}
with the function $\gamma(t)$
\begin{equation*}
\gamma(t):=\left \{ \begin{array}{cl}
     \abs{t}, &  \abs{t} > \beta;  \\
    \frac{1}{2\beta} t^2+\frac{\beta}{2}, &  \abs{t} \leq \beta. 
\end{array} \right.
\end{equation*}
%In the above the allowed range for $\beta$ is $0<\beta \le \frac 12$. We shall showcase
%some numerical results for this model.
In this work, our first Perturbed Amplitude Model (PAM1) is
\[
\min_{u \in \Rn} \quad f(u) = \frac 1 m \sum_{j=1}^m \Bigl( \;\sqrt{\beta |u|^2+(a_j\cdot u)^2} -\sqrt{ \beta |u|^2 + (a_j\cdot x)^2}\;\;  \Bigr)^2.
\]
The second Perturbed Amplitude Model (PAM2) is
\[
\min_{u \in \Rn} \quad f(u) = \frac 1 m \sum_{j=1}^m \Bigl( \;\sqrt{\beta |u|^2+(a_j\cdot u)^2 +(a_j\cdot x)^2}-\sqrt{ \beta |u|^2 +2 (a_j\cdot x)^2}\;\;  \Bigr)^2.
\]

We have show theoretically that any gradient descent algorithm will not get trapped in a local minimum for the loss functions above. Here we present numerical experiments to show that the estimators perform very well with randomized initial guess.
   
   We use the following vanilla gradient descent algorithm 
\[
     u_{k+1}=u_{k}-\mu \nabla f(u_{k})
\]
with a random initial guess to minimize the loss function $f(u)$ given above. The pseudocode for the algorithm  is as follows.
% In the experiments the algorithm converges to the global minimizer of $F(\vz)$ efficiently with random initialization. Here we state the algorithm:

\begin{algorithm}[H]
\caption{Gradient Descend Algorithm Based on Our New Models}
\begin{algorithmic}[H]
\Require
Measurement vectors: $a_i\in \R^n, i=1,\ldots,m $; Observations: $y \in \R^m$;  Parameter $\beta$; Step size $\mu$; Tolerance $\epsilon>0$  \\
\begin{enumerate}
\item[1:] Random initial guess $u_0\in \Rn$.
\item[2:] For $k=0,1,2,\ldots,$ if $\norm{\nabla f(u_{k})} \ge \epsilon $ do
\[
u_{k+1}=u_{k}-\mu \nabla f(u_{k})
\]
\item[3:] End for
\end{enumerate}

\Ensure
The vector $ u_T $.
\end{algorithmic}
\end{algorithm}

The performance of our  PAM1 and PAM2  algorithms are conducted via a series of numerical experiments in comparison against SAF, Trust Region \cite{sun2016complete},  WF \cite{WF}, TWF \cite{TWF} and TAF \cite{TAF}. Here, it is worth emphasizing that random initialization is used for SAF, Trust Region \cite{sun2016complete} and our PAM1, PAM2 algorithms while all other algorithms have adopted a spectral initialization.  Our theoretical results are for real Gaussian case, but the algorithms can be easily adapted to the complex Gaussian and CDP cases. All experiments are carried out on a MacBook Pro with a 2.3GHz Intel Core i5 Processor and 8 GB 2133 MHz LPDDR3 memory.

\subsection{Recovery of 1D Signals}
In our numerical experiments, the target vector $x\in \Rn$ is chosen  randomly from the standard Gaussian distribution and the measurement vectors $ a_i, \,i=1,\ldots,m$ are generated randomly from standard Gaussian distribution or CDP model.  For the real Gaussian case, the signal $ x \sim  \mathcal{N}(0,I_n)$ and measurement vectors $ a_i \sim  \mathcal{N}(0,I_n)$ for $i=1,\ldots,m$. For the complex Gaussian case, the signal $ x \sim  \mathcal{N}(0,I_n)+i  \mathcal{N}(0,I_n)$ and measurement vectors $ a_i  \sim \mathcal{N}(0,I_n/2)+i \mathcal{N}(0,I_n/2)$. For the CDP model, we use masks of octanary patterns as in \cite{WF}.  For simplicity, our parameters and step size are fixed for all experiments. Specifically, we adopt parameter $\beta=1/2$ and step size $\mu=1$ for SAF.
We choose the parameter $\beta=1$, step size $\mu=0.6$ and  $\mu=2.5$ for PAM1 and PAM2, respectively.
%For QWF1, we choose parameter $\beta=1$ and step size $\mu=0.4$.
% For QWF2, we adopt parameters $\beta=\beta_2=1$, $\beta_1=0.1$ and step size $\mu=0.3$.
  For Trust Region, WF, TWF and TAF, we use the code provided in the original papers with suggested parameters.
 
 \begin{figure}[H]
\centering
    \subfigure[]{
     \includegraphics[width=0.4\textwidth]{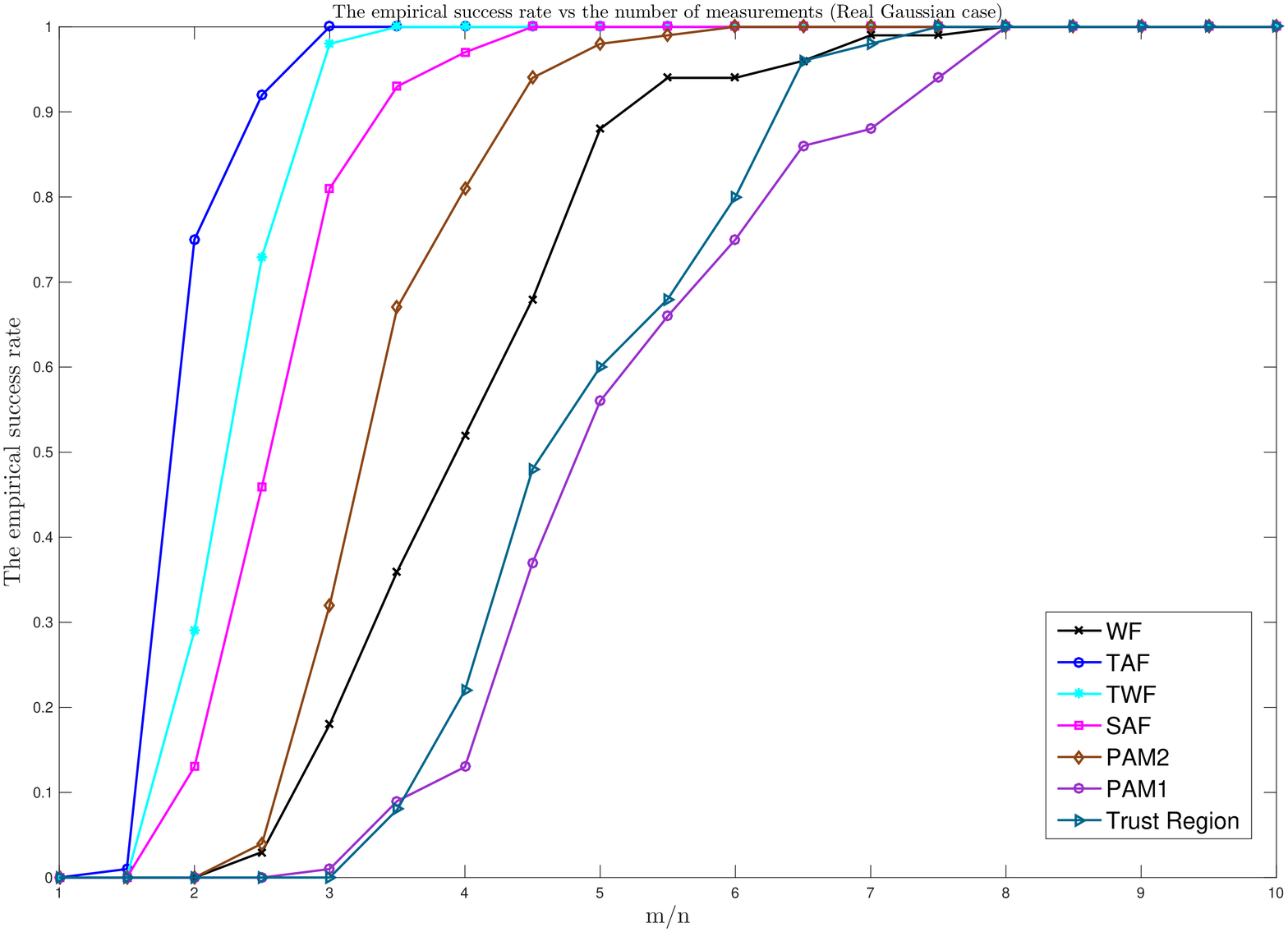}}
\subfigure[]{
     \includegraphics[width=0.4\textwidth]{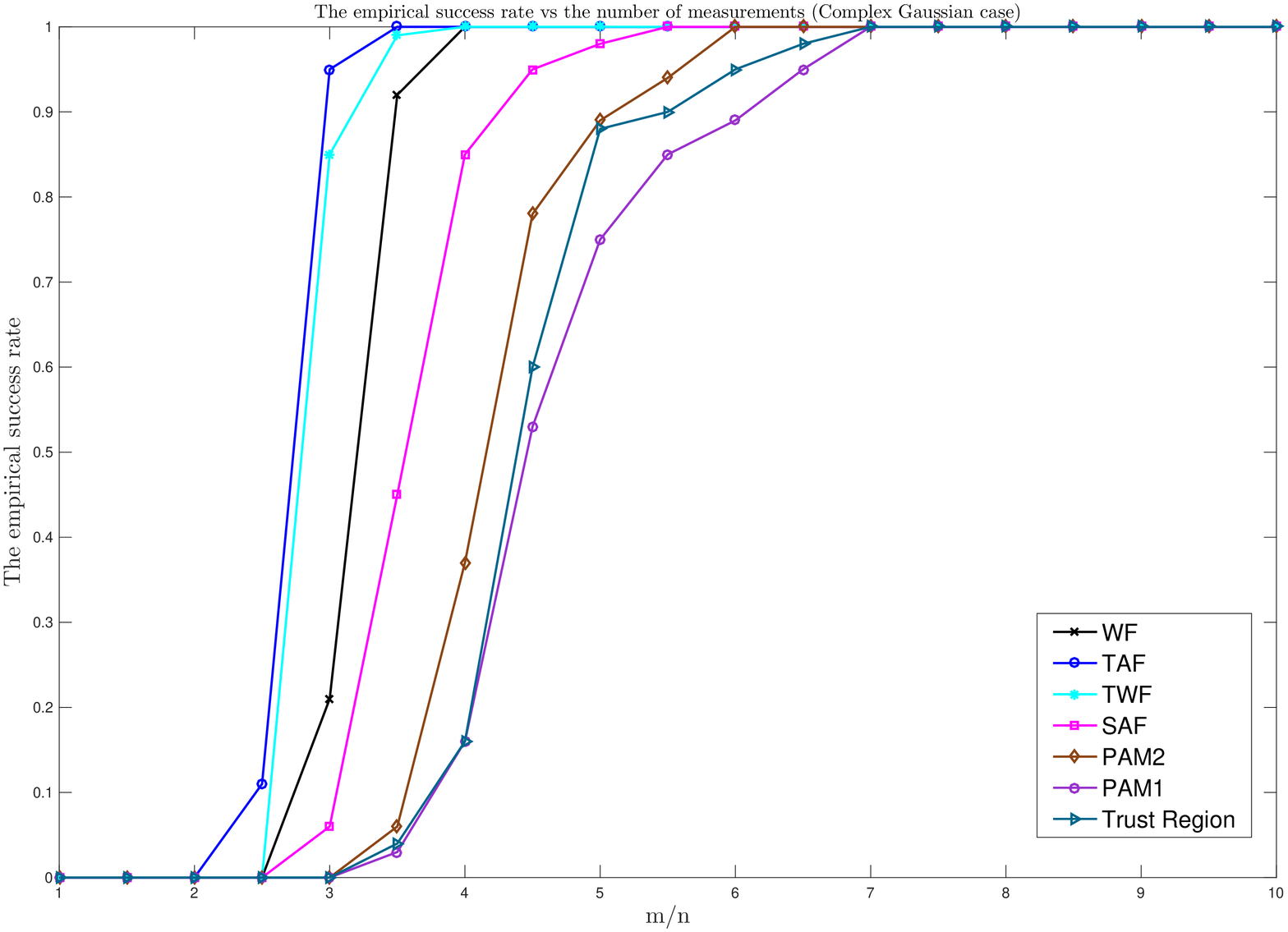}}
   \subfigure[]{
     \includegraphics[width=0.4\textwidth]{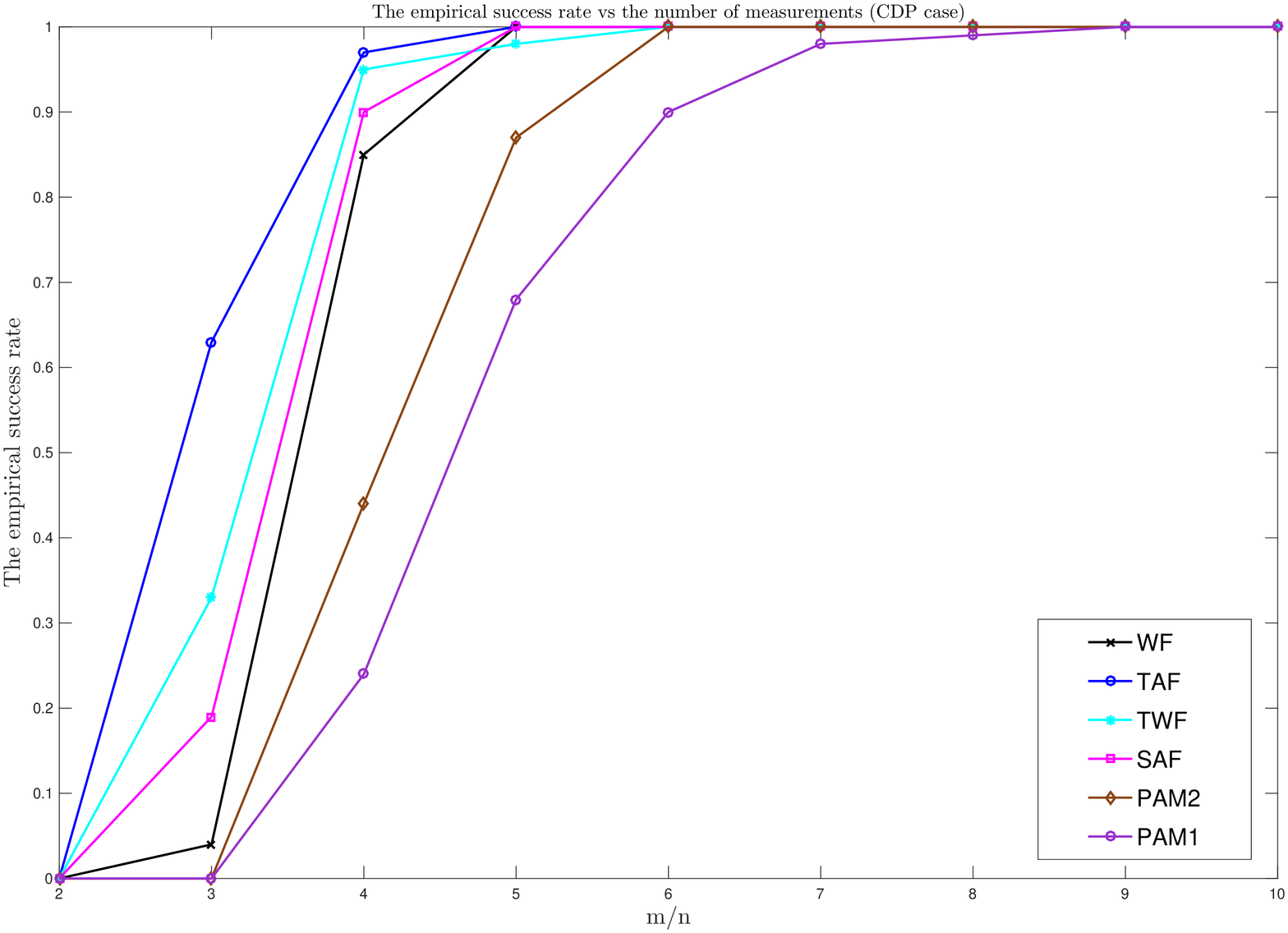}}  
\caption{ The empirical success rate for different $m/n$ based on $100$ random trails. (a) Success rate for real Gaussian case, (b) Success rate for complex Gaussian case, (c) Success rate for CDP case.}
\label{figure:succ}
\end{figure}

\begin{example}{\rm 
In this example, we test the empirical success rate of PAM1, PAM2 versus the number of measurements.  We conduct the experiments for the real Gaussian, complex Gaussian and CDP cases respectively.
We choose $n=128$ and the maximum number of iterations is $T=2500$.   For real and complex Gaussian cases, we vary $m$ within the range $[n,10n]$. For CDP case, we set the ratio $m/n=L$ from $2$ to $10$.
For each $m$, we run $100$ times trials to calculate the success rate. Here, we say a trial to have successfully reconstructed the target signal if the relative error satisfies $\mbox{dist}(u_{T}-x)/\norm{x} \le 10^{-5}$.
The results are plotted in Figure \ref{figure:succ}. 
It can be seen that
% $5n$ Gaussian phaseless measurement  or $5$ octanary patterns are enough for exactly recovery for SAF. 
$6n$ Gaussian phaseless measurement  or $7$ octanary patterns are enough for exactly recovery for PAM2.

}\end{example}

\begin{example}
{\rm In this example, we compare the convergence rate of PAM1, PAM2 with those of SAF, WF, TWF, TAF for real Gaussian and complex Gaussian cases. We choose $n=128$ and $m=6n$. The results are presented in Figure \ref{figure:relative_error}. Since PAM1 as well as PAM2 algorithm chooses a random initial guess according to the standard Gaussian distribution instead of adopting a spectral initialization, it sometimes need to escape the saddle points with a small number of  iterations.  Due to its high efficiency to escape the saddle points, it still performs well comparing with state-of-the-art algorithms with spectral initialization. 
\begin{figure}[H]
\centering
\subfigure[]{
     \includegraphics[width=0.45\textwidth]{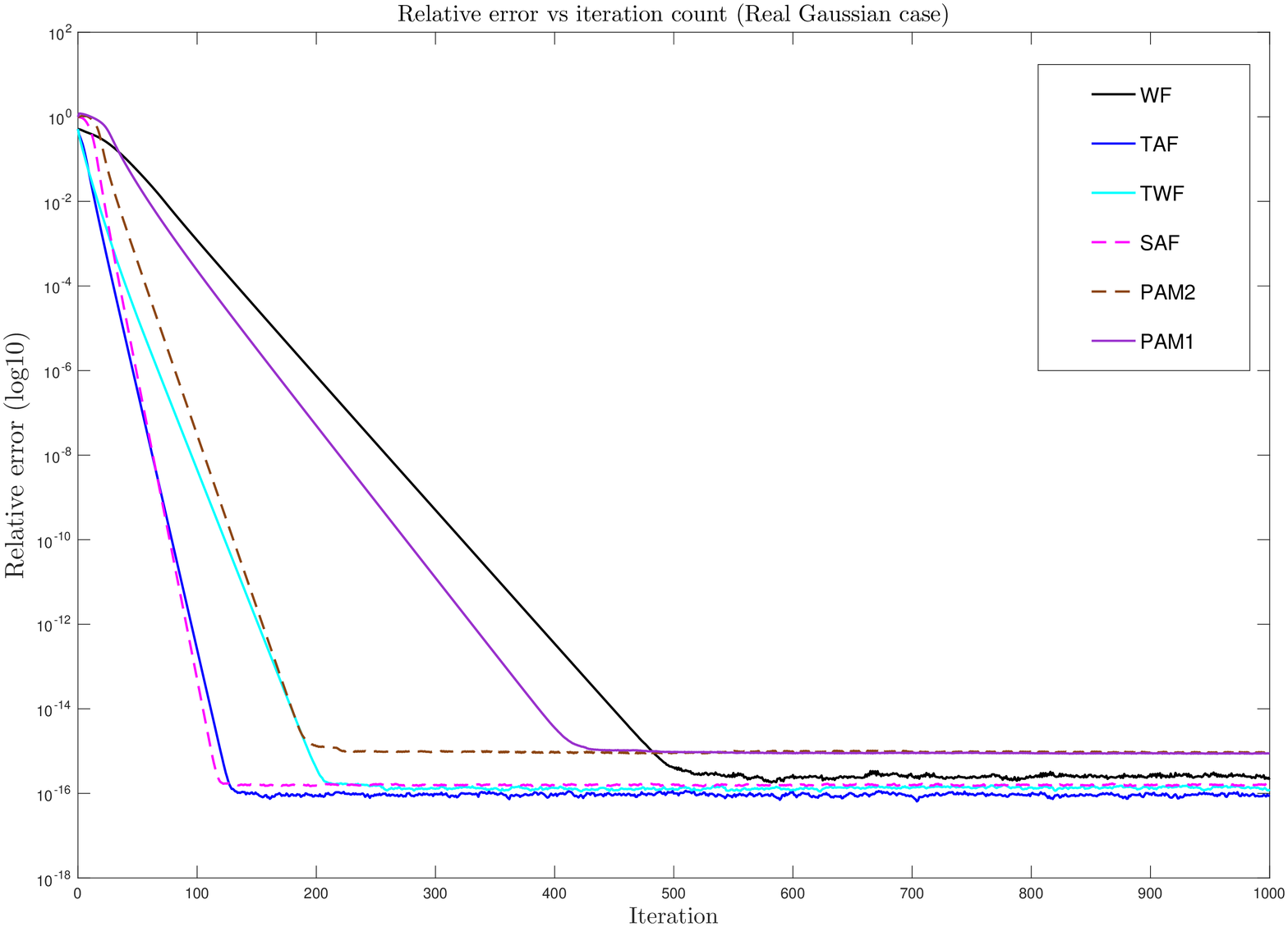}}
\subfigure[]{
     \includegraphics[width=0.45\textwidth]{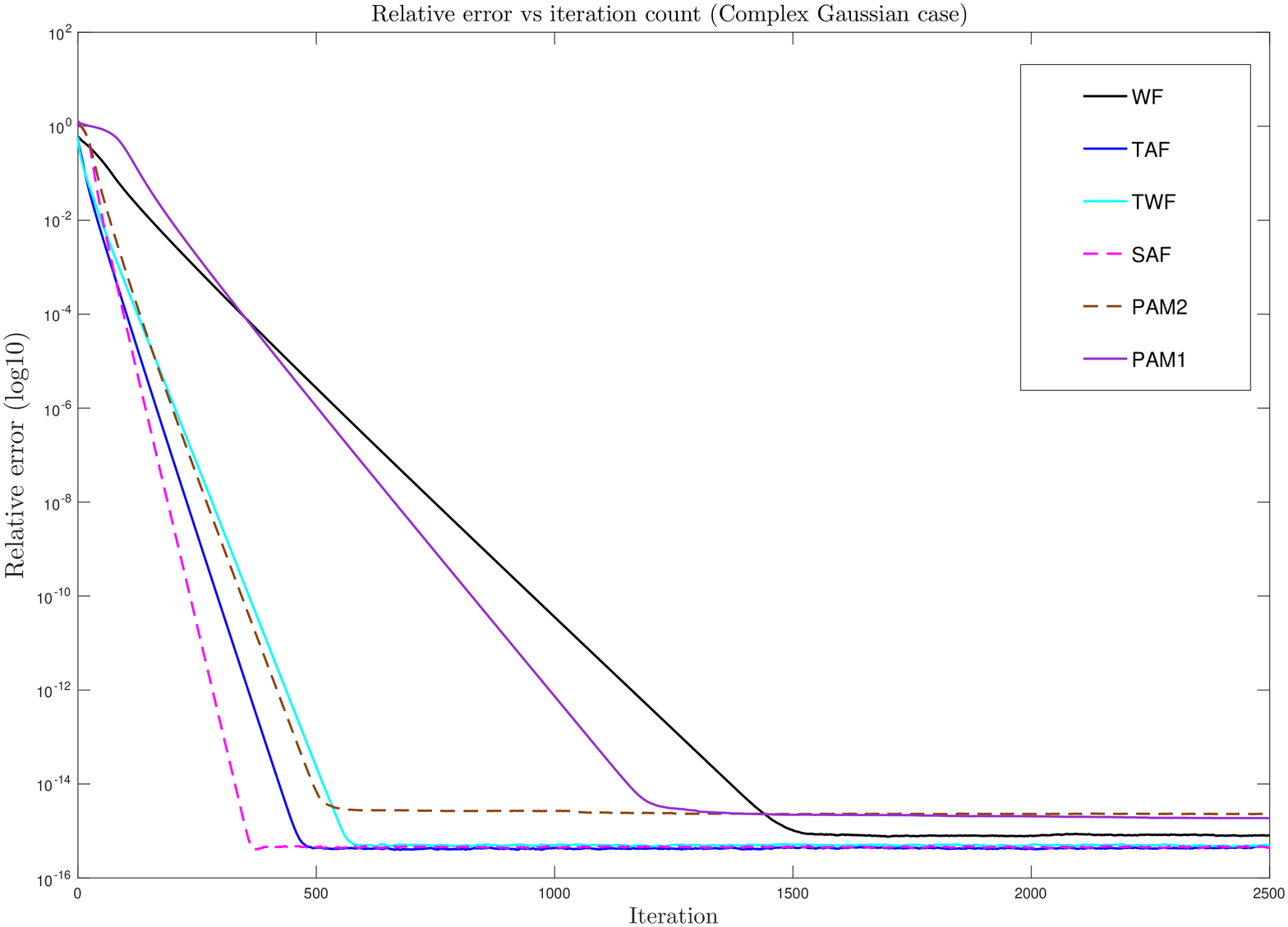}}
\caption{ Relative error versus number of iterations for PAF, SAF, WF, TWF, and TAF method: (a) Real Gaussian case; (b)  Complex Gaussian case.}
\label{figure:relative_error}
\end{figure}
}
\end{example}

\begin{example}{\rm
In this example, we compare the time elapsed and the iteration needed for WF, TWF, TAF, SAF and our PAM1, PAM2 to achieve the relative error  $10^{-5}$ and $10^{-10}$, respectively.  We choose $n=1000$ with $m=8n$. We adopt the same spectral initialization method for WF, TWF, TAF and the initial guess is obtained by power method with $50$ iterations.  We run $50$ times trials to calculate the average time elapsed and iteration number for those algorithms. The results are shown in Table \ref{tab:performance_comparison}. The numerical results show that PAM2 takes around $15$ and $42$ iterations to escape the saddle points for the real and complex Gaussian cases, respectively. 
%Similarly, the iteration numbers of escaping the saddle points are around $15, 42$ for PAF2  in the real and complex cases.
% Since there is no spectral initialization, the high efficiency of escaping saddle points and low computational complexity, the time elapsed of SAF is less than the other methods significantly. 

}
\end{example}

\begin{table}[tp]
  \centering
  \fontsize{12}{16}\selectfont
  \caption{Time Elapsed and Iteration Number among Algorithms on Gaussian Signals with $n=1000$.}
  \label{tab:performance_comparison}
    \begin{tabular}{|c|c c|cc|cc|cc|}
    \hline
    \multirow{2}{*}{Algorithm}&
    \multicolumn{4}{c|}{Real Gaussian}&\multicolumn{4}{c|}{ Complex Gaussian }\cr\cline{2-9}
    &\multicolumn{2}{c|}{$10^{-5}$ }&\multicolumn{2}{c|}{$10^{-10}$ }& \multicolumn{2}{c|}{$10^{-5}$ } &\multicolumn{2}{c|}{$10^{-10}$ }\cr \hline
    & Iter & Time(s) & Iter & Time(s) & Iter & Time(s) & Iter & Time(s) \cr \hline
   SAF & 44&\bf{0.1556} &68 &\bf{0.2276} &113&\bf{1.3092} & 190 &\bf{2.3596} \cr\hline
    PAM1&108 &3.3445&204 &5.5768 &291&35.8624&591 & 75.3231\cr\hline
    PAM2&46&1.5816&84 &2.1980 &129&15.8295& 239&27.6362\cr\hline
    WF &125&4.4214& 229 &6.3176 &304&34.6266& 655&86.6993\cr \hline
    TAF &29&0.2744&60&0.3515 &100&1.7704& 211 &2.7852\cr \hline
    TWF&40&0.3181&87&0.4274&112&1.9808& 244&3.7432\cr \hline
    Trust Region &\bf{21}&2.9832&\bf{29}&4.4683&{\bf 33}&19.1252& \bf{42}&29.0338\cr \hline
    \end{tabular}
\end{table}

%\begin{example} {\rm
%In this example, we show the performance of SAF1 with different parameter $\beta$. We choose $n=128$ and the step size $\mu=0.6$. For SAF with random initialization, we choose $m=4n$; and for SAF with spectral initialization, we choose $m=2.5n$.  We test the parameter $\beta$ within the range from $0.1$ to $1.0$. For each $\beta$, we run $100$ times trials and calculate the success rate, where a trial is successful if the relative error is less than $10^{-5}$.
%The results are depicted in Figure \ref{figure:beta}.  From the figures, we can see our SAF has better performance as the parameter $\beta$ increasing.
%}
%\end{example}
%\begin{figure}[H]
%\centering
%\subfigure[]{
%     \includegraphics[width=0.45\textwidth]{Sucess_betaR.eps}}
%\subfigure[]{
%     \includegraphics[width=0.45\textwidth]{Sucess_betaS.eps}}
%\caption{ Relative error versus the parameter $\beta$ for SAF: (a) SAF with random initialization under $m=4n$; (b) SAF with spectral initialization under $m=2.5n$.}
%\label{figure:beta}
%\end{figure}

\subsection{Recovery of Natural Image}
We next compare the performance of the above algorithms on recovering a natural image from masked Fourier intensity  measurements. The image is the Milky Way Galaxy with resolution $1080 \times 1920$. The colored image has RGB channels. We use $L=20$ random octanary patterns to obtain the Fourier intensity measurements for each R/G/B channel as in \cite{WF}. Table \ref{tab:performance_comp_image} lists the averaged time elapsed and the iteration needed to achieve the relative error  $10^{-5}$ and $10^{-10}$  over the three RGB channels. We can see that our algorithms have good performance comparing with state-of-the-art algorithms with spectral initialization. 
Furthermore, our algorithms perform well even with $L=10$ under $300$ iterations, while WF fails.  Figure \ref{figure:galaxy} shows the image recovered by PAM2. 
%The images recovered by other algorithms are not presented because they are similar.

\begin{figure}[H]
\centering
     \includegraphics[width=0.9\textwidth]{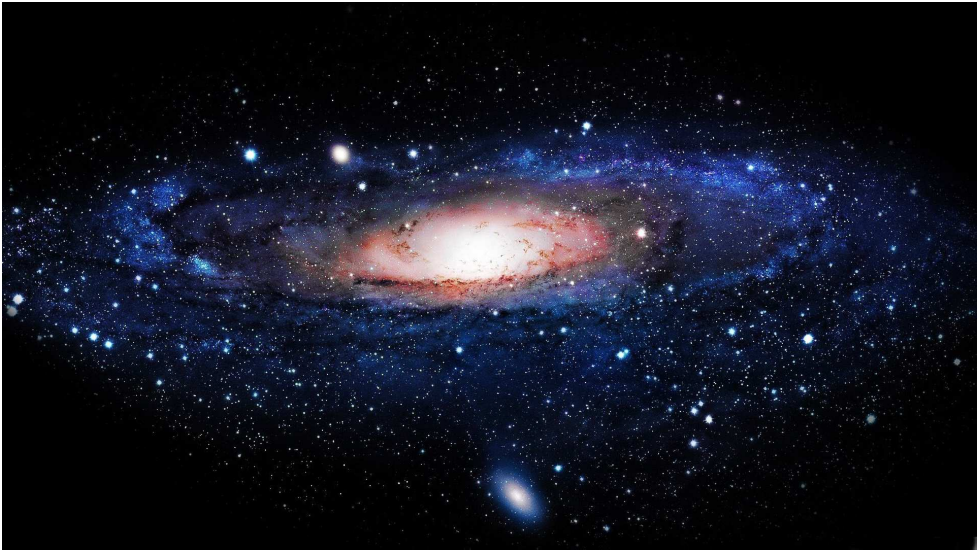}
\caption{ The Milky Way Galaxy image: PAM2 with $L=10$ takes $300$ iterations, computation time is $524.1$ s, relative error is  $7.26 \times 10^{-13}$.}
\label{figure:galaxy}
\end{figure}

\begin{table}[tp]
  \centering
  \fontsize{13}{16}\selectfont
  \caption{Time Elapsed and Iteration Number among Algorithms on Recovery of Galaxy Image.}
  \label{tab:performance_comp_image}
    \begin{tabular}{|c|c c|cc|}
    \hline
    \multirow{2}{*}{Algorithm}&
    \multicolumn{4}{c|}{The Milky Way Galaxy}\cr\cline{2-5}
    &\multicolumn{2}{c|}{$10^{-5}$ }&\multicolumn{2}{c|}{$10^{-10}$ }\cr \hline
    & Iter & Time(s) & Iter & Time(s) \cr \hline
   SAF & 92 &\bf{202.47} &148 &\bf{351.21} \cr\hline
    PAM1&198 &462.27 & 306 &710.27  \cr\hline
    PAM2&113 &260.48 &187 &441.55 \cr\hline
    WF &158 &381.7 & 277 &621.63 \cr \hline
    TAF &\bf{65} &223.89&\bf{122}&368.22 \cr \hline
    TWF&68 &315.14&145&566.84\cr \hline
    \end{tabular}
\end{table}

\subsection{ Recovery of signals with noise}
We now demonstrate the robustness of PAM1, PAM2 to noise and compare them with SAF,  WF, TWF, TAF. We  consider the noisy model $y_i=\abs{\nj{ a_i, x}}+\eta_i$ and add different level of Gaussian noises to explore the relationship between the signal-to-noise rate (SNR) of the measurements and the mean square error (MSE) of the recovered signal. Specifically, SNR and MSE are evaluated by
\[
\mbox{MSE}:= 10 \log_{10} \frac{\mbox{dist}^2(u,x)}{\norms{x}^2} \quad \mbox{and} \quad \mbox{SNR}=10 \log_{10} \frac{\sum_{i=1}^m \abs{a_i^\T x}^2}{\norms{\eta}^2},
\]
where $u $ is the output of the algorithms given above after $2500$ iterations. We choose $n=128$ and $m=8n$. The SNR varies from $20$db to $60$db. The result is shown in Figure \ref{figure:SNR}. We can see that our algorithms are stable for noisy phase retrieval.

\begin{figure}[H]
\centering
\subfigure[]{
     \includegraphics[width=0.45\textwidth]{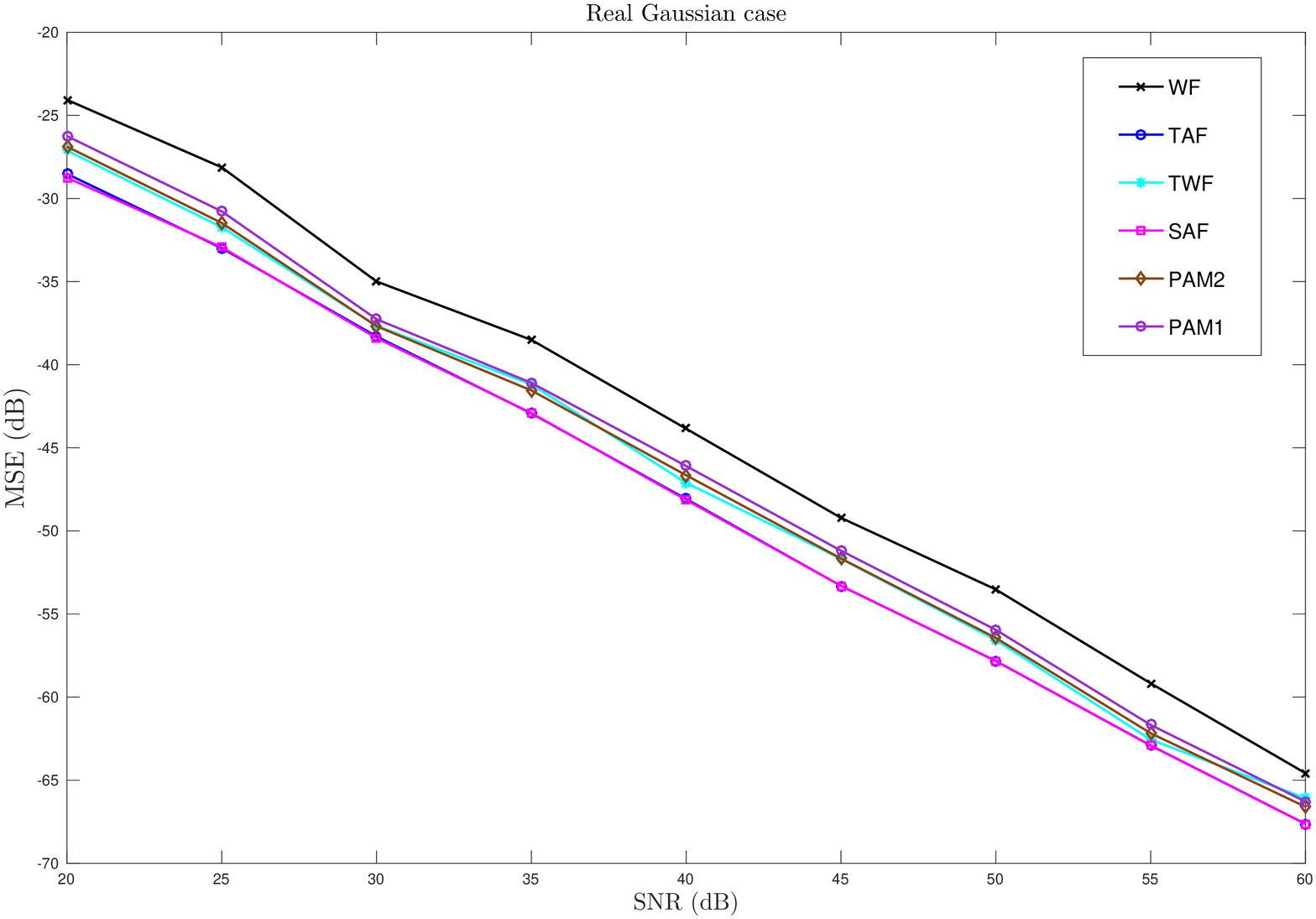}}
\subfigure[]{
     \includegraphics[width=0.45\textwidth]{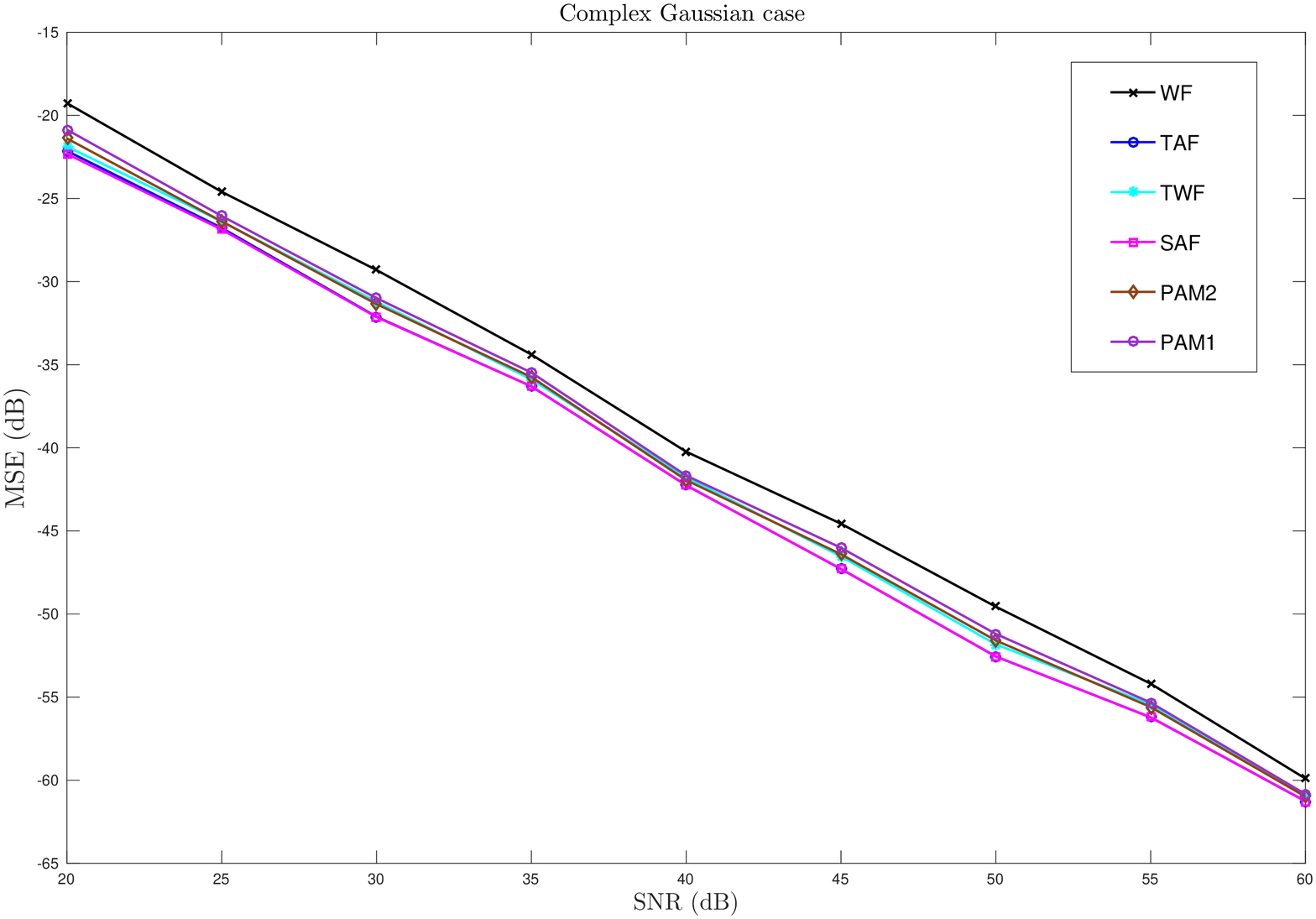}}
\caption{ SNR versus relative MSE on a dB-scale under the noisy Gaussian model: (a) Real Gaussian case; (b)  Complex Gaussian case.}
\label{figure:SNR}
\end{figure}

 \appendix
 \renewcommand{\appendixname}{Appendix~\Alph{section}}
 
\section{Auxiliary estimates for Section \ref{S:model2} } \label{Ap2mod}

\begin{proof}[Proof of Lemma \ref{lemAu29_2}]
Recall that 
\begin{align*}
\mathbb E \sqrt{\beta+X_t^2} \sqrt{\beta \rho^2+X_1^2} =:\, h_{\infty}(\rho, t). 
\end{align*}
Since $\rho \sim 1$ we shall slightly abuse notation and write $h_{\infty}(\rho,t)$ simply as $h(t)$
in this proof. 
Denote $g(x) =\sqrt{\beta+x^2}$. Clearly
\begin{align*}
g^{\prime}=\frac x {\sqrt{\beta+x^2}}, \quad 
g^{\prime\prime} = \beta (\beta+x^2)^{-\frac 32}.
\end{align*}
%We have
%\begin{align*}
%& h^{\prime}(t) = \mathbb E g^{\prime}(X_t) \dot X_t \cdot \sqrt{\beta \rho^2+X_1^2};\\
%& h^{\prime\prime}(t)
%=\mathbb E \Bigl( g^{\prime\prime}(X_t) (\dot X_t)^2
%+ g^{\prime}(X_t) \frac {d^2}{dt^2} X_t) \sqrt{\beta\rho^2+X_1^2}.
%\end{align*}
%The above is not very illuminating. 
Since $X_t = t X_1+ \sqrt{1-t^2} Y_1$, where $X_1$ and $Y_1$ are independent
standard 1D Gaussian random variables,  we clearly have 
\begin{align*}
h(t) = \int g(tx+\sqrt{1-t^2} y)  k(x)  \rho_1(x, y) dx dy,
\end{align*}
where 
\begin{align*}
& k(x) =\sqrt{\beta \rho^2 +x^2}, \qquad \rho_1(x,y)=\frac 1{2\pi} e^{-\frac{x^2+y^2} 2}.
\end{align*}
Observe that
\begin{align*}
&\partial_x (  g(tx+\sqrt{1-t^2} y) ) = g^{\prime} \cdot t;\\
&\partial_y ( g (tx+\sqrt{1-t^2} y) ) = g^{\prime} \cdot \sqrt{1-t^2};\\
& \partial_t (  g(tx+\sqrt{1-t^2} y) ) =g^{\prime}\cdot( x -\frac t{\sqrt{1-t^2} } y)
=\frac 1 {\sqrt{1-t^2}} (  x \partial_y g - y \partial_x g).
\end{align*}
The third identity is the key to obtaining cancellation when calculating $h^{\prime}(t)$ and
$h^{\prime\prime}(t)$. 

Observe that 
\begin{align*}
(x \partial_y - y \partial_x ) \rho_1\equiv 0, \qquad\forall\, x, y \in \mathbb R. 
\end{align*}
Then clearly
\begin{align*}
h^{\prime}(t) &= \frac 1{\sqrt{1-t^2} } \int \Bigl( 
(x \partial_y  -y \partial_x ) (g(tx +\sqrt{1-t^2} y) ) 
\Bigr) k(x) \rho_1(x,y) dx dy \notag \\
&=\frac 1 {\sqrt{1-t^2}} \int g(tx+\sqrt{1-t^2}y) \cdot y k^{\prime}(x) \rho_1 dx dy \\
&=\frac 1 {\sqrt{1-t^2}}
2\int_{x>0,y>0} (g(tx+\sqrt{1-t^2} y)
-g(tx-\sqrt{1-t^2} y) ) \cdot y k^{\prime}(x) \rho_1 dx dy.
\end{align*}
Clearly then 
\begin{align*}
&h^{\prime}(t) >0, \quad 0<t<1; \\
&h^{\prime}(t) <0, \quad -1<t<0.
\end{align*} 
 Moreover,
\begin{align*}
|h^{\prime}(t)| \le 4 \int \|g^{\prime}\|_{\infty} y^2 \cdot \rho_1 dx dy \lesssim 1,
\quad\forall\, |t| < 1.
\end{align*}
Note that we can actually obtain $|h^{\prime}(t) | \lesssim 1$ for all $|t| \le 1$. 
On the other hand, for $0<t<1$,
\begin{align*}
h^{\prime}(t)
&\gtrsim \int_{x>0,y>0}
\frac{t y x}{\sqrt{\beta+(tx+\sqrt{1-t^2} y)^2}
+ \sqrt{\beta+(tx-\sqrt{1-t^2} y)^2} }
\cdot y \cdot \frac x {\sqrt{\beta \rho^2 +x}}
\cdot e^{-\frac{x^2+y^2} 2} dx dy \notag \\
&\gtrsim \int_{x\sim 10,y\sim 10}
\frac{t y x}{\sqrt{\beta+(tx+\sqrt{1-t^2} y)^2}
+ \sqrt{\beta+(tx-\sqrt{1-t^2} y)^2} }
\cdot y \cdot \frac x {\sqrt{\beta \rho^2 +x}}
\cdot e^{-\frac{x^2+y^2} 2} dx dy \notag \\
&\gtrsim t. 
\end{align*}
Note that the implied constants here are allowed to depend on $\beta$.
Similarly one can show $-h^{\prime}(t) \gtrsim |t|$ for $-1<t<0$. 
Next we treat $h^{\prime\prime}(t)$ in  the regime $|t| \ll 1$. Observe that
\begin{align*}
\frac d {dt} \Bigl( \frac 1 {\sqrt{1-t^2} } \Bigr) = O(t),\quad
\frac 1 {\sqrt{1-t^2}} =1+O(t^2).
\end{align*}
Thus
\begin{align*}
h^{\prime\prime}(t)
&=O(t) + \frac 1{\sqrt{1-t^2} }\int g^{\prime}(t x + \sqrt{1-t^2} y)
(x-\frac t {\sqrt{1-t^2}} y) y k^{\prime}(x) \rho_1 dx dy \notag \\
&=O(t) + \int g^{\prime}(t x + \sqrt{1-t^2} y)
(x-\frac t {\sqrt{1-t^2}} y) y k^{\prime}(x) \rho_1 dx dy.
\end{align*}
Observe that the contribution of $\frac t {\sqrt{1-t^2}} y$ is bounded by $O(t)$.
Then
\begin{align*}
h^{\prime\prime}(t)=O(t)+
\int g^{\prime}(tx+\sqrt{1-t^2} y) y  \frac {x^2} {\sqrt{\beta \rho^2+x^2}}  \rho_1 dx dy.
\end{align*}
Denote $x_t= t x +\sqrt{1-t^2} y$. Then $y= \frac {x_t -t x}{\sqrt{1-t^2}}$.  The
contribution due to $\frac{ tx}{\sqrt{1-t^2}}$ is also $O(t)$.  Thus
\begin{align*}
h^{\prime\prime}(t) = O(t)
+ \int g^{\prime}(x_t) \frac {x_t}{\sqrt{1-t^2}} \frac {x^2} {\sqrt{\beta \rho^2+x^2}}  \rho_1 dx dy.
\end{align*}
Note that  $g^{\prime}(z) z = \frac {z^2}{\sqrt{\beta+z^2}}$.  Thus for
$|t|\ll 1$,  if $9\le y\le 11$, $\frac 14 \le x\le \frac 12$,
then $x_t \sim 1$, and the main term is $O(1)$. Thus
\begin{align*}
h^{\prime\prime}(t) \gtrsim 1
\end{align*}
for all $|t|\ll 1$.
\end{proof}

If we take the limit $\beta \to 0$ in $h(\rho, t)= \mathbb E
\sqrt{\beta +X_t^2} \sqrt{\beta \rho^2 +X_1^2}$. Then we obtain the expression
\begin{align*}
\mathbb E |X_t| |X_1|.
\end{align*}
Understanding this limiting case is of some importance for the case $\beta>0$. The following
proposition gives a very explicit characterization. 
\begin{prop}
We have for $|t|<1$:
\begin{align*}
&\frac {\pi} 2\mathbb E |X_t| |X_1| = (\frac{\pi}2 - \arccos t) t + \sqrt{1-t^2};  \\
&(\frac {\pi} 2\mathbb E |X_t| |X_1|)^{\prime}= \frac{\pi}2- \arccos t; \\
&(\frac {\pi} 2\mathbb E |X_t| |X_1|)^{\prime\prime}= \frac 1 {\sqrt{1-t^2}}. 
\end{align*}
\end{prop}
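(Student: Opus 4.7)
The plan is to reduce the first identity to the classical closed-form expression for $\mathbb{E}|UV|$ when $(U,V)$ is a centered bivariate Gaussian with unit variances and correlation $\rho$, and then obtain the two derivative identities by direct differentiation. To set up the reduction, observe that since $X_1, Y_1$ are independent standard Gaussians and $X_t = tX_1 + \sqrt{1-t^2}\,Y_1$, the pair $(X_1, X_t)$ is jointly Gaussian with $\mathbb{E}X_1^2 = \mathbb{E}X_t^2 = 1$ and $\mathbb{E}X_1 X_t = t$. Thus $t$ is exactly the correlation coefficient of the pair, and computing $\mathbb{E}|X_1||X_t|$ reduces to evaluating $f(\rho) := \mathbb{E}|UV|$ at $\rho = t$.

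For the key computation of $f$, I would apply Price's theorem together with Sheppard's arcsine formula. Differentiating under the integral (justified by mollifying $|z|$ to $\sqrt{\varepsilon+z^2}$ and sending $\varepsilon\to 0^{+}$) gives $f'(\rho) = \mathbb{E}[\operatorname{sgn}(U)\operatorname{sgn}(V)] = 2\mathbb{P}(UV>0) - 1$, and Sheppard's formula $\mathbb{P}(UV>0) = \tfrac12 + \tfrac{1}{\pi}\arcsin\rho$ then gives $f'(\rho) = \tfrac{2}{\pi}\arcsin\rho$. Using the initial condition $f(0) = (\mathbb{E}|X_1|)^2 = 2/\pi$ (the two factors are independent at $t=0$) together with the antiderivative $\int_0^\rho \arcsin s\,ds = \rho\arcsin\rho + \sqrt{1-\rho^2} - 1$, integration produces $f(\rho) = \tfrac{2}{\pi}\bigl(\rho\arcsin\rho + \sqrt{1-\rho^2}\bigr)$. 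Specializing to $\rho=t$ and rewriting $\arcsin t = \tfrac{\pi}{2} - \arccos t$ yields the first claimed identity.

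For the two derivative identities, direct differentiation of $(\tfrac{\pi}{2} - \arccos t)\,t + \sqrt{1-t^2}$ gives $\tfrac{\pi}{2} - \arccos t + \tfrac{t}{\sqrt{1-t^2}} - \tfrac{t}{\sqrt{1-t^2}}$, the last two terms cancelling to yield the second identity; differentiating $\tfrac{\pi}{2}-\arccos t$ once more immediately gives $\tfrac{1}{\sqrt{1-t^2}}$, the third identity. The only slightly delicate point in the whole argument is the justification of Price's theorem for $f'(\rho)$, since $|\cdot|$ is not smooth at the origin, but the mollification above handles this cleanly. A self-contained alternative, entirely in the style of the paper, is to mimic the cancellation trick from the proof of Lemma \ref{lemAu29_2}: rewrite $\partial_t$ acting on $|tx+\sqrt{1-t^2}\,y|$ as $\tfrac{1}{\sqrt{1-t^2}}(x\partial_y - y\partial_x)$ inside the integral, and integrate by parts against the rotationally invariant Gaussian density $\rho_1$. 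This produces $f'(t)$ without invoking Price's theorem or Sheppard's formula explicitly, after which the same integration argument yields the first identity and the remaining two identities follow as above.
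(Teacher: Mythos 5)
Your proof is correct, and it follows a genuinely different route from the paper. The paper computes the expectation directly by rewriting the Gaussian integral in polar coordinates (setting $t=\sin\theta_0$, using $\int_0^\infty r^3 e^{-r^2/2}\,dr=2$ to reduce to $\int_0^\pi |\sin(\theta+\theta_0)||\cos\theta|\,d\theta$, and then splitting this trigonometric integral into pieces where the sign pattern is constant), whereas you bypass any explicit integral and instead invoke two classical tools: Price's theorem to reduce $f'(\rho)$ to $\mathbb{E}[\operatorname{sgn}(U)\operatorname{sgn}(V)]$, and Sheppard's arcsine formula to evaluate that sign correlation as $\tfrac{2}{\pi}\arcsin\rho$, after which integration and the initial condition $f(0)=2/\pi$ close the argument. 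Your route is shorter and more conceptual --- it also makes the derivative identities transparent, since $f'$ is obtained before $f$ itself --- but it imports nontrivial machinery (Price's theorem in a distributional setting, justified by the mollification $\sqrt{\varepsilon+z^2}\to|z|$) that the paper's elementary polar-coordinate computation avoids. The alternative you sketch at the end, reusing the rotation-derivative cancellation $\partial_t = \tfrac{1}{\sqrt{1-t^2}}(x\partial_y-y\partial_x)$ from Lemma \ref{lemAu29_2} and integrating by parts against the rotationally invariant Gaussian density, would align most closely with the paper's own toolkit; note however that even that derivative-based route lands you exactly where Price's theorem does, namely at $f'(t)=\tfrac{2}{\pi}\arcsin t$, so the closed form still comes from integrating rather than from directly evaluating the expectation as the paper does. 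All three paths are valid; the paper's is the most self-contained, yours is the most economical given standard background.
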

\begin{proof}
We recall $X_t = t X+\sqrt{1-t^2}Y$ where $X=X_1\sim \mathcal N(0,1)$, $Y\sim
\mathcal N(0,1)$ are independent. Without loss of generality we can assume
$0\le t<1$.  Denote $t= \sin \theta_0$ where $0\le \theta_0< \frac {\pi} 2$.
Then by using polar coordinates, we have
\begin{align*}
\frac {\pi}2
\mathbb E|X_t X_1|
&=\frac 14\int_{\mathbb R^2}
|tx +\sqrt{1-t^2} y| \cdot |x| \cdot e^{-\frac {x^2+y^2} 2} dx dy \notag \\
&=\frac 14 \int_0^{2\pi} |\sin(\theta+\theta_0)| \cdot |\cos\theta | d\theta
\cdot \int_0^{\infty} r^3 e^{-\frac {r^2} 2} dr \notag \\
&= \frac 12 \int_0^{2\pi} |\sin(\theta+\theta_0)| \cdot |\cos\theta | d\theta \notag \\
& =  \int_0^{\pi} |\sin(\theta+\theta_0)| \cdot |\cos\theta | d\theta.
\end{align*}
Now observe that 
\begin{align*}
& \int_0^{\pi}| \sin(\theta+\theta_0)| \cdot |\cos\theta|  d\theta \notag \\
=& \int_0^{\frac {\pi}2 } \sin(\theta+\theta_0) \cdot \cos\theta d\theta-
\int_{\frac {\pi}2}^{\pi-\theta_0} \sin(\theta+\theta_0) \cdot \cos\theta  d\theta \notag \\
& \qquad + \int_{\pi-\theta_0}^{\pi} \sin(\theta+\theta_0) \cdot \cos\theta  d\theta.
\end{align*}
The desired result then easily follows by an explicit computation.
\end{proof}

%\begin{figure}[H]
%\centering
%     \includegraphics[width=0.6\textwidth]{ArcCos.pdf}
%\caption{The function $(\frac {\pi}2-\arccos t)t +\sqrt{1-t^2}$}
%%\label{figure:succ}
%\end{figure}

\begin{rem*}
One may wonder why at $t=1$, the derivative is formally given by $\frac {\pi}2$ instead
of being zero since $u=x$ should be a critical point. The reason is due to the artificial
singularity introduced by our representation. To see this, one can consider
the regular variable $t=\cos \theta$ with $\theta \in [0,\pi]$, then
\begin{align*}
f(\theta) = (\frac {\pi} 2 - \theta) \cos \theta +\sin \theta;
\end{align*}
Then clearly $f^{\prime}(0)=0$ and $f^{\prime\prime}(0)=-\frac{\pi}2<0$. On the other hand,
\begin{align*}
f(\theta) = \tilde f( t) =\tilde f( \cos \theta).
\end{align*}
Then
\begin{align*}
f^{\prime}(\theta) = \tilde f^{\prime}(\cos \theta) (-\sin \theta).
\end{align*}
Thus
\begin{align*}
\tilde f^{\prime}(1) = \lim_{\theta \to 0}
\frac {f^{\prime}{(\theta})} { -\sin \theta} =  -f^{\prime\prime}(0) >0.
\end{align*}

\end{rem*}

\begin{lem} \label{bp7}
Let $X_i$: $1\le i\le m$ be independent random variables with 
\begin{align*}
\max_{1\le i \le m} \mathbb E |X_i|^4 \lesssim 1.
\end{align*}
Then for any $t>0$, 
\begin{align*}
\mathbb P ( \Bigl| \frac 1m \sum_{j=1}^m X_i  - 
\operatorname{mean} \Bigr| > t)  \lesssim \frac 1{m^2 t^4}.
\end{align*}
\end{lem}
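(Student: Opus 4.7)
The plan is to apply Markov's inequality to the fourth moment of the centered sum, exploiting independence to show the fourth moment of $\sum_{i=1}^m (X_i - \mathbb E X_i)$ grows only like $m^2$ rather than $m^4$.

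First I would center: set $Y_i = X_i - \mathbb E X_i$, so the $Y_i$ are independent, mean zero, and by Jensen $\mathbb E |Y_i|^4 \lesssim \mathbb E |X_i|^4 + |\mathbb E X_i|^4 \lesssim 1$ uniformly in $i$. It then suffices to bound $\mathbb E\bigl| \sum_{i=1}^m Y_i \bigr|^4$ and invoke Markov in the form
\begin{align*}
\mathbb P\Bigl( \Bigl| \frac 1m \sum_{i=1}^m Y_i \Bigr| > t \Bigr)
= \mathbb P\Bigl( \Bigl| \sum_{i=1}^m Y_i \Bigr|^4 > m^4 t^4 \Bigr)
\le \frac{\mathbb E |\sum_i Y_i|^4}{m^4 t^4}.
\end{align*}

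Next I would expand $\mathbb E\bigl| \sum_i Y_i \bigr|^4 = \sum_{i,j,k,l} \mathbb E[Y_i Y_j Y_k Y_l]$ and use independence together with $\mathbb E Y_i = 0$ to kill every term in which some index appears exactly once. The surviving terms split into the diagonal $i=j=k=l$, giving $\sum_i \mathbb E Y_i^4 \lesssim m$, and the three pairings of the form $\{i=j,\,k=l,\,i\ne k\}$ and its two permutations, each contributing $\sum_{i\ne k} \mathbb E Y_i^2\, \mathbb E Y_k^2 \lesssim m^2$ via Cauchy--Schwarz and the fourth-moment bound. Altogether $\mathbb E\bigl|\sum_i Y_i\bigr|^4 \lesssim m + m^2 \lesssim m^2$, and plugging into the Markov bound above yields the claimed $1/(m^2 t^4)$.

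There is no real obstacle here; the only small care needed is the centering step and ensuring the combinatorial expansion is written cleanly so that the independence-plus-mean-zero cancellation is visible. If the $X_i$ were allowed to be complex (the $|\cdot|$ in the statement suggests this possibility), one treats real and imaginary parts separately, which costs only an absolute constant in the final bound.
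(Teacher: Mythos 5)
Your proposal is correct and takes essentially the same route as the paper: centering (WLOG mean zero), expanding the fourth moment of the sum, using independence and mean-zero to eliminate terms with a lone index so that only the diagonal ($\lesssim m$) and paired terms ($\lesssim m^2$) survive, and then applying Markov's inequality. The paper's proof is simply a more terse version of the same argument.
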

\begin{proof}
Without loss of generality we can assume $X_i$ has zero mean. The result then follows from
the observation that
\begin{align*}
\mathbb E (\sum_{j=1}^m X_j )^4 \lesssim \sum_{i<j} \mathbb E X_i^2 X_j^2 +
\sum_{i} \mathbb E X_i^4 \lesssim m^2.
\end{align*}
\end{proof}

\begin{lem} \label{A30_1}
Let $\psi: \mathbb R\to \mathbb R$ be continuously differentiable such that
\begin{align*}
\sup_{z \in \mathbb R} |\psi(z)| + \sup_{z\in \mathbb R}
\sqrt{1+z^2} |\psi^{\prime}(z)| \lesssim 1.
\end{align*}
Let 
\begin{align*}
\psi_0(R, z) = z \sqrt{R+z^2}, \quad z \in \mathbb R,  \, c_1\le R\le c_2,
\end{align*}
where $0<c_1<c_2<\infty$ are two fixed constants. For any
$0<\epsilon \le 1$, if $m\gtrsim n$, then the following hold
with high probability:

\begin{align*}
\Bigl| \frac 1 m \sum_{j=1}^m \psi( a_j\cdot u)
\psi_0 (R, a_j\cdot e_1) - \operatorname{mean} \Bigr|\le \epsilon,
\qquad \forall\, u \in \mathbb S^{n-1}, \quad\forall\, c_1\le R\le c_2.
\end{align*}

\end{lem}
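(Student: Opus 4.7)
The plan is to combine subexponential Bernstein-type pointwise concentration with a covering argument on the parameter space $\mathbb S^{n-1}\times[c_1,c_2]$, using the smoothness hypotheses on $\psi$ and $\psi_0$ to extend from the net to the full parameter space. Set $F(u,R)=\tfrac{1}{m}\sum_{j=1}^m X_j(u,R)$ with $X_j(u,R)=\psi(a_j\cdot u)\,\psi_0(R,a_j\cdot e_1)$. Since $|\psi|\lesssim 1$ and $|\psi_0(R,z)|\le |z|\sqrt{R+z^2}\lesssim 1+z^2$ uniformly for $R\in[c_1,c_2]$, each summand is dominated by $1+(a_j\cdot e_1)^2$, a subexponential random variable whose Orlicz norm and variance depend only on $c_2$ and are independent of $n$.

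First I would fix $(u_0,R_0)$ and apply Bernstein's inequality for sums of subexponential variables to obtain
\[
\mathbb P\bigl(|F(u_0,R_0)-\mathbb E F(u_0,R_0)|>\varepsilon/3\bigr)\le 2\exp(-cm\varepsilon^2)
\]
for an absolute constant $c>0$. Then I would build a $\delta$-net $\mathcal N_\delta\subset\mathbb S^{n-1}$ of cardinality at most $(3/\delta)^n$ together with a $\delta$-grid $\mathcal G_\delta\subset[c_1,c_2]$ of cardinality $O(1/\delta)$, and union-bound over $\mathcal N_\delta\times\mathcal G_\delta$ to get simultaneous control at all net points with probability $\ge 1-2(3/\delta)^{n+1}\exp(-cm\varepsilon^2)$.

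For the Lipschitz extension, the hypothesis $\sqrt{1+z^2}|\psi'(z)|\lesssim 1$ implies $|\psi'|\lesssim 1$ globally, while $|\partial_R\psi_0(R,z)|=|z|/(2\sqrt{R+z^2})\lesssim 1$ uniformly on $R\in[c_1,c_2]$. On the high-probability event $\mathcal A=\{\max_j\|a_j\|_2\le 2\sqrt n\}\cap\{\tfrac1m\sum_j(1+(a_j\cdot e_1)^2)\le C_0\}$ (which holds with probability $\ge 1-e^{-c'm}$ once $m\gtrsim n$), one then has
\[
|F(u,R)-F(u',R')|\lesssim \|u-u'\|_2\cdot\frac{1}{m}\sum_j\|a_j\|_2\bigl(1+(a_j\cdot e_1)^2\bigr)^{1/2}+|R-R'|\cdot\frac{1}{m}\sum_j(1+|a_j\cdot e_1|),
\]
which on $\mathcal A$ is dominated by $\sqrt n\,\|u-u'\|_2+|R-R'|$ up to a constant. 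Hence $F$ is Lipschitz on $\mathcal A$ with constant $L\lesssim\sqrt n$.

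Choosing $\delta\asymp\varepsilon/\sqrt n$ makes the Lipschitz error at most $\varepsilon/3$, and combining this with the pointwise bound yields total failure probability $\lesssim(C\sqrt n/\varepsilon)^{n+1}\exp(-cm\varepsilon^2)$. Under the paper's stated convention that the constant in $m\gtrsim n$ may depend on $\varepsilon$, taking $m\ge C(\varepsilon)n$ with $C(\varepsilon)$ large enough (in particular dominating the $\log(n/\varepsilon)$ produced by the net entropy) gives the required high-probability conclusion. The main technical obstacle is reconciling the crude dimensional Lipschitz constant $\sqrt n$ with the goal of the threshold $m\gtrsim n$: the Lipschitz constant forces $\delta\sim\varepsilon/\sqrt n$, which produces a $\log n$ factor in the union bound that must either be absorbed into the $\varepsilon$-dependent constant or removed by a sharper chaining-type estimate exploiting the Gaussianity of $u\mapsto\psi(a_j\cdot u)$.
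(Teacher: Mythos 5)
Your high-level template -- pointwise subexponential Bernstein concentration combined with a $\delta$-net over $\mathbb S^{n-1}\times[c_1,c_2]$ and a Lipschitz continuation -- is the same as the paper's. The gap is concentrated in the Lipschitz step, and it is a real gap, not a cosmetic one.

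You estimate the Lipschitz constant of $u\mapsto F(u,R)$ via $\max_j\|a_j\|_2\lesssim\sqrt n$, which produces a constant of order $\sqrt n$ and forces a net scale $\delta\sim\epsilon/\sqrt n$. The resulting union bound is of size $(C\sqrt n/\epsilon)^{\,n+1}\exp(-cm\epsilon^2)$, so the exponent must overcome $n\log(\sqrt n/\epsilon)$, i.e.\ you need $m\gtrsim n\log n$. Your proposed fix -- absorbing the $\log(n/\epsilon)$ into an $\epsilon$-dependent constant $C(\epsilon)$ -- does not work: $\log n$ grows with $n$, so no fixed $C(\epsilon)$ dominates it for all $n$, and the claimed $m\gtrsim n$ threshold is not reached. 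The paper's ``high probability'' convention ($1-Ce^{-cm}$) also fails under this estimate when $m\sim n$.

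The paper resolves exactly this issue by avoiding $\max_j\|a_j\|_2$ altogether. First it introduces a smooth cutoff $\phi\bigl(\tfrac{a_j\cdot u}{\eta\langle a_j\cdot e_1\rangle}\bigr)$ (and an auxiliary cutoff in $a_j\cdot e_1$) and shows the contribution of the localized piece $I_1$ is uniformly $O(\epsilon)$. On the main piece $I_2$, the complementary cutoff forces $|a_j\cdot u|\gtrsim\eta\langle a_j\cdot e_1\rangle$, and combined with the hypothesis $\sqrt{1+z^2}|\psi'(z)|\lesssim 1$ this gives the pointwise derivative bound $|h_1'(z)|\lesssim\langle X_j\rangle^{-1}$ for $h_1(z)=\psi(z)(1-\phi(z/(\eta\langle X_j\rangle)))$. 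The increment between net points is then bounded, summand by summand, by $|a_j\cdot(u-\tilde u)|\,|X_j|$ rather than by $\|a_j\|_2\langle X_j\rangle^2$; an AM--GM split $|a_j\cdot(u-\tilde u)|\,|X_j|\le\tfrac12\delta^{-1}|a_j\cdot(u-\tilde u)|^2+\tfrac12\delta X_j^2$ together with the uniform empirical-covariance concentration $\tfrac1m\sum_j|a_j\cdot v|^2\le 2\|v\|_2^2$ for all $v$ converts this into an $O(\delta+\delta^{1/2})$ bound with constants independent of $n$. This dimension-free Lipschitz control is precisely what lets the paper use a net of cardinality $e^{C_\delta n}$ with fixed $\delta$, so that Bernstein's $e^{-c\eta_1^2 m}$ wins under $m\gtrsim n$. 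Without the cutoff the same computation would produce $\langle X_j\rangle^4$-type terms after AM--GM, which are not subexponential and would again degrade the tail. So the cutoff serves a double purpose that your argument does not reproduce, and the obstacle you flag at the end of your proposal is the decisive one.
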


\begin{proof}
Step 1. Let $0<\eta<\frac 12$ be a constant whose value will be chosen sufficiently small. Let
$\phi \in C_c^{\infty}(\mathbb R)$ be such that $0\le \phi(x)\le 1$ for all $x$,
$\phi(x)=1$ for $|x| \le 1$, and $\phi(x)=0$ for $|x| \ge 2$. Denote
\begin{align*}
\langle x \rangle = \sqrt{1+x^2}, \qquad x \in \mathbb R.
\end{align*}
Consider first the piece
\begin{align*}
I_1 &= \frac 1m \sum_{j=1}^m \psi(a_j\cdot u)
\phi( \frac {a_j \cdot u}  {\eta \langle a_j\cdot e_1 \rangle } )
\psi_0(R, a_j \cdot e_1) \notag \\
& = \frac 1m \sum_{j=1}^m \psi(a_j\cdot u)
\phi( \frac {a_j \cdot u}  {\eta \langle a_j\cdot e_1 \rangle } )
\psi_0(R, a_j \cdot e_1)  \phi(\frac {a_j \cdot e_1} K) \notag \\
& \quad + 
\frac 1m \sum_{j=1}^m \psi(a_j\cdot u)
\phi( \frac {a_j \cdot u}  {\eta \langle a_j\cdot e_1 \rangle } )
\psi_0(R, a_j \cdot e_1) (1- \phi(\frac {a_j\cdot e_1} K) ) \notag \\
&=:\; I_{1,a}+I_{1,b}.
\end{align*}
where $K=\eta^{-\frac 16}$. 
Thanks to the cut-off $\phi(\frac {a_j\cdot e_1} {K})$, we have
$|a_j\cdot e_1| \le 2K$ on its support. Thus 
\begin{align*}
|\psi_0(R,a_j\cdot e_1)| \phi(\frac {a_j\cdot e_1} K) \lesssim K^2,
\end{align*}
and 
\begin{align*}
|I_{1,a}| \le \alpha_0  K^2 \frac 1m \sum_{j=1}^m
\phi( \frac {a_j\cdot u} { \eta \langle 2K \rangle}),
\end{align*}
where $\alpha_0>0$ is an absolute constant. 
Clearly 
\begin{align*} 
K^2\mathbb E \phi( \frac{a_j\cdot u} {\eta \langle 2K \rangle})
&\le K^2 \frac 1 {\sqrt{2\pi}}  
\int e^{-\frac {x^2} { 2} }
\phi( \frac x {\eta \langle 2K \rangle } ) dx \notag \\
& \lesssim \eta^{-\frac 13} \cdot \eta^{\frac 56}=\eta^{\frac 12}. 
\end{align*}
By Bernstein's inequality, we have with high probability, 
\begin{align*}
\Bigl|  \frac 1m \sum_{j=1}^m
\phi( \frac {a_j\cdot u} { \eta \langle 2K \rangle})
- \operatorname{mean} \Bigr| \le \eta.
\end{align*}
Thus for $\eta>0$ sufficiently small, 
\begin{align*}
|I_{1,a}| \lesssim \eta^{\frac 12} \le \frac {\epsilon}{10}. 
\end{align*}
For $I_{1,b}$, we have
\begin{align*}
|I_{1,b}|
\le \alpha_1 \cdot \frac 1m \sum_{j=1}^m (a_j\cdot e_1)^2 (
1-\phi(\frac {a_j \cdot e_1}  K) ),
\end{align*}
where $\alpha_1>0$ is a constant.  Similar to the estimate in 
$I_{1,a}$, we have with high probability,
\begin{align*}
\alpha_1 \cdot \frac 1m \sum_{j=1}^m (a_j\cdot e_1)^2 (
1-\phi(\frac {a_j \cdot e_1}  K) ) \lesssim  \eta \le \frac {\epsilon}{10}.
\end{align*}
Thus with high probability, it holds that for  sufficiently small $\eta$, 
\begin{align*}
|I_1| \le \frac {\epsilon} 5,
\end{align*}
By a simple estimate we have $|\mathbb E I_1 | \le \frac {\epsilon} 5$ for sufficiently
small $\eta$.  Thus 
\begin{align*}
|I_1 -\mathbb E I_1| \le \frac {2\epsilon} 5.
\end{align*}

Step 2. We now consider the main piece
\begin{align*}
I_2(u,R) = \frac 1 m \sum_{j=1}^m \psi( a_j\cdot u)
\cdot \Bigl( 1- \phi( \frac {a_j\cdot u} { \eta \langle a_j\cdot e_1 \rangle} ) \Bigr)
\cdot \psi_0(R, a_j\cdot e_1).
\end{align*}
Note that $\eta>0$ is fixed in step 1. For simplicity we denote
\begin{align*}
X_j= a_j\cdot e_1,  \quad h_1(z) = \psi(z) \cdot \Bigl(1- \phi(\frac z {\eta \langle X_j \rangle} ) 
\Bigr).
\end{align*}
Thanks to the cut-off $1-\phi(\frac z {\eta \langle X_j \rangle} )$ and the 
fact that $|\psi^{\prime}(z) | \lesssim \langle z \rangle^{-1}$, we have
\begin{align*}
|h_1^{\prime}(z) | \lesssim \eta^{-1}  \langle X_j \rangle^{-1} \lesssim \langle X_j
\rangle^{-1},
\end{align*}
where in the last inequality we have included $\eta^{-1}$ into the implied constant.
Since in this step $\eta>0$ is a fixed constant this will not cause any problem.
Clearly then 
\begin{align} \label{A30e30}
|h_1(z) -h_1(\tilde z)| \lesssim \langle X_j \rangle^{-1} |z-\tilde z|, 
\qquad \forall\, z, \tilde z \in \mathbb R.
\end{align}
Also 
\begin{align}\label{A30e30b}
|\psi_0(R, X_j) -\psi_0(\tilde R, X_j) | \le |X_j| \cdot |R-\tilde R|^{\frac 12},
\qquad \forall\, c_1\le R,\, \tilde R \le c_2.
\end{align}
We shall need these important estimates below.

Let $\delta>0$ be a small constant whose smallness will be specified later. We choose 
a $\delta$-net $F_{\delta}$ 
covering the set $\mathbb S^{n-1} \times \{ R:\, c_1 \le R \le c_2 \}$. We endow
the set $S^{n-1} \times \{R: \, c_1\le R\le c_2\}$ with the simple metric:
\begin{align*}
d( (u,R), (\tilde u, \tilde R) ) = \| u-\tilde u\|_2 + |R-\tilde R|.
\end{align*}
Note that 
$$\operatorname{Card}(F_{\delta}) \le \exp (C_{\delta} n),$$
where $C_{\delta}>0$ depends only on $\delta$. 
By Bernstein's inequality, we have for any $0<\eta_1\le \frac 12$, 
\begin{align*}
\mathbb P 
\Bigl( \sup_{(u,R)\in F_{\delta} } \Bigl| I_2(u,R) - \operatorname{mean} \Bigr| \ge \eta_1\Bigr)
\le 2 e^{C_{\delta} n} \cdot  e^{- c \eta_1^2 m}.
\end{align*}
Thus with high probability and taking $\eta_1= \frac {\epsilon}{10}$, we have
\begin{align*}
|I_2(u, R) - \mathbb E I_2(u,R) | \le \frac{\epsilon}{10}, \qquad\forall\, (u,R) \in F_{\delta}.
\end{align*}

Now let $(u, R) \in F_{\delta}$, and consider any $(\tilde u, \tilde R)$ such that
\begin{align*}
\| u-\tilde u\|_2 +|R-\tilde R| \le \delta.
\end{align*}
By using the estimates \eqref{A30e30}, \eqref{A30e30b}, we have
\begin{align*}
  & | I_2(u, R) -I_2(\tilde u, \tilde R) | \notag \\
  = & \;\Bigl| \frac 1m \sum_{j=1}^m ( h_1(a_j\cdot u) \psi_0(R,X_j)
  -h_1(a_j\cdot \tilde u) \psi_0(\tilde R, X_j)) \Bigr| \notag \\
  \le & \; \frac 1m \sum_{j=1}^m | h_1(a_j\cdot u)
  -h_1(a_j\cdot \tilde u) | |\psi_0(R,X_j) | 
  +\frac 1m \sum_{j=1}^m |h_1(a_j\cdot \tilde u) | |\psi_0(R,X_j)
  -\psi_0(\tilde R,X_j) | \notag \\
\le & \; \alpha_2 \frac 1m \sum_{j=1}^m  \langle X_j \rangle^{-1} | a_j\cdot (u-\tilde u)|
|X_j|\cdot \langle X_j \rangle 
+\alpha_2 \frac 1m \sum_{j=1}^m |R-\tilde R|^{\frac 12} |X_j| \langle X_j \rangle \notag \\
\le & \; \alpha_2 \frac 1m \sum_{j=1}^m  \Bigl(\frac 12| a_j\cdot (u-\tilde u)|^2 \cdot \delta^{-1}
+ \frac 12\delta |X_j|^2 \Bigr) 
+\alpha_2 |R-\tilde R|^{\frac 12} \frac 1m \sum_{j=1}^m (|X_j|^2 +1), \notag
\end{align*}  
where $\alpha_2>0$ is an absolute constant. 
By Bernstein's inequality, it holds with high probability that 
\begin{align*}
\frac 1m \sum_{j=1}^m |a_j\cdot v|^2 \le 2, \qquad \forall\, v\in \mathbb S^{n-1}.
\end{align*}
Thus
\begin{align*}
|I_2(u,R)-I_2(\tilde u,\tilde R)|
\le 10 \alpha_2 (\delta + \delta^{\frac 12}).
\end{align*}
Also it is easy to check that
\begin{align*}
|\mathbb E ( I_2(u,R)-I_2(\tilde u,\tilde R))|
\le 10 \alpha_2 (\delta + \delta^{\frac 12}).
\end{align*}
Therefore
\begin{align*}
| I_2(u,R) -\mathbb E I_2(u,R) - ( I_2(\tilde u, \tilde R), -\mathbb E
I_2(\tilde u, \tilde R) | \le 20 \alpha_2 (\delta+\delta^{\frac 12}).
\end{align*}
Now take $\delta$ such that 
\begin{align*}
20 \alpha_2 (\delta+\delta^{\frac 12}) \le \frac {\epsilon}{10}. 
\end{align*}
We then obtain (with high probability) 
\begin{align*}
|I_2(u,R)- \mathbb E I_2(u,R) | \le \frac {\epsilon}5, \qquad \forall\, u\in S^{n-1},
\, \forall\, c_1\le R\le c_2.
\end{align*}
Together with the estimate of $I_1$ in step 1, we obtain the desired conclusion. 
\end{proof}

\begin{lem} \label{A30_1a}
Let $\psi: \mathbb R\to \mathbb R$ be Lipschitz continuous  such that
\begin{align*}
\sup_{z \in \mathbb R} \frac {|\psi(z)|}{1+|z|}  + \sup_{z \ne \tilde z\in \mathbb R}
\frac {|\psi(z)-\psi(\tilde z)|}{|z-\tilde z|}  \lesssim 1.
\end{align*}
Let  $0<c_1<c_2<\infty$ be two fixed constants. For any
$0<\epsilon \le 1$, if $m\gtrsim n$, then the following hold
with high probability:

\begin{align*}
\Bigl| \frac 1 m \sum_{j=1}^m \psi( a_j\cdot u)
\sqrt{R+(a_j\cdot e_1)^2}  - \operatorname{mean} \Bigr|\le \epsilon,
\qquad \forall\, u \in \mathbb S^{n-1}, \quad\forall\, c_1\le R\le c_2.
\end{align*}

\end{lem}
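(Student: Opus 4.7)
The plan is to follow the structure of Lemma~\ref{A30_1}, tailored to the fact that here $\psi$ has at most linear (rather than bounded) growth, while the weight $\sqrt{R+X_j^2}$ (with $X_j=a_j\cdot e_1$) also grows only linearly in $|X_j|$. The key idea is to truncate the $\psi$-factor at a large height $K$, separating a bounded main piece (handled by Bernstein plus an $\epsilon$-net) from a small tail piece (controlled via moment bounds). Denote $J(u,R):=\frac{1}{m}\sum_{j=1}^m\psi(a_j\cdot u)\sqrt{R+X_j^2}$. First I would pick $K=K(\epsilon)\gg 1$ and a smooth cutoff $\phi\in C_c^\infty(\mathbb R)$ with $\phi\equiv 1$ on $[-1,1]$ and $\operatorname{supp}\phi\subset[-2,2]$, and decompose $\psi=\psi_K+\psi^\sharp$ with $\psi_K(z)=\psi(z)\phi(z/K)$, so that $\psi_K$ is Lipschitz and uniformly bounded by $O(K)$, while $\psi^\sharp$ is supported in $\{|z|>K\}$ and satisfies $|\psi^\sharp(z)|\lesssim |z|$. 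Accordingly split $J=J_1+J_2$.

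For $J_1$: at each fixed $(u,R)$ the summands are i.i.d.\ sub-exponential with Orlicz norm $O(K)$, so Bernstein's inequality gives single-point concentration with error $\epsilon/10$. To pass to uniformity, observe that $(u,R)\mapsto \psi_K(a_j\cdot u)\sqrt{R+X_j^2}$ is Lipschitz with constant $O(K(1+|X_j|))$. Choose a $\delta$-net of $\mathbb S^{n-1}\times[c_1,c_2]$ of cardinality $\exp(C_{K,\delta}n)$, apply the union bound, and then use the high-probability bounds $\frac{1}{m}\sum_j(a_j\cdot v)^2\lesssim 1$ uniformly in $v\in\mathbb S^{n-1}$ and $\frac{1}{m}\sum_j(1+|X_j|)^2\lesssim 1$ (Bernstein plus net, as in Lemma~\ref{A30_1}) to transfer concentration from the net to every $(u,R)$, at the cost of an $O(K\delta^{1/2})$ increment made small by taking $\delta$ small.

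For $J_2$: on $\{|a_j\cdot u|>K\}$ we have $|\psi^\sharp(a_j\cdot u)|\lesssim |a_j\cdot u|$, and the elementary inequality $|a_j\cdot u|\chi_{|a_j\cdot u|>K}\le (a_j\cdot u)^2/K$ yields
\[
|\psi^\sharp(a_j\cdot u)\sqrt{R+X_j^2}|\lesssim \frac{(a_j\cdot u)^2(1+|X_j|)}{K}.
\]
Cauchy--Schwarz together with uniform high-probability bounds on $\frac{1}{m}\sum_j(a_j\cdot u)^4$ and $\frac{1}{m}\sum_j(1+|X_j|)^2$ (again Bernstein plus net, using $m\gtrsim n$) then give $|J_2(u,R)|+|\mathbb E J_2(u,R)|\lesssim 1/K$ uniformly in $(u,R)$. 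Choosing $K$ large enough to make this $\le\epsilon/2$, and then $\delta$ small enough for the $J_1$ estimate, completes the proof.

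The principal technical point is the interplay between the $\epsilon$-net argument and the $K$-dependence: the Lipschitz constant of the truncated summand scales like $O(K)$, while the Bernstein tail behaves like $\exp(-c\epsilon^2m/K^2)$. Since $K$ depends only on $\epsilon$ and not on $n$, the hypothesis $m\gtrsim n$ (with implicit constant allowed to depend on $\epsilon$, $c_1$, $c_2$, in line with the paper's conventions) still closes the argument.
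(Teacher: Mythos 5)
Your proposal has a genuine gap in the $J_2$ (tail) estimate, and the truncation that creates the need for it is in fact unnecessary.

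\textbf{The gap.} You bound $J_2$ by $\frac{1}{K}\frac{1}{m}\sum_j(a_j\cdot u)^2(1+|X_j|)$, then invoke Cauchy--Schwarz and claim a uniform high-probability bound $\sup_{u\in\mathbb S^{n-1}}\frac{1}{m}\sum_j(a_j\cdot u)^4\lesssim 1$ ``by Bernstein plus net''. This bound is false for $m\sim n$: taking $u=a_1/\|a_1\|_2$ gives $\frac{1}{m}\sum_j(a_j\cdot u)^4\ge \|a_1\|_2^4/m\sim n^2/m$, which is of order $n$ when $m=Cn$, not $O(1)$. The underlying reason is that $(a_j\cdot u)^4$ is only sub-Weibull of order $1/2$ (tails $\exp(-ct^{1/2})$), so the Bernstein-plus-union-bound route over a net of size $e^{Cn}$ would require $m\gg n$ by a polynomial margin; and more fundamentally, no argument can rescue a false statement. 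Even replacing the fourth-moment route by the quadratic-form bound $\bigl\|\frac1m\sum_j(1+|X_j|)a_ja_j^\top\bigr\|\lesssim 1$ (which, unlike your fourth-moment claim, is at least plausibly true) would require a more delicate argument than ``Bernstein plus net'', because the summands are still only $\psi_{2/3}$.

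\textbf{Why the truncation is unnecessary, and what the paper does instead.} The truncation at height $K$ is what forces you into the tail estimate. The paper avoids it entirely: since $|\psi(z)|\lesssim 1+|z|$ and $\sqrt{R+X_j^2}\lesssim 1+|X_j|$, the summand is bounded by $(1+|a_j\cdot u|)(1+|X_j|)$, a product of two (correlated) sub-Gaussians and hence already sub-exponential, so pointwise Bernstein applies directly with no $K$-dependence. For the net argument one only needs the increment bound
\begin{align*}
\bigl|\psi(a_j\cdot u)\sqrt{R+X_j^2}-\psi(a_j\cdot \tilde u)\sqrt{\tilde R+X_j^2}\bigr|
\lesssim |a_j\cdot(u-\tilde u)|\,(1+|X_j|)+(1+|a_j\cdot\tilde u|)\,|R-\tilde R|^{\frac12},
\end{align*}
whose empirical average is controlled by Cauchy--Schwarz using only the \emph{second}-moment quantities $\frac1m\sum_j|a_j\cdot v|^2\lesssim\|v\|_2^2$ and $\frac1m\sum_j(1+|X_j|)^2\lesssim 1$, both of which do hold uniformly with probability $1-e^{-cm}$ for $m\gtrsim n$. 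No fourth-moment empirical process ever appears. If you drop the truncation and run the net argument directly on this increment, your proof closes and reduces to the paper's.
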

\begin{proof}
The main point is use a $\delta$-covering of the set $\mathbb S^{n-1}\times
\{R:\, c_1\le R\le c_2\}$. Note that
\begin{align*}
& \Bigl| \psi(a_j\cdot u) \sqrt{R+(a_j\cdot e_1)^2} 
- \psi(a_j\cdot \tilde u) \sqrt{\tilde R+(a_j\cdot e_1)^2} \Bigr| \notag \\
\le&\; |\psi(a_j\cdot u) -\psi(a_j\cdot \tilde u)|
\sqrt{R+(a_j\cdot e_1)^2}
+ |\psi(a_j\cdot \tilde u)| \cdot |\sqrt{R+(a_j\cdot e_1)^2}
-\sqrt{\tilde R+(a_j\cdot e_1)^2} | \notag \\
\lesssim & \;  |a_j\cdot (u-\tilde u)| (1+|a_j\cdot e_1|)
+ (1+|a_j\cdot \tilde u|) \cdot |R-\tilde R|^{\frac 12}.
\end{align*}
The argument is then similar to that in Lemma \ref{A30_1}. We omit details.
\end{proof}

Consider
\begin{align*}
h = \frac 1m \sum_{j=1}^m \sqrt{\beta + (a_j\cdot \hat u)^2}
\cdot \sqrt{R+X_j^2},
\end{align*}
where
\begin{align*}
& \hat u = te_1 + \sqrt{1-t^2} e^{\perp}, \quad |t| <1, \, e^{\perp}\cdot e_1=0,
e^{\perp} \in \mathbb S^{n-1} ;\\
& X_j= a_j\cdot e_1, \quad 0<c_1\le R\le c_2 <\infty.
\end{align*}
In the above we take $c_1>0$, $c_2>0$ as two fixed constants. In our original model,
$R=\beta \rho^2$ and $\rho \sim 1$,  and therefore this assumption is quite natural.
In the lemma below we shall study $h$ in the regime 
\begin{align*}
|t| \le 1- \epsilon_0,
\end{align*}
where $0<\epsilon_0\ll 1$.  The smallness of $\epsilon_0$ will be needed later when we
study the regime $||\hat u\cdot e_1|-1| \ll 1$.  Here we shall show that away
from $|t|=1$ we have good control of $h$.

\begin{lem} \label{A30_2}
Let $0<\epsilon_0\ll 1$ be fixed. 
For any $0<\epsilon\le 1$, if $m\gtrsim n$, then with high probability it holds that
\begin{align*}
&| \partial_t h -\mathbb E \partial_t h | \le \epsilon, \qquad\forall\, |t| \le 1-\epsilon_0, \,
 e^{\perp}\cdot e_1=0, e^{\perp} \in \mathbb S^{n-1}, c_1\le R\le c_2.
%&\partial_{tt} h \ge \mathbb E \partial_{tt} h - \epsilon, 
%\qquad\forall\, |t| \le 1-\epsilon_0, \, e^{\perp} \in \mathbb S^{n-1},
%c_1\le R\le c_2.
\end{align*}
\end{lem}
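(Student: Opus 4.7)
The plan is to compute $\partial_t h$ explicitly, split it into two empirical sums of the type already handled, and then invoke Lemma \ref{A30_1} together with a covering argument. Writing $X_j = a_j \cdot e_1$, $Y_j = a_j \cdot e^{\perp}$, and $X_t^{(j)} = t X_j + \sqrt{1-t^2}\, Y_j = a_j\cdot \hat u$, the chain rule gives
\begin{align*}
\partial_t h = \frac 1 m \sum_{j=1}^m \frac{X_t^{(j)}}{\sqrt{\beta + (X_t^{(j)})^2}} \bigl( X_j - \tfrac{t}{\sqrt{1-t^2}}\, Y_j \bigr) \sqrt{R+X_j^2} \;=:\; I_a(\hat u,R) \;-\; \tfrac{t}{\sqrt{1-t^2}}\, I_b(\hat u, e^{\perp}, R).
\end{align*}
On the regime $|t|\le 1-\epsilon_0$ the scalar $t/\sqrt{1-t^2}$ is bounded by a constant $C(\epsilon_0)$, so it suffices to control $|I_a-\mathbb E I_a|$ and $|I_b-\mathbb E I_b|$ uniformly.

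For the first piece $I_a$, set $\psi_1(z)=z/\sqrt{\beta+z^2}$. Then $|\psi_1(z)|\le 1$ and $|\psi_1'(z)|=\beta(\beta+z^2)^{-3/2}$, so $\sup_z \sqrt{1+z^2}\,|\psi_1'(z)|\lesssim 1$. With $\psi_0(R,z)=z\sqrt{R+z^2}$, Lemma \ref{A30_1} applies verbatim to
\begin{align*}
I_a(\hat u, R) = \frac 1m \sum_{j=1}^m \psi_1(a_j\cdot \hat u)\, \psi_0(R, a_j\cdot e_1),
\end{align*}
giving the required uniform concentration $|I_a-\mathbb E I_a|\le \epsilon/2$ provided $m\gtrsim n$.

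For the second piece $I_b$ the integrand is $\psi_1(a_j\cdot \hat u)\, (a_j\cdot e^{\perp})\,\sqrt{R+(a_j\cdot e_1)^2}$, which now depends jointly on two unit vectors $\hat u$ and $e^{\perp}$. I would mimic the two-step proof of Lemma \ref{A30_1}: first throw in the soft cutoff $\phi(\tfrac{a_j\cdot e^{\perp}}{\eta\langle a_j\cdot e_1\rangle})$ (or equivalently cut off $|a_j\cdot e_1|\le K=\eta^{-1/6}$ and $|a_j\cdot e^{\perp}|\le \eta\langle a_j\cdot e_1\rangle$) to kill a piece of size $O(\eta^{1/2})$ in $L^\infty$; then for the truncated piece note that the summand is Lipschitz in $a_j$ with bound $\lesssim \eta^{-1}\langle X_j\rangle$, so after renormalization we get Lipschitz bounds
\begin{align*}
&|\psi_1(a_j\cdot \hat u)(a_j\cdot e^{\perp}) - \psi_1(a_j\cdot \tilde{\hat u})(a_j\cdot \tilde e^{\perp})| \lesssim \langle X_j\rangle\,\bigl(\|\hat u-\tilde{\hat u}\|_2+\|e^{\perp}-\tilde e^{\perp}\|_2\bigr),\\
&|\sqrt{R+X_j^2}-\sqrt{\tilde R+X_j^2}| \lesssim |R-\tilde R|^{1/2}.
\end{align*}
A standard $\delta$-net on $\mathbb S^{n-1}\times\mathbb S^{n-1}\times[c_1,c_2]$ has cardinality $\exp(C_\delta n)$, so Bernstein's inequality at each net point combined with these Lipschitz estimates (with $\delta$ chosen so that the Lipschitz perturbation is at most $\epsilon/4$) yields $|I_b-\mathbb E I_b|\le \epsilon/(2C(\epsilon_0))$ with high probability when $m\gtrsim n$. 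Combining the two bounds gives the claim.

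The main technical obstacle is the last step: the second summand carries the unbounded weight $a_j\cdot e^{\perp}$ and depends on two independent unit-vector parameters, so one must perform the truncation carefully so that the residual has small sup-norm and the truncated part is genuinely Lipschitz in $(\hat u, e^{\perp})$ uniformly in $a_j$ up to an integrable factor $\langle X_j\rangle$. Once this truncation is set up exactly as in the proof of Lemma \ref{A30_1}, the covering/Bernstein argument goes through mechanically.
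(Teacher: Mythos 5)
You have the right high-level plan (compute $\partial_t h$, split into pieces, concentrate each piece uniformly via a net + Bernstein), and your treatment of $I_a$ is exactly what the paper does: with $g(z)=\sqrt{\beta+z^2}$ your $\psi_1=g'$ satisfies $|\psi_1|\le1$ and $\sqrt{1+z^2}|\psi_1'|\lesssim1$, so Lemma~\ref{A30_1} applies directly. However, you miss the paper's key algebraic step, and this creates a genuine gap in your handling of the second piece.

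The paper writes $\frac{d}{dt}Z_j = X_j - \frac{t}{\sqrt{1-t^2}}Y_j$ and then substitutes $Y_j=\frac{Z_j-tX_j}{\sqrt{1-t^2}}$ to obtain $\frac{d}{dt}Z_j=\frac{1}{1-t^2}X_j-\frac{t}{1-t^2}Z_j$, so that
\[
\partial_t h=\frac{1}{1-t^2}\cdot\frac1m\sum_j g'(Z_j)X_j\sqrt{R+X_j^2}\;-\;\frac{t}{1-t^2}\cdot\frac1m\sum_j g'(Z_j)Z_j\sqrt{R+X_j^2}.
\]
Both sums now depend only on $\hat u$ (through $Z_j$), $e_1$, and $R$, so the first falls under Lemma~\ref{A30_1} and the second (with $\psi(z)=g'(z)z=z^2/\sqrt{\beta+z^2}$, which is $O(|z|)$ and globally Lipschitz) falls under Lemma~\ref{A30_1a}. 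No extra covering over $e^{\perp}$ and no new concentration lemma are needed.

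By keeping $Y_j=a_j\cdot e^{\perp}$ explicit, your $I_b$ carries an \emph{unbounded} Gaussian weight and depends on the additional unit vector $e^{\perp}$. The Lipschitz estimate you state,
\[
|\psi_1(a_j\cdot\hat u)(a_j\cdot e^{\perp})-\psi_1(a_j\cdot\tilde{\hat u})(a_j\cdot\tilde e^{\perp})|\lesssim\langle X_j\rangle\bigl(\|\hat u-\tilde{\hat u}\|_2+\|e^{\perp}-\tilde e^{\perp}\|_2\bigr),
\]
is false as a pointwise bound: taking $\hat u=\tilde{\hat u}$ the left side is $\lesssim|a_j\cdot(e^{\perp}-\tilde e^{\perp})|$, which is an unbounded Gaussian of standard deviation $\|e^{\perp}-\tilde e^{\perp}\|_2$ and certainly not controlled by $\langle X_j\rangle\|e^{\perp}-\tilde e^{\perp}\|_2$ deterministically. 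The correct Lipschitz drift has the form
\[
|a_j\cdot(\hat u-\tilde{\hat u})|\,|Y_j|\,\langle X_j\rangle + |a_j\cdot(e^{\perp}-\tilde e^{\perp})|\,\langle X_j\rangle + |Y_j|\,|R-\tilde R|^{1/2},
\]
and after the AM-GM trick of Lemma~\ref{A30_1} the first term produces triple products such as $|a_j\cdot(\hat u-\tilde{\hat u})|^2\langle X_j\rangle$; summing over $j$ requires uniform control of third/fourth empirical moments over the sphere, which is not covered by the Bernstein argument of Lemma~\ref{A30_1} (as Lemma~\ref{bp7} indicates, quartic moments only concentrate at rate $O(m^{-2})$, not exponentially). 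So, while your route is not hopeless, as written it both asserts a wrong inequality and sweeps a nontrivial amount of new work under the rug. The paper's substitution is precisely what avoids all of this.
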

\begin{proof}
Denote $g(x)=\sqrt{\beta+x^2}$ and
\begin{align*}
Z_j =a_j\cdot \hat u = t X_j +\sqrt{1-t^2} Y_j, \qquad Y_j=a_j\cdot e^{\perp}.
\end{align*}
Clearly
\begin{align*}
\frac d{dt} Z_j&= X_j - \frac {t} {\sqrt{1-t^2} } Y_j  \notag \\
& = \frac 1 {1-t^2} X_j - \frac t {1-t^2} Z_j.
\end{align*}
Therefore
\begin{align*}
\partial_t h &= \frac 1 {1-t^2}
\cdot \frac 1 m\sum_{j=1}^m g^{\prime}(Z_j)  X_j \sqrt{R+X_j^2} 
-\frac t {1-t^2} \cdot \frac 1m
\sum_{j=1}^m g^{\prime}(Z_j) Z_j \sqrt{R+X_j^2} \notag \\
&=:\frac 1{1-t^2} H_1  -\frac t{1-t^2}  H_2. 
\end{align*}
By Lemma \ref{A30_1}, it holds with high probability that
\begin{align*}
|H_1 -\mathbb E H_1 | \le (1-\epsilon_0^2) \cdot \frac {\epsilon}3,
\qquad\forall\,  \hat u\in \mathbb S^{n-1}, c_1\le R\le c_2.
\end{align*}
For $H_2$, we observe that
\begin{align*}
g^{\prime}(x)x = \frac {x^2}{\sqrt{\beta +x^2}}.
%=\sqrt{\beta+x^2} - \frac {\beta}{\sqrt{\beta+x^2} }.
\end{align*}
By Lemma \ref{A30_1a}, it then holds with high probability that
\begin{align*}
|H_2 -\mathbb E H_2 | \le (1-\epsilon_0^2) \cdot \frac {\epsilon}3,
\qquad\forall\,  \hat u\in \mathbb S^{n-1}, c_1\le R\le c_2.
\end{align*}
The desired result then easily follows.
\end{proof}
\begin{lem} \label{A30_3}
Let $0<\epsilon_0\ll 1$ be fixed. 
For any $0<\epsilon\le \frac 12$, if $m\gtrsim n$, then with high probability it holds that
\begin{align*}
&\partial_{tt} h \ge \mathbb E \partial_{tt} h - \epsilon, 
\qquad\forall\, |t| \le 1-\epsilon_0, \, e^{\perp}\cdot e_1=0, 
 e^{\perp} \in \mathbb S^{n-1},
c_1\le R\le c_2.
\end{align*}
Furthermore, it holds with probability at least $1-O(m^{-2})$ that
\begin{align*}
&|\partial_{tt} h - \mathbb E \partial_{tt} h | \le  \epsilon, 
\qquad\forall\, |t| \le 1-\epsilon_0, \, e^{\perp}\cdot e_1=0, 
 e^{\perp} \in \mathbb S^{n-1},
c_1\le R\le c_2.
\end{align*}
\end{lem}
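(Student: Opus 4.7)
The plan is to parallel the proof of Lemma \ref{A30_2} by differentiating $h$ once more in $t$, decomposing $\partial_{tt}h$ into a finite list of sub-sums, and then controlling each sub-sum either by the covering/Bernstein scheme already developed for Lemmas \ref{A30_1} and \ref{A30_1a}, or by the weaker fourth-moment inequality (Lemma \ref{bp7}). Writing $g(z)=\sqrt{\beta+z^2}$, $k(x)=\sqrt{R+x^2}$, $Z_j=tX_j+\sqrt{1-t^2}\,Y_j$ with $Y_j=a_j\cdot e^{\perp}$, one has $Z_j'=X_j-\tfrac{t}{\sqrt{1-t^2}}Y_j$ and $Z_j''=-\tfrac{Y_j}{(1-t^2)^{3/2}}$, so that
$$\partial_{tt}h=\tfrac 1m\sum_{j=1}^m\bigl[g''(Z_j)(Z_j')^2+g'(Z_j)Z_j''\bigr]k(X_j).$$
On $|t|\le 1-\epsilon_0$ the rational coefficients are uniformly bounded, and after expanding $(Z_j')^2$ we reduce to a finite number of terms of the form $\tfrac 1m\sum_j\varphi(Z_j)\,q(X_j,Y_j)\,k(X_j)$ with $\varphi\in\{g',g''\}$ and $q$ a monomial of total degree at most $2$.

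For the ``light'' pieces in this expansion, i.e., those involving $g'$ (with $|g'|\le 1$, and $|g'(z)z|\lesssim\langle z\rangle$) or $g''$ without the $X_j^2$ factor (with $|g''|\le\beta^{-1/2}$), the integrands satisfy the same Lipschitz/bounded hypotheses used in Lemmas \ref{A30_1} and \ref{A30_1a}. Applying the $\delta$-net argument from those lemmas verbatim, adapted to the joint parameter $(t,e^{\perp},R)$ with the Lipschitz constants in $t$ uniformly bounded thanks to the buffer $\epsilon_0>0$, yields uniform two-sided concentration at exponential rate (i.e., high probability).

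The only genuinely heavy piece is $A=\tfrac{1}{m}\sum_j g''(Z_j)X_j^2 k(X_j)$, whose integrand grows like $|X_j|^3$. The decisive observation is that $A\ge 0$. For the one-sided lower bound, truncate $|X_j|\le K$ for a constant $K=K(\epsilon)$; choose $K$ so that $\mathbb{E}(A-A_K)\le \epsilon/4$ uniformly in $(\hat u,R)$ (uniformity is automatic because the $X_j$-marginal is fixed and $g''\le\beta^{-1/2}$), then apply the covering/Bernstein scheme of Lemma \ref{A30_1} to the now-bounded $A_K$ to get $A_K\ge \mathbb{E}A_K-\epsilon/4$ uniformly with high probability. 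Non-negativity of the dropped part gives $A\ge \mathbb{E}A-\epsilon/2$. Combined with the control of the light pieces, this proves the first assertion with high probability.

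For the two-sided bound, we also need an upper bound on $A$, which cannot be obtained by simply dropping terms. Here we invoke Lemma \ref{bp7}: for each fixed $(\hat u,R)$, since $\mathbb{E}|g''(Z_j)X_j^2 k(X_j)|^4\lesssim \mathbb{E}[X_j^8(R+X_j^2)^2]\lesssim 1$, the pointwise deviation obeys $\mathbb{P}(|A-\mathbb{E}A|>\tfrac{\epsilon}{4})\lesssim m^{-2}\epsilon^{-4}$. Taking a $\delta$-net of $\{|t|\le 1-\epsilon_0\}\times(\mathbb{S}^{n-1}\cap e_1^\perp)\times[c_1,c_2]$ and controlling the oscillation of $A$ in the parameters via a Lipschitz estimate analogous to \eqref{A30e30}-\eqref{A30e30b} (finer $\delta$ is fine since we have polynomial rather than exponential budget) closes the argument at probability $1-O(m^{-2})$. \textbf{The main obstacle} is the last step: carefully pairing the net granularity $\delta$ with the fourth-moment rate $m^{-2}$ so that the Lipschitz extension error and the $e^{Cn}$ union-bound cost together remain under control while retaining $m\gtrsim n$; this is where the probability drops from exponential to polynomial.
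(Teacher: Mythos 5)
Your decomposition and treatment of the light pieces and the one-sided lower bound mirror the paper's argument (the paper cuts off near $Z_j=0$ rather than truncating $|X_j|\le K$, but the structural role is the same). The genuine gap is in your upper-bound step for the heavy piece $A=\tfrac 1m\sum_j g''(Z_j)X_j^2 k(X_j)$. You propose to apply Lemma~\ref{bp7} pointwise in $(\hat u, R)$ and then union-bound over a $\delta$-net of $\mathbb S^{n-1}\times[c_1,c_2]$. This cannot close: the net has cardinality $e^{C_\delta n}$, Lemma~\ref{bp7} gives only a polynomial tail $O(m^{-2})$ per point, and $m\gtrsim n$ is nowhere near enough to beat $e^{C_\delta n}\cdot m^{-2}$. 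Choosing $\delta$ ``finer'' only makes the net larger; choosing it ``coarser'' requires a Lipschitz continuation across balls of the same radius, and you do not gain an exponent of $m$ anywhere. The parenthetical remark that ``finer $\delta$ is fine since we have polynomial rather than exponential budget'' has the logic reversed --- a polynomial budget is weaker, not stronger, and it is exactly what forbids the net.

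The missing observation (which is what the paper uses) is that the quantity to which the fourth-moment lemma is applied must be \emph{free of the continuous parameters}, so that no union bound is needed. In your truncation scheme, write $A-A_K=\tfrac 1m\sum_j g''(Z_j)X_j^2 k(X_j)\chi_{\{|X_j|>K\}}$ and note $0\le A-A_K\le \beta^{-1/2}\sqrt{c_2}\cdot\tfrac 1m\sum_j (1+X_j^2)^{3/2}\chi_{\{|X_j|>K\}}$, where the majorant depends only on the $X_j=a_j\cdot e_1$, hence not on $\hat u$, $e^\perp$ or $R$. Apply Lemma~\ref{bp7} \emph{once} to that single random variable to get it within $\epsilon/8$ of its (small) mean with probability $1-O(m^{-2})$, giving $0\le A-A_K\le\epsilon/4$ uniformly. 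Combined with the exponential concentration of the truncated $A_K$ over the net, this yields the two-sided bound at probability $1-O(m^{-2})$; the one-sided bound at exponential probability just drops the nonnegative tail piece as you say. The paper implements the same idea via a cutoff in $Z_j$ (splitting $H_2=H_3+H_4$ and bounding $H_3$ by a parameter-independent majorant before invoking Lemma~\ref{bp7}), but the essential point is identical: the polynomial-rate estimate is applied to one scalar, never over an $e^{Cn}$-size net.
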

\begin{proof}
We adopt the same notation as in Lemma \ref{A30_2}. 
Observe that
\begin{align*}
\partial_{tt}h &=\frac 1m \sum_{j=1}^m g^{\prime}(Z_j)
\frac {d^2}{dt^2} Z_j \sqrt{R+X_j^2}
+
\frac 1m \sum_{j=1}^m g^{\prime\prime}(Z_j)
\Bigl(\frac d{dt} Z_j \Bigr)^2 \sqrt{R+X_j^2} \notag \\
&=:\, H_1+ H_2.
\end{align*}
We first deal with $H_1$. Note that
\begin{align*}
\frac {d^2}{dt^2} Z_j= - (1-t^2)^{-\frac 32} Y_j.
\end{align*}
Since 
\begin{align*}
Y_j = \frac 1 {\sqrt{1-t^2}} ( Z_j - tX_j),
\end{align*}
we obtain
\begin{align*}
H_1=  -(1-t^2)^{-2} \frac 1m \sum_{j=1}^m g^{\prime}(Z_j)
Z_j \sqrt{R+X_j^2}
+\frac t {(1-t^2)^2} \frac 1m \sum_{j=1}^m
g^{\prime}(Z_j) X_j \sqrt{R+X_j^2}.
\end{align*}
By similar estimates as in Lemma \ref{A30_2}, we have with high probability,
\begin{align*}
&\Bigl |\frac 1m \sum_{j=1}^m g^{\prime}(Z_j) Z_j
\sqrt{R+X_j^2} 
-\operatorname{mean}
\Bigr| \le \frac {\epsilon}{20}\cdot (1-\epsilon_0^2)^2, \qquad
\forall\, u \in \mathbb S^{n-1}, \, c_1\le R\le c_2; \\
&\Bigl |\frac 1m \sum_{j=1}^m g^{\prime}(Z_j) X_j
\sqrt{R+X_j^2} 
-\operatorname{mean}
\Bigr| \le \frac {\epsilon}{20}\cdot (1-\epsilon_0^2)^2, \qquad
\forall\, u \in \mathbb S^{n-1}, \, c_1\le R\le c_2.
\end{align*}
Thus
\begin{align*}
|H_1 - \mathbb E H_1 | \le \frac {\epsilon}{10},
\qquad\forall\, |t| \le 1-\epsilon_0, \, e^{\perp}\cdot e_1=0,
e^{\perp} \in \mathbb S^{n-1},
c_1\le R\le c_2.
\end{align*}

Next we deal with $H_2$. 
Observe that
\begin{align*}
g^{\prime\prime}(x) = \beta (\beta +x^2)^{-\frac 32} >0.
\end{align*}
Let $\phi \in C_c^{\infty}(\mathbb R)$ be such that
$0\le \phi(x) \le 1$ for all $x$, $\phi(x)=1$ for $|x|\le 1$ and
$\phi(x)=0$ for $|x|\ge 2$.  Then 
\begin{align*}
H_2 &=H_3+ \frac 1m \sum_{j=1}^m g^{\prime\prime}(Z_j)
\cdot \Bigl( \frac d {dt} Z_j \Bigr)^2
\cdot \Bigl( 1- \phi( \frac {Z_j}{\eta \langle X_j \rangle} ) 
\Bigr) \sqrt{R+ X_j^2},
\end{align*}
where  $H_3\ge 0$ is given by
\begin{align*}
H_3= \frac 1m \sum_{j=1}^m g^{\prime\prime}(Z_j)
\cdot \Bigl( \frac d {dt} Z_j \Bigr)^2
\cdot \phi( \frac {Z_j}{\eta \langle X_j \rangle} ) 
\sqrt{R+ X_j^2}.
\end{align*}
We first show that if $0<\eta\le \frac 12$ is taken sufficiently small, then
\begin{align} \label{A31_e1}
\mathbb E H_3 \le \frac {\epsilon} {20};
\end{align}
and with probability at least $1-O(m^{-2})$ ,
\begin{align} \label{A31_e2}
H_3 \le \frac {\epsilon} {20}, \qquad\forall\, |t| \le 1-\epsilon_0,
\quad e^{\perp}\cdot e_1=0, e^{\perp} \in \mathbb S^{n-1},
c_1\le R\le c_2.
\end{align}
Here we stress that since $H_3\ge 0$, if we only care about the lower bound, we can
just discard it in order to obtain a high-in-probability statement. On the other hand,
to get a two-way bound of $H_3$, we need to work with weaker statements due
to the high-moment terms (i.e. more than quadratic) of $X_j$ in $H_3$.

Recall that
\begin{align*}
\Bigl| \frac d {dt} Z_j \Bigr|& =
\Bigl| X_j - \frac t {\sqrt{1-t^2} } Y_j \Bigr|
=\Bigl| \frac 1 {1-t^2} X_j - \frac t{1-t^2} Z_j \Bigr| \notag \\
&\lesssim  |X_j| + |Z_j|.
\end{align*}
We have
\begin{align*}
H_3 &\lesssim \frac 1m \sum_{j=1}^m \phi(\frac {Z_j} {\eta \langle X_j \rangle})
\langle Z_j \rangle^{-3} \cdot (X_j^2 +Z_j^2) \cdot 
(1+|X_j|) \notag \\
& \lesssim \frac 1m \sum_{j=1}^m \phi(\frac {Z_j} {\eta \langle X_j \rangle})
(1+|X_j|^3). \notag 
\end{align*}
Let $K=\eta^{-\frac 1 8}$. Then
\begin{align*}
H_3 \lesssim K^3 \frac 1m   \sum_{j=1}^m \phi(\frac {Z_j} {\eta \langle 2K \rangle} )
+\frac 1m \sum_{j=1}^m  (1+|X_j|^3)  (1- \phi(\frac {X_j} {K} ) ). 
\end{align*}
Clearly then for $\eta>0$ sufficiently small, 
\begin{align*}
\mathbb E H_3
&\lesssim K^3 \mathbb E \phi(\frac {Z_1} {\eta \langle 2K \rangle } )
+\mathbb E (1+|X_1|^3) (  1- \phi(\frac {X_1} K) ) \notag \\
&\le \frac {\epsilon}{20}.
\end{align*}
For $m\gtrsim n$, it holds with high probability that
\begin{align*}
&\Bigl| 
\frac 1m \sum_{j=1}^m \phi(\frac {Z_j} {\eta \langle2 K\rangle })
- \operatorname{mean} \Bigr| \le   \eta.
\end{align*}
On the other hand, by Lemma \ref{bp7}, it holds with probability at least $1-O(m^{-2})$ that
\begin{align*}
&\Bigl| 
\frac 1m \sum_{j=1}^m  (1+|X_j|^3)  (1- \phi(\frac {X_j} {K} ) )
- \operatorname{mean} \Bigr| \le \eta, \qquad \forall\, u
\in \mathbb S^{n-1}. 
\end{align*}
Thus for $\eta>0$ sufficiently small,  \eqref{A31_e1} and \eqref{A31_e2} hold.
Now we consider the main piece
\begin{align*}
H_4 =\frac 1m \sum_{j=1}^m g^{\prime\prime}(Z_j)
\cdot \Bigl( \frac d {dt} Z_j \Bigr)^2
\cdot \Bigl( 1- \phi( \frac {Z_j}{\eta \langle X_j \rangle} ) 
\Bigr) \sqrt{R+ X_j^2}.
\end{align*}
By using 
\begin{align*}
\Bigl( \frac d{dt } Z_j \Bigr)^2
&= \Bigl( \frac 1 {1-t^2} X_j - \frac t {1-t^2} Z_j \Bigr)^2 \notag \\
&= \frac 1{(1-t^2)^2} ( X_j^2 +t^2 Z_j^2 - 2t X_j Z_j).
\end{align*}
Then
\begin{align*}
H_4 
&= (1-t^2)^{-2} \frac 1m \sum_{j=1}^m g^{\prime\prime}(Z_j)
\Bigl( 1- \phi( \frac {Z_j}{\eta \langle X_j \rangle} ) 
\Bigr)  X_j^2 \sqrt{R+ X_j^2} \notag \\
& \quad + (1-t^2)^{-2} t^2 
 \frac 1m \sum_{j=1}^m g^{\prime\prime}(Z_j) Z_j^2
\Bigl( 1- \phi( \frac {Z_j}{\eta \langle X_j \rangle} ) 
\Bigr)  \sqrt{R+ X_j^2} \notag \\
& \quad - 2t (1-t^2)^{-2}
 \frac 1m \sum_{j=1}^m g^{\prime\prime}(Z_j) Z_j 
\Bigl( 1- \phi( \frac {Z_j}{\eta \langle X_j \rangle} ) 
\Bigr) X_j \sqrt{R+ X_j^2}.
\end{align*}
Define 
\begin{align*}
h_j(x) = g^{\prime\prime}(x) \cdot (1- \phi( \frac x {\eta \langle X_j \rangle } ) ).
\end{align*}
Clearly, thanks to the cut-off $1-\phi$, we have
\begin{align*}
&\|h_j  \|_{\infty} \lesssim \langle X_j \rangle^{-3};  \\
&\|h_j^{\prime} \|_{\infty} \lesssim  \langle X_j \rangle^{-4}.
\end{align*}
It is then easy to check that the summands in $H_4$ are bounded.  Moreover
\begin{align*}
|h_j(a_j\cdot u) -h_j(a_j\cdot \tilde u)|
\lesssim |a_j\cdot (u-\tilde u)| \cdot \langle X_j \rangle^{-4}.
\end{align*}
Similar bounds also hold for the other summands in $H_4$. Thus by a similar
union bound argument as in Lemma \ref{A30_1} (and taking care of the
covering in the $R$-variable), we have with high probability that
\begin{align*}
|H_4 -\mathbb EH_4| \le \frac {\epsilon}{20},
\qquad\forall\, |t|\le 1-\epsilon_0,
e^{\perp}\cdot e_1=0, e^{\perp} \in \mathbb S^{n-1},
c_1\le R\le c_2.
\end{align*}
Collecting all the estimates, 
we then obtain the desired estimate for $\partial_{tt} h$. 
\end{proof}

\begin{lem} \label{leAu31_60}
Let $X\sim \mathcal N(0,1)$, $Y\sim \mathcal N (0,1)$ be independent. Define
\begin{align*}
&H(\rho,s) =  \mathbb E
\sqrt{\beta +(\sqrt{1-s^2} X+ s Y)^2} \sqrt{\beta \rho^2 +X^2};\\
&h (\rho, s)= \frac 12 (1+2\beta) \rho^2 - \rho H(\rho,s).
\end{align*}
Then it holds that
\begin{align*}
\sup_{|\rho-1|\ll 1, |s|\ll 1} \sum_{j=1}^3( | \partial^j H |
+|\partial^j h|) \lesssim 1.
\end{align*}
where $\partial= \partial_{\rho}$ or $\partial_s$.

\end{lem}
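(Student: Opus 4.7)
The plan is to differentiate under the expectation and verify that each resulting integrand is dominated by a polynomial in $|X|,|Y|$ with coefficients that are uniformly bounded on the neighborhood $\{|\rho-1|\ll 1,\,|s|\ll 1\}$. Since polynomials in $|X|,|Y|$ have all moments under the Gaussian measure, uniform boundedness of the integrand in $L^1$ immediately gives $|\partial^j H|\lesssim 1$ for $j=1,2,3$, and the bound on $h$ follows at once from the Leibniz rule applied to $h=\frac12(1+2\beta)\rho^2 -\rho H$.

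The first step is to record pointwise estimates on the two ``building blocks.'' For $g_1(z):=\sqrt{\beta+z^2}$ one has $g_1'(z)=z/\sqrt{\beta+z^2}$, $g_1''(z)=\beta(\beta+z^2)^{-3/2}$, $g_1'''(z)=-3\beta z(\beta+z^2)^{-5/2}$, all of which are globally bounded by constants depending only on $\beta$. For $g_2(\rho,x):=\sqrt{\beta\rho^2+x^2}$, a direct calculation shows that $\partial_\rho^k g_2$ for $k=1,2,3$ equals $\beta$ times a rational expression whose numerator is a polynomial in $\rho,x$ of total degree $\le 4$ and whose denominator is $(\beta\rho^2+x^2)^{k-1/2}$. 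Using the substitution $t=x^2/(\beta\rho^2)$ one verifies that for $\rho\sim 1$ every such expression is bounded by a constant $C(\beta)$ uniformly in $x\in\mathbb R$.

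Next I would write $X_s=\sqrt{1-s^2}X+sY$ and compute the $s$-derivatives via the chain rule:
\begin{align*}
\partial_s X_s &= -\tfrac{s}{\sqrt{1-s^2}}X+Y, \quad
\partial_{ss} X_s = -(1-s^2)^{-3/2}X, \quad
\partial_{sss}X_s = -3s(1-s^2)^{-5/2}X.
\end{align*}
For $|s|\ll 1$ all the $s$-dependent coefficients are bounded by an absolute constant. Applying Fa\`a di Bruno, $\partial_s^b g_1(X_s)$ for $b\le 3$ is a sum of terms of the form $g_1^{(k)}(X_s)\cdot P_{b}(X,Y)$ where $P_b$ is a polynomial of degree $\le b$ in $(X,Y)$ with coefficients bounded uniformly in $s$ near $0$. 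Since each $g_1^{(k)}$ is bounded and each $\partial_\rho^a g_2$ is also bounded (by the previous paragraph), the integrand of $\partial_\rho^a\partial_s^b H$ is dominated by $C(\beta)(1+|X|+|Y|)^3$, which is integrable. Dominated convergence then justifies interchanging derivative and expectation and yields $|\partial_\rho^a\partial_s^b H|\lesssim 1$ for $a+b\le 3$.

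Finally, for $h=\tfrac12(1+2\beta)\rho^2-\rho H$, the Leibniz rule gives $|\partial^j h|\lesssim 1+\sum_{k\le j}|\partial^k H|$ for $j=1,2,3$, and combining with the previous bound completes the proof. There is no real obstacle here: the statement is a soft regularity assertion, and the only thing to watch is that the polynomial $(X,Y)$-factors produced by the chain rule have degree at most three (since we differentiate at most three times), ensuring integrability against the Gaussian weight; all $s$-dependent denominators $(1-s^2)^{-1/2},(1-s^2)^{-3/2},(1-s^2)^{-5/2}$ remain bounded on $|s|\ll 1$, so no singular behavior appears.
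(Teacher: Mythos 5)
Your argument is correct and sufficient for the lemma as stated, but it takes a genuinely different and more elementary route than the paper. You differentiate directly under the expectation, observe that $g_1^{(k)}(z)=\partial_z^k\sqrt{\beta+z^2}$ is globally bounded for $k\ge 1$ and that $\partial_\rho^k\sqrt{\beta\rho^2+x^2}$ is bounded uniformly in $x$ for $\rho\sim 1$, and then crudely dominate the Fa\`a di Bruno expansion by a polynomial in $(|X|,|Y|)$, which is integrable against the Gaussian. Since the paper explicitly fixes $\beta>0$ and allows all implied constants to depend on it, this pointwise domination is entirely adequate. The paper instead exploits the rotational-invariance identity $\partial_s A=\tfrac{1}{\sqrt{1-s^2}}(-x\partial_y+y\partial_x)A$ together with $(-x\partial_y+y\partial_x)e^{-(x^2+y^2)/2}=0$ to integrate by parts and move the $s$-derivatives off $\sqrt{\beta+A^2}$ onto the benign factor $\sqrt{\beta\rho^2+x^2}$. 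That maneuver is not needed for the stated $\lesssim_\beta 1$ bound, but it yields the stronger $\beta$-independent estimates for several terms (the paper uses the scaling identity $\int_{\mathbb R}\eta^2(\eta^2+x^2)^{-3/2}\,dx=\int_{\mathbb R}(1+x^2)^{-3/2}\,dx$ to that end), which the authors flag as ``of interest for future investigations.'' Two small cosmetic points in your write-up: for $\partial_s^3 H$ the dominating polynomial should really be $(1+|X|+|Y|)^4$ once you include the factor $\sqrt{\beta\rho^2+X^2}\lesssim 1+|X|$, not degree $3$; and although the lemma's wording suggests only pure $\partial_\rho^j$ and $\partial_s^j$, the paper's later application requires the mixed derivatives as well, which you cover by taking $a+b\le 3$ — good.
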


\begin{proof}
Clearly it suffices for us to prove the estimate for $H$ since the estimate for $h$ will
follow from it. 
We first deal with $\partial_{sss} H$ which appears to be the most difficult case and
simultaneously $\partial_s H$, $\partial_{ss} H$. In some terms we shall even
exhibit ($\beta$, $\rho$)-independent bounds which will be of interest for
future investigations. 
Denote $A=\sqrt{1-s^2} x+ sy$. Then 
\begin{align*}
& \partial_s A = - \frac s {\sqrt{1-s^2}} x + y; \\
& \partial_y A = s, \qquad \partial_x A = \sqrt{1-s^2};\\
&  \partial_s A = \frac 1 {\sqrt{1-s^2}} (-x \partial_y +y \partial_x )A. 
\end{align*}
Now we have
\begin{align*}
2\pi H = \int \sqrt{\beta +A^2} \sqrt{ \beta \rho^2 +x^2} e^{-\frac{x^2+y^2}2} dx dy.
\end{align*}
Since 
\begin{align*}
&\partial_s ( \sqrt{\beta +A^2} )
= \frac 1 {\sqrt{1-s^2}}
(-x \partial_y +y \partial_x ) ( \sqrt{\beta+A^2} ), \\
&(-x \partial_y +y\partial_x) ( e^{-\frac {x^2+y^2}2} )=0,
\end{align*}
we obtain (by using integration by parts) that
\begin{align*}
2\pi \partial_s H &=\frac 1 {\sqrt{1-s^2}} \int
\sqrt{\beta +A^2} (-y \partial_x)(\sqrt{\beta \rho^2+x^2}) e^{-\frac{x^2+y^2}2} dx dy
\\
&= - \frac 1 {\sqrt{1-s^2}}
\int \sqrt{\beta +A^2}
\frac {xy}{\sqrt{\beta \rho^2+x^2}} e^{-\frac{x^2+y^2}2} dx dy.
\end{align*}
Note that the pre-factor $\frac 1 {\sqrt{1-s^2} }$ is smooth in the regime $|s|\ll 1$, therefore
to compute the higher order $\partial_s$-derivatives of $H$, 
it suffices for us to treat 
\begin{align*}
H_1 = \int \sqrt{\beta +A^2}
\frac {xy}{\sqrt{\beta \rho^2+x^2}} e^{-\frac{x^2+y^2}2} dx dy.
\end{align*}
Then 
\begin{align*}
\partial_s H_1 = \frac 1 {\sqrt{1-s^2}} 
\int \sqrt{\beta +A^2}
(x \partial_y - y \partial_x ) ( \frac{xy}{\sqrt{\beta\rho^2+x^2}})
e^{-\frac{x^2+y^2}2} dx dy.
\end{align*}
The most difficult term is the piece corresponding to $y \partial_x$. Thus we consider
\begin{align*}
H_2 &= \int \sqrt{\beta +A^2}
( y \partial_x ) ( \frac{xy}{\sqrt{\beta\rho^2+x^2}})
e^{-\frac{x^2+y^2}2} dx dy \notag \\
&= \int \sqrt{\beta +A^2} \cdot
y^2  \cdot  \frac {\beta \rho^2 }{  ({\beta\rho^2+x^2})^{\frac 32} } 
e^{-\frac{x^2+y^2}2} dx dy. \notag 
\end{align*}
Thus
\begin{align*}
\partial_s H_2
= \int \frac A {\sqrt{\beta +A^2}} (-\frac s {\sqrt{1-s^2}} x +y)
y^2  \cdot  \frac {\beta \rho^2 }{  ({\beta\rho^2+x^2})^{\frac 32} } 
e^{-\frac{x^2+y^2}2} dx dy.  \notag 
\end{align*}
The piece corresponding to $-\frac s {\sqrt{1-s^2}} x $ is clearly fine. 
So we only need to treat 
\begin{align*}
H_3 = 
 \int \frac A {\sqrt{\beta +A^2}} 
y^3  \cdot  \frac {\beta \rho^2 }{  ({\beta\rho^2+x^2})^{\frac 32} } 
e^{-\frac{x^2+y^2}2} dx dy.
\end{align*}
Observe that for $\eta>0$,
\begin{align*}
\int_{\mathbb R} \frac {\eta^2} {(\eta^2 +x^2)^{\frac 32} } dx = 
\int_{\mathbb R} \frac  1 {(1+x^2)^{\frac 32}} dx.
\end{align*}
Thus $H_3$ is bounded by an absolute constant. Collecting the estimates, we have
\begin{align*}
|\partial_{sss} H | \lesssim 1.
\end{align*}

Now we deal with $\partial_{\rho}H$, 
$\partial_{\rho\rho}H$, and $\partial_{\rho\rho\rho} H$. This case is easy. Denote 
$B=\beta \rho^2 +x^2$. Then
\begin{align*}
& \partial_{\rho} (\sqrt{B}) =  B^{-\frac 12} \beta \rho; \\
& \partial_{\rho\rho}(\sqrt {B})
=  B^{-\frac 12} \beta -B^{-\frac 32} \beta^2 \rho^2; \\
& \partial_{\rho\rho\rho}(\sqrt{B})
= -B^{-\frac 32} \beta^2 \rho +3 B^{-\frac 52} (\beta \rho)^3
- B^{-\frac 32} \beta^2 2\rho.
\end{align*}
Clearly all terms are bounded and we have
\begin{align*}
|\partial_{\rho} H| +|\partial_{\rho\rho} H|+ |\partial_{\rho\rho\rho}H| \lesssim 1.
\end{align*}

Next clearly $\partial_{\rho s} H$ and $\partial_{\rho\rho s}H $ are OK.
We only need to treat $\partial_{\rho ss} H$.  The main
term of $\partial_{ss} H$ is 

\begin{align*}
H_4 = 
\int \frac A {\sqrt{\beta +A^2}} (-\frac s {\sqrt{1-s^2}} x +y)
\frac{xy}{\sqrt{\beta\rho^2+x^2}} 
e^{-\frac{x^2+y^2}2} dx dy.
\end{align*}
Now 
\begin{align*}
\partial_{\rho} H_4
=  -\int \frac A {\sqrt{\beta +A^2}} (-\frac s {\sqrt{1-s^2}} x +y)
\frac{xy \beta \rho}{(\beta\rho^2+x^2 )^{\frac 32}} 
e^{-\frac{x^2+y^2}2} dx dy.
\end{align*}
Clearly for any $0<\eta\lesssim 1$,
\begin{align*}
\int \frac {\eta^2 |x|} { (\eta^2+x^2)^{\frac 32} }dx 
={\eta} \int \frac {|x|} {(1+x^2)^{\frac 32} } dx <\infty.
\end{align*}
Thus $\partial_{\rho ss} H$ is also OK for us. 
\end{proof}
\begin{lem}[Calculation of $\partial^2 h$ at ($\rho=1$, $s=0$)] \label{leAu31_61}
Let
\begin{align*}
&H(\rho,s) =  \mathbb E
\sqrt{\beta +(\sqrt{1-s^2} X+ s Y)^2} \sqrt{\beta \rho^2 +X^2}; \\
&h (\rho, s)= \frac 12 (1+2\beta) \rho^2 - \rho H(\rho,s).
\end{align*}
Then at $\rho=1$, $s=0$, we have
\begin{align*}
&(\partial_{\rho\rho} H)(1, 0) =- \gamma_1<0, \quad 
(\partial_{\rho s} H)(1,0)=0; \\
&(\partial_{ss} H)(1, 0)= -\gamma_2<0; \\
& (\partial_s h)(\rho,0)=0, \forall\, \rho>0, \quad (\partial_{\rho} h)(1,0)=0;\\
&(\partial_{\rho\rho} h)(1, 0) =\gamma_3>0, \quad
(\partial_{\rho s} h)(1,0)=0;\\
&(\partial_{ss} h)(1, 0)= \gamma_4>0, 
\end{align*}
where $\gamma_i>0$, $i=1,\cdots, 4$ are constants depending on $\beta$. 

\end{lem}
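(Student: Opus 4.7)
The plan is a direct computation exploiting two structural features: when $s=0$ the first factor $\sqrt{\beta+A^2}$ with $A(s)=\sqrt{1-s^2}X+sY$ degenerates to $\sqrt{\beta+X^2}$, so $H(\rho,0)$ depends only on $\rho$; and independence of $X,Y$ together with $\mathbb{E}Y=0$, $\mathbb{E}Y^2=1$ makes every expectation explicit. All derivatives at $(\rho=1,s=0)$ reduce to one-dimensional Gaussian moments.

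For the pure $\rho$-derivatives, I would use $\partial_\rho\sqrt{\beta\rho^2+X^2}=\beta\rho/\sqrt{\beta\rho^2+X^2}$ and $\partial_{\rho\rho}\sqrt{\beta\rho^2+X^2}=\beta X^2/(\beta\rho^2+X^2)^{3/2}$; multiplying by $\sqrt{\beta+X^2}$ and taking expectations at $\rho=1$ gives $(\partial_\rho H)(1,0)=\beta$ and $(\partial_{\rho\rho} H)(1,0)=\mathbb{E}[\beta X^2/(\beta+X^2)]$. Combined with $H(1,0)=\mathbb{E}(\beta+X^2)=\beta+1$ and $h=\tfrac12(1+2\beta)\rho^2-\rho H$, elementary arithmetic then yields $(\partial_\rho h)(1,0)=(1+2\beta)-(\beta+1)-\beta=0$ and $(\partial_{\rho\rho} h)(1,0)=1-\mathbb{E}[\beta X^2/(\beta+X^2)]$. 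The strict positivity of the latter follows from the pointwise strict inequality $\beta X^2/(\beta+X^2)<X^2$ for $X\ne 0$ combined with $\mathbb{E}X^2=1$.

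For the $s$-derivatives, note $A(0)=X$, $A'(0)=Y$, $A''(0)=-X$. A single chain-rule step gives $\partial_s\sqrt{\beta+A^2}|_{s=0}=XY/\sqrt{\beta+X^2}$, which is linear in $Y$ with zero mean; independence of $X,Y$ therefore forces $(\partial_s H)(\rho,0)=0$ for every $\rho>0$, hence $(\partial_s h)(\rho,0)=0$, and differentiating once more in $\rho$ kills the mixed partials at $(1,0)$. A second chain-rule step yields
\begin{align*}
\partial_{ss}\sqrt{\beta+A^2}\,\Big|_{s=0} = \frac{Y^2-X^2}{\sqrt{\beta+X^2}} - \frac{X^2 Y^2}{(\beta+X^2)^{3/2}}.
\end{align*}
Multiplying by $\sqrt{\beta\rho^2+X^2}$, evaluating at $\rho=1$, and using $\mathbb{E}Y^2=1$ collapses the expression to $-\mathbb{E}[X^2/(\beta+X^2)]$, which is strictly negative. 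Since $\partial_{ss}h(\rho,s)=-\rho\,\partial_{ss}H(\rho,s)$, the corresponding value for $h$ is $\mathbb{E}[X^2/(\beta+X^2)]>0$.

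There is no real obstacle in this lemma: the only care needed is to organize the second $s$-derivative of $\sqrt{\beta+A^2}$ so that the $X$- and $Y$-dependence factor apart cleanly before one takes expectations, so that $\mathbb{E}Y=0$, $\mathbb{E}Y^2=1$ can be applied to trivialize the integration in $y$. The strict positivity of all of the constants $\gamma_i$ reduces to the single elementary two-sided bound $0<\mathbb{E}[X^2/(\beta+X^2)]<\mathbb{E}X^2=1$, which requires nothing more than monotonicity of the integrand.
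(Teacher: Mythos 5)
Your proof is correct, and the approach (reducing every second derivative at $(\rho,s)=(1,0)$ to a one-dimensional Gaussian moment in $X$ after integrating out $Y$ by independence) is essentially the paper's, but two of your steps are genuinely cleaner. First, for the $s$-derivatives the paper routes through the rotational-operator identity $\partial_s = (1-s^2)^{-1/2}(-x\partial_y + y\partial_x)$ and integrates by parts, a trick needed elsewhere (Lemma~\ref{lemAu29_2}) to tame behaviour near $|t|=1$; since you only evaluate at $s=0$, your direct chain-rule computation $\partial_{ss}g(A)|_{s=0}=g''(X)Y^2-g'(X)X$ has no singularity issue and lands on the same expression $\partial_{ss}H(1,0)=-\,\mathbb{E}\!\left[\frac{X^2}{\beta+X^2}\right]$ with less machinery. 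Second, for the positivity of $(\partial_{\rho\rho}h)(1,0)=1-\mathbb{E}\!\left[\frac{\beta X^2}{\beta+X^2}\right]$ the paper invokes a separate auxiliary Lemma~\ref{lemE.7} (proved by differentiating in $\epsilon=1/\beta$); your observation that $1-\mathbb{E}\!\left[\frac{\beta X^2}{\beta+X^2}\right]=\mathbb{E}\!\left[X^2-\frac{\beta X^2}{\beta+X^2}\right]=\mathbb{E}\!\left[\frac{X^4}{\beta+X^2}\right]>0$ is shorter and makes the auxiliary lemma unnecessary. One small caveat worth flagging: the lemma statement records $(\partial_{\rho\rho}H)(1,0)=-\gamma_1<0$, but both your computation and the paper's own proof give $(\partial_{\rho\rho}H)(1,0)=\mathbb{E}\!\left[\frac{\beta X^2}{\beta+X^2}\right]>0$; the sign in the displayed statement is a typo, and it is harmless because only the positivity of $\gamma_3$ and $\gamma_4$ (the $h$-Hessian entries) is used downstream, which you established correctly.
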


\begin{proof}
\texttt{\underline{Calculation of $\partial_{ss} H$}}. 

Denote $A=\sqrt{1-s^2} x+ sy$. Then 
\begin{align*}
& \partial_s A = - \frac s {\sqrt{1-s^2}} x + y, \quad
 \partial_y A = s, \qquad \partial_x A = \sqrt{1-s^2};\\
&  \partial_s A = \frac 1 {\sqrt{1-s^2}} (-x \partial_y +y \partial_x )A. 
\end{align*}
Now we have
\begin{align*}
2\pi H = \int \sqrt{\beta +A^2} \sqrt{ \beta \rho^2 +x^2} e^{-\frac{x^2+y^2}2} dx dy.
\end{align*}
Then 
\begin{align*}
2\pi \partial_s H &=\frac 1 {\sqrt{1-s^2}} \int
\sqrt{\beta +A^2} (-y \partial_x)(\sqrt{\beta \rho^2+x^2}) e^{-\frac{x^2+y^2}2} dx dy
\\
&= - \frac 1 {\sqrt{1-s^2}}
\int \sqrt{\beta +A^2}
\frac {xy}{\sqrt{\beta \rho^2+x^2}} e^{-\frac{x^2+y^2}2} dx dy.
\end{align*}

One should observe that $\partial_s H\Bigr |_{\rho>0,s=0}=0$. 

Then
\begin{align*}
2\pi \partial_{ss} H \Bigr|_{\rho=1,s=0}
&= - \int \frac {A}{\sqrt {\beta +A^2} }\Bigr|_{s=0}  \cdot \frac {x y^2}{\sqrt{\beta +x^2}} e^{-\frac {x^2+y^2} 2} 
dx dy \notag \\
&= - \int \frac {x^2 y^2} {\beta +x^2} e^{-\frac {x^2+y^2}2} dx dy.
\end{align*}

\texttt{\underline{Calculation of $\partial_{\rho\rho} H$}}. 
Clearly
\begin{align*}
2\pi \partial_{\rho} H
= \int \sqrt{\beta+A^2} \frac{\beta \rho}
{\sqrt{\beta \rho^2+x^2}} e^{-\frac{x^2+y^2}2} dx dy.
\end{align*}

Observe that
\begin{align*}
2\pi \partial_{\rho} H\Bigr|_{\rho=1,s=0}
= \int \beta e^{-\frac {x^2+y^2} 2} dx dy.
\end{align*}

Then
\begin{align*}
2\pi \partial_{\rho\rho} H \Bigr|_{\rho=1,s=0}
&=  \int \sqrt{\beta+A^2} \Bigr|_{\rho=1,s=0} \frac{\beta }
{\sqrt{\beta \rho^2+x^2}} e^{-\frac{x^2+y^2}2} dx dy \notag \\
& \quad -  \int \sqrt{\beta+A^2} \Bigr|_{\rho=1,s=0} \frac{ \beta^2 \rho^2}
{(\beta \rho^2+x^2)^{\frac 32} } e^{-\frac{x^2+y^2}2} dx dy \notag \\
&= \int 
\frac {\beta x^2} {\beta +x^2} e^{-\frac{x^2+y^2}2 }dxdy.
\end{align*}

\texttt{\underline{Calculation of $\partial_{\rho s} H$}}. 
We have
\begin{align*}
2 \pi
\partial_{\rho s} H \Bigr|_{\rho=1,s=0}
&= \int \frac {A}{\sqrt{\beta +A^2}} \Bigr|_{s=0} y \frac {\beta \rho}
{\sqrt{\beta \rho^2 +x^2}} e^{-\frac {x^2+y^2} 2} dx dy \notag \\
& = \int  \frac{\beta xy}{\beta +x^2} e^{-\frac {x^2+y^2} 2}
dx dy =0.
\end{align*}

Now we calculate the corresponding Hessian for $
h = \frac 12 (1+2\beta) \rho^2 - \rho H$.  Clearly
\begin{align*}
\partial_{\rho\rho} h \Bigr|_{\rho=1,s=0}
&= 1+2\beta - ( \partial_{\rho\rho} H + 2 \partial_{\rho} H)
=1+2\beta- \frac 1 {2\pi}\int ( \frac {\beta x^2}{\beta +x^2}
+2\beta) e^{-\frac{x^2+y^2} 2} dx dy \notag \\
& = \frac 1 {2\pi}\int  (1-\frac {\beta x^2}{\beta +x^2}) 
e^{-\frac {x^2+y^2} 2} dx dy.
\end{align*}
By Lemma \ref{lemE.7} this is clearly positive and has a lower bound depending only 
in terms of $\beta$. 

On the other hand, 
\begin{align*}
\partial_{\rho s} h \Bigr|_{\rho=1,s=0}
= - \partial_s H - \partial_{\rho s} H =0.
\end{align*}
Finally
\begin{align*}
\partial_{ss } h = - \partial_{ss } H
= \frac 1 {2\pi} \int \frac {x^2 y^2} {\beta +x^2} e^{-\frac {x^2+y^2}2} dx dy
>0.
\end{align*}
\end{proof}

\begin{lem} \label{lemE.7}
For any $0<\beta <\infty$, we have
\begin{align*}
\int ( 1- \frac{\beta x^2} {\beta +x^2} ) e^{-\frac {x^2} 2} dx >0.
\end{align*}
\end{lem}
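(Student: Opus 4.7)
The plan is to reduce the claimed inequality to the well-known identity $\int (1-x^2)e^{-x^2/2}\,dx=0$ (i.e.\ the standard normal has unit variance), by isolating an explicit strictly positive remainder. Concretely, I would use the algebraic identity
\[
\frac{\beta x^2}{\beta+x^2}= x^2-\frac{x^4}{\beta+x^2},
\]
which follows from writing $\beta x^2=x^2(\beta+x^2)-x^4$ and dividing by $\beta+x^2$. Substituting this into the integrand gives
\[
1-\frac{\beta x^2}{\beta+x^2}=(1-x^2)+\frac{x^4}{\beta+x^2}.
\]

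Then I would integrate both summands against $e^{-x^2/2}\,dx$ on $\mathbb{R}$. The first summand contributes $0$, because $\int_{\mathbb R} e^{-x^2/2}\,dx=\sqrt{2\pi}$ and $\int_{\mathbb R} x^2 e^{-x^2/2}\,dx=\sqrt{2\pi}$ (the second moment of a standard Gaussian). The second summand has a nonnegative integrand $\frac{x^4}{\beta+x^2}e^{-x^2/2}$ which is strictly positive on a set of positive measure (in fact for all $x\neq 0$), so its integral is strictly positive. Adding the two contributions gives the desired strict inequality, uniformly for every $\beta\in(0,\infty)$.

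There is no real obstacle here; the only thing to be careful about is that the bound is strict, which is automatic from the positivity of $x^4/(\beta+x^2)$ away from $x=0$. As a sanity check, this decomposition also makes the two limiting regimes transparent: as $\beta\to\infty$, $x^4/(\beta+x^2)\to 0$ pointwise and the integral tends to $0$, while for $\beta\to 0^+$ the integrand tends to $1$ and the integral tends to $\sqrt{2\pi}$, consistent with the monotone behavior one expects.
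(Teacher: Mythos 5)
Your proof is correct, and it is a genuinely different (and arguably cleaner) argument from the paper's. The paper splits into two cases: for $0<\beta\le 1$ the integrand $1-\frac{\beta x^2}{\beta+x^2}$ is pointwise positive (since $\frac{\beta x^2}{\beta+x^2}<\beta\le 1$), so the claim is immediate; for $\beta>1$ it substitutes $\epsilon=1/\beta$, writes $\tilde h(\epsilon)=\int\bigl(1-\frac{x^2}{1+\epsilon x^2}\bigr)e^{-x^2/2}\,dx$, observes $\tilde h(0)=0$ (the Gaussian second-moment identity) and $\tilde h'(\epsilon)=\int\frac{x^4}{(1+\epsilon x^2)^2}e^{-x^2/2}\,dx>0$, and concludes by monotonicity. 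Your decomposition
\[
1-\frac{\beta x^2}{\beta+x^2}=(1-x^2)+\frac{x^4}{\beta+x^2}
\]
is essentially the integrated form of the same underlying observation, but it avoids both the case split and the differentiation-under-the-integral step, reducing directly to the identity $\int(1-x^2)e^{-x^2/2}\,dx=0$ plus a manifestly positive remainder. That makes the proof a single-line calculation valid uniformly in $\beta\in(0,\infty)$ and also exposes the $\beta\to 0^+$ and $\beta\to\infty$ asymptotics cleanly, as you note. Both approaches are valid; yours is more economical.
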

\begin{proof}
For $0<\beta \le 1$, this is obvious. For $\beta>1$, denote $\epsilon=\frac 1 {\beta}$.
Then
\begin{align*}
&\tilde h(\epsilon) = \int ( 1- \frac {x^2} {1+\epsilon x^2}) e^{-\frac {x^2} 2} dx;\\
&\tilde h^{\prime}(\epsilon)
= \int \frac { \epsilon x^4} {(1+\epsilon x^2)^2 } e^{-\frac {x^2} 2} dx >0.
\end{align*}
Clearly $\tilde h(0)=0$. Thus $\tilde h(\epsilon)>0$ for all $0<\epsilon<\infty$.
\end{proof}
\begin{lem} \label{Sp1_1_0.1}
Let $0<c_1<c_2<\infty$ be fixed. Consider for $\xi \in \mathbb S^{n-1}$,
$u\in \mathbb R^n$ with $c_1\le \|u\|_2 \le c_2$, the following:
\begin{align*}
&I_1= I_1(\xi, u)=\frac 1m \sum_{k=1}^m
(\beta |u|^2+ (a_k\cdot u)^2 )^{-\frac 32}
(\beta |u|^2 + (a_k\cdot e_1) )^{\frac 12}
\cdot (a_k \cdot \xi)^2, \\
&I_2= I_2(\xi,u)=\frac 1m \sum_{k=1}^m
(\beta |u|^2+ (a_k\cdot u)^2 )^{-\frac 32}
(\beta |u|^2 + (a_k\cdot e_1) )^{\frac 12}
\cdot (a_k\cdot u) \cdot (a_k \cdot \xi).
\end{align*}
For any $0<\epsilon\le 1$, if $m\gtrsim n$, then it holds with probability
at least $1-O(m^{-2})$ that
\begin{align*}
&|I_1 -\mathbb E I_1 | \le \epsilon, 
\qquad \forall\, \xi \in \mathbb S^{n-1}, \quad \forall\, c_1\le \| u\|_2 \le c_2; \\
&|I_2 -\mathbb E I_2 | \le \epsilon, 
\qquad \forall\, \xi \in \mathbb S^{n-1}, \quad \forall\, c_1\le \| u\|_2 \le c_2.
\end{align*}
\end{lem}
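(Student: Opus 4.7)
The plan is to follow the truncation-plus-covering scheme already used in Lemma \ref{A30_3}. The key simplification in the present regime is that, since $c_1\le \|u\|_2\le c_2$, the weight $(\beta|u|^2+(a_k\cdot u)^2)^{-3/2}$ is pointwise bounded by $M_1:=(\beta c_1^2)^{-3/2}$. Consequently, the summands of $I_1$ and $I_2$ are dominated by polynomials of degree at most three in the Gaussian projections $a_k\cdot e_1$, $a_k\cdot u$, $a_k\cdot \xi$, multiplied by a smooth bounded function of $a_k\cdot u$ with bounded derivatives. This places the analysis in exactly the same framework as that of $\partial_{tt} h$ in Lemma \ref{A30_3}.

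First I would split each summand into a bulk and a tail piece via a smooth cutoff $\phi$ (with $\phi=1$ on $[-1,1]$ and $\phi=0$ outside $[-2,2]$) applied at a large scale $K=K(\eta)$ to the variables $a_k\cdot e_1$ and $a_k\cdot \xi$ (and $a_k\cdot u$ for $I_2$). On the bulk $\{|a_k\cdot e_1|\le 2K,\ |a_k\cdot \xi|\le 2K\}$ the summand is uniformly bounded by a constant depending only on $K$ and, as a function of $(\xi,u)$, Lipschitz with Lipschitz constant $\lesssim (1+|a_k|)$. A standard $\delta$-net on $\mathbb S^{n-1}\times\{u:c_1\le\|u\|_2\le c_2\}$ of cardinality $\exp(C_\delta n)$, together with Bernstein's inequality at each net point and the bulk Lipschitz estimate off-net, yields bulk concentration at probability $1-e^{-cm}$ provided $m\gtrsim n$.

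The tail piece consists of summands for which at least one of $|a_k\cdot e_1|$ or $|a_k\cdot \xi|$ (or $|a_k\cdot u|$ for $I_2$) exceeds $K$. By the pointwise majorization
\begin{align*}
M_1(\sqrt{\beta}\, c_2 + |a_k\cdot e_1|)(a_k\cdot \xi)^2\bigl(1-\phi(a_k\cdot e_1/K)\phi(a_k\cdot \xi/K)\bigr),
\end{align*}
this tail is dominated by a sum of nonnegative random variables with finite fourth moment. The $a_k\cdot e_1$-only contribution depends on neither $u$ nor $\xi$ and is handled directly by Lemma \ref{bp7}, giving $1-O(m^{-2})$ concentration. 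The $\xi$-dependent contribution $(a_k\cdot \xi)^2(1-\phi(a_k\cdot \xi/K))$ is treated by a separate $\delta$-net on $\mathbb S^{n-1}$, with the Lipschitz bound $|(a_k\cdot \xi)^2-(a_k\cdot \tilde\xi)^2|\le |a_k\cdot(\xi-\tilde\xi)|(|a_k\cdot \xi|+|a_k\cdot \tilde\xi|)$ controlling the off-net error, again via the fourth-moment inequality. By choosing $\eta$ small the mean of the tail is $\le \epsilon/10$, and combining with the bulk estimate yields the claim.

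The main obstacle is the bookkeeping in the tail: since the truncated-away summand is not uniformly sub-Gaussian at the relevant scale, the only concentration tool available for it is the fourth-moment Markov bound of Lemma \ref{bp7}, which intrinsically gives $O(m^{-2})$ rather than exponential probability — this is precisely why the stated conclusion carries only the weaker probability $1-O(m^{-2})$. The estimate for $I_2$ is identical to that for $I_1$ after replacing $(a_k\cdot \xi)^2$ by the bilinear $(a_k\cdot u)(a_k\cdot \xi)$, which is of the same polynomial order in Gaussians and enjoys the same Lipschitz structure in $(\xi,u)$.
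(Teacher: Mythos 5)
Your proposal shares the high-level skeleton of the paper's proof --- truncate, net plus Bernstein on the bulk, and accept the $O(m^{-2})$ probability coming from a fourth-moment bound on a tail piece --- but the specific decomposition and the route to $I_2$ differ in ways that leave genuine gaps.

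First, for $I_2$ the paper does not repeat the $I_1$ argument at all: it polarizes, writing
\begin{align*}
I_2(\xi,u)=\tfrac12 I_1(\xi+u,u)-\tfrac12 I_1(\xi,u)-I_3,
\end{align*}
with a benign $I_3$, so that only $I_1$ needs to be controlled (for $\xi$ in a slightly larger ball). You instead propose to redo the argument for $I_2$ by replacing $(a_k\cdot\xi)^2$ with $(a_k\cdot u)(a_k\cdot\xi)$. This is feasible, but you miss a simplification that would actually make $I_2$ \emph{easier} than $I_1$ in your framework: since $|a_k\cdot u|\,(\beta|u|^2+(a_k\cdot u)^2)^{-3/2}\lesssim(\beta c_1^2)^{-1}$, the $I_2$ summand is bounded by $C(1+|a_k\cdot e_1|)\,|a_k\cdot\xi|$, a product of two sub-Gaussians and hence sub-exponential. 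A direct net-plus-Bernstein argument then gives exponential concentration for $I_2$ with no truncation and no fourth moments.

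Second, and this is the substantive gap, your handling of the $\xi$-dependent tail does not close. You propose to control $\frac1m\sum(1+|a_k\cdot e_1|)(a_k\cdot\xi)^2\bigl(1-\phi(a_k\cdot\xi/K)\bigr)$ uniformly over $\xi\in\mathbb S^{n-1}$ by a $\delta$-net combined with Lemma \ref{bp7}. But Lemma \ref{bp7} gives a per-point failure probability of order $m^{-2}t^{-4}$, while the net has $\exp(C_\delta n)$ points; with $m\gtrsim n$ the union bound $\exp(C_\delta n)\cdot m^{-2}$ is not small. The paper avoids this by choosing its cutoff on $a_k\cdot u$ relative to $a_k\cdot\xi$: on the support of $1-\phi\bigl(\frac{a_k\cdot u}{\delta\langle a_k\cdot\xi\rangle}\bigr)$ one has $|a_k\cdot u|\gtrsim\langle a_k\cdot\xi\rangle$, so the weight $(\beta|u|^2+(a_k\cdot u)^2)^{-3/2}\lesssim\langle a_k\cdot\xi\rangle^{-3}$ cancels the $(a_k\cdot\xi)^2$ and leaves a sub-exponential summand for which net-plus-Bernstein works at exponential rate; only the residual $I_{1,a}$ piece, with small mean, is left to the weaker fourth-moment control. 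Your large-$K$ cutoff on $a_k\cdot e_1, a_k\cdot\xi$ leaves a $\xi$-dependent tail containing degree-3 Gaussian monomials that you have no way to unionize over $\exp(C_\delta n)$ points at $m\gtrsim n$. Relatedly, your claim that the ``$a_k\cdot e_1$-only contribution'' needs Lemma \ref{bp7} has it backwards: once $\phi(a_k\cdot\xi/K)$ bounds $(a_k\cdot\xi)^2\le 4K^2$, the piece $K^2(1+|a_k\cdot e_1|)(1-\phi(a_k\cdot e_1/K))$ is sub-Gaussian and independent of $(\xi,u)$, so Bernstein alone gives exponential concentration there. The fourth-moment Markov bound is needed only for a piece that has already been made $\xi$-free by deterministic bounding; that reduction is precisely what your decomposition fails to produce.
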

\begin{proof}[Proof of Lemma \ref{Sp1_1_0.1}]
We first note that, in order to prove the statement for $I_2$, 
 it suffices for us to prove the statement for $I_1$ under 
a more general condition (instead of $\xi \in \mathbb S^{n-1}$):
\begin{align*}
 \|\xi\|_2 \le c_3:=2+c_2.
 \end{align*}
The reason is as follows. By using the simple identity
\begin{align*}
(a_k \cdot (\xi+u) )^2 = (a_k\cdot \xi)^2 +(a_k\cdot u)^2 +2
(a_k\cdot \xi ) (a_k \cdot u),
\end{align*}
we have
\begin{align*}
I_2(\xi, u)= \frac 12 I_1(\xi+u, u) -\frac 12 I_1(\xi, u) - I_3,
\end{align*}
where
\begin{align*}
I_3 = \frac 1m \sum_{k=1}^m (
\beta |u|^2+(a_k\cdot u)^2)^{-\frac 32} (a_k\cdot u)^2
(\beta |u|^2+ (a_k\cdot e_1)^2)^{\frac 12}.
\end{align*}
Clearly $I_3$ is OK for union bounds and we have with high probability 
\begin{align*}
|I_3 -\mathbb I_3 | \le \epsilon,
\qquad\forall\,  c_1 \le \| u\|_2 \le c_2.
\end{align*}
Thus to prove the statement for $I_2$ it suffices for us to prove it for $I_1$
uniformly in $\xi$ with $\|\xi\|_2\le c_3$. 

Next we observe that for $\xi \ne 0$ with $\| \xi \|_2 \le c_3$, we have
\begin{align*}
|I_1(\xi, u) - \mathbb E I_1(\xi, u) |
&\le\;  \|\xi\|_2 |I_1( \frac {\xi}{\|\xi\|_2}, u) - \mathbb E I_1(\frac {\xi}{\|\xi\|_2}, u) | 
\notag \\
& \le\; c_3 |I_1( \frac {\xi}{\|\xi\|_2}, u) - \mathbb E I_1(\frac {\xi}{\|\xi\|_2}, u) |.
\end{align*}
Thus it suffices for us to prove the statement for $I_1$ under the original assumption
$\xi \in \mathbb S^{n-1}$. 

Now let $\phi \in C_c^{\infty}(\mathbb R)$ be such that 
$0\le \phi(x) \le 1$ for all $x$, $\phi(x)=1$ for $|x|\le 1$ and $\phi(x)=0$ for 
$|x|\ge 2$. Let $\delta>0$ be a sufficiently small constant. The needed smallness
will be specified later. We write (below $\langle x \rangle =(1+x^2)^{\frac 12}$)
\begin{align*}
I_1 &= \frac 1m \sum_{k=1}^m
(\beta |u|^2+(a_k\cdot u)^2)^{-\frac 32}
(\beta |u|^2 +(a_k\cdot e_1)^2)^{\frac 12}\cdot (a_k\cdot \xi)^2
\cdot \phi(\frac {a_k \cdot u} {\delta \langle a_k \cdot \xi \rangle} ) \notag \\
&\quad+ \frac 1m \sum_{k=1}^m
(\beta |u|^2+(a_k\cdot u)^2)^{-\frac 32}
(\beta |u|^2 +(a_k\cdot e_1)^2)^{\frac 12}\cdot (a_k\cdot \xi)^2
\cdot \Bigl( 1-\phi(\frac {a_k \cdot u} {\delta \langle a_k \cdot \xi \rangle} ) \Bigr) \notag \\
&=:\, I_{1,a}+I_{1,b}.
\end{align*}

\underline{Estimate of $I_{1,a}$}. 
Let $K=\delta^{-\frac 19}$. Then
\begin{align*}
|I_{1,a}|
& \le \frac 1m \sum_{k=1}^m
(\beta |u|^2+(a_k\cdot u)^2)^{-\frac 32}
(\beta |u|^2 +(a_k\cdot e_1)^2)^{\frac 12}\cdot (a_k\cdot \xi)^2
\cdot \phi(\frac {a_k \cdot u} {\delta \langle a_k \cdot \xi \rangle} ) 
\phi(\frac {a_k \cdot \xi} K) \notag \\
& \quad +\frac 1m \sum_{k=1}^m
(\beta |u|^2+(a_k\cdot u)^2)^{-\frac 32}
(\beta |u|^2 +(a_k\cdot e_1)^2)^{\frac 12}\cdot (a_k\cdot \xi)^2
\cdot \phi(\frac {a_k \cdot u} {\delta \langle a_k \cdot \xi \rangle} ) 
\cdot \Bigl(1-\phi(\frac {a_k\cdot \xi} K) \Bigr) \notag \\
& \lesssim
\; K^2 \frac 1m \sum_{k=1}^m 
(1+(a_k\cdot e_1)^2)^{\frac 12} 
\phi(\frac {a_k\cdot u} {\delta \langle 2K \rangle} )
+ \frac 1m \sum_{k=1}^m (1+(a_k\cdot e_1)^2)^{\frac 12}
\cdot (a_k\cdot \xi)^2 \cdot 
\Bigl(1-\phi(\frac {a_k\cdot \xi} K) \Bigr) \notag \\
& \lesssim \; K^5 \frac 1m \sum_{k=1}^m 
\phi(\frac {a_k\cdot u} {\delta \langle 2K \rangle} )
+ \frac 1m \sum_{k=1}^m (1+(a_k\cdot e_1)^2) 
\cdot  K^{-1}  + K\cdot \frac 1m \sum_{k=1}^m 
(a_k\cdot \xi)^4 \cdot 
\Bigl(1-\phi(\frac {a_k\cdot \xi} K) \Bigr). 
\end{align*}
Clearly for sufficiently small $\delta$, we have
\begin{align*}
\mathbb E |I_{1,a}| \le \frac {\epsilon}{10}.
\end{align*}
Furthermore, with probability at least $1-O(m^{-2})$, we have
\begin{align*}
|I_{1,a}| \le \frac {\epsilon}{10}, \qquad
\forall\, c_1\le \|u\|_2\le c_2, \;\forall\, \xi \in \mathbb S^{n-1}.
\end{align*}

\underline{Estimate of $I_{1,b}$}.
Thanks to the cut-off $1-\phi(\frac {a_k \cdot u} {\delta \langle a_k \cdot \xi \rangle} )$,
we have $|a_k\cdot u|\gtrsim \langle a_k\cdot \xi \rangle$ on its support. 
It is then easy to check that the summands in $I_{1,b}$ are sub-exponential random
variables. It remains for us to check the union bound. 

To this end, take $u$, $\tilde u$ with $c_1\le \|u\|_2, \|\tilde u\|_2 \le c_2$,
and $\xi$, $\tilde \xi \in \mathbb S^{n-1}$. Then clearly 
\begin{align*}
&\biggl| (\beta |u|^2+(a_k\cdot u)^2)^{-\frac 32}
(\beta |u|^2 +(a_k\cdot e_1)^2)^{\frac 12}\cdot (a_k\cdot \xi)^2
\cdot \Bigl( 1- \phi(\frac {a_k \cdot u} {\delta \langle a_k \cdot \xi \rangle} ) 
\Bigr)
  \notag \\
& \quad - (\beta |\tilde u|^2+(a_k\cdot \tilde u)^2)^{-\frac 32}
(\beta |\tilde u|^2 +(a_k\cdot e_1)^2)^{\frac 12}\cdot (a_k\cdot \tilde {\xi})^2
\cdot \Bigl( 1-\phi(\frac {a_k \cdot \tilde u} {\delta \langle a_k \cdot \tilde {\xi} \rangle} ) 
\Bigr)  \biggr| \notag \\
\lesssim &\; (1+|a_k\cdot e_1|)\cdot \Bigl(\|u-\tilde u\|_2+
|a_k\cdot (u-\tilde u)| + |a_k\cdot (\xi -\tilde {\xi} )|\Bigr).
\end{align*}
Here in the above derivation we have used the fact that the function (it differs from the actual one
by some minor change of parameters)
\begin{align*}
G(t,s)= \langle t\rangle^{-3} s^2  \Bigl(1- \phi(\frac t {\langle s \rangle} ) \Bigr)
\end{align*}
satisfies
\begin{align*}
|G(t,s)-G(\tilde t, \tilde s)| \lesssim |t-\tilde t| +|s-\tilde s|.
\end{align*}
It is then clear that $I_{1,b}$ is OK for union bounds and we have with high probability
\begin{align*}
| I_{1,b}-\mathbb E I_{1,b}| \le \frac {\epsilon}{10},
\qquad\forall\, c_1\le \|u\|_2\le c_2, \, \forall\, \xi \in S^{n-1}.
\end{align*}
The desired estimate for $I_1$ then easily follows. 
\end{proof}

\begin{lem} \label{Sp1_1}
Let $0<c_1<c_2<\infty $ be fixed. Consider
\begin{align*}
f_0 (u) = -\frac 1 m \sum_{k=1}^m \sqrt{\beta |u|^2 + (a_k \cdot u)^2}
\sqrt{\beta |u|^2 + (a_k\cdot e_1)^2}.
\end{align*}
For any $0<\epsilon\le 1$, if $m\gtrsim n$, 
%then with high probability it holds that
%\begin{align*}
%&\sum_{i,j=1}^n \xi_i \xi_j 
%(\partial_{ij} f_0)(u) \ge \;\; -\epsilon+
%\sum_{i,j=1}^n \xi_i \xi_j 
%\mathbb E(\partial_{ij} f_0)(u),
%\qquad \forall\, \xi \in \mathbb S^{n-1}, \quad \forall\, c_1\le \| u\|_2 \le c_2.
%\end{align*}
%Furthermore, it holds with probability at least $1-O(m^{-2})$ that
then it holds with probability at least $1-O(m^{-2})$ that 
\begin{align*}
&\Bigl| \sum_{i,j=1}^n \xi_i \xi_j 
(\partial_{ij} f_0)(u) - 
\sum_{i,j=1}^n \xi_i \xi_j 
\mathbb E(\partial_{ij} f_0)(u) \Bigr| \le \epsilon, 
\qquad \forall\, \xi \in \mathbb S^{n-1}, \quad \forall\, c_1\le \| u\|_2 \le c_2.
\end{align*}
\end{lem}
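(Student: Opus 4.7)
The plan is to compute the Hessian of $f_0$ explicitly, write the quadratic form $Q(\xi,u) := \sum_{i,j=1}^n \xi_i\xi_j(\partial_{ij}f_0)(u)$ as a finite sum of empirical averages, and concentrate each summand separately. The ``hard'' summands, which involve the singular factor $G_k^{-3}$ with $G_k := \sqrt{\beta|u|^2+(a_k\cdot u)^2}$, are precisely the quantities $I_1$ and $I_2$ treated in Lemma \ref{Sp1_1_0.1}; this is the only step that produces the probability $1-O(m^{-2})$, all other summands concentrating at the stronger rate $1-e^{-cm}$.

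First I would set $H_k := \sqrt{\beta|u|^2+(a_k\cdot e_1)^2}$ so that $f_0(u)=-\tfrac{1}{m}\sum_k G_k H_k$, and compute
\begin{align*}
\xi^T \nabla^2 G_k \xi &= \tfrac{\beta|\xi|^2+(a_k\cdot\xi)^2}{G_k} - \tfrac{(\beta(u\cdot\xi)+(a_k\cdot u)(a_k\cdot\xi))^2}{G_k^3},\\
\xi^T \nabla^2 H_k \xi &= \tfrac{\beta|\xi|^2}{H_k} - \tfrac{\beta^2(u\cdot\xi)^2}{H_k^3},\\
(\xi\cdot\nabla G_k)(\xi\cdot\nabla H_k) &= \tfrac{(\beta(u\cdot\xi)+(a_k\cdot u)(a_k\cdot\xi))\,\beta(u\cdot\xi)}{G_k H_k},
\end{align*}
so that $Q(\xi,u) = -\frac{1}{m}\sum_k \bigl(H_k\,\xi^T\nabla^2 G_k\xi + G_k\,\xi^T\nabla^2 H_k\xi + 2(\xi\cdot\nabla G_k)(\xi\cdot\nabla H_k)\bigr)$. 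Expanding the squared numerator in $\xi^T\nabla^2 G_k\xi$ and using the elementary identity $(a_k\cdot u)^2 G_k^{-3} = G_k^{-1} - \beta|u|^2 G_k^{-3}$, every resulting summand falls into one of two classes: (a) a scalar multiple (possibly depending on $u\cdot\xi$, $|u|$, $\beta$) of the quantities $I_1(\xi,u)$ or $I_2(\xi,u)$ of Lemma \ref{Sp1_1_0.1}; or (b) an empirical average $\frac{1}{m}\sum_k \psi_k(u,\xi)$ whose summand contains at most one of $G_k,H_k$ in the numerator and only the bounded rational factors $1/G_k$, $1/H_k$, $1/(G_kH_k)$, $1/H_k^3$ in the denominator (all $\le (\sqrt{\beta}c_1)^{-j}$ for $j\le 3$ since $G_k,H_k\ge\sqrt{\beta}\|u\|_2\ge\sqrt{\beta}c_1$).

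For class (a) I invoke Lemma \ref{Sp1_1_0.1}, which yields uniform concentration over $\mathbb S^{n-1}\times\{c_1\le\|u\|_2\le c_2\}$ with probability $1-O(m^{-2})$, provided $m\gtrsim n$. For class (b) each summand $\psi_k(u,\xi)$ is sub-exponential in $a_k$ with sub-exponential norm bounded uniformly in $(\xi,u)$, and is Lipschitz in $(u,\xi)$ with modulus controlled by a low-degree polynomial in $|a_k|$ (bounded in moments). A $\delta$-net argument over $\mathbb S^{n-1}\times\{c_1\le\|u\|_2\le c_2\}$ combined with Bernstein's inequality, exactly in the style used in Lemma \ref{A30_1}, yields uniform concentration of these pieces with probability $1-e^{-cm}$. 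Summing the finitely many contributions and choosing the net scale $\delta$ small enough to absorb the Lipschitz slack into $\epsilon/(\text{number of pieces})$ completes the proof.

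The main (and only) obstacle is the control of the class (a) pieces, where the singular factor $G_k^{-3}$ forces the fourth-moment argument of Lemma \ref{bp7} embedded in the proof of Lemma \ref{Sp1_1_0.1}; this accounts precisely for the weaker probability $1-O(m^{-2})$ appearing in the conclusion.
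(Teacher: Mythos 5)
Your proposal follows essentially the same route as the paper: you compute the Hessian quadratic form term-by-term, use the algebraic identity $(a_k\cdot u)^2 G_k^{-3} = G_k^{-1}-\beta|u|^2G_k^{-3}$ (which is exactly the cancellation the paper effects by combining its terms \eqref{Sp1_1.3b} and \eqref{Sp1_1.1a}), reduce the singular $G_k^{-3}$ pieces to scalar multiples of $I_1$ and $I_2$ of Lemma \ref{Sp1_1_0.1}, and handle the remaining pieces by $\delta$-net plus Bernstein as in Lemma \ref{A30_1}. This matches the paper's proof in all essentials, including the observation that the weaker $1-O(m^{-2})$ probability originates solely from Lemma \ref{Sp1_1_0.1}.
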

\begin{proof}[Proof of Lemma \ref{Sp1_1}]
To simplify the notation, write $a_k$ as $a$, and denote
\begin{align*}
&A= \beta |u|^2 + (a\cdot u)^2, \qquad B= \beta |u|^2 + (a\cdot e_1)^2; \\
& \partial_i A= 2 \beta u_i + 2 (a\cdot u)a_i, \qquad \partial_{ij}A
= 2\beta \delta_{ij} + 2 a_i a_j; \\
& \partial_i B= 2 \beta u_i, \qquad \partial_{ij} B= 2\beta \delta_{ij}.
\end{align*}

We need to compute $\partial_{ij} \tilde F$ for 
\begin{align*}
\tilde F =  A^{\frac 12} B^{\frac 12}.
\end{align*}
Clearly
\begin{align*}
&\partial_i \tilde F = \frac 12 A^{-\frac 12} \partial_i A B^{\frac 12}
+ \frac 12 B^{-\frac 12} \partial_i B A^{\frac 12};\\
&\partial_{ij} \tilde F
= -\frac 14 A^{-\frac 32} \partial_i A \partial_j A B^{\frac 12} + \frac 12 A^{-\frac 12} \partial_{ij} A B^{\frac 12} + \frac 12 A^{-\frac 12} \partial_i A \frac 12 B^{-\frac 12}
\partial_j B \notag \\
&\quad  \qquad -\frac 14 B^{-\frac 32} \partial_j B \partial_i B A^{\frac 12} + \frac 12 B^{-\frac 12} \partial_{ij} B A^{\frac 12} + \frac 14 B^{-\frac 12} A^{-\frac 12} \partial_i B \partial_j A. 
\end{align*}
We then have
\begin{align}
  & \sum_{i,j=1}^n \xi_i \xi_j (\partial_{ij} f_0)(u) \notag \\
=&\; \frac 14 \cdot  \frac 1 m \sum_{k=1}^m  A_k^{-\frac 32}
|\xi \cdot \nabla A_k|^2  B_k^{\frac 12} \label{Sp1_1.1} \\
& \; + \frac 14 \cdot \frac 1m \sum_{k=1}^m
B_k^{-\frac 32} |\xi \cdot \nabla B_k|^2 A_k^{\frac 12} \label{Sp1_1.2} \\
& \; - \frac 12\cdot \frac 1m \sum_{k=1}^m A_k^{-\frac 12}
\langle \xi, (\nabla^2 A_k) \xi \rangle B_k^{\frac 12} \label{Sp1_1.3} \\
& \; - \frac 12 \cdot \frac 1m \sum_{k=1}^m 
A_k^{-\frac 12} B_k^{-\frac 12} (\nabla A_k \cdot \xi) (\nabla B_k \cdot \xi) 
\label{Sp1_1.4} \\
& \; - \frac 12 \cdot \frac 1m \sum_{k=1}^m B_k^{-\frac 12} A_k^{\frac 12}
\langle \xi, (\nabla^2 B_k) \xi \rangle, \label{Sp1_1.5} 
\end{align}  
where $A_k=\beta |u|^2 + (a_k\cdot u)^2$, $B_k=
\beta |u|^2 +(a_k\cdot e_1)^2$, and we have denoted 
\begin{align*}
\langle \xi, (\nabla^2 A_k) \xi \rangle = \sum_{i,j=1}^n \xi_i \xi_j 
\partial_{ij} A_k.
\end{align*}

\underline{Estimate of \eqref{Sp1_1.5}}. We have
\begin{align*}
 & \frac 1m \sum_{k=1}^m B_k^{-\frac 12} A_k^{\frac 12} 
 \langle \xi, (\nabla^2 B_k ) \xi \rangle \notag \\
 =&\; 2\beta |\xi|^2 \Bigl( \frac 1m \sum_{k=1}^m B_k^{-\frac 12} A_k^{\frac 12}
 \Bigr).
 \end{align*}
 The summand consists of sub-exponential random variables and are clearly OK
 for union bounds. Thus
  with high probability, it holds that
 \begin{align*}
 \Bigl| \frac 1m \sum_{k=1}^m
 B_k^{-\frac 12} A_k^{\frac 12}
 -\operatorname{mean} \Bigr| 
 \le \frac {\epsilon}{100(1+2\beta)},
 \qquad \forall\, c_1\le \|u\|_2\le c_2.
 \end{align*}
 Thus the contribution of \eqref{Sp1_1.5} is OK for us. 

\underline{Estimate of \eqref{Sp1_1.2}}. We have
\begin{align*}
 & \frac 1m \sum_{k=1}^m B_k^{-\frac 32} A_k^{\frac 12} 
 |\xi \cdot \nabla B_k|^2  \notag \\
 =&\; 4\beta^2 (\xi \cdot u)^2 \Bigl( \frac 1m \sum_{k=1}^m B_k^{-\frac 32} A_k^{\frac 12}
 \Bigr).
 \end{align*}
 Again the summand consists of sub-exponential random variables and are clearly OK
 for union bounds. Thus
  with high probability, it holds that
 \begin{align*}
 \Bigl| \frac 1m \sum_{k=1}^m
 B_k^{-\frac 32} A_k^{\frac 12}
 -\operatorname{mean} \Bigr| 
 \le \frac {\epsilon}{100(1+4\beta^2 c_2^2)},
 \qquad \forall\, c_1\le \|u\|_2\le c_2.
 \end{align*}
 Thus the contribution of \eqref{Sp1_1.2} is OK for us.

\underline{Estimate of \eqref{Sp1_1.4}}. We have
\begin{align}
& \frac 1m \sum_{k=1}^m A_k^{-\frac 12}
B_k^{-\frac 12} (\nabla A_k \cdot \xi) (\nabla B_k \cdot \xi) \notag \\
=&\;
\frac 1m \sum_{k=1}^m A_k^{-\frac 12} B_k^{-\frac 12}
\cdot \Bigl( 2\beta (u\cdot \xi) + 2(a_k\cdot u) (a_k\cdot \xi) \Bigr)
2\beta (\xi \cdot u) \notag \\
=&\; 4\beta^2 (\xi\cdot u)^2 \frac 1m \sum_{k=1}^m
A_k^{-\frac 12} B_k^{-\frac 12} \notag \\
& \qquad + 4\beta(\xi \cdot u)
\frac 1m \sum_{k=1}^m A_k^{-\frac 12} B_k^{-\frac 12}
(a_k \cdot \xi) (a_k\cdot u). \label{Sp1_1.4b}
\end{align}
The first term is clearly under control and therefore we focus only on
\eqref{Sp1_1.4b}. For this observe that for any
$u$, $\tilde u$ with $c_1\le \|u\|_2, \|\tilde u\|_2\le c_2$,
$\xi$, $\tilde \xi \in \mathbb S^{n-1}$, it holds that
\begin{align*}
& \Bigl| \frac {a_k\cdot u} {\sqrt{\beta |u|^2+ |a_k\cdot u |^2} }
-\frac {a_k\cdot \tilde u} {\sqrt{ \beta | u|^2 + |a_k\cdot \tilde u|^2 } }
\Bigr| \lesssim  | a_k\cdot (u-\tilde u) |, \notag \\
& \Bigl| \frac {a_k\cdot \tilde u} {\sqrt{\beta |u|^2+ |a_k\cdot \tilde u|^2} }
-\frac {a_k\cdot \tilde u}  {\sqrt{ \beta |\tilde u|^2 + |a_k\cdot \tilde u|^2 } }
\Bigr| \lesssim  \| u-\tilde u\|_2, \notag \\
&\Bigl|
\frac {a_k \cdot u}
{\sqrt{\beta |u|^2 + |a_k\cdot u|^2}}
\cdot \frac {a_k \cdot \xi} { \sqrt{\beta |u|^2+ |a_k\cdot e_1|^2} }
-\frac {a_k \cdot \tilde u}
{\sqrt{\beta |\tilde u|^2 + |a_k\cdot \tilde u|^2}}
\cdot \frac {a_k \cdot \tilde {\xi} } { \sqrt{\beta |\tilde u|^2+ |a_k\cdot e_1|^2} }
\Bigr| \notag \\
&\qquad 
\lesssim (|a_k\cdot (u-\tilde u)| + \|u-\tilde u\|_2) |a_k\cdot \xi|
+ 
 |a_k\cdot (\xi -\tilde {\xi})|.
\end{align*}
Thus \eqref{Sp1_1.4b} is OK for union bounds and we have
with high probability, 
\begin{align*}
\Bigl|
\frac 1m \sum_{k=1}^m A_k^{-\frac 12} B_k^{-\frac 12}
(a_k\cdot \xi) (a_k \cdot u) - \operatorname{mean} 
\Bigr| \le \frac {\epsilon} {200(1+4\beta c_2)} , 
\qquad \forall\, c_1\le \|u\|_2 \le c_2,
\, \forall\, \xi \in \mathbb S^{n-1}.
\end{align*}
Thus \eqref{Sp1_1.4} is under control. 

\underline{Estimate of \eqref{Sp1_1.3}}.
We have 
\begin{align}
& -\frac 12 \cdot \frac 1m \sum_{k=1}^m A_k^{-\frac 12}
\langle \xi, (\nabla^2 A_k) \xi \rangle B_k^{\frac 12} \notag \\
=&\;  -\beta |\xi|^2 \frac 1m \sum_{k=1}^m
A_k^{-\frac 12} B_k^{\frac 12} \notag \\
& \quad -  \frac 1m \sum_{k=1}^m A_k^{-\frac 12}
B_k^{\frac 12} (a_k \cdot \xi)^2. \label{Sp1_1.3b}
\end{align}
The first term is clearly under control. Therefore we only
need to treat \eqref{Sp1_1.3b}. We shall treat it together with
\eqref{Sp1_1.1a} below.

\underline{Estimate of \eqref{Sp1_1.1}}.   We have
\begin{align} 
 & \frac 14\cdot \frac 1m \sum_{k=1}^m A_k^{-\frac 32} |\xi \cdot \nabla A_k|^2 B_k^{\frac 12} 
 \notag \\
 =& \;   \frac 1m \sum_{k=1}^m
 A_k^{-\frac 32} B_k^{\frac 12} (a_k\cdot u)^2 (a_k\cdot \xi)^2 
 \label{Sp1_1.1a}\\
 & \; +2\beta (\xi \cdot u) \frac 1m \sum_{k=1}^m
 A_k^{-\frac 32} B_k^{\frac 12} (a_k \cdot u) (a_k \cdot \xi) \label{Sp1_1.1b} \\
 & \; + \beta^2 (\xi \cdot u)^2  \frac 1m \sum_{k=1}^m A_k^{-\frac 32}
 B_k^{\frac 12}. \label{Sp1_1.1c} 
 \end{align}
 Clearly \eqref{Sp1_1.1c} is perfectly under control. Now observe
 \begin{align*}
 \eqref{Sp1_1.3b}+ \eqref{Sp1_1.1a}
 =-\beta |u|^2 \frac 1m \sum_{k=1}^m
 A_k^{-\frac 32} B_k^{\frac 12} (a_k\cdot \xi)^2. 
 \end{align*}
 One can then apply Lemma \ref{Sp1_1_0.1} to get the desired estimate
 for this term as well as \eqref{Sp1_1.1b}. 
\end{proof}

\section{Technical estimates for Section \ref{S:model3}}

\begin{lem} \label{ApMod3E1}
Denote $X_j=a_j\cdot e_1$ and $Z_j=a_j\cdot \hat u$, where $\hat u \in \mathbb S^{n-1}$.
For any $\epsilon>0$, there exists $R=R(\epsilon,\beta)>0$, such that 
if $m \gtrsim n$, then the following hold with high probability:
\begin{align*}
 \frac 1m \sum_{j=1}^m 
(\beta+Z_j^2) \sqrt{ 
\frac {\rho^2(\beta+2Z_j^2)} {\rho^2(\beta+Z_j^2) +X_j^2} }
\le \epsilon, \qquad\forall\, 0<\rho \le R, \quad\forall\, \hat u \in \mathbb S^{n-1}.
\end{align*}
\end{lem}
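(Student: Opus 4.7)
The plan is to exploit the observation that the square-root factor
$$\sqrt{\tfrac{\rho^2(\beta+2Z_j^2)}{\rho^2(\beta+Z_j^2)+X_j^2}}$$
vanishes linearly in $\rho$ on the event $\{|X_j|$ bounded away from $0\}$, while it is at most $\sqrt{2}$ pointwise. So I would split the sum according to whether $|X_j|\ge \delta$ or $|X_j|<\delta$, where $\delta=\delta(\epsilon,\beta)>0$ is a small threshold independent of $\rho$ and $\hat u$ to be fixed at the very end.

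On $\{|X_j|\ge\delta\}$, I use $\rho^2(\beta+Z_j^2)+X_j^2\ge X_j^2\ge \delta^2$ to get the pointwise bound
$$(\beta+Z_j^2)\sqrt{\tfrac{\rho^2(\beta+2Z_j^2)}{\rho^2(\beta+Z_j^2)+X_j^2}}\le \tfrac{\rho}{\delta}\,(\beta+Z_j^2)\sqrt{\beta+2Z_j^2}.$$
A Bernstein plus $\epsilon$-net argument of the type used throughout Section~\ref{S:model2} (the summand is sub-exponential in $a_j$ and $\hat u\mapsto Z_j$ is $\|a_j\|_2$-Lipschitz) gives $\frac{1}{m}\sum_j(\beta+Z_j^2)\sqrt{\beta+2Z_j^2}\le C_1(\beta)$ uniformly in $\hat u\in\mathbb S^{n-1}$ with high probability, so this piece contributes at most $C_1\rho/\delta$. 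On $\{|X_j|<\delta\}$, I instead drop $X_j^2$ in the denominator, using $\rho^2(\beta+Z_j^2)+X_j^2\ge \rho^2(\beta+Z_j^2)$; this cancels $\rho$ but leaves the universal bound $(\beta+2Z_j^2)/(\beta+Z_j^2)\le 2$, so this piece is at most $\sqrt{2}\cdot\frac{1}{m}\sum_j(\beta+Z_j^2)\chi_{|X_j|<\delta}$. Since $\chi_{|X_j|<\delta}$ does not depend on $\hat u$, Cauchy--Schwarz yields
$$\tfrac{1}{m}\sum_j(\beta+Z_j^2)\chi_{|X_j|<\delta}\le \Bigl(\tfrac{1}{m}\sum_j(\beta+Z_j^2)^2\Bigr)^{1/2}\Bigl(\tfrac{1}{m}\sum_j\chi_{|X_j|<\delta}\Bigr)^{1/2},$$
where the first factor is $O(1)$ uniformly in $\hat u$ by another Bernstein/net bound on $Z_j^4$, and the second factor is at most $\sqrt{2\,\mathbb P(|X_1|<\delta)}\le C\sqrt{\delta}$ with high probability by a single scalar Bernstein inequality (no $\hat u$-uniformity is needed). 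Call this contribution $C_2\sqrt{\delta}$.

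Combining, with high probability and uniformly in $\hat u$ and $0<\rho\le R$, the full expression is bounded by $C_1\rho/\delta+C_2\sqrt{\delta}$. I first choose $\delta$ small so that $C_2\sqrt{\delta}\le\epsilon/2$, then set $R=\epsilon\delta/(2C_1)$ so that $C_1\rho/\delta\le\epsilon/2$ for all $\rho\le R$; this defines $R=R(\epsilon,\beta)$. The only mildly delicate step is getting the uniform concentration over $\hat u\in\mathbb S^{n-1}$ for the sub-exponential moments of $Z_j$ in both pieces, but this is routine given the $\|a_j\|_2$-Lipschitz dependence of $Z_j$ on $\hat u$ and is entirely parallel to the net arguments already used in Lemmas~\ref{A30_1}--\ref{A30_3}.
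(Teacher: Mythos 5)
Your decomposition (split on $|X_j|\gtrless\delta$ first) is genuinely different from the paper's (which splits on $|Z_j|\lesssim K$ first, via a cutoff $\phi(Z_j/K)$, and only then treats small $|X_j|$), and this difference is not cosmetic: it is exactly what produces a gap in your argument. On the $\{|X_j|\ge\delta\}$ piece, after substituting $X_j^2\ge\delta^2$ you are left with $\frac{\rho}{\delta}\cdot\frac{1}{m}\sum_j(\beta+Z_j^2)\sqrt{\beta+2Z_j^2}$, and you assert this empirical average is $O(1)$ uniformly in $\hat u$ with high probability because ``the summand is sub-exponential.'' That claim is false: $(\beta+Z_j^2)\sqrt{\beta+2Z_j^2}\asymp|Z_j|^3$ for large $|Z_j|$, and $|Z_j|^3$ with $Z_j\sim\mathcal N(0,1)$ has tail $\sim e^{-t^{2/3}/2}$ (sub-Weibull with exponent $2/3$), not sub-exponential. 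Its moment generating function is infinite for every positive parameter, so Bernstein's inequality does not apply, and the pointwise deviation probability is not of order $e^{-cm}$; consequently the exponential-size union bound over an $\epsilon$-net of $\mathbb S^{n-1}$ (of cardinality $e^{Cn}$) cannot be closed at $m\gtrsim n$. The same defect appears in your treatment of the $\{|X_j|<\delta\}$ piece: after Cauchy--Schwarz you need $\frac{1}{m}\sum_j(\beta+Z_j^2)^2=O(1)$ uniformly in $\hat u$, which again involves $Z_j^4$, a sub-Weibull$(1/2)$ quantity, so the same objection applies.

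This is precisely the difficulty the paper's proof is engineered to avoid. The paper first inserts the cutoff $\phi(Z_j/K)$, so that the piece on which it exploits the $\rho$-smallness has $|Z_j|\lesssim K$ and is genuinely bounded in $Z_j$ (giving a $\rho\,C_{K,\beta}/\sqrt{\rho^2\beta+X_j^2}$-type bound, which is then handled by a second cutoff in $X_j$). The complementary $Z_j$-tail, $(\beta+Z_j^2)\sqrt{2}\bigl(1-\phi(Z_j/K)\bigr)$, is bounded by $\sqrt 2(\beta+Z_j^2)$, which \emph{is} sub-exponential, so its sum concentrates at rate $e^{-cm}$ and has small mean for $K$ large. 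In short, the order of truncation is essential: you must cut off $Z_j$ before you cash in the factor of $\rho$, otherwise the $\rho$-proportional piece carries a cubic-in-$Z_j$ weight that falls outside the sub-exponential class. Your argument could likely be repaired by keeping the square-root factor as $\min\!\bigl(\sqrt 2,\;\rho\sqrt{2(\beta+Z_j^2)}/\delta\bigr)$ rather than replacing it wholesale by the second branch (this keeps the summand dominated by $\sqrt 2(\beta+Z_j^2)$ while still having small mean when $\rho\ll\delta$), but as written the Bernstein-plus-net claims for the third and fourth moment empirical averages of $Z_j$ are not correct at the advertised $1-e^{-cm}$ confidence level.
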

\begin{proof}
We shall only sketch the proof. Choose $\phi \in C_c^{\infty}(\mathbb R)$ such 
that $0\le \phi(x) \le 1$ for all $x$, $\phi(x)=1$ for $|x|\le 1$ and $\phi(x)=0$
for $|x| \ge 2$. Then 
\begin{equation}  \label{ApMo3E1.1} 
  (\beta+Z_j^2) \sqrt{ 
\frac {\rho^2(\beta+2Z_j^2)} {\rho^2(\beta+Z_j^2) +X_j^2} }
 \le  (\beta+Z_j^2) \sqrt{ 
\frac {\rho^2(\beta+2Z_j^2)} {\rho^2(\beta+Z_j^2) +X_j^2} } \phi(\frac {Z_j} K)
+ (\beta +Z_j^2) \cdot \sqrt 2 \cdot \Bigl(1- \phi(\frac {Z_j} K) \Bigr),
\end{equation}
where $K>0$ is a constant to be specified momentarily.  Clearly by taking $K$ 
sufficiently large, we have with high probability that
\begin{align*}
\frac 1m \sum_{j=1}^m 
(\beta +Z_j^2) \cdot \sqrt 2 \cdot \Bigl(1- \phi(\frac {Z_j} K) \Bigr)
\le \frac {\epsilon}{10}, \qquad \forall\, \hat u \in \mathbb S^{n-1}. 
\end{align*}
It then remains for us to deal with \eqref{ApMo3E1.1}.  Thanks to the smooth cut-off,
we have
\begin{align*}
\eqref{ApMo3E1.1} &\le \rho C_{K,\beta}
\cdot \frac 1 {\sqrt{\rho^2\beta + X_j^2}}  \notag \\
& \le  \rho C_{K,\beta} \cdot \frac 1 {\eta} + E_{K,\beta} \cdot \phi(\frac {X_j}{\eta} ),
\end{align*}
where $C_{K,\beta}>0$, $E_{K,\beta}>0$ are constants depending only on
$K$ and $\beta$.  We first choose $\eta>0$ sufficiently small such that 
with high probability,
\begin{align*}
E_{K,\beta} \Bigl| \frac 1m \sum_{j=1}^m \phi(\frac {X_j} {\eta} ) \Bigr| 
\le \frac {\epsilon}{10}.
\end{align*}
Then the desired result follows by taking $\rho$ sufficiently small. 
\end{proof}

\begin{lem} \label{lemSep5_0}
Let $\gamma_1>0$, $\gamma_2>0$ and $\gamma_3\ge 0$. Consider
\begin{align*}
g(\theta_0)
=\int_0^{\pi}
\sqrt{\gamma_1+\gamma_2 \cos^2(\theta-\theta_0)
+\gamma_3 \sin^2 \theta}
\sqrt{\gamma_1+2\gamma_3 \sin^2 \theta} d\theta.
\end{align*}
Then 
\begin{align*}
g^{\prime}(\theta_0)\ge 0, \qquad\forall\, \theta_0 \in[0,\frac {\pi}2).
\end{align*}
Furthermore, if $\gamma_1 \sim 1$, $\gamma_2\sim1$, $\gamma_3 \ge 0$,
then 
\begin{align*}
g^{\prime}(\theta_0) \gtrsim  \frac 1 {1+\gamma_3} \sin 2\theta_0.
%\min\{ \theta_0, \frac {\pi} 2 -\theta_0 \}.
\end{align*}
In particular we have
\begin{align*}
g^{\prime\prime}(0) \gtrsim \frac 1{1+\gamma_3}.
\end{align*}
\end{lem}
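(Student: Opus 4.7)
The plan is to reduce everything to a pointwise-positive integrand via a symmetry argument, then upgrade to a quantitative bound using monotonicity.

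First, I would differentiate under the integral. Only the first factor depends on $\theta_0$, so
\begin{align*}
g'(\theta_0) = \int_0^\pi \frac{\gamma_2 \sin(2(\theta-\theta_0))/2}{\sqrt{\gamma_1+\gamma_2\cos^2(\theta-\theta_0)+\gamma_3\sin^2\theta}}\sqrt{\gamma_1+2\gamma_3\sin^2\theta}\,d\theta.
\end{align*}
Substituting $\phi=\theta-\theta_0$ and exploiting $\pi$-periodicity of the integrand (all of $\sin(2\phi)$, $\cos^2\phi$, $\sin^2(\phi+\theta_0)$ have period $\pi$), I would rewrite this as an integral over $[0,\pi]$. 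Then I would break $[0,\pi]=[0,\pi/2]\cup[\pi/2,\pi]$ and apply the involution $\phi\mapsto\pi-\phi$ on the second piece. Using $\sin(2(\pi-\phi))=-\sin 2\phi$, $\cos^2(\pi-\phi)=\cos^2\phi$, and the key identity $\sin^2(\pi-\phi+\theta_0)=\sin^2(\phi-\theta_0)$, the two halves combine into
\begin{align*}
g'(\theta_0)=\int_0^{\pi/2}\frac{\gamma_2\sin 2\phi}{2}\bigl[f(\sin^2(\phi+\theta_0))-f(\sin^2(\phi-\theta_0))\bigr]d\phi,
\end{align*}
where $f(s):=\sqrt{(\gamma_1+2\gamma_3 s)/(\gamma_1+\gamma_2\cos^2\phi+\gamma_3 s)}$ (with $\phi$ viewed as a parameter).

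Next I would establish positivity by three pointwise facts: (i) $\sin 2\phi\ge 0$ on $[0,\pi/2]$; (ii) by the identity $\sin^2 A-\sin^2 B=\sin(A+B)\sin(A-B)$ applied to $A=\phi+\theta_0$, $B=\phi-\theta_0$, we get $\sin^2(\phi+\theta_0)-\sin^2(\phi-\theta_0)=\sin 2\phi\,\sin 2\theta_0\ge 0$ for $\theta_0\in[0,\pi/2)$; and (iii) a direct computation gives $f'(s)=\gamma_3(\gamma_1+2\gamma_2\cos^2\phi)/\bigl[2\sqrt{\gamma_1+2\gamma_3 s}\,(\gamma_1+\gamma_2\cos^2\phi+\gamma_3 s)^{3/2}\bigr]\ge 0$. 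Since $f$ is increasing, the bracketed difference is nonnegative, hence $g'(\theta_0)\ge 0$.

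For the quantitative bound in the regime $\gamma_1,\gamma_2\sim 1$, I would bound the denominators of $f'(s)$ from above uniformly in $s\in[0,1]$: both $\gamma_1+2\gamma_3 s$ and $\gamma_1+\gamma_2\cos^2\phi+\gamma_3 s$ lie in an interval of the form $[c,C(1+\gamma_3)]$. This yields a uniform lower bound $f'(s)\gtrsim \gamma_3(\gamma_1+2\gamma_2\cos^2\phi)/(1+\gamma_3)^2$ on $s\in[0,1]$, and by the mean value theorem $f(\sin^2(\phi+\theta_0))-f(\sin^2(\phi-\theta_0))\gtrsim \sin 2\phi\,\sin 2\theta_0\cdot \gamma_3/(1+\gamma_3)^2$. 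Inserting and using $\int_0^{\pi/2}\sin^2 2\phi\,d\phi=\pi/4$ produces a lower bound of the form $g'(\theta_0)\gtrsim \gamma_3(1+\gamma_3)^{-2}\sin 2\theta_0$, which (in the regime where $\gamma_3$ is bounded away from zero or else the bound is only invoked jointly with such a factor) gives the claimed estimate; the statement about $g''(0)$ then follows by dividing by $\theta_0$ and letting $\theta_0\to 0^+$, using $\sin 2\theta_0/\theta_0\to 2$.

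The main subtlety is the symmetrization step: naively differentiating leaves a signed integrand, and without the reflection $\phi\mapsto\pi-\phi$ combined with the identity $\sin^2 A-\sin^2 B=\sin(A+B)\sin(A-B)$ the sign is not transparent. The rest — monotonicity of $f$ in $s$ and the denominator estimates in the regime $\gamma_1,\gamma_2\sim 1$ — is elementary once the integrand has been put in the symmetrized form.
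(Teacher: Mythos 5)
Your proof follows the same structure as the paper's: differentiate under the integral sign, use $\pi$-periodicity to recenter at $\theta_0$, fold $[0,\pi]$ onto $[0,\pi/2]$ via the reflection $\theta\mapsto\pi-\theta$, and invoke monotonicity in $s$ of the ratio $s\mapsto(\gamma_1+2\gamma_3 s)/(\gamma_1+\gamma_2\cos^2\theta+\gamma_3 s)$. Both you and the paper use the identity $\sin^2(\theta+\theta_0)-\sin^2(\theta-\theta_0)=\sin 2\theta\sin 2\theta_0$, so the route is essentially identical.

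Where you part company with the paper is in the quantitative constant, and there you are right and the lemma as literally stated is not. Your computation gives $f'(s)\gtrsim\gamma_3/(1+\gamma_3)^2$ uniformly on $[0,1]$, hence $g'(\theta_0)\gtrsim\dfrac{\gamma_3}{(1+\gamma_3)^2}\sin 2\theta_0$; the extra factor of $\gamma_3$ is unavoidable. The paper's auxiliary function satisfies $\tilde g'(z)=\dfrac{(2a-1)\gamma_1\gamma_3}{(a\gamma_1+\gamma_3 z)^2}$, which vanishes at $\gamma_3=0$, so the asserted lower bound $\tilde g'(z)\gtrsim 1/(1+\gamma_3)$ cannot hold for small $\gamma_3$. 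This is not a cosmetic slip: at $\gamma_3=0$ the integrand of $g$ is $\pi$-periodic in $\theta-\theta_0$ and is integrated over a full period, so $g$ is constant in $\theta_0$ and $g'\equiv 0$, contradicting the claimed $g'(\theta_0)\gtrsim\sin 2\theta_0$. The bound the paper's own argument actually supports is exactly the one you derive. The discrepancy is harmless in the only downstream use (Lemma~\ref{lemSep4_2a}, where one takes $\gamma_3=r^2$ and integrates against $r e^{-r^2/2}\,dr$, and $\int_0^\infty\frac{r^2}{(1+r^2)^2}\,re^{-r^2/2}\,dr$ is still a positive constant), but you were right to flag that the lemma's stated bound needs the factor $\gamma_3/(1+\gamma_3)^2$ rather than $1/(1+\gamma_3)$, and your proof is the careful version of the same argument.
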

\begin{rem}
There exists a subtle balance of coefficients in the expression of $g(\theta_0)$ without which
we cannot have the positivity of $g^{\prime}$. As a counter-example, consider
\begin{align*}
f(s,b)=\int_0^{\pi}  (1+b\cos^2(\theta-s)+2 \sin^2 \theta)^{\frac 12}
(1+\sin^2 \theta)^{\frac 12} d\theta.
\end{align*}
One can check that $\partial_s f(s,b)<0$ for $b<1.99$ and $\partial_s f(s,b)>0$ for some
 $b\ge 2$ and $s$.
\end{rem}
\begin{proof}[Proof of Lemma \ref{lemSep5_0}]
Clearly
\begin{align*}
g^{\prime}(\theta_0)
&= \gamma_2 \int_0^{\pi}
\sqrt{ \frac {\gamma_1+2\gamma_3 \sin^2 \theta}
{\gamma_1+\gamma_2 \cos^2(\theta-\theta_0)
+\gamma_3 \sin^2 \theta} } \sin 2(\theta-\theta_0) d\theta \notag \\
&=\gamma_2 
\int_0^{\pi}
\sqrt{ \frac {\gamma_1+2\gamma_3 \sin^2( \theta+\theta_0) }
{\gamma_1+\gamma_2 \cos^2\theta
+\gamma_3 \sin^2( \theta+\theta_0)} } \sin 2\theta d\theta \notag \\
&= \gamma_2 \int_0^{\frac {\pi} 2} \Bigl( 
\sqrt{ \frac {\gamma_1+2\gamma_3 \sin^2( \theta+\theta_0) }
{\gamma_1+\gamma_2 \cos^2\theta
+\gamma_3 \sin^2( \theta+\theta_0)} }
-
\sqrt{ \frac {\gamma_1+2\gamma_3 \sin^2( \theta- \theta_0) }
{\gamma_1+\gamma_2 \cos^2\theta
+\gamma_3 \sin^2( \theta-\theta_0)} }
\Bigr) \sin2\theta d\theta.
\end{align*}
Clearly for $\theta, \theta_0 \in [0,\frac {\pi} 2)$, we have 
\begin{align*}
\sin(\theta+\theta_0) \ge |\sin (\theta-\theta_0)|.
\end{align*}
The non-negativity of $g^{\prime}$ then follows from the monotonicity of
the function (below $a\ge 1$ is a constant)
\begin{align*}
\tilde g(z) = \frac {\gamma_1 +2\gamma_3 z} {a\gamma_1+ \gamma_3 z}
=2-\frac {(2a-1) \gamma_1} {\gamma_3 z+a\gamma_1}, \quad z\ge 0.
\end{align*}
Next if $\gamma_1, \gamma_2\sim 1, \gamma_3 \ge 0$, then clearly 
(note that $a=1+\frac{\gamma_2}{\gamma_1} \cos^2 \theta \ge 1$, $a\sim 1$)
\begin{align*}
{\tilde g}^{\prime}(z) \gtrsim \frac 1 {1+\gamma_3}, \qquad \forall\,
z\in [0,1].
\end{align*}
Thus
\begin{align*}
g^{\prime}(\theta_0) &\gtrsim \frac 1{ 1+\gamma_3} 
\int_0^{\frac {\pi} 2}
(\sin^2(\theta+\theta_0) -\sin^2(\theta-\theta_0) ) \sin 2\theta d\theta \notag \\
&\gtrsim \frac 1 {1+\gamma_3}
\int_0^{\frac{\pi}2}
\sin^2(2\theta) d\theta \sin 2\theta_0 \notag \\
&\gtrsim \frac 1{1+\gamma_3} \sin 2\theta_0.
%& \gtrsim  \frac 1 {1+\gamma_3} \min\{ \theta_0, \frac {\pi}2 -\theta_0 \}.
\end{align*}
Since $g^{\prime\prime}(0)=\lim_{\theta_0\to 0+}
\frac {g^{\prime}(\theta_0)} {\theta_0}$, the estimate for $g^{\prime\prime}(0)$
easily follows. 
\end{proof}

\begin{proof}[Proof of Lemma \ref{lemSep4_2a}]
Clearly
\begin{align*}
h_{\infty}(\rho, t) &= \mathbb E \sqrt{ \beta \rho^2 +\rho^2 X_t^2
+X_1^2} \sqrt{\beta\rho^2 +2X_1^2} \notag \\
& =\frac 1 {2\pi} \int_{\mathbb R^2} 
\sqrt{\beta \rho^2 +\rho^2(t x
+\sqrt{1-t^2} y)^2+ x^2}
\sqrt{\beta \rho^2 +2x^2}  e^{-\frac {x^2+y^2}2} dx dy.
\end{align*}
Since $\rho\sim 1$, it is easy to check that 
\begin{align*}
\sup_{|t|\le 1-\eta_0} (|\partial_t h_{\infty}(\rho, t)|
+|\partial_{tt} h_{\infty}(\rho,t)| + |\partial_{ttt} h_{\infty}(\rho, t) |\lesssim 1.
\end{align*}
To show the lower bound on $|\partial_t h_{\infty}(\rho, t)|$, observe that
$h_{\infty}(\rho, t)$ is an even function of $t$. Thus 
without loss of generality we assume $0\le t<1$. 
Now
let $t= \sin \theta_0$ with $\theta_0 \in[0,\frac {\pi}2)$.  
By using polar coordinates, we obtain
\begin{align*}
h_{\infty}(\rho, t)
&= \frac 1 {2\pi}
\int_0^{\infty} 
\int_{0}^{2\pi}
\sqrt{ \beta \rho^2 + \rho^2 \cos^2 (\theta-\theta_0)
+r^2 \sin^2 \theta}
\sqrt{\beta \rho^2 +2 r^2 \sin^2 \theta}
e^{-\frac {r^2} 2} r d\theta dr \notag \\
&= \frac 1 {\pi}
\int_0^{\infty} 
\int_{0}^{\pi}
\sqrt{ \beta \rho^2 + \rho^2 \cos^2 (\theta-\theta_0)
+r^2 \sin^2 \theta}
\sqrt{\beta \rho^2 +2 r^2 \sin^2 \theta}
e^{-\frac {r^2} 2} r d\theta dr.
\end{align*}
Observe that
\begin{align} \label{lemSep5_0a.0}
\partial_{\theta_0} \Bigl(  h_{\infty} (\rho, \sin \theta_0 ) \Bigr)
= (\partial_t h_{\infty} )(\rho, t)\Bigr|_{t=\sin \theta_0}  \cos \theta_0.
\end{align}
By Lemma \ref{lemSep5_0} (note that $\gamma_3=r^2$) and integrating in $r$, we then obtain
\begin{align*}
\partial_t h_{\infty}(\rho, t) \gtrsim t, \qquad\forall\, 0\le t<1.
\end{align*}
Finally to show that $\partial_{tt} h_{\infty}(\rho, t) \gtrsim 1$ for $|t|\ll 1$,
it suffices for us to show (since $ |\partial_{ttt} h_{\infty}(\rho ,t)| \lesssim 1$
for $|t|\ll 1$)
\begin{align*}
\partial_{tt} h_{\infty}(\rho, 0) \gtrsim 1.
\end{align*}
By using \eqref{lemSep5_0a.0}, we only need to check 
\begin{align*}
\partial_{\theta_0 \theta_0} \Bigl( h_{\infty} (\rho, 
\sin \theta_0 ) \Bigr) \Bigr|_{\theta_0=0} \gtrsim 1.
\end{align*}
This again follows from Lemma \ref{lemSep5_0}. 
\end{proof}

\begin{lem} \label{lemSep5b_1}
Suppose $\phi_1:\; \mathbb R\to \mathbb R$, $\phi_2:\; \mathbb R\to \mathbb R$
are $C^1$ functions such that
\begin{align*}
\max_{|z|\le L} ( |\phi_1(z)|+|\phi_1^{\prime}(z)| +
|\phi_2(z)|+|\phi_2^{\prime}(z)| ) \le C_{L,\phi_1,\phi_2},
\end{align*}
where $C_{L,\phi_1,\phi_2}>0$ is finite for each finite $L$. 

Suppose $0<c_1<c_2<\infty$ and $\phi_3:\, (\frac {c_1} 2, 2c_2) \to \mathbb R$ is a smooth 
function such that
\begin{align*}
\sup_{\frac {c_1}2 <|z|<2c_2} (|\phi_3(z)|+|\phi_3^{\prime}(z)|) \le C_{c_1,c_2,\phi_3},
\end{align*}
where $C_{c_1,c_2,\phi_3}>0$ depends only on $c_1$, $c_2$ and $\phi_3$.

Let $(d_{ij})_{1\le i\le 2, 1\le j\le 3}$ be given constants and consider
\begin{align*}
I (u,w,v)&= \frac 1m \sum_{j=1}^m
\phi_1\Bigl(
\frac {d_{11} |u|^2 + d_{12} (a_j \cdot e_1)^2 +d_{13} (a_j\cdot u)^2}
{\beta |u|^2 + (a_j\cdot u)^2 +(a_j\cdot e_1)^2} \Bigr)
\phi_2\Bigl(
\frac {d_{21} |u|^2 + d_{22} (a_j \cdot e_1)^2 +d_{23} (a_j\cdot u)^2}
{\beta |u|^2 + (a_j\cdot u)^2 +(a_j\cdot e_1)^2} \Bigr) \notag \\
&\qquad \qquad\cdot \phi_3(\|u\|_2)  (a_j\cdot w) (a_j\cdot v),
\quad u\in \mathbb R^n, \, w,v\in \mathbb S^{n-1}.
\end{align*}
Then for any $0<\epsilon \le 1$, if $m\gtrsim n$, then the following hold with
high probability:
\begin{align*}
|I(u,w,v)-\mathbb E I(u,w,v)|
\le \epsilon,
\qquad\forall\, w,v\in \mathbb S^{n-1}, \forall\, c_1\le \|u\|_2 \le c_2.
\end{align*}
\end{lem}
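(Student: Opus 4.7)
The plan is to combine a pointwise Bernstein-type concentration bound with a standard $\delta$-net covering argument over the compact parameter set
$$\mathcal K = \{u \in \mathbb R^n:\, c_1 \le \|u\|_2 \le c_2\} \times \mathbb S^{n-1} \times \mathbb S^{n-1}.$$
The crucial preliminary observation is that the arguments fed into $\phi_1$ and $\phi_2$ are \emph{uniformly} bounded. Indeed, on $\mathcal K$ the denominator
$$D_j(u) := \beta |u|^2 + (a_j\cdot u)^2+(a_j\cdot e_1)^2 \ge \beta c_1^2>0,$$
while each monomial in the numerators is bounded by $(\max_{ik}|d_{ik}|/\min(\beta,1))$ times the corresponding term in $D_j(u)$. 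Hence both ratios lie in a fixed interval $[-L,L]$ with $L = L(\beta,c_1,\{d_{ik}\})$. Combined with the hypotheses on $\phi_1,\phi_2,\phi_3$, the prefactor $\phi_1\phi_2\phi_3$ in each summand is bounded in absolute value by a constant $M=M(\beta,c_1,c_2,\phi_1,\phi_2,\phi_3)$.

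\textbf{Pointwise concentration.} Fix $(u_0,w_0,v_0)\in \mathcal K$. The $j$-th summand is bounded by $M|a_j\cdot w_0|\,|a_j\cdot v_0|$, which is a product of two standard Gaussians (times a constant) and hence sub-exponential with Orlicz norm uniformly bounded. Bernstein's inequality therefore yields
$$\mathbb P\bigl(|I(u_0,w_0,v_0)-\mathbb E I(u_0,w_0,v_0)|>\tfrac{\epsilon}{2}\bigr)\lesssim \exp\bigl(-c m\min(\epsilon,\epsilon^2)\bigr).$$

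\textbf{Covering and Lipschitz step.} Equip $\mathcal K$ with the $\ell_2$-metric $d((u,w,v),(\tilde u,\tilde w,\tilde v)) = \|u-\tilde u\|_2+\|w-\tilde w\|_2+\|v-\tilde v\|_2$ and choose a $\delta$-net $\mathcal N_\delta\subset\mathcal K$ with $|\mathcal N_\delta|\le e^{C_\delta n}$. Union-bounding the display above over $\mathcal N_\delta$ and taking $m\gtrsim n$ with a sufficiently large implied constant gives control at every point of $\mathcal N_\delta$ with high probability. To transfer this to the whole of $\mathcal K$, I would show that the per-sample integrand is Lipschitz in $(u,w,v)$ with a local Lipschitz constant bounded by a low-degree polynomial $P(|a_j|,|a_j\cdot e_1|,|a_j\cdot u|)$: the bilinear factor $(a_j\cdot w)(a_j\cdot v)$ is trivially Lipschitz with constant $\lesssim |a_j|^2$, and for the ratios inside $\phi_1,\phi_2$ one writes $\nabla_u(\mathrm{num}/D_j(u))$ using the quotient rule, uses the lower bound $D_j(u)\ge\beta c_1^2$, and bounds the numerator gradients using Cauchy--Schwarz. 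Averaging gives
$$|I(u,w,v)-I(\tilde u,\tilde w,\tilde v)|\le B_m\cdot d((u,w,v),(\tilde u,\tilde w,\tilde v)),\qquad B_m = \tfrac 1m\sum_j P(|a_j|,|a_j\cdot e_1|,|a_j\cdot u|),$$
and a separate Bernstein argument shows $B_m\le B$ uniformly in $u$ with high probability. Choosing $\delta = \epsilon/(10 B)$, together with a parallel deterministic bound for the expectation, gives the desired uniform estimate.

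\textbf{Main obstacle.} The only technically delicate point is the Lipschitz bookkeeping for the ratios $\mathrm{arg}_i(u)$: one must verify that the derivatives of the quotients, which carry one extra factor of $D_j(u)$ in the denominator, are still controlled by a polynomial in $|a_j|$ whose averaged size is $O(1)$ with high probability uniformly in $u$. This is handled by the lower bound $D_j(u)\ge \beta c_1^2$, which removes any possibility of a singular denominator, so the worst case is a quadratic dependence on $|a_j|$, well within the reach of a routine Bernstein-type bound. No new conceptual ingredient beyond those already used in Lemmas \ref{A30_1}--\ref{A30_3} and Lemma \ref{Sp1_1} is required.
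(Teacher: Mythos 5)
Your overall skeleton (pointwise Bernstein plus a $\delta$-net) is the right one, and your observation that the arguments of $\phi_1,\phi_2$ are uniformly bounded (so the summands are sub-exponential) is correct. However, the Lipschitz step as you describe it does not close at the sampling complexity $m\gtrsim n$, and this is where the paper's proof has a genuine extra idea that is missing from your sketch.

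The problem is that you bound increments like $a_j\cdot(u-\tilde u)$ and $a_j\cdot(w-\tilde w)$ by Cauchy--Schwarz, i.e.\ by $\|a_j\|_2\,\|u-\tilde u\|_2$, and you bound the increment of the bilinear factor $(a_j\cdot w)(a_j\cdot v)$ by $\|a_j\|_2^2$. Since $\mathbb E\|a_j\|_2^2 = n$, your averaged Lipschitz constant $B_m$ is $\sim n$ (or at best $\sim\sqrt n$), so the mesh $\delta\sim\epsilon/B$ is $\sim\epsilon/n$, the net has $\exp(Cn\log(n/\epsilon))$ points, and the union bound only closes with $m\gtrsim n\log n$. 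To get $m\gtrsim n$, one must keep the increments in the form $a_j\cdot(u-\tilde u)$, $a_j\cdot(w-\tilde w)$ and only invoke, after averaging over $j$, the uniform-in-direction bound $\frac 1m\sum_j(a_j\cdot v)^2\lesssim 1$ with high probability and a Cauchy--Schwarz in $j$. This is exactly what the paper does, arriving at a per-sample increment bound of the form
\begin{align*}
\|u-\tilde u\|_2\,|a_j\cdot w|^2 \;+\; |a_j\cdot(u-\tilde u)|\,|a_j\cdot w| \;+\; |a_j\cdot(w-\tilde w)|\,\bigl(|a_j\cdot w|+|a_j\cdot\tilde w|\bigr),
\end{align*}
each term of which is quadratic in Gaussian linear forms and hence averages to $O(\|u-\tilde u\|_2+\|w-\tilde w\|_2)$ uniformly, with high probability, at $m\gtrsim n$.

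There is a second, more subtle, gap that makes even the sharpened bookkeeping above fail without an extra device. When you differentiate the summand with respect to $z=a_j\cdot u$, the ratio derivatives obey only $|\partial_z r_i|\lesssim 1/\langle z\rangle$; combined with the $(a_j\cdot w)(a_j\cdot v)$ factor (a $y^2$ after polarization), this gives $|\partial_z H|\lesssim y^2/\langle z\rangle$, which in the regime $|z|\ll 1$, $|y|\gg 1$ is genuinely of size $y^2$, not $|y|$. The resulting increment $|a_j\cdot(u-\tilde u)|\,|a_j\cdot w|^2$ is cubic in Gaussians, and its average over $j$ cannot be controlled uniformly in $w$ with probability $1-e^{-cm}$ at $m\gtrsim n$ by Bernstein. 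The paper handles this by first subtracting off the region $|a_j\cdot u|\lesssim\delta\langle a_j\cdot w\rangle$ with a smooth cut-off $\phi$ and estimating that piece ($I_1$) by a separate, non-Lipschitz argument; on the complementary piece ($I_2$) the cut-off yields $\langle z\rangle\gtrsim\delta|y|$, hence $|\partial_z H|\lesssim|y|$, so the increment is only quadratic and the covering argument closes. Your plan also omits the polarization/scaling reduction to $v=w$ (which lets one work with a single direction $w$ rather than two), though that is a convenience rather than a necessity.

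In short: your proof plan identifies the right ingredients but underestimates the difficulty of the Lipschitz step. As written it would give $m\gtrsim n\log n$, and fixing the dimension-dependence requires both (i) keeping increments in the form $a_j\cdot(\cdot-\cdot)$ and averaging via uniform quadratic concentration, and (ii) a cut-off to tame the $\partial_z$ bound from quadratic to linear in $|a_j\cdot w|$, which is the core new ingredient of the paper's argument.
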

\begin{proof}
We first note that, by using a polarization argument and scaling (cf. the beginning part
of the proof of Lemma \ref{Sp1_1_0.1}), it suffices for us to prove the statement
for $I(u,w,w)$ uniformly in $w\in\mathbb S^{n-1}$ and $u\in \mathbb R^n$
with $c_1\le \|u\|_2 \le c_2$. 

Now let $\phi \in C_c^{\infty}(\mathbb R)$ such that $0\le \phi (x) \le 1$ for 
all $x$, $\phi(x)=1$ for $|x|\le 1$ and $\phi(x)=0$ for $|x|\ge 2$. Let
$\delta>0$ be a sufficiently small constant. The smallness of $\delta$ will
be specified momentarily. Then
\begin{align*}
& |I_1(u,w)|=\biggl| \frac 1m \sum_{j=1}^m
\phi_1\Bigl(
\frac {d_{11} |u|^2 + d_{12} (a_j \cdot e_1)^2 +d_{13} (a_j\cdot u)^2}
{\beta |u|^2 + (a_j\cdot u)^2 +(a_j\cdot e_1)^2} \Bigr)
\phi_2\Bigl(
\frac {d_{21} |u|^2 + d_{22} (a_j \cdot e_1)^2 +d_{23} (a_j\cdot u)^2}
{\beta |u|^2 + (a_j\cdot u)^2 +(a_j\cdot e_1)^2} \Bigr) \notag \\
&\qquad \qquad\cdot \phi_3(\|u\|_2)  (a_j\cdot w)^2
\phi\Bigl( \frac {a_j \cdot u} {\delta \langle a_j\cdot w \rangle } \Bigr)
\biggr| \notag \\
\lesssim &\;
\frac 1m \sum_{j=1}^m (a_j\cdot w)^2 \phi\Bigl( \frac {a_j \cdot u} {\delta \langle a_j\cdot w \rangle } \Bigr) \notag \\
\lesssim & \;
\frac 1m \sum_{j=1}^m (a_j\cdot w)^2 
\Bigl(1-\phi(2\delta^{\frac 18} (a_j\cdot w) ) \Bigr)
+ \frac 1m \sum_{j=1}^m  \delta^{-\frac 14} 
\phi(\frac {a_j\cdot u} {\delta \langle \delta^{-\frac 18} \rangle } ).
\end{align*}
The expectation of the above two terms are clearly small if we take $\delta>0$
sufficiently small. Moreover they are clearly OK for union bounds and can be made
small in high probability. Thus for 
sufficiently small $\delta$, if $m\gtrsim n$, then with high probability we have 
\begin{align*}
|I_1(u,w)-\mathbb E I_1(u,w)| \le &\; \frac {\epsilon}{3}, \qquad\forall\, w\in \mathbb S^{n-1},
\, \forall\, c_1\le \|u\|_2 \le c_2. 
\end{align*}
We now fix $\delta$ and deal with the main term
\begin{align*}
 &I_2 (u, w) \notag \\ 
=& \frac 1m \sum_{j=1}^m
\phi_1\Bigl(
\frac {d_{11} |u|^2 + d_{12} (a_j \cdot e_1)^2 +d_{13} (a_j\cdot u)^2}
{\beta |u|^2 + (a_j\cdot u)^2 +(a_j\cdot e_1)^2} \Bigr)
\phi_2\Bigl(
\frac {d_{21} |u|^2 + d_{22} (a_j \cdot e_1)^2 +d_{23} (a_j\cdot u)^2}
{\beta |u|^2 + (a_j\cdot u)^2 +(a_j\cdot e_1)^2} \Bigr) \notag \\
&\qquad \qquad\cdot \phi_3(\|u\|_2)  (a_j\cdot w)^2
\cdot\left( 1- \phi\Bigl( \frac {a_j \cdot u} {\delta \langle a_j\cdot w \rangle } \Bigr)
\right) \notag \\
=&\, \frac 1m \sum_{j=1}^m H(\|u\|_2, a_j\cdot u, a_j\cdot w, a_j\cdot e_1),
\end{align*}
where
\begin{align*}
&H(s,z,y,b) \notag \\
=&\;
\phi_1( \frac { d_{11} s^2 + d_{12} b^2 +d_{13} z^2}{\beta s^2
+z^2 +b^2} )
\phi_2( \frac { d_{21} s^2 + d_{22} b^2 +d_{23} z^2}{\beta s^2
+z^2 +b^2} ) \cdot \phi_3 (s) y^2 \left(
1- \phi( \frac {z} {\delta \langle y \rangle} ) \right).
\end{align*}
The main point is to check the union bounds. 
Note that $s=\|u\|_2\sim 1$. We have
\begin{align*}
& | \partial_s H(s,z,y,b)| \lesssim y^2; \\
& |\partial_z H(s,z,y,b)| \lesssim  |y| ;\\
&| \partial_y H(s,z,y,b)| \lesssim |y|.
\end{align*}
Thus for $c_1\le \|u\|_2, \|\tilde u\|_2 \le c_2$,
$w, \tilde w \in \mathbb S^{n-1}$, we have
\begin{align*}
 & \Bigl| H(\|u\|_2, a_j\cdot u, a_j\cdot w, a_j\cdot e_1)
 - H(\|\tilde u\|_2, a_j\cdot \tilde u, a_j\cdot \tilde w,
 a_j\cdot e_1) \Bigr| \notag \\
 \lesssim &\;
 \|u-\tilde u\|_2 ( |a_j\cdot w|^2)
 +|a_j\cdot (u-\tilde u)| |a_j\cdot w|
 + |a_j\cdot (w-\tilde w)| (|a_j\cdot w| +|a_j\cdot \tilde w|).
 \end{align*}
Clearly then the union bounds hold for $I_2$. Thus
for $m\gtrsim n$, with high probability it holds that
\begin{align*}
|I_2(u,w)-\mathbb E I_2(u,w)|
\le &\; \frac {\epsilon}{3}, \qquad\forall\, w\in \mathbb S^{n-1},
\, \forall\, c_1\le \|u\|_2 \le c_2. 
\end{align*}
The desired estimate for $I(u,w,w)$ then easily follows.
\end{proof}

Consider
\begin{align*}
h (\rho, t,e^{\perp})= \frac 1m \sum_{j=1}^m \sqrt{\beta \rho^2+ \rho^2 (a_j\cdot \hat u)^2
+X_j^2}
\cdot \sqrt{\beta \rho^2+2X_j^2},
\end{align*}
where
\begin{align*}
&X_j=a_j\cdot e_1, \quad
u=\rho \hat u, \quad 0<c_1\le \rho\le c_2<\infty;\\
& \hat u = te_1 + \sqrt{1-t^2} e^{\perp}, \quad |t| <1, \, e^{\perp}\cdot e_1=0,
e^{\perp} \in \mathbb S^{n-1}.
\end{align*}
Here we take $c_1>0$, $c_2>0$ as two fixed constants. The main point is that
$\rho \sim 1$. We consider $h$ in the regime 
\begin{align*}
|t| \le 1- \epsilon_0,
\end{align*}
where $0<\epsilon_0\ll 1$ is fixed.  
\begin{lem} \label{Sep5a_2}
Let $0<\epsilon_0\ll 1$ be fixed. 
For any $0<\epsilon\le 1$, if $m\gtrsim n$, then with high probability it holds that
\begin{align*}
&| \partial_t h -\mathbb E \partial_t h | 
+|\partial_{tt} h - \mathbb E \partial_{tt} h |\le \epsilon, \qquad\forall\, |t| \le 1-\epsilon_0, \,
 e^{\perp}\cdot e_1=0, e^{\perp} \in \mathbb S^{n-1}, c_1\le \rho \le c_2.
\end{align*}
\end{lem}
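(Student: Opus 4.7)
The plan is to mirror the PAM1 treatment of Lemmas~\ref{A30_2}--\ref{A30_3}, but to exploit the additional regularization term $(a_j\cdot e_1)^2$ sitting inside the first square root. This term forces the denominator $D_j:=\beta|u|^2+(a_j\cdot u)^2+(a_j\cdot e_1)^2$ to be bounded below by $\beta c_1^2$ uniformly over all relevant parameters, which will let me replace the delicate truncation/high-moment arguments of PAM1 by a direct reduction to Lemma~\ref{lemSep5b_1}. This is exactly why both one-sided concentration bounds should come with the strong high-probability estimate (rather than the $1-O(m^{-2})$ bound needed in PAM1).

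Writing $Z_j=a_j\cdot\hat u=tX_j+\sqrt{1-t^2}Y_j$ with $X_j=a_j\cdot e_1$, $Y_j=a_j\cdot e^\perp$, I will use $\dot Z_j=(X_j-tZ_j)/(1-t^2)$ and $\ddot Z_j=(tX_j-Z_j)/(1-t^2)^2$. Setting $E_j:=\beta|u|^2+2X_j^2$ and $N_j:=\beta|u|^2+X_j^2$, a direct differentiation yields
\begin{align*}
\partial_t h &= \frac{1}{1-t^2}\cdot\frac{1}{m}\sum_{j=1}^m \frac{(a_j\cdot u)(X_j-tZ_j)\sqrt{E_j}}{\sqrt{D_j}},\\
\partial_{tt} h &= \frac{1}{(1-t^2)^2}\cdot\frac{1}{m}\sum_{j=1}^m\left[\frac{\rho^2 N_j(X_j-tZ_j)^2}{D_j^{3/2}} + \frac{(a_j\cdot u)(tX_j-Z_j)}{\sqrt{D_j}}\right]\sqrt{E_j}.
\end{align*}
Since $|t|\le 1-\epsilon_0$ and $c_1\le\rho\le c_2$, the $(1-t^2)^{-k}$ prefactors are bounded by $\epsilon_0^{-2}$ and can be absorbed into the final error constant.

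The main step is then to cast each summand as a finite linear combination of quantities of the exact form handled by Lemma~\ref{lemSep5b_1}. The key algebraic identities are
$\frac{\sqrt{E_j}}{\sqrt{D_j}}=\sqrt{E_j/D_j}$ and
$\frac{N_j\sqrt{E_j}}{D_j^{3/2}}=\frac{N_j}{D_j}\sqrt{E_j/D_j}$. Both ratios $N_j/D_j\in[0,1]$ and $E_j/D_j\in[0,2]$ are exactly of the form $\phi_k(N^{(k)}/D_j)$ with $\phi_k\in\{z,\sqrt{z}\}$ and with $N^{(k)}$ a linear combination of $|u|^2,(a_j\cdot e_1)^2,(a_j\cdot u)^2$; all remaining $\rho$-dependent scalars are smooth on $[c_1,c_2]$ and absorb into $\phi_3(\|u\|_2)$. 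The bilinear factors $(a_j\cdot u)(X_j-tZ_j)$ and $(X_j-tZ_j)^2$ expand into finitely many products $(a_j\cdot w)(a_j\cdot v)$ with $w,v\in\{e_1,u/\|u\|_2\}$, up to smooth $t,\rho$-coefficients. Applying Lemma~\ref{lemSep5b_1} with a sufficiently small tolerance to each of the finitely many resulting terms and summing then gives the desired concentration. The main obstacle is purely algebraic bookkeeping, namely, verifying that every term produced by differentiation fits the exact template of Lemma~\ref{lemSep5b_1}; once one observes that $E_j/D_j\le 2$ and $N_j/D_j\le 1$ are the only ``ratio'' factors that appear, the verification is routine and no finer tools (such as the $L^4$-Bernstein Lemma~\ref{bp7} used in PAM1) are required.
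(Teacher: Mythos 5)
Your proof is correct and follows the same route as the paper: compute $\partial_t h$ and $\partial_{tt} h$ explicitly, observe that the regularization $X_j^2 = (a_j\cdot e_1)^2$ in the denominator $D_j$ keeps everything uniformly bounded, and reduce each summand to the template of Lemma~\ref{lemSep5b_1}. Your combination of the $D_j^{-3/2}$ and $D_j^{-1/2}$ contributions via the identity $D_j - (a_j\cdot u)^2 = N_j$ is a slightly tidier packaging than the paper's six-term expansion (minor typo: the summands should carry an extra factor of $\rho = \|u\|_2$ since $\rho^2 Z_j = \rho\,(a_j\cdot u)$, but this is absorbed into $\phi_3$). One point worth being honest about, which the paper also glosses over: $\phi(z)=\sqrt z$ is not $C^1$ on all of $\mathbb R$ as Lemma~\ref{lemSep5b_1} literally requires, and the ratio $E_j/D_j$ is not bounded away from $0$; the composite $\sqrt{E_j/D_j}$ nevertheless has bounded partials in $(s,z,b)$ precisely because $D_j\ge\beta c_1^2$, so the Lipschitz bounds needed in the covering argument of Lemma~\ref{lemSep5b_1} do hold — but a strictly rigorous treatment should either verify the composite derivatives directly or restate the lemma with a hypothesis on the composition.
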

\begin{proof}[Proof of Lemma \ref{A30_2}]
Denote  $Y_j=a_j\cdot e^{\perp}$ and
\begin{align*}
Z_j =a_j\cdot \hat u = t X_j +\sqrt{1-t^2} Y_j. 
\end{align*}
Clearly
\begin{align*}
&\frac d{dt} Z_j= X_j - \frac {t} {\sqrt{1-t^2} } Y_j;  \notag \\
&\frac {d^2} {dt^2} Z_j = -(1-t^2)^{-\frac 32} Y_j.
\end{align*}
Using $Y_j =(1-t^2)^{-\frac 12} (Z_j-t X_j)$, we obtain
\begin{align*}
&\frac d{dt} Z_j = \frac 1 {1-t^2} X_j - \frac t {1-t^2} Z_j;\\
&\frac {d^2} {dt^2} Z_j=(1-t^2)^{-2} (tX_j-Z_j).
\end{align*}

Therefore
\begin{align*}
\partial_t h &= \frac 1 {1-t^2}
\cdot \frac 1 m\sum_{j=1}^m \sqrt{
\frac {\beta |u|^2+ 2X_j^2} {\beta |u|^2 +(a_j\cdot u)^2 +X_j^2}}
\|u\|_2 \cdot (a_j\cdot u) X_j  \notag \\
&\quad -\frac t {1-t^2} \cdot \frac 1m
\sum_{j=1}^m \sqrt{ \frac {\beta |u|^2+ 2X_j^2} {\beta |u|^2 +(a_j\cdot u)^2 +X_j^2}}
(a_j\cdot u)^2
\notag \\
&=:\frac 1{1-t^2} H_1  -\frac t{1-t^2}  H_2. 
\end{align*}

By Lemma \ref{lemSep5b_1}, it holds with high probability that
\begin{align*}
|H_1 -\mathbb E H_1 | +|H_2-\mathbb EH_2|\le  (1-\epsilon_0^2) \cdot \frac {\epsilon}3,
\qquad\forall\,  \hat u\in \mathbb S^{n-1}, c_1\le \|u\|_2\le c_2, |t|\le 1-\epsilon_0.
\end{align*}
The desired estimate for $\partial_t h$ then easily follows.

To compute $\partial_{tt}h$, we shall denote
\begin{align*}
&A_j= \beta \rho^2 +\rho^2 Z_j^2 +X_j^2=
\beta |u|^2 + (a_j\cdot u)^2 + X_j^2;\\
&B_j= \beta \rho^2 +2X_j^2 = \beta |u|^2 +2 X_j^2.
\end{align*}
Then
\begin{align*}
\partial_{tt} h 
& = - \frac 1m \sum_{j=1}^m A_j^{-\frac 32}
B_j^{\frac 12} (\rho^2 Z_j \frac d {dt} Z_j )^2 
+\frac 1m \sum_{j=1}^m A^{-\frac 12}_j
\sqrt{B_j}
\rho^2
\cdot \Bigl( (\frac d {dt}Z_j)^2
+ Z_j \frac {d^2}{dt^2} Z_j \Bigr) \notag \\
& = - (1-t^2)^{-2} \frac 1m \sum_{j=1}^m 
A_j^{-\frac 32} B_j^{\frac12} \|u\|_2^2 (a_j\cdot u)^2 X_j^2 \notag \\
& \quad +  {2t} (1-t^2)^{-2}
\frac 1m \sum_{j=1}^m A_j^{-\frac 32} B_j^{\frac12} \|u\|_2 (a_j\cdot u)^3 X_j
\notag \\
& \quad - t^2 (1-t^2)^{-2}
\frac 1m \sum_{j=1}^m A_j^{-\frac 32} B_j^{\frac 12}
(a_j\cdot u)^4\notag \\
& \quad 
+(1-t^2)^{-2} \frac 1m \sum_{j=1}^m A_j^{-\frac 12}
B_j^{\frac 12}  \|u\|_2^2 X_j^2 \notag \\
&\quad- t (1-t^2)^{-2} \frac 1m \sum_{j=1}^m
A_j^{-\frac 12} B_j^{\frac 12} \|u\|_2 X_j (a_j\cdot u) \notag \\
&\quad -(1-t^2)^{-1}
\frac 1m \sum_{j=1}^m A_j^{-\frac 12}
B_j^{\frac 12} (a_j\cdot u)^2. 
\end{align*}
It is then a bit tedious but not difficult to verify that the above terms  can be treated with the help of Lemma \ref{lemSep5b_1}. 
Thus with high probability it holds that
\begin{align*}
| \partial_{tt} h -\mathbb E \partial_{tt} h
| \le \frac {\epsilon}5, 
\qquad\forall\, \hat u \in \mathbb S^{n-1}, \, c_1\le \|u\|_2 \le c_2,
|t|\le 1-\epsilon_0.
\end{align*}
\end{proof}
\begin{lem} \label{leSep6_60}
Let $X\sim \mathcal N(0,1)$, $Y\sim \mathcal N (0,1)$ be independent. Define
\begin{align*}
&H(\rho,s) =  \mathbb E
\sqrt{\beta \rho^2 +\rho^2 (\sqrt{1-s^2} X+ s Y)^2+X^2 } \sqrt{\beta \rho^2 +2X^2};\\
&h (\rho, s)= \frac 12 (1+2\beta) \rho^2 -  H(\rho,s).
\end{align*}
Then it holds that
\begin{align*}
\sup_{|\rho-1|\ll 1, |s|\ll 1} \sum_{j=1}^3( | \partial^j H |
+|\partial^j h|) \lesssim 1.
\end{align*}
where $\partial= \partial_{\rho}$ or $\partial_s$.
\end{lem}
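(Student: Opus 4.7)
The plan is to mimic the argument used to prove Lemma \ref{leAu31_60}. Writing $A = \sqrt{1-s^2}\,x + sy$, the density representation gives
\[
2\pi H(\rho,s) = \int_{\R^2} \sqrt{\beta\rho^2 + \rho^2 A^2 + x^2}\,\sqrt{\beta\rho^2 + 2x^2}\, e^{-\frac{x^2+y^2}{2}}\, dx\, dy.
\]
Since $h(\rho,s)$ differs from $H$ only by the explicit smooth function $\frac{1}{2}(1+2\beta)\rho^2$, it suffices to bound $\partial^j H$ for $1\le j\le 3$ in the regime $|\rho-1|\ll 1$, $|s|\ll 1$, where $\partial$ stands for either $\partial_\rho$ or $\partial_s$.

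First I would handle the pure $\rho$-derivatives. Differentiating $\sqrt{\beta\rho^2 + \rho^2 A^2 + x^2}\sqrt{\beta\rho^2 + 2x^2}$ in $\rho$ produces ratios such as $\rho\beta/\sqrt{\beta\rho^2 + \rho^2 A^2+x^2}$ and $\rho\beta/\sqrt{\beta\rho^2+2x^2}$, together with their higher analogues. For $\rho\sim 1$ each such factor is pointwise bounded by a constant times $(1+|x|+|A|)$, and the Gaussian weight gives uniform integrability of all the resulting polynomial moments in $(x,y)$. This yields $|\partial_\rho^j H|\lesssim 1$ for $j=1,2,3$.

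The heart of the argument is the control of $\partial_s^j H$ and the mixed derivatives. Here I would use the same rotational-symmetry trick as in Lemma \ref{leAu31_60}: since $\partial_s A = \frac{1}{\sqrt{1-s^2}}(-x\partial_y + y\partial_x)A$ and the Gaussian weight is annihilated by $-x\partial_y + y\partial_x$, integration by parts transfers $\partial_s$ onto the other factor $\sqrt{\beta\rho^2+2x^2}$. Concretely, for $\partial_s H$ we obtain
\[
2\pi\partial_s H = -\frac{1}{\sqrt{1-s^2}}\int \sqrt{\beta\rho^2 + \rho^2 A^2 + x^2}\,\frac{2xy}{\sqrt{\beta\rho^2+2x^2}}\, e^{-\frac{x^2+y^2}{2}}\, dx\, dy,
\]
which is $O(1)$ uniformly in $|s|\ll 1$ because both $\sqrt{\beta\rho^2+\rho^2 A^2+x^2}$ and $1/\sqrt{\beta\rho^2+2x^2}\lesssim 1/\sqrt{\beta}$ are dominated by polynomials in $x,y$ that are integrable against the Gaussian. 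For $\partial_{ss}H$ and $\partial_{sss}H$ we iterate: each additional $\partial_s$ either hits the prefactor $(1-s^2)^{-1/2}$ (smooth for $|s|\ll 1$) or the factor $\sqrt{\beta\rho^2+\rho^2 A^2+x^2}$, in which case we apply the same rotational identity and integrate by parts once more, moving the derivative onto $y^k/\sqrt{\beta\rho^2+2x^2}$-type factors. The critical point to verify is that the worst integrand arising after three such operations is again integrable against the Gaussian; the new $x^2$ term inside the first square root only helps, since it makes $\sqrt{\beta\rho^2+\rho^2 A^2+x^2}\ge\sqrt{\beta}\rho$ bounded away from zero.

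The mixed derivatives $\partial_\rho\partial_s^j H$ and $\partial_\rho^2\partial_s H$ follow by combining the two strategies above: differentiate in $\rho$ first (producing bounded coefficients for $\rho\sim 1$), then apply the integration-by-parts argument in $s$. The main technical obstacle is $\partial_{sss}H$, where one must keep track of the formal singularity $(1-s^2)^{-3/2}$ and confirm it is actually smooth on $|s|\ll 1$, and also verify that the highest-moment integrand $\sim(1+|x|^3+|y|^3)$ that shows up after three integrations by parts remains pointwise controlled by a Gaussian-integrable function; this is where the presence of the regularizing $\sqrt{\beta\rho^2+2x^2}$ in the denominator plays its crucial role.
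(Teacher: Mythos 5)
The paper's own proof of this lemma is essentially two sentences: since $\beta\rho^2\gtrsim\beta$ for $\rho\sim 1$, both square roots in the integrand are bounded away from zero uniformly, so the integrand is a smooth function of $(\rho,s)$ on $\{|\rho-1|\ll 1,\;|s|\ll 1\}$ whose partial derivatives through order $3$ are pointwise dominated by $C_\beta(1+|X|^k+|Y|^k)$ for some fixed $k$; differentiating under the Gaussian expectation immediately gives the uniform bound, and $h$ differs from $H$ by an explicit polynomial in $\rho$. You have imported the much heavier machinery of Lemma~\ref{leAu31_60}, which the paper needed precisely because that model lacked the extra $+X^2$ regularizer; here that machinery is unnecessary.

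More importantly, the imported argument is misapplied. The rotational identity
$\partial_s\Phi = (1-s^2)^{-1/2}(-x\partial_y+y\partial_x)\Phi$
holds only when $\Phi$ depends on $(x,y)$ purely through $A=\sqrt{1-s^2}\,x+sy$. Your $\Phi=\sqrt{\beta\rho^2+\rho^2 A^2+x^2}$ has an explicit $x^2$ in addition to $A^2$, so
$(-x\partial_y+y\partial_x)\Phi = \sqrt{1-s^2}\,\partial_s\Phi + \dfrac{xy}{\Phi}$,
and the integration by parts therefore yields
\begin{align*}
\sqrt{1-s^2}\cdot 2\pi\,\partial_s H
= -\int \Phi\,\frac{2xy}{\sqrt{\beta\rho^2+2x^2}}\, e^{-\frac{x^2+y^2}{2}}\,dx\,dy
\;-\;\int \frac{xy\,\sqrt{\beta\rho^2+2x^2}}{\Phi}\, e^{-\frac{x^2+y^2}{2}}\,dx\,dy,
\end{align*}
whereas your displayed formula drops the second integral entirely. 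Both terms are of course individually bounded, so the stated conclusion survives, but the computation as written is incorrect, and iterating it to $\partial_{ss}H$ and $\partial_{sss}H$ would compound the error. You should simply discard the integration-by-parts route and invoke smoothness of the integrand together with Gaussian integrability of polynomial moments.
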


\begin{proof}
For $H(\rho,s)$, this is obvious since the integrand inside the expectation is smooth.
The estimate for $h(\rho,s)$ also follows easily.
\end{proof}
\begin{lem}[Calculation of $\partial^2 h$ at ($\rho=1$, $s=0$)] \label{leSep6_61}
Let
\begin{align*}
&H(\rho,s) =  \mathbb E
\sqrt{\beta\rho^2 +\rho^2(\sqrt{1-s^2} X+ s Y)^2+X^2} \sqrt{\beta \rho^2 +2X^2}; \\
&h (\rho, s)= \frac 12 (1+2\beta) \rho^2 - H(\rho,s).
\end{align*}
Then at $\rho=1$, $s=0$, we have
\begin{align*}
&(\partial_{\rho\rho} H)(1, 0) = \gamma_1>0, \quad 
(\partial_{\rho s} H)(\rho,0)=0, \, \forall\, \rho>0; \\
&(\partial_{ss} H)(1, 0)= -\gamma_2<0; \\
& (\partial_s h)(\rho,0)=0, \forall\, \rho>0, \quad (\partial_{\rho} h)(1,0)=0;\\
&(\partial_{\rho\rho} h)(1, 0) =\gamma_3>0, \quad
(\partial_{\rho s} h)(\rho,0)=0, \,\forall\, \rho>0 ;\\
&(\partial_{ss} h)(1, 0)= \gamma_4>0, 
\end{align*}
where $\gamma_i>0$, $i=1,\cdots, 4$ are constants depending on $\beta$. 
\end{lem}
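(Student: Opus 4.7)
The proof is via explicit differentiation under the expectation, which is justified throughout because the integrand $\sqrt{\beta\rho^2 + \rho^2 A^2 + X^2}\sqrt{\beta\rho^2 + 2X^2}$ (with $A = \sqrt{1-s^2}X + sY$) is smooth in $(\rho,s)$ for $\rho>0$ and has at most quadratic growth in $(X,Y)$, while all derivatives in $(\rho,s)$ remain dominated by polynomials in $(X,Y)$ on any neighborhood of $(\rho=1,s=0)$. The first easy observation is a symmetry argument: since $Y\sim\mathcal N(0,1)$ is symmetric and only $Y$ enters through $sY$, the substitution $Y\mapsto -Y$ shows $H(\rho,s)=H(\rho,-s)$, so $H$ is even in $s$. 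This immediately yields $(\partial_s H)(\rho,0)=0$ and hence $(\partial_{\rho s}H)(\rho,0)=0$ for all $\rho>0$. Combined with $h=\tfrac12(1+2\beta)\rho^2-H$, this gives $(\partial_s h)(\rho,0)=0$ and $(\partial_{\rho s}h)(\rho,0)=0$.

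The crucial algebraic miracle is that at $s=0$ one has $A=X$, so $F:=\sqrt{\beta\rho^2+\rho^2 A^2+X^2}=\sqrt{\beta\rho^2+(\rho^2+1)X^2}$, and at $\rho=1$ this coincides with $G:=\sqrt{\beta\rho^2+2X^2}=\sqrt{\beta+2X^2}$. Writing $H(\rho,0)=\mathbb E\sqrt{PQ}$ with $P=\beta\rho^2+(\rho^2+1)X^2$, $Q=\beta\rho^2+2X^2$, and using $P|_{\rho=1}=Q|_{\rho=1}=\beta+2X^2$, a direct chain-rule computation gives $(\partial_\rho H)(1,0)=\mathbb E(2\beta+X^2)=1+2\beta$, hence $(\partial_\rho h)(1,0)=(1+2\beta)-(1+2\beta)=0$. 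Applying the identity $\partial_{\rho\rho}\sqrt{PQ}=\tfrac{\partial_{\rho\rho}(PQ)}{2\sqrt{PQ}}-\tfrac{(\partial_\rho(PQ))^2}{4(PQ)^{3/2}}$ at $\rho=1$ and simplifying yields $(\partial_{\rho\rho}H)(1,0)=\mathbb E\tfrac{2\beta^2+5\beta X^2+X^4}{\beta+2X^2}$; performing polynomial division of the numerator by $\beta+2X^2$ produces the clean expression
\[
(\partial_{\rho\rho}h)(1,0)=(1+2\beta)-(\partial_{\rho\rho}H)(1,0)=\tfrac12\,\mathbb E\tfrac{\beta+(2-\beta)X^2}{\beta+2X^2}.
\]
Positivity for all $\beta>0$ — the one nonobvious step — follows by writing this as $\tfrac12-\tfrac{\beta}{4}+\tfrac{\beta^2}{4}\mathbb E\tfrac{1}{\beta+2X^2}$ and applying Jensen's inequality $\mathbb E\tfrac1{\beta+2X^2}\ge \tfrac1{\beta+2}$; the bound then telescopes to $(\partial_{\rho\rho}h)(1,0)\ge \tfrac1{\beta+2}>0$.

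For the $s$-second derivative at $(\rho,s)=(1,0)$, I compute $\partial_s A|_{s=0}=Y$ and $\partial_{ss}A|_{s=0}=-X$, then differentiate $F$ twice via $2F\partial_s F=\rho^2\partial_s(A^2)$. Because the $G$ factor is $s$-independent, $(\partial_{ss}H)(1,0)=\mathbb E[G\cdot\partial_{ss}F|_{s=0,\rho=1}]$; at $\rho=1$ the factors $G$ and $F|_{s=0}$ cancel from the expression, and using independence of $X,Y$ together with $\mathbb E Y^2=1$ collapses everything to $(\partial_{ss}H)(1,0)=-\mathbb E\tfrac{X^2}{\beta+2X^2}$. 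Thus $(\partial_{ss}h)(1,0)=\mathbb E\tfrac{X^2}{\beta+2X^2}>0$, and we may take $\gamma_2=\gamma_4=\mathbb E\tfrac{X^2}{\beta+2X^2}$ and $\gamma_1$ equal to the closed-form expression above.

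The main obstacle is the positivity of $\gamma_3=(\partial_{\rho\rho}h)(1,0)$ uniformly in $\beta\in(0,\infty)$; a priori the $-\tfrac{\beta}{4}$ term could dominate, and brute comparison fails since terms of different signs appear. The Jensen route outlined above is clean but does rely on spotting the cancellation $2(\beta+2)-\beta(\beta+2)+\beta^2=4$, which makes the lower bound $\tfrac1{\beta+2}$ miraculously simple; I would first verify this cancellation before committing to the full argument.
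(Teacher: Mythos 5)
Your proposal is correct and follows essentially the same route as the paper: parity in $s$ for the vanishing mixed derivatives, the chain rule applied to $\sqrt{PQ}$ exploiting $P|_{\rho=1}=Q|_{\rho=1}=\beta+2X^2$, and explicit Gaussian integration. Two remarks are worth making about your treatment of $\gamma_3=(\partial_{\rho\rho}h)(1,0)$.

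First, be careful with the ``polynomial division'' claim: passing from $(1+2\beta)-\mathbb E\frac{2\beta^2+5\beta X^2+X^4}{\beta+2X^2}=\mathbb E\frac{\beta+(2-\beta)X^2-X^4}{\beta+2X^2}$ to $\frac12\mathbb E\frac{\beta+(2-\beta)X^2}{\beta+2X^2}$ is \emph{not} a pointwise identity; the two integrands differ by $\frac12(1-x^2)$, which only vanishes after taking expectation via $\mathbb E X^2=1$. Both expressions are correct for the expectation, but the justification needs that extra step.

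Second, your Jensen detour (giving the neat lower bound $\gamma_3\ge\frac1{\beta+2}$) is valid but unnecessary. Using $\frac{(\beta+x^2)x^2}{\beta+2x^2}=x^2-\frac{x^4}{\beta+2x^2}$ together with $\mathbb E X^2=1$ yields directly
\[
(\partial_{\rho\rho}h)(1,0)=1-\mathbb E\frac{(\beta+X^2)X^2}{\beta+2X^2}
=\mathbb E\,\frac{X^4}{\beta+2X^2}>0,
\]
which is manifestly positive without any auxiliary inequality. In fact the paper's own proof has a sign slip here: it writes $(\partial_{\rho\rho}h)(1,0)=-\frac1{\sqrt{2\pi}}\int\frac{x^4}{\beta+2x^2}e^{-x^2/2}dx<0$, which contradicts the lemma's assertion $\gamma_3>0$; the sign should be $+$, matching the computation above. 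Your conclusion $\gamma_3>0$ is the correct one, and your $\partial_{ss}$ computation $(\partial_{ss}H)(1,0)=-\mathbb E\frac{X^2}{\beta+2X^2}$ exactly matches the paper.
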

\begin{proof}[Proof of Lemma \ref{leSep6_61}]
Firstly by using parity it is easy to check that $(\partial_{ s} H)(\rho,0)=0$ for any $\rho>0$.
It follows easily that $(\partial_{\rho s} h)(\rho,0)= (\partial_{\rho s} H)(\rho, 0)=0$
for any $\rho>0$. It is also easy to check that
\begin{align*}
(\partial_{\rho} H)(1,0)
&= \partial_{\rho} \mathbb E (\sqrt{ \beta \rho^2 +(\rho^2+1) X^2}
\sqrt{\beta \rho^2 +2X^2} ) \Bigr|_{\rho=1} \notag \\
& = \mathbb E (2\beta +X^2) = 2\beta +1.
\end{align*}
Clearly $(\partial_{\rho} h)(1,0)=0$. One should note that we can also deduce this
directly (and easily) from the fact that the original loss function attains a minimum at
$u=e_1$. 

\underline{\texttt{Calculation of $\partial_{ss} H$}}. By a tedious computation, we 
have
\begin{align*}
2\pi (\partial_{ss} H)(1,0)
&= 2\pi \partial_{ss} \Bigl( \mathbb E \sqrt{\beta +X^2+(\sqrt{1-s^2} X+s Y)^2}
\sqrt{\beta +2X^2} \Bigr) \biggr|_{s=0} \notag \\
& = \int_{\mathbb R^2}
\frac {-2x^4 -\beta x^2 + (\beta +x^2) y^2} {\beta +2x^2}
e^{-\frac {x^2+y^2} 2} dx dy \notag \\
& =\int_{\mathbb R} \frac{ -2 x^4 -\beta x^2 +\beta+x^2} {\beta +2x^2} e^{-\frac {x^2} 2} dx \notag \\
&=\int_{\mathbb R}
\Bigl( -x^2 \frac{\beta+1+2x^2}{\beta+2x^2} + 1 \Bigr)e^{-\frac {x^2} 2} dx \notag \\
&=- \int_{\mathbb R} \frac {x^2} {\beta+2x^2} e^{-\frac {x^2} 2} dx <0.
\end{align*}

\underline{\texttt{Calculation of $\partial_{\rho\rho} H$}}. By a tedious computation, we 
have
\begin{align*}
 (\partial_{\rho\rho} H)(1,0)
&= \partial_{\rho\rho}
\Bigl(
\mathbb E \sqrt{\beta \rho^2 + (\rho^2+1) X^2} \sqrt{\beta \rho^2 +2X^2} \Bigr)
\biggr|_{\rho=1} \notag \\
&=\frac 1 {\sqrt{2\pi}} \int_{\mathbb R}
\frac{2\beta^2 +5\beta x^2 +x^4} {\beta +2x^2} e^{-\frac {x^2} 2} dx \notag \\
&=\frac 1 {\sqrt{2\pi}} \int_{\mathbb R}
(2\beta+ \frac {\beta+x^2}{\beta+2x^2} x^2 ) e^{-\frac {x^2}2} dx.
\end{align*}
It follows that
\begin{align*}
(\partial_{\rho\rho} h)(1,0) & = 1+2\beta - (\partial_{\rho\rho} H)(1,0) \notag \\
&=-\frac 1{\sqrt{2\pi}} \int_{\mathbb R}
\frac{x^4}{\beta+2x^2} e^{-\frac {x^2}2} dx <0.
\end{align*}

\end{proof}
\begin{lem} \label{Sep6Sp1_1}
Let $0<c_1<c_2<\infty $ be fixed. Consider
\begin{align*}
f_0 (u) = -\frac 1 m \sum_{k=1}^m \sqrt{\beta |u|^2 + (a_k \cdot u)^2+(a_k\cdot e_1)^2}
\sqrt{\beta |u|^2 +2 (a_k\cdot e_1)^2}.
\end{align*}
For any $0<\epsilon\le 1$, if $m\gtrsim n$, 
then it holds with high probability  that 
\begin{align*}
&\Bigl| \sum_{i,j=1}^n \xi_i \xi_j 
(\partial_{ij} f_0)(u) - 
\sum_{i,j=1}^n \xi_i \xi_j 
\mathbb E(\partial_{ij} f_0)(u) \Bigr| \le \epsilon, 
\qquad \forall\, \xi \in \mathbb S^{n-1}, \quad \forall\, c_1\le \| u\|_2 \le c_2.
\end{align*}
\end{lem}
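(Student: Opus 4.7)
The plan is to mimic the strategy used in the proof of Lemma \ref{Sp1_1}, but exploit the fact that in the PAM2 setting the quantity
\[
A_k = \beta |u|^2 + (a_k\cdot u)^2 + (a_k \cdot e_1)^2
\]
is uniformly bounded below away from zero (by $\beta c_1^2$) on the regime $c_1\le \|u\|_2 \le c_2$. This extra positivity, absent in PAM1, replaces the delicate estimate of Lemma \ref{Sp1_1_0.1} with a direct application of Lemma \ref{lemSep5b_1}, and in particular allows the full concentration statement to hold with high probability (i.e.\ $1 - e^{-cm}$) rather than with the weaker polynomial rate.

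The first step is purely algebraic: denote $B_k = \beta |u|^2 + 2(a_k\cdot e_1)^2$ and compute, as in Lemma \ref{Sp1_1},
\[
\partial_i A_k = 2\beta u_i + 2(a_k\cdot u) a_{k,i},\quad \partial_{ij} A_k = 2\beta \delta_{ij} + 2 a_{k,i} a_{k,j},\quad \partial_i B_k = 2\beta u_i,\quad \partial_{ij} B_k = 2\beta \delta_{ij}.
\]
Then expanding $\partial_{ij}(A_k^{1/2} B_k^{1/2})$ and contracting with $\xi_i \xi_j$ produces exactly the same five families of terms as in \eqref{Sp1_1.1}--\eqref{Sp1_1.5} (with the new $A_k,B_k$), namely sums of the form
\[
\frac{1}{m}\sum_k A_k^{-3/2} B_k^{1/2}\,|\xi\cdot \nabla A_k|^2,\quad \frac{1}{m}\sum_k A_k^{-1/2} B_k^{1/2}\langle\xi,(\nabla^2 A_k)\xi\rangle,\quad \text{etc.}
\]
After expanding $\xi\cdot\nabla A_k$ and $\xi\cdot\nabla B_k$, each resulting term is a sum of expressions of the generic shape
\[
\frac{1}{m}\sum_{k=1}^m R\!\left(\frac{d_1 |u|^2 + d_2 (a_k\cdot e_1)^2 + d_3 (a_k\cdot u)^2}{A_k}\right) \cdot Q\!\left(\frac{d_1' |u|^2 + d_2' (a_k\cdot e_1)^2 + d_3' (a_k\cdot u)^2}{A_k}\right) \phi_3(\|u\|_2)\,(a_k\cdot w)(a_k\cdot v),
\]
where $R,Q$ are smooth bounded functions of their argument (arising from $A_k^{\pm 1/2}$, $B_k^{\pm 1/2}$ after factoring out $|u|^2$), $\phi_3$ is smooth on $[c_1,c_2]$, and $w,v \in \{u/\|u\|_2, \xi, e_1\}$. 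The boundedness of $R,Q$ on the relevant range follows since each ratio inside is in $[0,\max_i|d_i|/\min(\beta,1)]$ and $A_k \ge \beta c_1^2>0$, $B_k \ge \beta c_1^2>0$.

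The second step is to apply Lemma \ref{lemSep5b_1} to each such summand (after a simple scaling argument to pass from $v,w\in\mathbb S^{n-1}$ to the normalized versions of $u/\|u\|_2$ and $e_1$, which are in $\mathbb S^{n-1}$ up to a factor bounded by $1$). This yields, with high probability,
\[
\Bigl| \frac{1}{m}\sum_k (\text{summand}) - \operatorname{mean} \Bigr| \le \frac{\epsilon}{C}, \qquad \forall\, \xi \in \mathbb S^{n-1},\, \forall\, c_1\le \|u\|_2\le c_2,
\]
for any preassigned small $C$ depending on the number of terms in the decomposition. Summing the finitely many contributions (with constants absorbed into $\epsilon$) yields the claim.

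The only step that requires any care is verifying that each summand really does fit the template of Lemma \ref{lemSep5b_1}: one must check that every $R$ and $Q$ extracted in the expansion is $C^1$ with bounded derivatives on the (bounded) range of its argument, and that $\phi_3(s)=s^{\pm k}$ is smooth on $(c_1/2, 2 c_2)$. Both checks are immediate given the lower bound on $A_k$ and $B_k$. I do not expect any significant obstacle; the proof is a routine but careful bookkeeping of terms, made cleaner than the PAM1 analog because the regularization $(a_k\cdot e_1)^2$ inside $A_k$ removes the near-singular subset that forced the use of Lemma \ref{Sp1_1_0.1} (and the weaker $1-O(m^{-2})$ rate) in Lemma \ref{Sp1_1}.
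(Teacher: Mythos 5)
Your overall strategy matches the paper's own proof: the same five-term expansion of $\sum_{i,j}\xi_i\xi_j\partial_{ij}(A_k^{1/2}B_k^{1/2})$, and the same key observation that the $(a_k\cdot e_1)^2$ regularization inside $A_k$ bounds the ratio $A_k^{-3/2}B_k^{1/2}$ uniformly and thereby eliminates the high-moment terms that forced the weaker $1-O(m^{-2})$ rate in Lemma~\ref{Sp1_1_0.1}. That is the right insight, and the bulk of your argument is sound.

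However, you overclaim when you assert that ``each resulting term'' fits the template of Lemma~\ref{lemSep5b_1} and that ``both checks are immediate given the lower bound on $A_k$ and $B_k$.'' The lower bound $B_k\ge \beta c_1^2$ bounds $B_k^{-1/2}$, but does \emph{not} bound $A_k^{1/2}B_k^{-1/2}$ or $A_k^{1/2}B_k^{-3/2}$: since $A_k$ contains $(a_k\cdot u)^2$ while $B_k$ does not, the ratio $A_k/B_k$ is unbounded, so these cannot be written as $R(\cdot/A_k)Q(\cdot/A_k)$ with $R,Q$ of bounded $C^1$-norm on the range of the argument. Concretely, the terms coming from $\partial_{ij}B_k$ and $|\xi\cdot\nabla B_k|^2$ --- equations \eqref{Sep6Sp1_1.5} and \eqref{Sep6Sp1_1.2} --- reduce to $\frac{1}{m}\sum_k A_k^{1/2}B_k^{-1/2}$ and $(\xi\cdot u)^2\,\frac{1}{m}\sum_k A_k^{1/2}B_k^{-3/2}$, which both carry the unbounded factor and also lack the $(a_k\cdot w)(a_k\cdot v)$ structure that Lemma~\ref{lemSep5b_1} requires. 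The paper handles exactly these two terms by a direct sub-exponential tail bound plus $\epsilon$-net (as in the estimates of \eqref{Sp1_1.5} and \eqref{Sp1_1.2} in the proof of Lemma~\ref{Sp1_1}, which are already established there with high probability). This is a routine fix, but it is a genuine step outside Lemma~\ref{lemSep5b_1} that your write-up omits.
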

\begin{proof}[Proof of Lemma \ref{Sep6Sp1_1}]
To simplify the notation, write $a_k$ as $a$, and denote
\begin{align*}
&A= \beta |u|^2 + (a\cdot u)^2+(a\cdot e_1)^2, \qquad B= \beta |u|^2 +2 (a\cdot e_1)^2; \\
& \partial_i A= 2 \beta u_i + 2 (a\cdot u)a_i, \qquad \partial_{ij}A
= 2\beta \delta_{ij} + 2 a_i a_j; \\
& \partial_i B= 2 \beta u_i, \qquad \partial_{ij} B= 2\beta \delta_{ij}.
\end{align*}

We need to compute $\partial_{ij} \tilde F$ for 
\begin{align*}
\tilde F =  A^{\frac 12} B^{\frac 12}.
\end{align*}
Clearly
\begin{align*}
&\partial_i \tilde F = \frac 12 A^{-\frac 12} \partial_i A B^{\frac 12}
+ \frac 12 B^{-\frac 12} \partial_i B A^{\frac 12};\\
&\partial_{ij} \tilde F
= -\frac 14 A^{-\frac 32} \partial_i A \partial_j A B^{\frac 12}  + \frac 12 A^{-\frac 12} \partial_{ij} A B^{\frac 12} + \frac 12 A^{-\frac 12} \partial_i A \frac 12 B^{-\frac 12}
\partial_j B \notag \\
& \qquad -\frac 14 B^{-\frac 32} \partial_j B \partial_i B A^{\frac 12} + \frac 12 B^{-\frac 12} \partial_{ij} B A^{\frac 12} + \frac 14 B^{-\frac 12} A^{-\frac 12} \partial_i B \partial_j A. 
\end{align*}
We then have
\begin{align}
  & \sum_{i,j=1}^n \xi_i \xi_j (\partial_{ij} f_0)(u) \notag \\
=&\; \frac 14 \cdot  \frac 1 m \sum_{k=1}^m  A_k^{-\frac 32}
|\xi \cdot \nabla A_k|^2  B_k^{\frac 12} \label{Sep6Sp1_1.1} \\
& \; + \frac 14 \cdot \frac 1m \sum_{k=1}^m
B_k^{-\frac 32} |\xi \cdot \nabla B_k|^2 A_k^{\frac 12} \label{Sep6Sp1_1.2} \\
& \; - \frac 12\cdot \frac 1m \sum_{k=1}^m A_k^{-\frac 12}
\langle \xi, (\nabla^2 A_k) \xi \rangle B_k^{\frac 12} \label{Sep6Sp1_1.3} \\
& \; - \frac 12 \cdot \frac 1m \sum_{k=1}^m 
A_k^{-\frac 12} B_k^{-\frac 12} (\nabla A_k \cdot \xi) (\nabla B_k \cdot \xi) 
\label{Sep6Sp1_1.4} \\
& \; - \frac 12 \cdot \frac 1m \sum_{k=1}^m B_k^{-\frac 12} A_k^{\frac 12}
\langle \xi, (\nabla^2 B_k) \xi \rangle, \label{Sep6Sp1_1.5} 
\end{align}  
where $A_k=\beta |u|^2 + (a_k\cdot u)^2+(a_k\cdot e_1)^2$, $B_k=
\beta |u|^2 +2(a_k\cdot e_1)^2$, and we have denoted 
\begin{align*}
\langle \xi, (\nabla^2 A_k) \xi \rangle = \sum_{i,j=1}^n \xi_i \xi_j 
\partial_{ij} A_k.
\end{align*}
Thanks to the strong damping provided by $A_k$, it is tedious but not difficult to check that the
terms \eqref{Sep6Sp1_1.1}, 
\eqref{Sep6Sp1_1.3}, \eqref{Sep6Sp1_1.4} can be easily controlled with the 
help of Lemma \ref{lemSep5b_1}.  The term
\eqref{Sep6Sp1_1.5} can be estimated in a similar way as 
in the estimate of \eqref{Sp1_1.5} in the proof of Lemma \ref{Sp1_1} (note
that this is done in high probability therein!).  The term
\eqref{Sep6Sp1_1.2} is also easy to handle. We omit further details.
\end{proof}

\end{document}